\documentclass[a4paper,reqno]{amsart}

\usepackage{adjustbox}
\usepackage[margin=2.5cm]{geometry}

\usepackage{stmaryrd}
\usepackage{amsmath}

\usepackage{float}
\usepackage{hhline}
\usepackage{array}
\usepackage[english]{babel}
\usepackage{amscd,amssymb,amsmath,amsthm,amsfonts}
\usepackage{mathtools}
\usepackage{accents}
\usepackage{tensor}
\usepackage{graphicx}
\usepackage{subfig}
\usepackage{tikz-cd}
\usetikzlibrary{calc,matrix,arrows,decorations.pathmorphing}
\usepackage{calc}
\usepackage{enumitem}
\usepackage{marginnote}
\usepackage{booktabs}
\usepackage{hyperref}
\graphicspath{ {./images/} }

\hypersetup{colorlinks=true,pageanchor=false,
	linkcolor=blue,citecolor=red,urlcolor=red}
\usepackage[all]{hypcap}
\usepackage{cleveref}
\usepackage{appendix}

\allowdisplaybreaks

\newcommand{\ie}{i.\,e.\ }

\newtheorem*{theorem*}{Theorem}
\newtheorem{theorem}{Theorem}
\newtheorem{corollary}[theorem]{Corollary}
\newtheorem{proposition}[theorem]{Proposition}
\newtheorem{lemma}[theorem]{Lemma}

\newtheorem{maintheorem}{Theorem}

\newenvironment{refmaintheorem}[1]{%
	 \addtocounter{maintheorem}{-1}
	\begin{maintheorem}%
	}{%
	\end{maintheorem}%
}

\theoremstyle{definition}
\newtheorem{definition}[theorem]{Definition}

\theoremstyle{remark}
\newtheorem{remark}[theorem]{Remark}

\newcommand{\N}{\mathbb{N}}
\newcommand{\Z}{\mathbb{Z}}

\newcommand{\R}{\mathbb{R}}
\newcommand{\C}{\mathbb{C}}

\newcommand{\Heis}{\mathcal{H}}
\newcommand{\cL}{\mathcal{L}}

\newcommand{\bbF}{\mathbb{F}}

\newcommand{\bbH}{\mathbb{H}}

\newcommand{\bbP}{\mathbb{P}}
\newcommand{\bbS}{\mathbb{S}}

\newcommand{\bfL}{\mathbf{L}}

\newcommand{\calA}{\mathcal{A}}
\newcommand{\calB}{\mathcal{B}}
\newcommand{\calC}{\mathcal{C}}

\newcommand{\calE}{\mathcal{E}}
\newcommand{\calF}{\mathcal{F}}
\newcommand{\calG}{\mathcal{G}}

\newcommand{\calK}{\mathcal{K}}
\newcommand{\calL}{\cL}

\newcommand{\calR}{\mathcal{R}}
\newcommand{\calS}{\mathcal{S}}
\newcommand{\calT}{\mathcal{T}}

\newcommand{\calW}{\mathcal{W}}

\newcommand{\id}{\mathrm{id}}
\newcommand{\Id}{\mathrm{Id}}

\newcommand{\Cob}{\mathrm{Cob}}
\newcommand{\Conf}{\mathrm{Conf}}
\newcommand{\Loc}{\mathrm{Loc}^{(-,-)}}

\newcommand{\sqbinom}[2]{\left[ \begin{matrix} #1 \\ #2 \end{matrix} \right]}

\newcommand{\Hom}{\mathrm{Hom}}

\newcommand{\bS}{\mathbb{S}}
\newcommand{\Ker}{\mathrm{Ker}}
\newcommand{\Sym}{\mathrm{Sym}}

\makeatletter
\def\@settitle{%
	\begin{center}%
		\baselineskip14\p@\relax
		\bfseries \@title
	\end{center}%
}
\makeatother

\title{Homological Topological Quantum Field Theories}
\author[A. Andreev]{Aleksei Andreev}
\address{Institut f\"ur Mathematik,
	Universit\"at Z\"urich,
	Winterthurerstrasse 190,
	CH-8057 Z\"urich.}
\email{aleksei.andreev@math.uzh.ch} 

\begin{document}
		\begin{abstract}
		We develop a new framework for quantum invariants of $3$-manifolds by extending to cobordisms a homological construction of mapping class group representations. More specifically, we construct a $(2+1)$-dimensional topological quantum field theory (TQFT) that assigns to each surface the twisted homology of its unordered configuration space. The construction requires a choice of local systems on configuration spaces together with additional data. We formulate sufficient conditions on these data that guarantee the TQFT axioms, and we show that there are at least two useful examples satisfying them. One of them yields a homological construction of the projective Kerler--Lyubashenko TQFT, while the other recovers the Frohman--Nicas--Donaldson TQFT. In contrast to the classical algebraic constructions of quantum invariants, our approach is purely topological and relies on multi-trajectory spaces of cobordisms.
	\end{abstract}

	\maketitle

	\tableofcontents

	\section{Introduction}

	The topological nature of the Alexander polynomial led to significant achievements in low-dimensional topology, including genus bounds \cite{Crowell1959, Murasugi1958a, Murasugi1958b}, Freedman's theorem \cite{FreedmanQuinn1990}, knot concordance \cite{FoxMilnor1966}, and many related developments. At the same time, many quantum invariants of knots, links, or $3$- and $4$-manifolds are defined by means of sophisticated algebraic objects, such as quantum groups, braided tensor categories, or Hopf algebras, while their direct geometric or topological meaning often remains only partially understood. A natural problem is therefore to realize those algebraic constructions in terms of classical topological invariants. The program of seeking a homological interpretation of quantum invariants was initiated by Lawrence in the 1990s \cite{lawrence1990homological}, who gave the first homological construction of a quantum representation of the braid group. One of the greatest achievements of this approach was the proof of braid group linearity in the early 2000s by Bigelow \cite{bigelow2001braid} and Krammer \cite{krammer2002braid} independently. Later, homological constructions led to an interpretation of the Jones polynomial via intersection theory \cite{bigelow2002homological} and were generalized to various quantum link invariants \cite{anghel2022topological,anghel2023topological,bigelow2007homological}. More recently, Martel and Bigelow gave homological constructions of quantum $\mathfrak{sl}_2$-modules \cite{martel2022homological} and of quantum groups \cite{bigelow2024quantum}.

	Developing Lawrence's ideas, Blanchet, Palmer, and Shaukat \cite{blanchet2025heisenberg} studied a homological construction of a projective representation of surface mapping class groups. For a compact connected oriented surface $\Sigma$ with one boundary component, the mapping class group $\mathrm{Mod}(\Sigma,\partial\Sigma)$ acts on the twisted relative Borel--Moore homology of unordered configuration spaces $\Conf_*(\Sigma)=\bigsqcup_n\Conf_n(\Sigma)$, the so-called \emph{Heisenberg homology}:
	\[
	\bbH^{BM}(\Sigma;\calL(\Sigma)):=\bigoplus\limits_{n=0}^{\infty} H^{BM}_n(\Conf_*(\Sigma),\Conf^-_*(\Sigma);\calL(\Sigma)).
	\]
	Here $\calL(\Sigma)$ is a local system whose monodromy action factors through a group $\Heis(\Sigma)$ associated with $\Sigma$. The \emph{Heisenberg group} $\Heis(\Sigma)$ can be described either as a quotient of the surface braid group or, equivalently, as the central extension of $H_1(\Sigma)$ determined by the intersection pairing. When $\Sigma$ is oriented, one can choose a canonical generator $\sigma$ of the center of $\Heis(\Sigma)$. In practice we work with the finite Heisenberg group $\Heis_p(\Sigma)$, a finite quotient of $\Heis(\Sigma)$ in which $\sigma^{2p}=1$. Local systems whose monodromy factors through $\Heis_p(\Sigma)$ are called \emph{$p$-Heisenberg local systems} in this paper.

	De Renzi and Martel \cite{de2022homological} showed that for a specific choice of a $p$-Heisenberg local system, called the \emph{Schr\"odinger local system}, the subspace of \emph{small cycles}
	\[
	\bbF_{\calW_p}(\Sigma):=\bigoplus\limits_{n=0}^{\infty}\operatorname{Im}\left[\bbH(\Sigma;\calW_p(\Sigma)) \to \bbH^{BM}(\Sigma;\calW_p(\Sigma))\right]
	\]
	carries a projective representation of $\mathrm{Mod}(\Sigma,\partial\Sigma)$ isomorphic to Lyubashenko's representation \cite{lyubashenko1995invariants} associated with the small quantum $\mathfrak{sl}_2$ at the $p$-th root of unity $\zeta=e^{2\pi i/p}$, where $p$ is odd and $p\geq 3$. Schr\"odinger local systems $\calW_p(\Sigma)$ arise from complex irreducible finite-dimensional representations of the finite Heisenberg group in which the central generator $\sigma$ acts by $-\zeta^{-2}$. Since the Lyubashenko representation is the mapping class group part of the Kerler--Lyubashenko TQFT \cite{kerler2002non}, it is natural to ask whether the homological construction extends from mapping class groups to cobordisms, and hence to an invariant of $3$-manifolds. This question is addressed in the present paper. We show that the answer is positive and provide a general construction of a \emph{homological topological quantum field theory}.

	Topological quantum field theories (TQFTs) provide a general and practical framework for constructing quantum invariants. The mathematical notion of TQFT was first proposed by Atiyah in \cite{atiyah1988topological}, motivated by works of Segal \cite{segal1988definition} and Witten \cite{witten1988topological}. A large class of quantum invariants was defined in the groundbreaking works of Reshetikhin and Turaev \cite{reshetikhin1991invariants,turaev2016quantum}, where they constructed $3$-dimensional semisimple TQFTs from suitable algebraic categories. Later, various non-semisimple TQFTs were developed \cite{blanchet2016non, de20193}. Lyubashenko's invariant of $3$-manifolds can be organized into a non-semisimple TQFT, called the Kerler--Lyubashenko TQFT \cite{kerler2002non}. In this paper, a TQFT is a monoidal functor
	\[
	3\Cob \to \calC
	\]
	from a cobordism category $3\Cob$ to an algebraic category $\calC$. The target category $\calC$ varies with the particular TQFT, while $3\Cob$ is fixed to be the category of connected $3$-cobordisms, studied by Crane and Yetter \cite{crane1999algebraic}, and later by Kerler \cite{kerler1996genealogy,kerler2001towards}, Habiro \cite{habiro2006bottom}, Bobtcheva, Piergallini, Beliakova and De Renzi \cite{bobtcheva20124,beliakova2312algebraic}. Objects of $3\Cob$ are compact connected oriented surfaces with one boundary component. Morphisms are diffeomorphism classes of connected $3$-dimensional cobordisms. The monoidal structure is given by boundary connected sum. More details can be found in Definition~\ref{def:3Cob}.

	Any cobordism in $3\Cob$ can be decomposed into elementary ones. Juh\'asz's presentation of cobordism categories \cite{juhasz2018defining}, adapted to $3\Cob$, allows us to write a full set of relations among elementary cobordisms. With this presentation, a functor
	\[
	F:3\Cob \to \calC
	\]
	can be constructed in three steps:
	\begin{enumerate}
		\item specify an object $F(\Sigma)$ for each $\Sigma \in 3\Cob$;
		\item specify morphisms $F(M):F(\Sigma_-)\to F(\Sigma_+)$ for each elementary cobordism $M:\Sigma_-\to\Sigma_+$ in $3\Cob$;
		\item check Juh\'asz's relations.
	\end{enumerate}
	We follow these steps in our construction of homological TQFTs. The first step is essentially contained in \cite{blanchet2025heisenberg,de2022homological}, but it requires the additional datum of a collection $\calL=\{\calL(\Sigma)\}_{\Sigma\in 3\Cob}$ of local systems. Each $\calL(\Sigma)$ is a $p$-Heisenberg local system of $R$-modules on $\Conf_*(\Sigma)$, where $R$ is a unital commutative ring with involution and $p$ is either $2$ or an odd integer $\geq 3$. We also require each $\calL(\Sigma)$ to be equipped with a perfect sesquilinear pairing
	\[
	(-,-):\calL(\Sigma)\otimes \overline{\calL(\Sigma)}\to R,
	\]
	where the overline denotes conjugation with respect to the involution on $R$. Switching the orientation of $\Sigma$ (denoted by $\overline{\Sigma}$) is required to be compatible with this conjugation, in the sense that $\overline{\calL(\Sigma)}$ is identified with $\calL(\overline{\Sigma})$. These assumptions are sufficient to define a perfect intersection pairing on twisted homology, and our definition of homological TQFT uses this pairing in an essential way. We associate to each $\Sigma\in 3\Cob$ the subspace of small cycles
	\[
	\bbF_{\calL}(\Sigma):= \bigoplus\limits_{n=0}^{\infty}\operatorname{Im}\left[\bbH(\Sigma;\calL(\Sigma)) \to \bbH^{BM}(\Sigma;\calL(\Sigma))\right].
	\]

	\vspace{0.5cm}

	Our construction of the action of elementary cobordisms is based on the notion of the \emph{trajectory space} of an elementary cobordism. The intuitive idea behind trajectory spaces is illustrated in Fig.~\ref{fig:cob2_trajectories} for an elementary $2$-cobordism; a detailed discussion is given in Section~\ref{sec:Multi_trajectory}.

	\begin{figure}[H]
		\centering
		\includegraphics[width=0.9\linewidth]{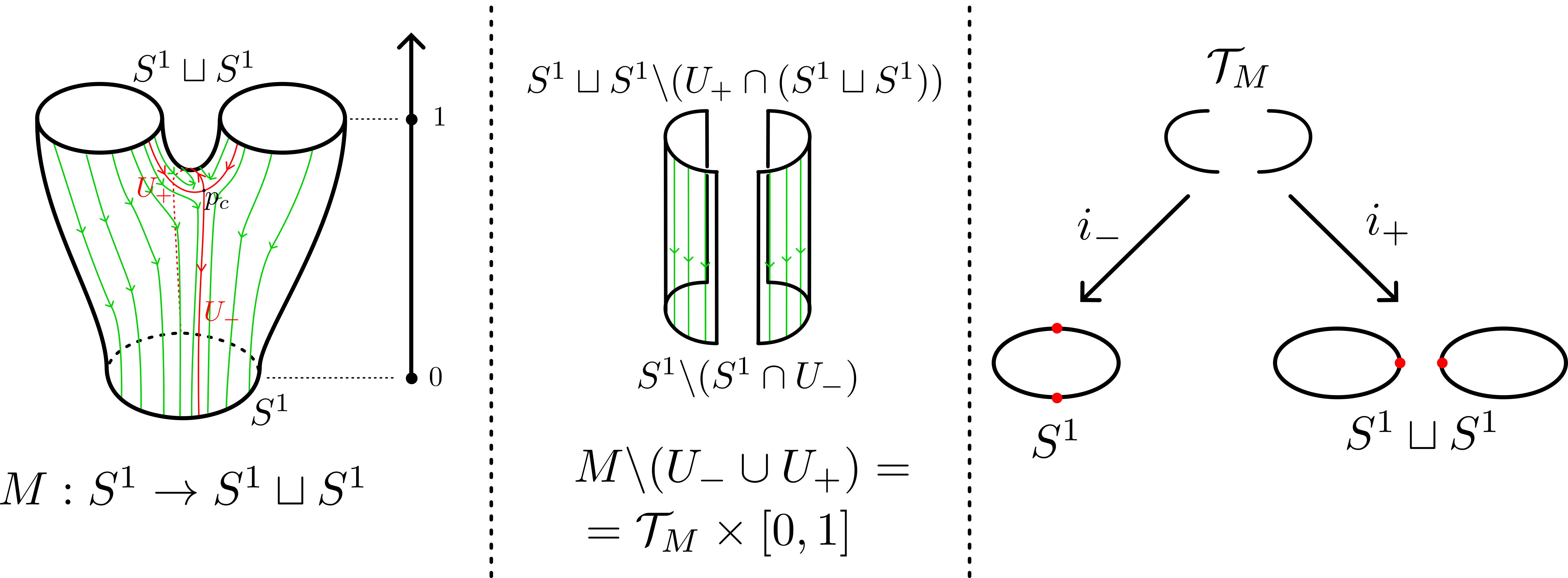}
		\caption{A schematic picture of the trajectory space of an elementary $2$-cobordism $M:S^1\to S^1\sqcup S^1$. The green curves are regular trajectories, while the red curves form the unstable and stable manifolds $U_-$ and $U_+$ of the critical point $p_c$. Removing $U_-\cup U_+$ leaves a cylinder over the trajectory space $\calT_M$, equipped with natural endpoint-evaluation maps $i_-:\calT_M\to S^1$ and $i_+:\calT_M\to S^1\sqcup S^1$.}
		\label{fig:cob2_trajectories}
	\end{figure}

	The main feature of the trajectory space $\calT_M$ of a cobordism $M:\Sigma_-\to \Sigma_+$ is that it comes with maps $i_-$ and $i_+$ evaluating trajectories at their endpoints:
	\[
	\begin{tikzcd}
		&\calT_M\arrow[dl,"i_-"']\arrow[dr, "i_+"]&\\
		\Sigma_- & & \Sigma_+.
	\end{tikzcd}
	\]
	Such a diagram is called a \emph{span}. The maps $i_-$ and $i_+$ are embeddings and therefore induce a similar span on the level of configuration spaces:
	\[
	\begin{tikzcd}
		&\Conf_*(\calT_M)\arrow[dl,"i_-"']\arrow[dr, "i_+"]&\\
		\Conf_*(\Sigma_-) & & \Conf_*(\Sigma_+).
	\end{tikzcd}
	\]
	We would like to apply a twisted homology functor to this span in order to obtain a span of $R$-modules. For this one needs additional data on local systems: namely, for each elementary cobordism, a morphism $\varphi$ between the pull-backs of $\calL(\Sigma_-)$ and $\calL(\Sigma_+)$ to the trajectory space. Given such a morphism, one obtains a span
	\[
	\begin{tikzcd}
		&\bbH^{BM}(\calT_M;i_-^*\calL(\Sigma_-))\arrow[dl,"(i_-)_*"']\arrow[r,"\varphi_*"]&\bbH^{BM}(\calT_M;i_+^*\calL(\Sigma_+))\arrow[dr,"(i_+)_*"]&\\
		\bbH^{BM}(\Sigma_-;\calL(\Sigma_-))\arrow[rrr,blue,dashed,"\bbF^{BM}_{\calL}(M)"]&&& \bbH^{BM}(\Sigma_+;\calL(\Sigma_+)).
	\end{tikzcd}
	\]
	We say that $\calL$ is equipped with an action of elementary cobordisms if such a morphism $\varphi$ is specified for each elementary cobordism. If $M$ is an index $1$ or $2$ cobordism, the span above is modified by inserting a fundamental class of a covering space over $\Conf_{k}(S^1)$
	\[
		\left[\widetilde{\Conf}_{k}(S^1)\right]
	\]
	on the belt or attaching sphere, as explained in Section~\ref{subs:Construction_of_homological_TQFTs}. The essential step is then to ``invert'' the left arrow in the span above in order to obtain the horizontal map $\bbF^{BM}_{\calL}(M)$ represented by the blue dashed line. This is possible if the following conditions are satisfied:
	\begin{itemize}
		\item the span is restricted to the subspaces of small cycles;
		\item the local systems $\calL(\Sigma_-)$ and $\calL(\Sigma_+)$ are $p$-Heisenberg for some odd $p\geq 3$ or for $p=2$;
		\item the local systems $\calL(\Sigma_-)$ and $\calL(\Sigma_+)$ are equipped with perfect pairings and $\varphi$ preserves them;
		\item the inserted fundamental class has $k=p-1$ points.
	\end{itemize}
	Under these assumptions, the span defines the action $\bbF_{\calL}(M)$ of the elementary cobordism $M$ as the restriction of $\bbF_{\calL}^{BM}(M)$ to small cycles:
	\[
		\bbF_{\calL}(M):=\bbF^{BM}_{\calL}(M)\big|_{\text{small cycles}}.
	\]

	The last step is to check Juh\'asz's relations. Not every action of elementary cobordisms on local systems gives rise to a TQFT. In Section~\ref{sec:nice_ls} we formulate a set of sufficient conditions and call a collection $\calL=\{\calL(\Sigma)\}$, equipped with an action of elementary cobordisms, \emph{Morse compatible} if it satisfies them. We expect that a Morse compatible collection should provide a natural TQFT with values in a category of spans over $\mathrm{Mod}_R$ (or perhaps correspondences), in which one does not need to invert an arrow in each span. We plan to address this in future work.

	\vspace{0.5cm}

	\noindent\textit{Main results.} In cases where the construction is defined only up to sign, we write $\mathrm{Mod}_R^{\pm}$ for the category of $R$-modules and $R$-linear maps up to sign. Our first main result is the following theorem.

	\begin{refmaintheorem}{thm:main_thm}
		Let $R$ be a unital commutative ring with involution. Let $\calL=\{\calL(\Sigma)\}_{\Sigma\in 3\Cob}$ be a collection of $p$-Heisenberg local systems of $R$-modules, such that each $\calL(\Sigma)$ is equipped with a perfect sesquilinear pairing, the monodromy action of all $1-(-\sigma)^{k}$, $0\leq k<p$, on $\calL(\Sigma)$ is invertible, $1-(-\sigma)^p$ acts trivially, and $\overline{\calL(\Sigma)}$ is identified with $\calL(\overline{\Sigma})$. Suppose $\calL$ is Morse compatible. Then the assignments $\Sigma \mapsto\bbF_{\calL}(\Sigma)$ for all objects $\Sigma\in 3\Cob$ and $M\mapsto\bbF_{\calL}(M)$ for all elementary cobordisms $M\in \mathrm{Mor}(3\Cob)$ give rise to a monoidal functor
		\[
		\bbF_{\calL}:3\Cob\to \mathrm{Mod}_R
		\]
		for odd $p\geq 3$, and to a monoidal functor
		\[
		\bbF_{\calL}:3\Cob\to \mathrm{Mod}_R^{\pm}
		\]
		for $p=2$.
	\end{refmaintheorem}

	We note that each state space $\bbF_{\calL}(\Sigma)$ is in fact naturally equipped with an intersection pairing, and the TQFT $\bbF_{\calL}$ is Hermitian by construction; see Remark~\ref{rem:Hermitian}.

	\vspace{0.5cm}

	We provide two natural examples of Morse compatible local systems that recover two well-known non-semisimple TQFTs: the first up to isomorphism, and the second projectively. The first is the Frohman--Nicas--Donaldson TQFT (FND)
	\[
	D:3\Cob\to \mathrm{Vect}_{\Bbbk}^{\pm},
	\]
	where $\mathrm{Vect}_{\Bbbk}^{\pm}$ is the category of vector spaces over a field $\Bbbk$ and $\Bbbk$-linear maps up to sign. Its homological construction via Lagrangian correspondences was given by Frohman and Nicas in \cite{frohman1992alexander} and developed by Kerler in \cite{kerler2003homology}. In particular, Kerler noted that this is a non-semisimple Hennings TQFT. Later, Donaldson outlined a connection between this TQFT and $3$-dimensional Seiberg--Witten invariants in \cite{donaldson1999topological}, and Nguyen further elaborated this approach in \cite{nguyen2014lagrangian}.

	The FND TQFT can be recovered from the homological construction for a particular choice of local systems. In the somewhat degenerate case $p=2$, the collection $\underline{\Bbbk}$ of trivial $2$-Heisenberg local systems of $\Bbbk$-modules is Morse compatible, and the corresponding homological TQFT $\bbF_{\underline{\Bbbk}}$ is isomorphic to the FND TQFT. This is our second main result.

	\begin{refmaintheorem}{thm:Donaldson_iso}
		Let $\Bbbk$ be a field with $\mathrm{char}(\Bbbk)\not= 2$. Then the collection of trivial local systems $\underline{\Bbbk}$ together with the action of elementary cobordisms is Morse compatible. It gives rise to a homological TQFT
		\[
			\bbF_{\underline{\Bbbk}}:3\Cob\to \mathrm{Vect}_{\Bbbk}^{\pm}
		\]
		isomorphic to the Frohman--Nicas--Donaldson TQFT.
	\end{refmaintheorem}

	\vspace{0.5cm}

	The second TQFT recovered by the homological construction is the projective version of the Kerler--Lyubashenko TQFT \cite{lyubashenko1995invariants, kerler2002non} for small quantum $\mathfrak{sl}_2$ at a $p$-th root of unity, where $p$ is odd and $p\geq 3$. It arises from the collection of Schr\"odinger local systems $\calW_p$. We define a projective action of elementary cobordisms on Schr\"odinger local systems and prove in Section~\ref{sec:Schroedinger_ls} that they form a projectively Morse compatible collection of local systems. Our third main result is the following theorem.

	\begin{refmaintheorem}{thm:KL_isomorphism}
		The projective TQFT $\bbF_{\calW_p}$ arising from the collection of Schr\"odinger local systems $\calW_p$ for an odd $p\geq 3$ is projectively isomorphic to the Kerler--Lyubashenko TQFT $J_{\mathfrak{sl}_2}$ associated with the small quantum $\mathfrak{sl}_2$ at the $p$-th root of unity $\zeta=e^{2\pi i/p}$.
	\end{refmaintheorem}

	Thus the paper provides a topological realization of two non-semisimple TQFTs in terms of twisted homology of configuration spaces.

	\vspace{0.5cm}

	In Figure~\ref{fig:Donaldson_action} we give a schematic description of the action of index $1$ and $2$ cobordisms in the homological TQFT.

	\begin{figure}[H]
		\centering
		\includegraphics[width=0.9\linewidth]{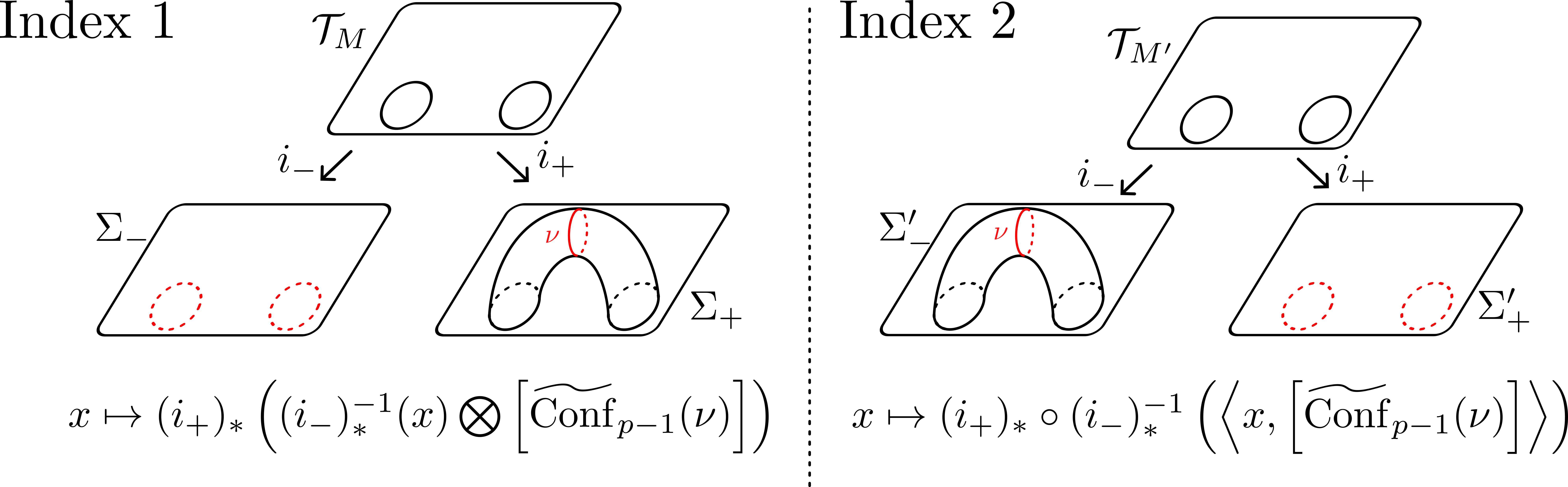}
		\caption{Schematic description of the action of index $1$ and $2$ cobordisms in homological TQFTs for surfaces of genus $0$ or $1$. For an index $1$ cobordism $M$, a class $x$ is pulled back to the trajectory space $\calT_M$ and then pushed forward to the homology of $\Sigma_+$ after inserting the (twisted) fundamental class $\left[\widetilde{\Conf}_{p-1}(\nu)\right]$. The action of an index $2$ cobordism $M'$ is defined dually with respect to the intersection pairing: reversing the orientation of $M'$ gives an index $1$ cobordism $\overline{M}'$ whose action is already defined, and the perfect pairing identifies the desired map with its adjoint. Equivalently, one may view the action of $M'$ as first intersecting $x$ with the (twisted) fundamental class $\left[\widetilde{\Conf}_{p-1}(\nu)\right]$, then pulling back to the trajectory space, and finally pushing forward. For details, see Section~\ref{sec:general_construction}.}
		\label{fig:Donaldson_action}
	\end{figure}

	\vspace{0.5cm}

	The paper is organized as follows. In Section~\ref{section:preliminaries} we collect preliminaries. In Section~\ref{sec:Heisenberg_homology} we introduce Heisenberg groups and Heisenberg local systems, and define the main homological objects of the paper: ordinary homology and Borel--Moore homology of configuration spaces with twisted coefficients. We also define the subspaces of small cycles and equip them with a perfect pairing. In Section~\ref{sec:general_construction} we construct homological TQFTs from Morse compatible collections of local systems. The action of elementary cobordisms is defined using trajectory spaces, and the relations in the cobordism category are verified using Juh\'asz's presentation. In Section~\ref{sec:trivial_ls} we introduce trivial local systems, show that they are Morse compatible, and prove that the corresponding homological TQFT recovers the FND TQFT. In Section~\ref{sec:Schroedinger_ls} we introduce Schr\"odinger local systems, show that they are projectively Morse compatible, and prove that the corresponding homological TQFT recovers the projective Kerler--Lyubashenko TQFT.

	\subsection*{Acknowledgments}
	The author would like to thank Anna Beliakova and Christian Blanchet for helpful discussions, valuable suggestions and guidance in writing this paper. This work was supported by Simons Collaboration on New Structures in Low Dimensional Topology and Grant 200020\_207374 of Swiss National Science Foundation.

\newpage

	\section{Preliminaries}\label{section:preliminaries}
	In this section we collect the background material and notation needed for the homological construction. 
	

	\subsection{Notation}\label{subs:Notation}
	\begin{itemize}
	\item[-] We write $\Z_k:=\Z/k\Z$. If the coefficients of a homology group are not specified, then integral coefficients are understood: $H_*(X):=H_*(X;\Z)$. 
	
	\item[-] For a topological space $X$ we denote by $[-]:\pi_1(X)\to H_1(X)$ the Hurewicz homomorphism.


	\item[-] Our convention for multiplication in fundamental groups is that the composition $\alpha\beta$ of two loops means concatenation in which \emph{$\alpha$ is traversed first and then $\beta$}.
	
	\item[-] We write $I:=[0,1]\subset \R$.
	
	\item[-] For a commutative ring $R$, let $\mathrm{Mod}_R$ denote the monoidal category of finite-rank $R$-modules. For a field $\Bbbk$, we denote by $\mathrm{Vect}_{\Bbbk}$ the monoidal category of vector spaces.

	\item[-] We write $\delta_{ij}$ for the Kronecker delta.

\item[-] \textit{Quantum integers.}
	Consider the ring $\Z[q,q^{-1}]$ of Laurent polynomials in one variable. For every integer $n$ we use the standard notation
	\[
		[n]_q:=\frac{q^n-q^{-n}}{q-q^{-1}}
	\]
	for quantum integers, and
	\[
		[k]_q!:=\prod\limits_{i=1}^{k}[i]_q,\quad\quad\quad \sqbinom{k}{l}_q:=\frac{[k]_q!}{[l]_q!\cdot[k-l]_q!},
	\]
	for quantum factorials and quantum binomial coefficients, where $k$ and $l$ are integers satisfying $k\geq l\geq 0$.
	
	\end{itemize}
	\subsection{Cobordism categories}\label{subs:Cobordism_categories}
	
	We work with the following cobordism category.
	
	\begin{definition}\label{def:3Cob}
		The monoidal braided category $3\Cob$ of connected cobordisms is defined as follows:
		\begin{itemize}
			\item Objects are compact connected oriented surfaces $\Sigma$ with one parametrized boundary component, \ie a homeomorphism $S^1\xrightarrow{\simeq}\partial\Sigma$ is specified. The boundary of each surface $\Sigma$ is split into two arcs $\partial_-\Sigma$ and $\partial_+\Sigma$;
			\item Morphisms between $\Sigma$ and $\Sigma'$ are equivalence classes of oriented 3-dimensional manifolds $M$ with corners equipped with a parametrization of the boundary, \ie a homeomorphism $(-\Sigma)\cup_{\partial\Sigma}S^1\times I\cup_{\partial\Sigma'}\Sigma'\xrightarrow{\simeq}\partial M$, such that its restriction to $S^1\times\{0\}$ and $S^1\times \{1\}$ coincides with the parametrizations of $-\partial\Sigma$ and $\partial\Sigma'$. The equivalence is given by diffeomorphisms preserving boundary parametrization;
			\item The composition is given by collar gluing of cobordisms;
			\item The monoidal structure is given by the boundary connected sum $\Sigma_1\natural \Sigma_2$ such that $\partial_-\Sigma_1\cup \partial_-\Sigma_2\subset \partial_-(\Sigma_1\natural \Sigma_2)$ (see Fig.~\ref{fig:gluing}).
		\end{itemize}
	\end{definition}

	\begin{figure}[h]
		\centering
		\includegraphics[width=0.5\linewidth]{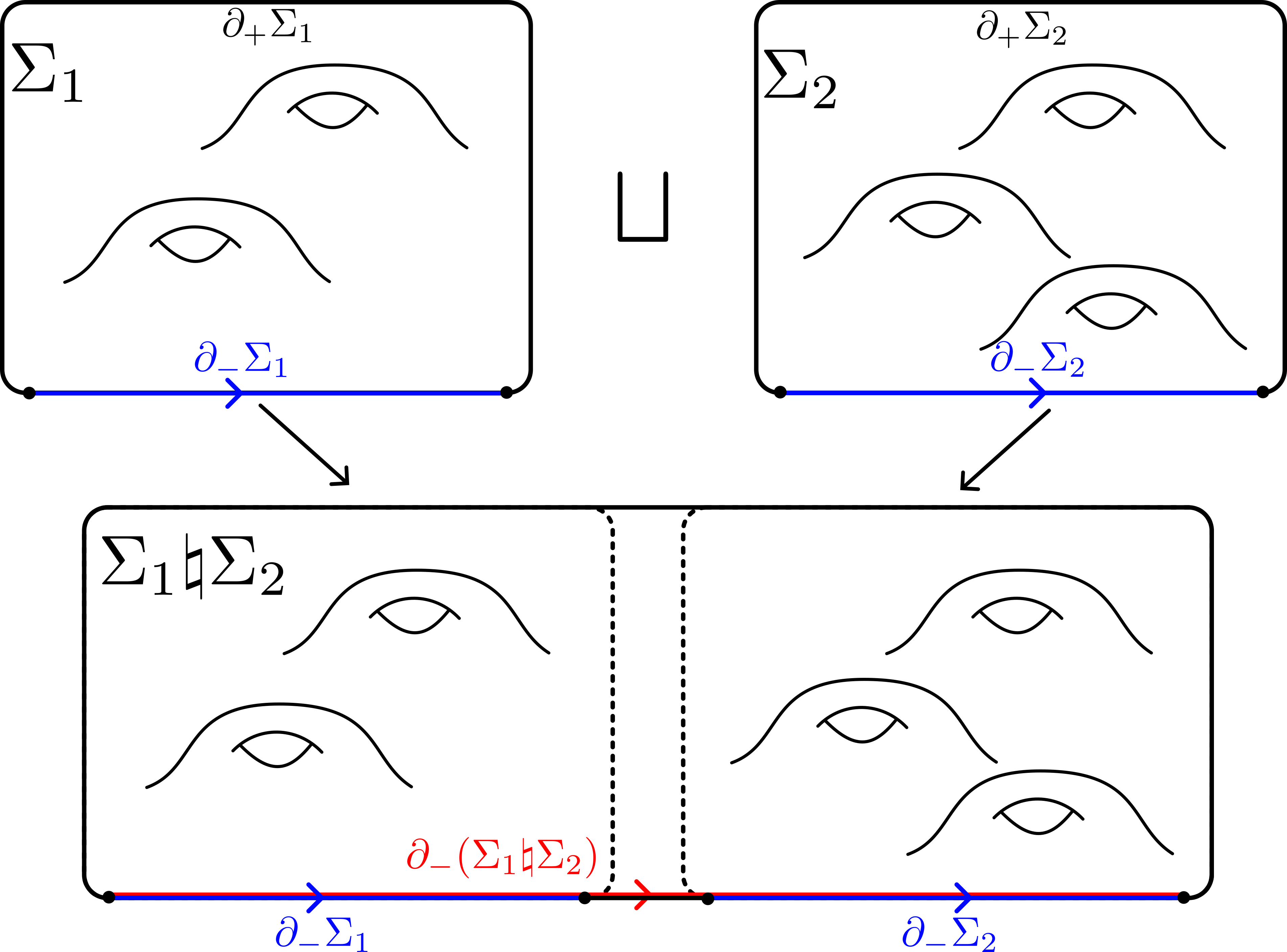}
		\caption{The boundary connected sum $\Sigma_1\natural \Sigma_2$. Surfaces $\Sigma_1$ and $\Sigma_2$ are glued along vertical segments of their boundary. They are viewed as subsurfaces of $\Sigma_1\natural\Sigma_2$ such that $\partial_-\Sigma_1,\partial_-\Sigma_2\subset\partial_-(\Sigma_1\natural\Sigma_2)$ and $\partial_-\Sigma_1$ appears first if one follows the orientation of $\partial_-(\Sigma_1\natural\Sigma_2)$.}
		\label{fig:gluing}
	\end{figure}
	
	Isomorphism classes of objects in this category are parametrized by their genus. We fix a representative for each isomorphism class, namely the standard surface $\Sigma_g$ depicted in Fig.~\ref{fig:standard_picture}.
	
	\begin{figure}[h]
		\centering
		\includegraphics[width=0.8\linewidth]{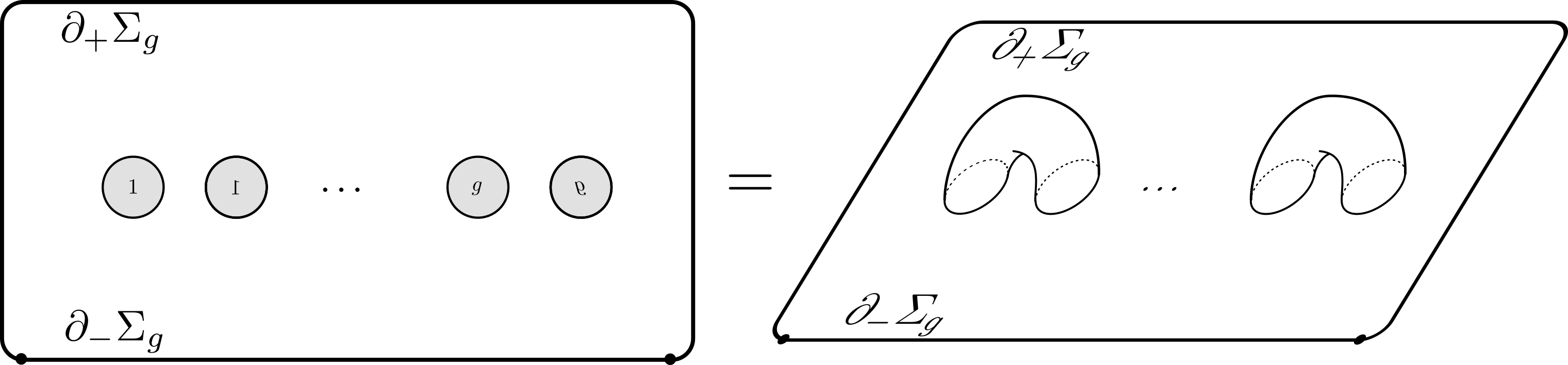}
		\caption{The standard surface $\Sigma_g\in 3\Cob$ of genus $g$. Circles with the same label are identified by reflection across a vertical plane between them. The boundary is split into the arcs $\partial_-\Sigma_g$ and $\partial_+\Sigma_g$.}
		\label{fig:standard_picture}
	\end{figure}

	 \vspace{0.5cm}

	 \begin{definition}\label{def:MCG}
	 	For $\Sigma\in 3\Cob$ the mapping class group $\mathrm{Mod}(\Sigma,\partial\Sigma)$ is the group of orientation-preserving diffeomorphisms of $\Sigma$ that restrict to the identity on the boundary, modulo isotopies relative to the boundary.
	 \end{definition}
	 
	 Let $\Sigma,\Sigma'$ be objects in $3\Cob$ and $d:\Sigma\to \Sigma'$ a diffeomorphism preserving orientation and the boundary parametrization. Recall that the mapping cylinder $M_d$ of $d$ is defined as the equivalence class of the cobordism $\Sigma\times I$ with the parametrization: $\Sigma\xrightarrow{\Id}\Sigma\times\{0\}$, $\Sigma'\xrightarrow{d^{-1}}\Sigma\times \{1\}$. Note that if two diffeomorphisms are equivalent up to boundary-preserving isotopy the corresponding mapping cylinders are equivalent as cobordisms.
	 
	 \vspace{0.5cm}
	 
	The category $3\Cob$ is in fact braided monoidal, \ie for any pair of objects $\Sigma_1,\Sigma_2\in 3\Cob$ there is an isomorphism $M_{\beta}:\Sigma_1\natural\Sigma_2\to \Sigma_2\natural \Sigma_1$ satisfying the hexagon identities (see \cite{street1993braided} for the definition and \cite{kerler2002non}, Lemma 1.3.1, for a discussion of the braided structure on $3\Cob$).

	Specifically, for two objects $\Sigma_1,\Sigma_2\in 3\Cob$, let
	\begin{equation}\label{eq:d_beta_def}
		d_{\beta}=d_{\beta,\Sigma_1,\Sigma_2}:\Sigma_1\natural\Sigma_2\longrightarrow \Sigma_2\natural\Sigma_1
	\end{equation}
	be the boundary-fixing diffeomorphism represented by an isotopy of $\Sigma_1\natural\Sigma_2$ that moves the distinguished copies of $\Sigma_1$ and $\Sigma_2$ counterclockwise around each other inside the connected sum, so that their positions are interchanged (see Fig.~\ref{fig:braiding}). The corresponding mapping cylinder gives the braiding
	\[
		M_{\beta}:=M_{d_{\beta}}:\Sigma_1\natural\Sigma_2\longrightarrow \Sigma_2\natural\Sigma_1.
	\]

	\begin{figure}[h]
		\centering
		\includegraphics[width=0.9\linewidth]{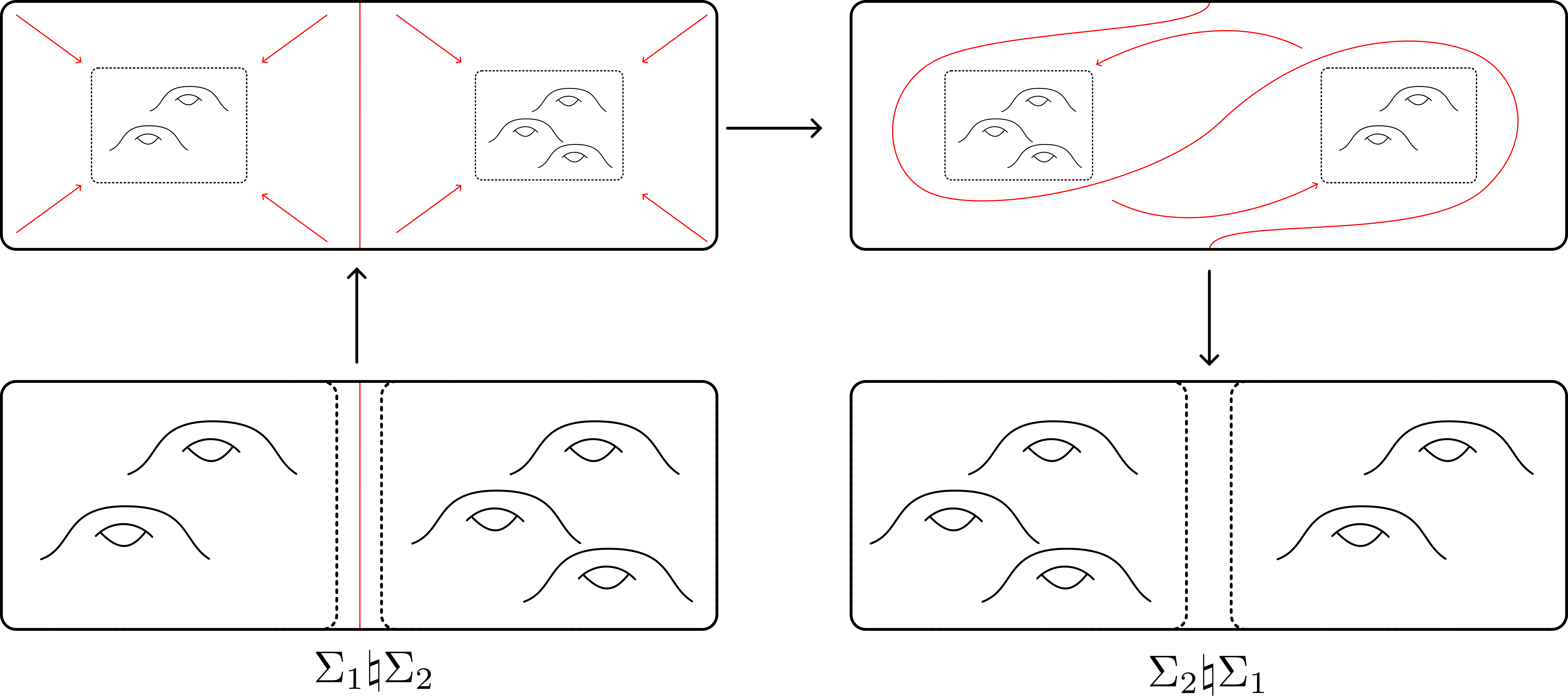}
		\caption{The boundary-preserving diffeomorphism $d_{\beta}$ inducing the braiding on $3\Cob$.}
		\label{fig:braiding}
	\end{figure}

		For an oriented surface $\Sigma$ denote by $\overline{\Sigma}$ the same surface but with inverted orientation. For a cobordism $M:\Sigma_1\to\Sigma_2$ in $3\Cob$ we denote by $\overline{M}:\overline{\Sigma}_2\to \overline{\Sigma}_1$ the opposite cobordism obtained by inverting orientation on $M$ and both $\Sigma_1$, $\Sigma_2$.

	\subsection{TQFTs and Juh\'asz's presentation of $3\Cob$}\label{subs:TQFTs_and_Juhasz_presentation}
	
		\begin{definition}\label{def:TQFT}
		Fix a commutative ring $R$. We call a monoidal functor
		\[
		3\Cob\to \mathrm{Mod}_{R}
		\]
		a topological quantum field theory (TQFT).
	\end{definition}

	We briefly describe the presentation of $3\Cob$ given by Juh\'asz in~\cite{juhasz2018defining}. This presentation is the main tool in our construction.
	
	Any cobordism in $3\Cob$ admits a Morse decomposition. For an introduction to Morse theory, see \cite{milnor1963morse}. Recall that a Morse function on a cobordism $M:\Sigma\to \Sigma'$ is a smooth function $f:M\to[0,1]$ such that $f^{-1}(0)=\Sigma$, $f^{-1}(1)=\Sigma'$, its restriction to the vertical boundary $S^1\times I$ is the projection $f|_{S^1\times[0,1]}(s,t)=t$, and all its critical points are non-degenerate. An elementary cobordism $M:\Sigma_-\to \Sigma_+$ is a cobordism admitting a Morse function with at most one critical point. Equivalently, $M$ is determined, up to equivalence, by an attaching tube $\bbS$ in the incoming surface: if $M$ has no critical points, we set $\bbS=\emptyset$, while otherwise $\bbS:S^k\times D^{2-k}\hookrightarrow \Sigma_-$ is an orientation-reversing embedding, with $k\in\{0,1\}$. Then, relative $\Sigma_-$, $M$ is equivalent to the \emph{trace} $M_{\bbS}$ obtained from $\Sigma_-\times I$ by attaching $D^{k+1}\times D^{2-k}$ along $\bbS\subset \Sigma_-\times\{1\}$. The outgoing surface is the surgered surface $\Sigma_+=\Sigma_-(\bbS)$. We denote by $a(\bbS):=\bbS(S^k\times\{0\})\subset \Sigma_-$ the \emph{attaching sphere} and by $b(\bbS)=\bbS(0\times S^{1-k})\subset \Sigma_+$ the \emph{belt sphere} of the attached handle. Thus $k=0$ gives an index $1$ elementary cobordism, with a framed $S^0$ attaching sphere and a framed $S^1$ belt sphere, while $k=1$ gives an index $2$ elementary cobordism, with a framed attaching sphere $S^1$ and a framed belt sphere $S^0$. For an introduction into Morse theory on cobordisms and more detailed discussion of elementary cobordisms see for example \cite{Milnor1965hCobordism}.
	
	It is in fact possible to give a complete set of relations on elementary cobordisms, and therefore give a presentation of $3\Cob$ as follows. Let $\mathrm{Surf}$ be the category of compact oriented surfaces with one parametrized boundary component and orientation preserving diffeomorphisms between them (identical on the boundary). Let $\mathcal{G}$ be the oriented graph obtained from $\mathrm{Surf}$ by adding an edge $e_{\Sigma,\bS}:\Sigma\to\Sigma(\bS)$ for each attaching tube $\bS$ in $\Sigma$, where $\Sigma(\bS)$ is the surface obtained by the surgery on $\bS$. Define $\calF(\calG)$ as the free category generated by $\calG$. There is a natural functor
	\[
		\calE:\calF(\calG)\to 3\Cob
	\]
	sending each object to itself $\Sigma\mapsto \Sigma$ and each edge $e_{\Sigma,\bbS}$ to the trace $M_{\bbS}:\Sigma \to \Sigma(\bbS)$. The theorem below is a version of Juh\'asz's theorem in \cite{juhasz2018defining} adapted to the category $3\Cob$.
	\begin{theorem}\label{thm:Juhasz_relations}
		The functor $\calE$ descends to an equivalence functor
		\[
			\calF(\calG)/\calR \xrightarrow{\simeq} 3\Cob
		\]
		from the quotient of the category $\calF(\calG)$ by the set of the following relations $\calR$:
		\begin{enumerate}
			\item $e_d\circ e_{d'}=e_{d\circ d'}$, where $e_d,e_{d'}$ and $e_{d\circ d'}$ are mapping cylinders of diffeomorphisms. If $d$ is isotopic to the identity, then $e_d=\id_\Sigma$;
			\item For an orientation preserving diffeomorphism $d:\Sigma\rightarrow \Sigma'$ and for a framed sphere $\bS\subset \Sigma$, let $\bS' =d\circ \bS$ and $d^{\bS}:\Sigma(\bbS)\to \Sigma'(\bbS')$ be the induced diffeomorphism, then $e_{\Sigma,\bS'}\circ e_{d}= e_{d^{\bS}}\circ e_{\Sigma,\bS}$ where $e_{\Sigma,\bS}$ is the elementary cobordism corresponding to the surgery on $\bS$;
			\item If $\bS$ and $\bS'$ are two disjoint framed spheres in $\Sigma$, then $e_{\Sigma,\bS}$ and $e_{\Sigma,\bS'}$ commute;
			\item If $\bS'\subset \Sigma(\bS)$ is a framed sphere of index 1, $\bS$ is a framed sphere of index 2 and the attaching sphere $a(\bS')$ intersects the belt sphere $b(\bS)$ transversely at one point. Then $e_{\Sigma(\bS),\bS'}\circ e_{\Sigma,\bS}= e_{f}$,
			where $f:\Sigma\to \Sigma(\bS)(\bS')$ is a diffeomorphism (unique up to isotopy) which is identical on $\Sigma\cap\Sigma(\bS)(\bS')$;
			\item $e_{\Sigma,\bS}= e_{\Sigma,\overline{\bS}}$, where $\overline{\bS}$ is the same sphere with the opposite orientation.
		\end{enumerate}
		
	\end{theorem}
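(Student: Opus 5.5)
The plan is to deduce Theorem~\ref{thm:Juhasz_relations} from Juh\'asz's general presentation theorem in \cite{juhasz2018defining}. That theorem gives a presentation of the category of all $(d+1)$-dimensional cobordisms between closed $d$-manifolds. Here we must work with connected surfaces having one parametrized boundary component, and connected cobordisms with a vertical boundary $S^1\times I$. I would proceed in three steps.

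\emph{Step 1: reduction to the closed case.} Cap off the boundary circle, or equivalently work relative to the fixed collar $S^1\times I$. Morse functions are required to restrict to the projection on the vertical boundary, so all Morse theory happens away from $S^1\times I$. Juh\'asz's arguments are local: Cerf theory for generic one-parameter families of Morse functions together with gradient-like vector fields. Hence they go through verbatim for manifolds with this product boundary. I would check this by applying Cerf's theorem to functions fixed near the vertical boundary. The space of such functions is still contractible, and its codimension-one strata are the same: birth--death singularities, critical value crossings and handle slides.

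\emph{Step 2: fullness.} Every morphism of $3\Cob$ has a Morse function relative to the boundary. After rearranging by index, it decomposes into elementary cobordisms and mapping cylinders. Hence $\calE$ is full, and it is bijective on objects up to the fixed representatives $\Sigma_g$.

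\emph{Step 3: faithfulness, the main obstacle.} Any two Morse decompositions of the same cobordism are joined by a generic path of Morse data. Each codimension-one event changes the word in $\calF(\calG)$ by one of the relations:
\begin{itemize}
\item[] diffeomorphism changes of the parametrizations give relations (1) and (2);
\item[] crossings of critical values of handles attached along disjoint spheres give relation (3);
\item[] birth--death singularities give relation (4);
\item[] orientation changes of the attaching sphere give relation (5);
\item[] handle slides are absorbed into (2), as in Juh\'asz, by isotoping the attaching sphere in the level surface.
\end{itemize}

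The delicate new point is connectivity. Juh\'asz allows index $0$ and $3$ handles and disconnected level sets, whereas here every intermediate surface must be connected with one boundary component. I would therefore show two things. First, on connected cobordisms with nonempty connected incoming and outgoing boundary, index $0$ and $3$ critical points can be cancelled against index $1$ and $2$ points. Second, the one-parameter family can be modified to keep all level sets connected, by a relative version of the argument that the space of Morse functions without index $0$ and $3$ points and with connected levels is connected in the Cerf sense. This is the content of Kerler's and Bobtcheva--Piergallini's treatment of connected $3\Cob$ \cite{kerler2001towards,bobtcheva20124}. I would import that lemma and check that the resulting moves are again instances of (1)--(5).
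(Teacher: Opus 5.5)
Your plan follows the paper's own route. The paper gives no argument beyond presenting the statement as Juh\'asz's Cerf-theoretic presentation adapted to $3\Cob$, and your Step~3 correctly isolates the one point this adaptation really needs: a Cerf path with no index-$0$/$3$ critical points and with connected levels throughout.

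There are two caveats:
\begin{itemize}
\item Capping off $\partial\Sigma$ is not equivalent to working relative to the collar. It kills the boundary-parallel Dehn twist, whose mapping cylinder is nontrivial in $3\Cob$ for $g\geq 1$.
\item The connectivity lemma is standard Cerf theory, not something to import from Kerler or Bobtcheva--Piergallini. First eliminate index-$0$/$3$ points from the one-parameter family, which is possible since $M$ is connected and $\Sigma_\pm\neq\emptyset$. Then rearrange parametrically so that index-$1$ values stay below index-$2$ values away from births and deaths; this works because a flow line from an index-$1$ point down to an index-$2$ point is a codimension-two event. After that, every regular level is obtained from $\Sigma_-$ or from $\Sigma_+$ by $1$-handles, and is therefore connected.
\end{itemize}
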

	
	Using this presentation one can construct a TQFT $F:3\Cob\to\mathrm{Mod}_R$ by providing a monoidal functor $F:\mathrm{Surf}\to \mathrm{Mod}_R$ and a set of morphisms $F(e_{\bS}):F(\Sigma)\to F(\Sigma(\bS))$ for each $e_{\bS}:\Sigma\to\Sigma(\bS)$ satisfying the relations above and such that for any objects $\Sigma, \Sigma'\in 3\Cob$ and any $\bS$ in $\Sigma$ diagrams
	
	\begin{equation}\label{eq:monoidality_condition}
		\begin{tikzcd}
			F(\Sigma)\otimes F(\Sigma') \arrow[d,"F(e_{\bS})\otimes\mathrm{Id}"] \arrow[r,"\mu"]&F(\Sigma\natural\Sigma')\arrow[d,"F(e_{\bS})"]\\
			F(\Sigma(\bS))\otimes F(\Sigma') \arrow[r,"\mu"]& F(\Sigma(\bS)\natural\Sigma')
		\end{tikzcd}
		\quad
		\begin{tikzcd}
			F(\Sigma')\otimes F(\Sigma) \arrow[d,"\mathrm{Id} \otimes F(e_{\bS})"] \arrow[r,"\mu"]&F(\Sigma'\natural\Sigma)\arrow[d,"F(e_{\bS})"]\\
			F(\Sigma')\otimes F(\Sigma(\bbS)) \arrow[r,"\mu"]& F(\Sigma'\natural \Sigma(\bbS))
		\end{tikzcd}
	\end{equation}
	are commutative, where isomorphisms $\mu$ are a part of the monoidal functor structure.

	
	\subsection{Configuration spaces}\label{sec:conf_spaces}
	
	For $X$ a topological space, the \emph{unordered configuration space} of $n$-points on $X$, $n\geq 1$ is defined as the topological space (we omit the word ``unordered'' in what follows):
	\[
		\Conf_n(X):=(X^{\times n}\setminus \Delta)/S_n, \, n\geq 1
	\]
	where $\Delta:=\big\{(x_1,\dots,x_n)\in X^{\times n}\ \big|\ \ \exists i\not=j, \: x_i=x_j\big\}$ is the big diagonal and $S_n$ is the symmetric group acting by permutation of points. We use the notation $\mathbf{x}$ for a point in $\Conf_n(X)$. Note that a point in $\Conf_n(X)$ is a subset of cardinality $n$ in $X$, hence sometimes the notation $\mathbf{x}=\{x_1,\dots,x_n\}\in \Conf_n(X)$ is used to specify the points $x_i\in X$. We also define $\Conf_0(X)$ as the one-point space.
	
	Note that if $d:X\to Y$ is a diffeomorphism between manifolds, then it induces a homeomorphism of configuration spaces
	\[
		d^{\times n}:\Conf_n(X)\to\Conf_n(Y),
	\]
	\[
		\{x_1,\dots,x_n\}\longmapsto \{d(x_1),\dots,d(x_n)\}.
	\]
	
	\vspace{0.5cm}
	
	One of the central objects in this paper is the configuration space $\Conf_n(\Sigma)$, where $\Sigma\in 3\Cob$, $n\geq 0$. For each $n\geq 1$ define the subspaces
	\[
		\Conf_n^-(\Sigma):=\{\mathbf{x}=\{x_1,\dots,x_n\}\mid \exists i:\ x_i\in\partial_{-}\Sigma\},
	\]
	\[
		\Conf_n^+(\Sigma):=\{\mathbf{x}=\{x_1,\dots,x_n\}\mid \exists i:\ x_i\in\partial_{+}\Sigma\}
	\]
	consisting of configurations with at least one point on $\partial_-\Sigma$ or on $\partial_+\Sigma$, respectively. Define also $\Conf_0^-(\Sigma):=\varnothing$. We choose a base point $*_n$ for each $\Conf_n(\Sigma)$ by selecting $n$ distinct points on the arc $\partial_-\Sigma$. We always think about $\partial_-\Sigma$ as a horizontal interval as in Fig.~\ref{fig:standard_picture}, hence all configuration points of $*_n$ are ordered from left to right. For $n'>n$ we assume that $*_n$ is given by the $n$ leftmost points of $*_{n'}$. We write $B_n(\Sigma):=\pi_1(\Conf_n(\Sigma),*_n)$ for the corresponding surface braid group. The subgroup of braids supported in a neighborhood of $\partial_-\Sigma$ diffeomorphic to a disc is the classical braid group. With respect to the ordering of the points of $*_n$, its standard generators are denoted by $\sigma_1,\sigma_2,\dots,\sigma_{n-1}$; each $\sigma_i$ exchanges two neighboring points by moving them counter-clockwise around each other (see Fig.~\ref{fig:sigma}). 
	
	\begin{figure}[h]
		\centering
		\includegraphics[width=0.45\linewidth]{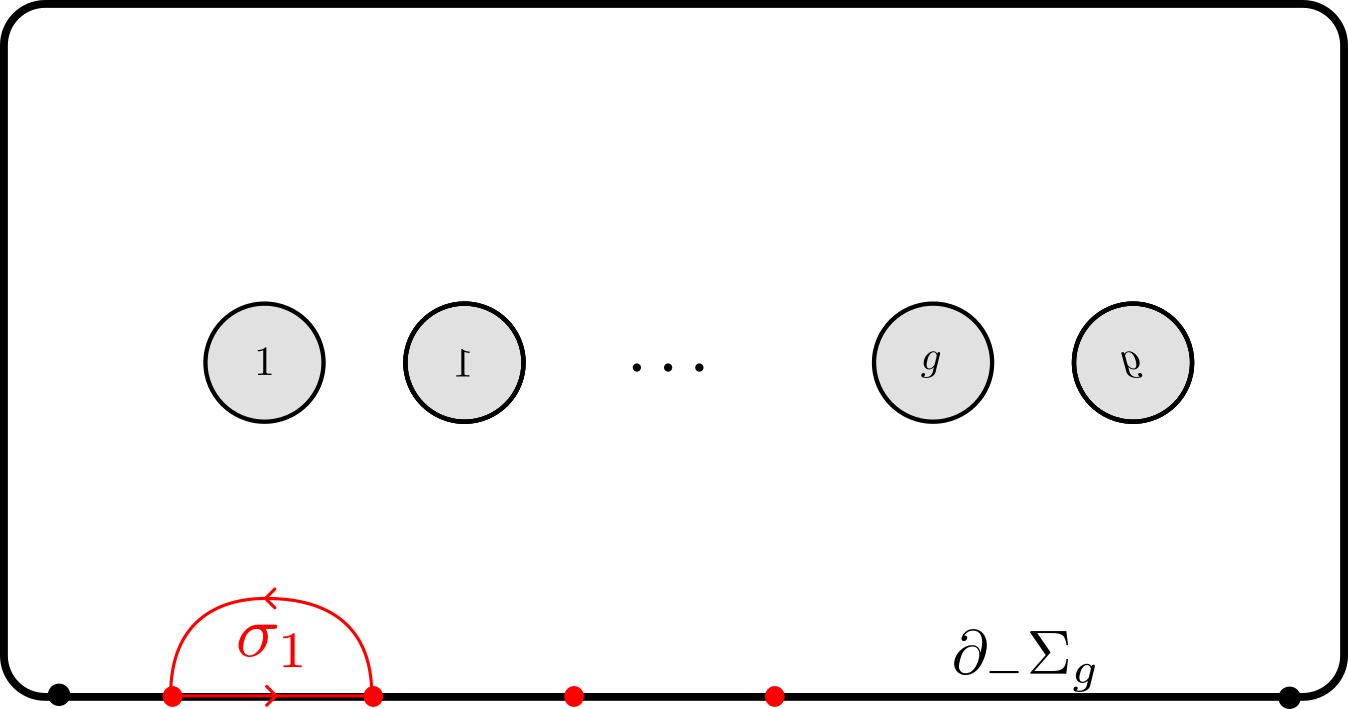}
		\caption{The set of red points on $\partial_-\Sigma_g$ depicts the base-point $*_n$. The classical braid group generator $\sigma_1$ switches two neighboring points by moving them counter-clockwise around each other (red arcs show the trajectories of points under $\sigma_1$).}
		\label{fig:sigma}
	\end{figure}

	\subsection{Multi-trajectory spaces}\label{sec:Multi_trajectory}

	Our construction of TQFTs is motivated by the idea of trajectory spaces induced by a Morse--Smale datum (see the paper \cite{Wehrheim2012} of Wehrheim for definitions and a detailed discussion). Below is an informal motivating description of trajectory spaces in the spirit of Wehrheim. We feel that the intuitive description is important for completeness, but one can skip it and go straight to the Definition~\ref{def:trajectory_spaces}, where trajectory spaces of generating morphisms in Juh\'asz's presentation are defined axiomatically.
	
	Let $M:\Sigma_-\to \Sigma_+$ be an elementary cobordism in $3\Cob$ and let $(f,g)$ be a Morse--Smale pair consisting of a Morse function $f:M\to [0,1]$ and a Riemannian metric $g$ satisfying suitable compatibility conditions. The set of unbroken (regular) trajectories is defined as
	\[
		\calT_M:=\bigl\{\gamma:[0,L]\to M\ \big|\ \dot\gamma=-\nabla_g f(\gamma),\ \gamma(0)\in \Sigma_-,\ \gamma(L)\in \Sigma_+,\ L\in \R_{>0}\bigr\}
	\]
	that is, the set of negative gradient flow lines with endpoints on $\Sigma_-$ and $\Sigma_+$. The maps $i_-$ and $i_+$ evaluating the endpoints of trajectories form a diagram:
	\[
	\begin{tikzcd}
		&\calT_M\arrow[dl,"i_-"']\arrow[dr,"i_+"]&\\
		\Sigma_-&&\Sigma_+,
	\end{tikzcd}
	\]
	and therefore $\calT_M$ inherits a topology from $\Sigma_-\times \Sigma_+$.
	
	Starting from a point $p_-\in\Sigma_-$ there is a unique gradient flow line with an endpoint in $p_-$. Then it is either a regular line with the second endpoint $p_+\in \Sigma_+$ or $\gamma:[0,\infty)\to M$, such that $p_-=\gamma(0)$ and $\lim_{t\to \infty}\gamma(t)=p_{c}$ for some critical point $p_c$. Similarly, for any point $p_+\in \Sigma_+$ there is a unique trajectory line starting at this point which ends up either in a critical point or on $\Sigma_-$. Since the union of all non-regular trajectories forms stable and unstable manifolds $U_-$ and $U_+$, it is natural to identify $
	\calT_M$ with $\Sigma_-\setminus (\Sigma_-\cap U_-)$ or $\Sigma_+\setminus\Sigma_+\cap U_+$. See Fig.~\ref{fig:cob2_trajectories} for a picture of trajectory spaces of a 2-dimensional elementary cobordism.

	We now slightly modify the trajectory space to make it compact, and define it as follows.
	
	\begin{definition}\label{def:trajectory_spaces}
		Let $\Sigma_-\in \mathrm{Surf}$ and let $\bbS_-\subset \Sigma_-$ be an attaching tube. Let $\Sigma_+\simeq\Sigma_-(\bbS_-)$, then the trace $M:=M(\bbS_-):\Sigma_-\to \Sigma_+$ is an elementary cobordism. The trajectory space $\calT_M$ is defined depending on the index of $M$ as follows:
		\begin{itemize}
			\item If $M=M_d$ is the mapping cylinder for some $d:\Sigma_-\to \Sigma_+$ (\ie $\bbS=\varnothing$) then $\calT_M$ is defined as:
			\[
			\calT_{M_d}:=\Sigma_-,\quad\begin{tikzcd}
				&\calT_{M_d}\arrow[dl,"{\Id}"']\arrow[dr,"d"]&\\
				\Sigma_-&&\Sigma_+.
			\end{tikzcd} 
			\]
			\item If $M$ is of index 1 or 2 (\ie $\bbS_-:S^k\times D^{2-k}\to \Sigma_-$ for $k=0,1$) and $\bbS_+:S^{1-k}\times D^{1+k}\to \Sigma_+$ is the belt tube, $\calT_M$ is defined as:
			\begin{equation}\label{eq:trajectory_spaces_span}
			\calT_{M}:=\mathrm{Cl}\left(\Sigma_-\setminus\bS_-\right)\xrightarrow{\simeq} \mathrm{Cl}\left(\Sigma_+\setminus\bS_+\right),\quad \begin{tikzcd}
				&\calT_{M}\arrow[dl,"i_-"']\arrow[dr,"i_+"]&\\
				\Sigma_-&&\Sigma_+,
			\end{tikzcd}
			\end{equation}
			where $\mathrm{Cl}$ means closure inside $\Sigma_-$ or $\Sigma_+$ correspondingly, $i_-$ is the natural inclusion into $\Sigma_-$, and $i_+$ is the natural inclusion into $\Sigma_+$.
		\end{itemize}
		
	\end{definition}
	
	Note that index 1 and 2 cases are dual to each other in the following sense: if $M:\Sigma_-\to \Sigma_+$ is of index 1 (or 2) then $\overline{M}:\overline{\Sigma}_+\to \overline{\Sigma}_-$ is of index 2 (resp. 1). Furthermore $\overline{\bbS}_-$ becomes the belt tube for $\overline{M}$, $\overline{\bbS}_+$ becomes the attaching tube for $\overline{M}$ and $\calT_{\overline{M}}=\overline{\calT}_M$, see Fig.~\ref{fig:index_12_duality}.

	\begin{figure}[h]
		\centering
		\includegraphics[width=0.95\linewidth]{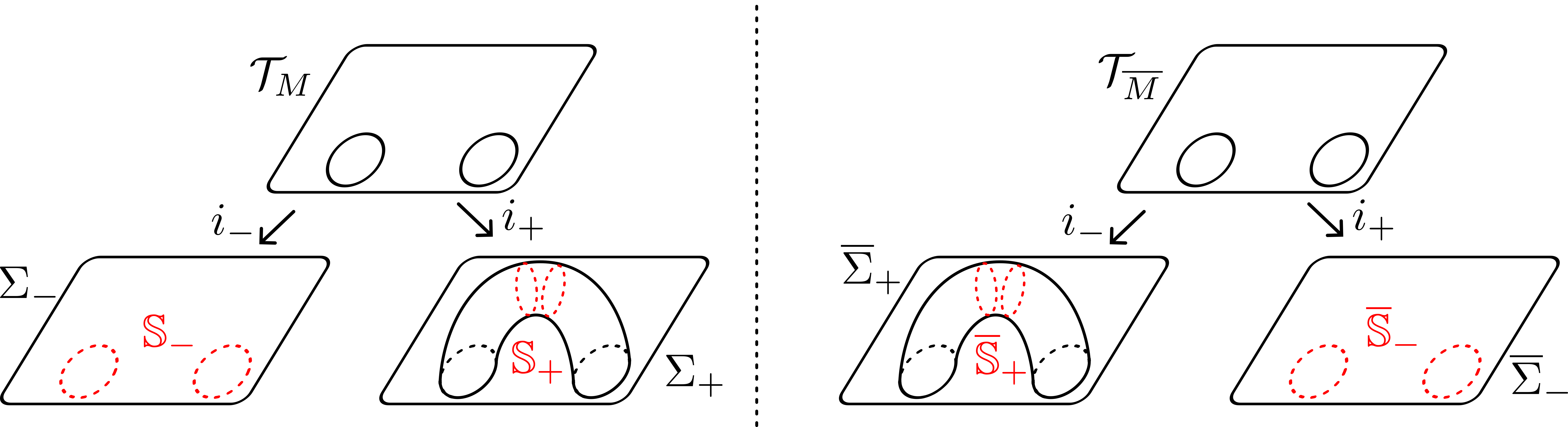}
		\caption{On the left is the trajectory space $\calT_M$ of an index $1$ cobordism $M$: it is a closed subspace of $\Sigma_-$ obtained by removing two open discs; equivalently, it is obtained from $\Sigma_+$ by removing an open cylinder over $S^1$. On the right is the trajectory space of the dual cobordism $\overline{M}$. One may view this duality on the level of trajectory spaces as reflecting the diagram \eqref{eq:trajectory_spaces_span} and simultaneously reversing the orientations of the trajectory space and of the surfaces $\Sigma_-$ and $\Sigma_+$.}
		\label{fig:index_12_duality}
	\end{figure}


	\vspace{0.5cm}
	
	We define the $n$-th multi-trajectory space of an elementary cobordism $M:\Sigma_-\to \Sigma_+$ as the configuration space $\Conf_n(\calT_M)$ together with the induced maps to $\Conf_n(\Sigma_-)$ and $\Conf_n(\Sigma_+)$ which we also denote by $i_-=(i_-)^{\times n}$ and $i_+=(i_+)^{\times n}$:
	\[
		\begin{tikzcd}
			&\Conf_n(\calT_{M})\arrow[dl,"{i_-}"']\arrow[dr,"{i_+}"]&\\
			\Conf_n(\Sigma_-)& &\Conf_n(\Sigma_+).
		\end{tikzcd}
	\]
	
	Since the maps on the level of surfaces are defined only up to isotopy, the induced maps between configuration spaces are defined up to isotopy as well. The trajectory space $\calT_M$ has a boundary component naturally identified with the boundary of $\Sigma_-$ (or, equivalently, of $\Sigma_+$). Denote by $\partial_-\calT_M$ and $\partial_+\calT_M$ the arcs corresponding to $\partial_-\Sigma_-$ and $\partial_+\Sigma_-$, respectively. Therefore the subspaces
	\[
		\Conf_n^-(\calT_M):=\{\{x_1,\dots,x_n\}\:|\:\exists i:\: x_i\in\partial_{-}\calT_M\},\ \ \ \ \Conf_n^+(\calT_M):=\{\{x_1,\dots,x_n\}\:|\:\exists i:\: x_i\in\partial_{+}\calT_M\}
	\]
	are well defined. The maps $i_-$ and $i_+$ send $\Conf_n^-(\calT_M)$ and $\Conf_n^+(\calT_M)$ to the corresponding subspaces of $\Conf_n(\Sigma_-)$ and $\Conf_n(\Sigma_+)$.

	\subsection{Local systems and monodromy representations}\label{sec:local_systems}
	
	Let $X$, $Y$ be path-connected topological spaces. Let $R$ be a commutative ring with unity. A local system of $R$-modules on $X$ is a covariant functor 
	\[
		\calL:\Pi_1(X)\to \mathrm{Mod}_R
	\]
	from the fundamental groupoid $\Pi_1(X)$ of $X$ to the category of $R$-modules. A morphism of local systems is a natural transformation between such functors. If $\calL_1$ and $\calL_2$ are local systems on $X$, then their tensor product $\calL_1\otimes\calL_2$ is defined pointwise by
	\[
		(\calL_1\otimes\calL_2)(x):=\calL_1(x)\otimes \calL_2(x),\,\forall x\in X,\qquad
		(\calL_1\otimes\calL_2)(h):=\calL_1(h)\otimes\calL_2(h),\,\forall h\in \mathrm{Mor}(\Pi_1(X)).
	\]
	If $X$ and $Y$ are equipped with local systems $\calL_X$ and $\calL_Y$, we write
	\[
		\calL_X\boxtimes \calL_Y:\Pi_1(X\times Y)=\Pi_1(X)\times \Pi_1(Y)\to \mathrm{Mod}_R
	\]
	for their exterior product, given by
	\[
		(x,y)\mapsto \calL_X(x)\otimes \calL_Y(y),\, \forall (x,y)\in X\times Y,
	\]
	\[
		(h_X,h_Y)\mapsto \calL_X(h_X)\otimes \calL_Y(h_Y),\,\forall (h_X,h_Y)\in \mathrm{Mor}(\Pi_1(X)\times \Pi_1(Y)).
	\]
	Given a map of topological spaces $f:X\to Y$ and a local system $\calL_Y$ on $Y$, its pull-back $f^*\calL_Y$ is the local system on $X$ obtained by composing $\calL_Y$ with the induced morphism $f_*:\Pi_1(X)\to \Pi_1(Y)$. Likewise, a morphism of local systems $\rho:\calL_Y\to \calL_Y'$ induces a pull-back morphism $f^*\rho:f^*\calL_Y\to f^*\calL_Y'$.
	
	 By definition of a local system $\calL$, for any two points $x,x'\in X$ and any homotopy class of paths $h\in\Pi_1(X)$ between them there is an isomorphism between their images $\mathcal{L}(x)\xrightarrow[h]{\simeq}\mathcal{L}(x')$. Therefore, fixing a base point $x_0\in X$ a local system $\mathcal{L}$ can be described as a (left) $\pi_1(X,x_0)$-module $L$. We call $L$ the monodromy representation of $\calL$ at $x_0$. A morphism of local systems induces a morphism of their monodromy representations at the same point. Monodromy representations are an alternative way to describe local systems.
	
	\begin{proposition}[See for example \cite{whitehead2012elements}, Chapter VI, Theorems 1.11, 1.12]
		The category of local systems on a path-connected space $X$ is equivalent to the category of $\pi_1(X,x_0)$-modules, for any choice of base point $x_0\in X$.
	\end{proposition}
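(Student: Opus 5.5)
The statement to prove is the classical equivalence between local systems on a path-connected space $X$ and $\pi_1(X,x_0)$-modules. I would prove it by writing down explicit functors in both directions and showing they are mutually inverse up to natural isomorphism.

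\emph{Restriction functor.} The first functor $\Phi$ sends a local system $\calL:\Pi_1(X)\to\mathrm{Mod}_R$ to its restriction to the full subgroupoid on the single object $x_0$. This subgroupoid is the group $\pi_1(X,x_0)$, so $\Phi(\calL)$ is the module $L=\calL(x_0)$ with action $[\gamma]\cdot v=\calL(\gamma)(v)$. This is a left action because $\calL$ is covariant and loops compose with $\alpha$ traversed first. A natural transformation $\eta:\calL\to\calL'$ is sent to its component $\eta_{x_0}$, which is $\pi_1$-equivariant by naturality.

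\emph{Extension functor.} For the inverse $\Psi$, choose for each $x\in X$ a homotopy class of paths $h_x$ from $x_0$ to $x$, with $h_{x_0}$ constant; path-connectedness makes this possible. Given a module $L$, set $\Psi(L)(x)=L$. For a morphism $h:x\to x'$ of $\Pi_1(X)$, let $\Psi(L)(h)$ be the action of the loop $h_x\,h\,h_{x'}^{-1}\in\pi_1(X,x_0)$. Functoriality follows from cancellation: $h_x h h_{x'}^{-1}h_{x'}h' h_{x''}^{-1}=h_x hh' h_{x''}^{-1}$, and identity paths give the trivial loop. An equivariant map $f$ is sent to the transformation with every component equal to $f$; it is natural by equivariance.

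\emph{The two composites.} The composite $\Phi\Psi$ is literally the identity, since $h_{x_0}$ is constant. For $\Psi\Phi$, define $\theta_x=\calL(h_x):\calL(x_0)\to\calL(x)$, an isomorphism because $\calL$ sends groupoid morphisms to invertible maps. Naturality of $\theta$ reduces to the identity $\calL(h)\calL(h_x)=\calL(h_{x'})\,\calL(h_x h h_{x'}^{-1})$, which is immediate from functoriality, and $\theta$ is natural in $\calL$. Hence $\Psi\Phi\cong\Id$, and $\Phi$ is an equivalence.

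There is no real obstacle here. The only care needed is keeping the path-composition convention of Subsection~\ref{subs:Notation} consistent, so that the action is genuinely a left action and the composites $h_xhh_{x'}^{-1}$ are written in the correct order. Alternatively, $\Phi$ is essentially surjective by the construction of $\Psi$ and fully faithful because any natural transformation is determined by its $x_0$-component: naturality along the $h_x$ forces $\eta_x=\calL'(h_x)\eta_{x_0}\calL(h_x)^{-1}$.
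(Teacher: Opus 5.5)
The paper gives no argument here beyond citing Whitehead, and your restriction/extension construction is the standard proof, so the overall strategy is right. There is, however, a concrete error exactly at the point you single out as needing care.

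With the paper's conventions, $\calL$ is a \emph{covariant} functor on $\Pi_1(X)$ and $\alpha\beta$ means $\alpha$ traversed first. So the concatenation $\alpha\beta$ is the groupoid composite $\beta\circ\alpha$, and $\calL(\alpha\beta)=\calL(\beta)\circ\calL(\alpha)$. Hence $[\gamma]\cdot v:=\calL(\gamma)(v)$ satisfies $(\alpha\beta)\cdot v=\beta\cdot(\alpha\cdot v)$. That is a right action, not a left one.

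The same problem breaks $\Psi$ if $L$ is a genuine left module. Put $g_1=h_xhh_{x'}^{-1}$ and $g_2=h_{x'}h'h_{x''}^{-1}$. Then $\Psi(L)(hh')=(g_1g_2)\cdot(-)=\Psi(L)(h)\circ\Psi(L)(h')$, whereas covariance requires $\Psi(L)(h')\circ\Psi(L)(h)$. So $\Psi(L)$ would be contravariant. Your cancellation identity only checks that the loops multiply correctly, not the order in which the operators compose. Since every $\Psi(L)(x)$ is the same module $L$, source/target bookkeeping cannot catch the reversal.

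The repair is cosmetic. You have two options:
\begin{enumerate}
\item Read ``$\pi_1(X,x_0)$-module'' as right module throughout. Then $\Phi$, $\Psi$, $\theta$ and your identity $\calL(h)\calL(h_x)=\calL(h_{x'})\calL(h_xhh_{x'}^{-1})$ are all correct verbatim.
\item Obtain a left action by setting $[\gamma]\cdot v:=\calL(\gamma^{-1})(v)$, and let $\Psi(L)(h)$ be the action of the loop $h_{x'}h^{-1}h_x^{-1}$. One checks that this $\Psi(L)$ is covariant, that $\Phi\Psi=\Id$, and that $\theta_x=\calL(h_x)$ is still natural, since $\Psi\Phi(\calL)(h)=\calL(h_xhh_{x'}^{-1})$ again.
\end{enumerate}
Because $g\mapsto g^{-1}$ identifies left and right $\pi_1(X,x_0)$-modules, the equivalence holds in either reading; the paper's parenthetical ``(left)'' glosses over the same point. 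As written, though, your claim that covariance plus the $\alpha$-first convention gives a left action is false, and the covariance of $\Psi(L)$ fails with it.
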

	
	If $f:(X,x_0)\to (Y,y_0)$ is a map of pointed spaces and $L_Y$ is the monodromy representation of a local system $\calL_Y$ on $Y$ at $y_0$, then the pull-back $f^*\calL_{Y}$ has monodromy representation $f^*L_Y$ identified with $L_Y$ as an $R$-module, but with twisted action: any $\gamma\in \pi_1(X,x_0)$ acts by $f_*(\gamma)$.

	We denote by $\underline{R}$ the trivial local system on $X$ that sends every point to $R$ and every homotopy class of paths to the identity. Suppose $R$ is a ring with involution:
	\[
		R\to R, \quad x\to \overline{x}.
	\]
	If $M$ is an $R$-module then $\overline{M}$ is the $R$-module with conjugated action: 
	\[
		r\cdot_{\overline{M}} m :=\overline{r}\cdot_M m, \quad \forall r\in R, \ \forall m\in \overline{M}.
	\]
	Similarly, if $\calL$ is a local system of $R$-modules, then $\overline{\calL}$ is the local system obtained by pointwise replacement of $\calL(x)$ by $\overline{\calL(x)}$.

	 A sesquilinear pairing on a local system $\calL$ of $R$-modules is a morphism of local systems
	\[
		(-,-):\calL\otimes\overline{\calL}\to \underline{R}.
	\]
	Recall that a pairing of local systems $\calA\otimes \calB\to \underline{R}$ is perfect if both dual maps $\calA\to \Hom(\calB,\underline{R})$ and $\calB\to \Hom(\calA,\underline{R})$ are isomorphisms. We always work with local systems equipped with perfect sesquilinear pairings, therefore fix the following category.
	\begin{definition}
	For $X$ a topological space define the category of paired local systems $\Loc_R(X)$ as follows:
	\begin{itemize}
		\item Objects are local systems $\calL:\Pi_1(X)\to \mathrm{Mod}_R$ of $R$-modules on $X$ equipped with a perfect pairing:
		\[
			\calL\otimes \overline{\calL}\to \underline{R}.
		\]
		\item A morphism $\psi:\calL_1\to \calL_2$ is a morphism of local systems preserving pairings, \ie the diagram
		\[
		\begin{tikzcd}
			\calL_1\otimes \overline{\calL}_1 \arrow[r,"\psi\otimes \overline{\psi}"]\arrow[d,"{(-,-)}"] &\calL_2\otimes \overline{\calL}_2\arrow[d,"{(-,-)}"]\\
			\underline{R}\arrow[r,"\Id"] &\underline{R}
		\end{tikzcd}
		\]
		is commutative.
	\end{itemize}
	\end{definition}
	Note that tensor products and exterior products of paired local systems inherit pairings in the evident way.

	If $L$ is the monodromy representation of $\calL\in \Loc_R(X)$ at a point $x_0$, then the pairing is equivalently given by a sesquilinear $R$-pairing
	\[
		(-,-):L\otimes \overline{L}\to R,
	\]
	which is equivariant in the following sense:
	\[
		(\gamma\cdot v,\gamma\cdot u)=(v,u),\qquad \forall\gamma\in \pi_1(X,x_0),\ \forall v,u\in L.
	\]

	\subsection{Twisted homology}\label{subs:Twisted_homology}
		We briefly give a definition and some properties of twisted (Borel--Moore) homology in terms of monodromy representations here. For a more detailed introduction see \cite{davis2001lecture}, Chapter~5.
		Let $X$ be a path-connected, locally path-connected, semilocally simply connected topological space. Fix a local system $\mathcal{L}$ on it with monodromy representation $L$ at some base point $x_0$. Let $\tilde{p}:\tilde{X}\to X$ be the universal cover of $X$ together with a choice of a base point $\tilde{x}_0\in\tilde{X}$ such that $\tilde{p}(\tilde{x}_0)=x_0$. Then it is equipped with a left action of $\pi_1(X,x_0)$ as follows: a point $x$ of $\tilde{X}$ represented by a path $\tilde{x}:[0,1]\to X$ from $\tilde{x}_0$ is sent by a loop $\gamma$ to the point $\gamma\cdot x$ represented by the path $\gamma \tilde{x}$. It can also be regarded as a right action by setting $x\cdot \gamma:=\gamma^{-1}\cdot x$. Then the (singular) chain complex $C_*(\tilde{X})$ is equipped with a right $\pi_1(X,x_0)$ action and the chain complex $C_*(\tilde{X})\otimes_{\pi_1(X,x_0)}L$ is defined. Homology of this complex is the twisted homology of $X$ with coefficients in $L$:
	\[
		H_*(X;\calL):=H_*(C_*(\tilde{X})\otimes_{\pi_1(X,x_0)}L, \partial\otimes \Id).
	\]
	The relative version of twisted homology $H_*(X,Y;\calL)$ for a subspace $Y\subset X$ is defined as homology of the complex $C_*(\tilde{X};\tilde{p}^{-1}(Y))\otimes_{\pi_1(X,x_0)}L$.

	Twisted Borel--Moore homology of the pair $(X,Y)$ with coefficients in $\calL$ is defined as the (projective) limit:
	\begin{equation}
		H^{BM}_*(X,Y;\calL):=\lim\limits_{K\in\calK}H_*(C_*(\tilde{X},\tilde{p}^{-1}(Y\cup X\setminus K))\otimes_{\pi_1(X,x_0)}L, \partial\otimes \Id)
	\end{equation}
	over the set $\calK$ of compact subspaces $K\subset X$ partially ordered by inclusion. 
	
	
	Since any cycle in ordinary homology is also a cycle in Borel--Moore homology there is a natural homomorphism 
	\begin{equation}\label{eq:iota}
		\iota: H_*(X,Y;\calL)\to  H_*^{BM}(X,Y;\calL).
	\end{equation}

	Twisted homology is functorial with respect to local systems: if $\phi:\calL_1\to\calL_2$ is a morphism of local systems on $(X,x_0)$ then for both versions of twisted homology there are induced homomorphisms:
	\[
		\phi_*:H_*(X;\calL_1)\to H_*(X;\calL_2),\quad \phi_*:H^{BM}_*(X;\calL_1)\to H^{BM}_*(X;\calL_2).
	\]
	Twisted homology is also functorial with respect to maps between topological spaces in the following sense. Let $f:X\to Y$ be a map between topological spaces and $\calL$ a local system on $Y$. Then there is an induced homomorphism of twisted homology:
	\begin{equation}\label{eq:functoriality_H}
		f_*:H_*(X;f^*\calL)\to H_*(Y;\calL).
	\end{equation}
	If in addition $f$ is a proper map then there is an induced homomorphism of twisted Borel--Moore homology as well:
	\begin{equation}\label{eq:functoriality_H_BM}
		f_*:H_*^{BM}(X;f^*\calL)\to H_*^{BM}(Y;\calL).
	\end{equation}
	Therefore, when we need to construct a homomorphism $H_*^{BM}(X;\calL_1)\to H_*^{BM}(Y;\calL_2)$ induced by a (proper) map $f:X\to Y$ we make it by composing two homomorphisms: one induced by a change of local system $\calL_1\to f^*\calL_2$ and another induced by $f:X\to Y$ itself.
	
	\vspace{0.5cm}
	
	Let $\Sigma$ be either an object in $3\Cob$ or a trajectory space of an elementary cobordism. Suppose a local system $\calL_n$ on $\Conf_n(\Sigma)$ is defined for all $n\geq 0$. It is convenient to organize all configuration spaces into the space
	\[
		\Conf_*(\Sigma):=\bigsqcup\limits_{n\geq 0} \Conf_n(\Sigma),
	\]
	and interpret the collection of local systems $\{\calL_n\}_{n\geq 0}$ as a local system $\calL$ on $\Conf_*(\Sigma)$. We denote
	\begin{equation}
		\bbH_n(\Sigma;\calL):=H_n(\Conf_*(\Sigma),\Conf_*^-(\Sigma);\calL)
	\end{equation}
	and
	\begin{equation}
		\bbH_n^{BM}(\Sigma;\calL):=H_n^{BM}(\Conf_*(\Sigma),\Conf_*^-(\Sigma);\calL).
	\end{equation}

	We denote graded sums over all degrees by
	\begin{equation}
		\bbH(\Sigma;\calL):=\bigoplus\limits_{n\geq 0}\bbH_n(\Sigma;\calL),\quad \bbH^{BM}(\Sigma;\calL):=\bigoplus\limits_{n\geq 0}\bbH_n^{BM}(\Sigma;\calL).
	\end{equation}

	\section{Heisenberg homology}\label{sec:Heisenberg_homology}
	In this section we define the state spaces of a homological TQFT. Twisted Borel--Moore homology of configuration spaces of a surface $\Sigma\in 3\Cob$ can be organized into a representation of $\mathrm{Mod}(\Sigma,\partial\Sigma)$, as shown in \cite{blanchet2025heisenberg} and \cite{de2022homological} for a specific local system. In particular, we are interested in the $p$-Heisenberg local systems introduced in Subsection~\ref{sec:Heisenberg_ls}. Twisted Borel--Moore homology with coefficients in a Heisenberg local system is sometimes called \emph{Heisenberg homology}. Thanks to the compression trick (see \cite{blanchet2025heisenberg}, Theorem 11), Heisenberg homology can be computed explicitly and admits a graphical calculus on twisted cycles, briefly reviewed in Subsection~\ref{subs:Twisted_cycles_and_bases_in_twisted_homology}. The action of elementary index $1$ and $2$ cobordisms defined in the following section requires a non-degenerate self-pairing on the state spaces. In Subsection~\ref{sec:pairing} we discuss an intersection pairing on twisted homology with coefficients in an $\calL\in \Loc_R(\Conf_*(\Sigma))$. This pairing, however, involves two different versions of homology, both of infinite rank. This issue can be resolved by restricting to a natural finite-rank submodule of Borel--Moore homology that inherits a self-pairing. This submodule is called \emph{the subspace of small cycles}, and Subsection~\ref{sec:small_cycles} is devoted to its definition. The state space associated with $\Sigma\in 3\Cob$ is then defined as the subspace of small cycles in the corresponding homology.

	\subsection{Heisenberg local systems}\label{sec:Heisenberg_ls}
		
	We give two equivalent definitions of the (finite) Heisenberg group $\Heis_p(\Sigma)$, for $p=2$ or odd $\geq 3$, associated with $\Sigma$, where $\Sigma$ is either an object of $3\Cob$ or a trajectory space of an elementary cobordism. One definition is useful for an invariant formulation of the action of cobordisms, while the other is better suited for explicit computations in a preferred basis. We identify $H_1(\Sigma;\partial_-\Sigma)\simeq H_1(\Sigma)$.

	\vspace{0.5cm}

	Let $H_1(\Sigma)\times H_1(\Sigma)\to \Z$, $(x,y)\mapsto x.y$ be the intersection pairing on $H_1(\Sigma)$, then the Heisenberg group $\Heis'(\Sigma)$ is the central extension of $H_1(\Sigma)$ given by this pairing. As a set $ \Heis'(\Sigma)\simeq \Z\times H_1(\Sigma)$ and multiplication is given by the formula:
	\[
		(k,x)\cdot (l,y)=(k+l+x.y,x+y),
	\]
	where $k,l\in\Z$ and $x,y\in H_1(\Sigma)$.
	
	Recall that $\sigma_i$, $i=1,\dots,n-1$, are the classical generators of the braid group. The second definition of the Heisenberg group appears as a quotient of the surface braid group $B_n(\Sigma)$:
	\begin{equation}\label{eq:Heisenberg_def}
		\Heis(\Sigma):=B_n(\Sigma)/[B_n(\Sigma),\sigma_1], \quad n\geq 2.
	\end{equation}
		For a loop $\gamma\in B_n(\Sigma)$ denote by $[\gamma]_{\Heis}$ its image in $\Heis(\Sigma)$ under the quotient homomorphism \eqref{eq:Heisenberg_def}. The classical braid relation $\sigma_i\sigma_{i+1}\sigma_i=\sigma_{i+1}\sigma_i\sigma_{i+1}$ involves that each $\sigma_i$ is sent to the same element by this quotient, and we denote
		\[
			\boxed{\sigma:=[\sigma_1]_{\Heis}.}
		\]
	
	The surface braid group $B_n(\Sigma)$ has an explicit presentation first given in \cite{bellingeri2004presentations}, see also \cite{blanchet2025heisenberg} for a discussion in the context of homological representations of mapping class groups. We only describe a set of generators as follows. Call a set of generators $\{\alpha_i,\beta_i\}_{i=1}^g$ of $\pi_1(\Sigma_g,*_1)$ symplectic if their homology classes satisfy: $[\alpha_i].[\alpha_j]=[\beta_i].[\beta_j]=0$ and $[\alpha_i].[\beta_j]=\delta_{ij}$ for any $i,j\in\{1,\dots,g\}$. An oriented embedded arc $\gamma:[0,1]\hookrightarrow \Sigma$ such that $\gamma(0),\gamma(1)\in \partial_-\Sigma$ determines a unique element in $\pi_1(\Sigma,*_1)$ by sliding both endpoints to $*_1$ along $\partial_-$. We will call a set of disjoint arcs symplectic if they are in one to one correspondence with a symplectic set of generators. We choose a standard symplectic set of arcs on each $\Sigma_g$ depicted in Fig.~\ref{fig:symplectic_arcs}.
	
	\begin{figure}[h]
		\centering
		\includegraphics[width=0.4\linewidth]{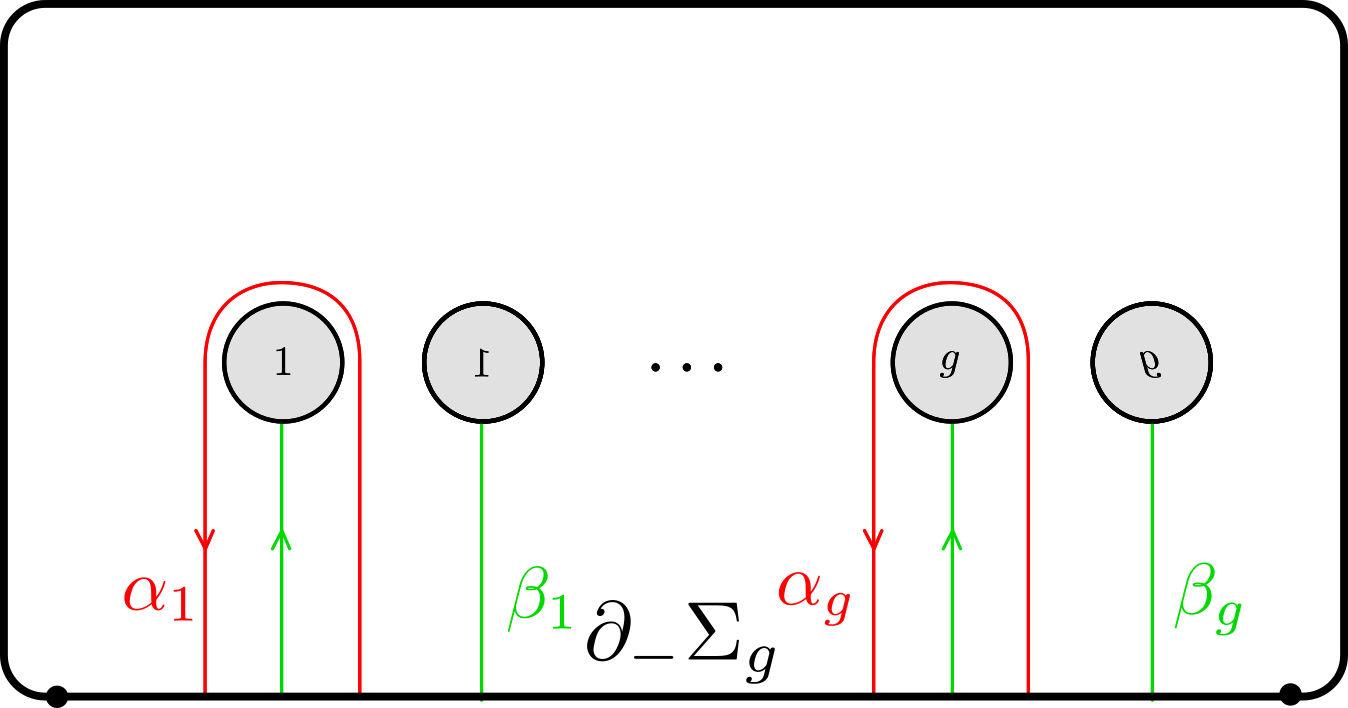}
		\caption{A choice of a symplectic set of arcs in a standard surface $\Sigma_g$.}
		\label{fig:symplectic_arcs}
	\end{figure}

	For a symplectic set of generators $G:=\{\alpha_i,\beta_i\}_{i=1}^g$ the braid group $B_n(\Sigma)$ is generated by loops $\sigma_j$, $1\leq j\leq n-1$ and loops $\{\tilde\alpha_i,\tilde\beta_i\}_{i=1}^g$, where for $\gamma\in\pi_1(\Sigma,*_1)$ the corresponding loop $\tilde{\gamma}\in B_n(\Sigma)$ moves the leftmost point of $*_n$ along $\gamma$ and is constant on all other points. 
	Then one can define a surjective homomorphism 
	\[
		\rho_{G}:B_n(\Sigma)\to \Heis'(\Sigma),
	\]
	\[
		\sigma_i\mapsto (-1,0),\  \forall i\in\{1,\dots,n-1\}, \quad \tilde{\alpha}_j\mapsto (0,[\alpha_j]), \ \  \tilde{\beta}_j\mapsto (0,[\beta_j]), \forall j\in\{1,\dots, g\}.
	\]
	\begin{proposition}[\cite{blanchet2025heisenberg}]\label{prop:Heisenberg_iso}
		The homomorphism $\rho_{G}$ is well-defined and has kernel $[B_n(\Sigma),\sigma_1]$. Therefore, given a symplectic set of generators $G:=\{\alpha_i,\beta_i\}_{i=1}^{g}$ of $\pi_1(\Sigma)$ there is an isomorphism:
		\begin{equation}\label{eq:def_Heisenberg}
			\rho_{G}:\Heis(\Sigma)\xrightarrow{\simeq}\Heis'(\Sigma).
		\end{equation}
	\end{proposition}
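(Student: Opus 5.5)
To prove Proposition~\ref{prop:Heisenberg_iso}, I would establish three facts: $\rho_G$ is a well-defined homomorphism on the braid group, it is surjective, and its kernel is exactly the normal subgroup $[B_n(\Sigma),\sigma_1]$.

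\textbf{Well-definedness.} I would use the explicit presentation of $B_n(\Sigma)$ from \cite{bellingeri2004presentations}, with generators $\sigma_1,\dots,\sigma_{n-1}$ and $\tilde\alpha_i,\tilde\beta_i$, and check that every defining relation holds in $\Heis'(\Sigma)$.
\begin{itemize}
	\item The classical braid relations hold trivially, since all $\sigma_j$ go to the same central element $(-1,0)$.
	\item The relations stating that $\tilde\alpha_i$, $\tilde\beta_i$ commute with $\sigma_j$ for $j\geq 2$ also hold, because $(-1,0)$ is central.
	\item The remaining relations are of the type $\sigma_1\tilde\gamma\sigma_1\tilde\gamma = \tilde\gamma\sigma_1\tilde\gamma\sigma_1$, together with the mixed relations between $\tilde\alpha_i$ and $\tilde\beta_j$ involving $\sigma_1^{\pm 2}$. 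For these I would use the commutator formula in $\Heis'$:
	\[
	[(0,x),(0,y)]=(x.y-y.x,0)=(2\,x.y,0).
	\]
	Matching exponents of $\sigma_1$ against intersection numbers of the symplectic generators then verifies each relation. In particular, the relation expressing $[\tilde\alpha_i,\tilde\beta_i]$ through $\sigma_1^{2}$ (up to a sign convention) matches $(2,0)=(-1,0)^{-2}$, up to the orientation convention. This step is bookkeeping of signs and orientation conventions.
\end{itemize}

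\textbf{Surjectivity.} This is immediate, since the images of the generators are $(-1,0)$ and the elements $(0,[\alpha_j])$, $(0,[\beta_j])$, which generate $\Heis'(\Sigma)$.

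\textbf{Kernel contains $[B_n(\Sigma),\sigma_1]$.} Since $\rho_G(\sigma_1)$ is central, every commutator $[b,\sigma_1]$ maps to the identity. Hence $\rho_G$ factors through $\Heis(\Sigma)=B_n(\Sigma)/[B_n(\Sigma),\sigma_1]$.

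\textbf{Reverse inclusion.} This is the main step. I would show that the induced map $\bar\rho:\Heis(\Sigma)\to\Heis'(\Sigma)$ is injective by constructing an inverse.
\begin{enumerate}
	\item In $\Heis(\Sigma)$ the class $\sigma$ is central: it commutes with the generators $\tilde\alpha_i$, $\tilde\beta_i$ by the defining quotient, and all $\sigma_j$ become equal to $\sigma$.
	\item The quotient $\Heis(\Sigma)/\langle\sigma\rangle$ is the quotient of $B_n(\Sigma)$ by the normal closure of $\sigma_1$. Killing $\sigma_1$ in the presentation makes the $\tilde\alpha_i,\tilde\beta_i$ commute (the mixed relations lose their $\sigma_1$ factors). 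It also kills the surface relation, so this quotient is abelian and generated by $2g$ elements. It therefore surjects onto $H_1(\Sigma)\simeq\Z^{2g}$, hence is isomorphic to it, because a surjection from a $2g$-generated abelian group onto $\Z^{2g}$ is an isomorphism.
	\item Thus $\Heis(\Sigma)$ is a central extension
	\[
	1\to\langle\sigma\rangle\to\Heis(\Sigma)\to H_1(\Sigma)\to 0 .
	\]
	Comparing it with the analogous extension for $\Heis'(\Sigma)$ via $\bar\rho$, the five lemma reduces injectivity to injectivity on $\langle\sigma\rangle$. That holds because $\sigma\mapsto(-1,0)$ has infinite order, so $\langle\sigma\rangle\to\Z$ is injective.
\end{enumerate}
The one subtle point is showing that the mixed relations truly reduce to commutativity once $\sigma_1$ is killed, which needs care with the exact form of the Bellingeri relations.
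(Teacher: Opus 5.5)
Your route (verify Bellingeri's relations to get $\rho_G$, then show that $B_n(\Sigma)/[B_n(\Sigma),\sigma_1]$ is a central extension of $H_1(\Sigma)$ by $\langle\sigma\rangle$ and compare with $\Heis'(\Sigma)$) is the standard one. It is the argument behind the cited result; the paper itself gives no proof beyond the citation. Your kernel argument is sound: killing $\sigma_1$ leaves an abelian group on $2g$ generators, so the surjection onto $\Z^{2g}$ is an isomorphism, and the five lemma finishes because $\sigma\mapsto(-1,0)$ has infinite order. (There is no surface relation for one boundary component, but that is harmless.)

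\textbf{The gap is in well-definedness, which as written covers only one choice of $G$.} Bellingeri's presentation is a presentation on a \emph{specific} system of loops. The proposition, however, is stated for an arbitrary free basis $G$ of $\pi_1(\Sigma,*_1)$ whose homology classes are symplectic, and the paper uses it for non-standard systems, e.g.\ $\rho_{G^{+}}$ in the proof of Lemma~\ref{lem:i_inversion}.
\begin{itemize}
\item For a general such $G$ there is no reason for $\tilde\alpha_i,\tilde\beta_i$ to satisfy Bellingeri's relations.
\item You also cannot transport those relations by a diffeomorphism. Replacing $\alpha_1$ by $\alpha_1[\alpha_2,\beta_2]$ gives a symplectic generating set that is not the image of the standard one under any mapping class, since the induced automorphism would not fix the boundary word.
\end{itemize}
So ``check every defining relation on $\tilde\alpha_i,\tilde\beta_i$'' is not available in general.

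\textbf{The repair is short.}
\begin{enumerate}
\item Run your argument for Bellingeri's system. It shows that $\Heis(\Sigma)$ is a central extension of $H_1(\Sigma)$ by the infinite cyclic group $\langle\sigma\rangle$, with commutator $[u,v]=\sigma^{-2\pi(u).\pi(v)}$. This formula depends only on $\pi(u)$ and $\pi(v)$.
\item For any symplectic $G$, the elements $\sigma$, $A_i:=[\tilde\alpha_i]_{\Heis}$, $B_i:=[\tilde\beta_i]_{\Heis}$ generate $\Heis(\Sigma)$.
\item They satisfy $\sigma$ central, $[A_i,B_j]=\sigma^{-2\delta_{ij}}$ and $[A_i,A_j]=[B_i,B_j]=1$. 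These relations present $\Heis'(\Sigma)$ on the generators $(-1,0)$, $(0,[\alpha_i])$, $(0,[\beta_i])$.
\item The resulting map $\Heis'(\Sigma)\to\Heis(\Sigma)$ is an isomorphism of central extensions. Then $\rho_G$ is its inverse composed with the quotient map, and its kernel is $[B_n(\Sigma),\sigma_1]$.
\end{enumerate}

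\textbf{The sign in the $\tilde\alpha_r,\tilde\beta_r$ relation is not mere bookkeeping.} Fix the paper's conventions: $\sigma_1$ counter-clockwise, loops composed left to right, and $[\alpha_i].[\beta_i]=1$. You must then verify geometrically that $[\tilde\alpha_i,\tilde\beta_i]\equiv\sigma_1^{-2}$ modulo $[B_n(\Sigma),\sigma_1]$. The alternative $\sigma_1^{+2}$ would contradict $\sigma_j\mapsto(-1,0)$. You cannot take this from \eqref{eq:commutator_formula}, because the paper derives that formula from this very proposition.
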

	Note that in particular, the quotient $B_n(\Sigma)/[B_n(\Sigma),\sigma_1]$ doesn't depend on $n$ for $n\geq2$.
	
	We distinguish these two versions of the Heisenberg group because $\Heis(\Sigma)$ is intrinsically associated with $\Sigma$, whereas $\Heis'(\Sigma)$ is not.

	Both versions of the Heisenberg group are naturally a part of the short exact sequence:
	\begin{equation}\label{s_e_s}
		1 \xrightarrow{c} \Z\to \Heis(\Sigma)\xrightarrow{\pi} H_1(\Sigma)\to1
	\end{equation}
	where $c$ is the natural inclusion of the center, $\pi$ is the obvious projection in the case of $\Heis'(\Sigma)$. For $\Heis(\Sigma)$ the map $\pi$ can be described as follows. Let $\gamma\in B_n(\Sigma)$ be an element in the preimage of $x\in \Heis(\Sigma)$ under the quotient homomorphism. Let $\gamma_i:[0,1]\to \Sigma$, $i=1,\dots,n$ be the trajectories of each configuration point, then
	\[
		\pi(x):=\sum_i [\gamma_i],
	\]
	where $[\gamma_i]\in H_1(\Sigma;\partial_-\Sigma)\simeq H_1(\Sigma)$. 
	
	Independently of the choice of the isomorphism \eqref{eq:def_Heisenberg} one obtains a formula for commutators in $\Heis(\Sigma)$:
	\begin{equation}\label{eq:commutator_formula}
		\big[[\gamma]_{\Heis},[\gamma']_{\Heis}\big]=\sigma^{-2\pi(\gamma).\pi(\gamma')}.
	\end{equation}

	We now define finite versions of Heisenberg groups in two separate cases. Let $p=2$, then the group $\Heis_p(\Sigma)=\Heis_2(\Sigma)$ is defined as the quotient
	\[
		\Heis_2(\Sigma):=\Heis(\Sigma)/ I_2,\quad I_2=\{x^2\,| \, x\in \Heis(\Sigma) \}.
	\]
	Note that this group is commutative since $\sigma^2\in I_2$. The group $\Heis_2'(\Sigma)$ can be defined as the central extension of $H_1(\Sigma;\Z_2)$ by $\Z_2$. Any isomorphism \eqref{eq:def_Heisenberg} descends to an isomorphism of finite versions $\Heis_2(\Sigma)\xrightarrow{\simeq}\Heis'_2(\Sigma)$.
	
	If $p$ is an odd integer $\geq 3$, we define the finite Heisenberg group $\Heis_p(\Sigma)$ as follows. Recall that the subgroup of pure braids $P_n(\Sigma)\subset B_n(\Sigma)$ is defined as the kernel of the natural homomorphism $B_n(\Sigma)\to S_n$ to the symmetric group $S_n$, \ie each loop in $P_n(\Sigma)$ is a braid with each strand starting and ending at the same point. We define a subset of $\Heis(\Sigma)$
	\begin{equation}\label{eq:finite_Heisenbeg_def}
		I_p:=\{[\gamma]_{\Heis}^p\,| \, \gamma\in P_n(\Sigma)\}, \quad p \text{ is odd }\geq 3,\ n\geq 2.
	\end{equation}
	Using \eqref{eq:commutator_formula} the following computation shows that $I_p$ is in fact a subgroup:
	\[
		[\gamma]_{\Heis}^p[\gamma']_{\Heis}^p=\sigma^{-(p-1)p\pi(\gamma).\pi(\gamma')}[\gamma\gamma']_{\Heis}^p=[\sigma_1^{-(p-1)\pi(\gamma).\pi(\gamma')}\gamma\gamma'], \quad \gamma,\gamma'\in P_n(\Sigma),
	\]
	where $\sigma_1^{-(p-1)\pi(\gamma).\pi(\gamma')}$ is a pure braid since $p-1$ is even.
	It is easy to see that $I_p$ is a normal subgroup since 
	\[
		[\eta]_{\Heis}[\gamma]_{\Heis}^p[\eta^{-1}]_{\Heis}=\sigma^{2p\pi(\gamma).\pi(\eta)}[\gamma]_{\Heis}^p=[\sigma_1^{2\pi(\gamma).\pi(\eta)}\gamma]_{\Heis}^p, \quad \forall \gamma\in P_2(\Sigma), \ \eta\in B_n(\Sigma).
	\]
	Hence the finite quotient 
	\[
		\Heis_p(\Sigma):=\Heis(\Sigma)/I_p
	\]
	is well-defined.


	
	
	
	
	
	
	Let $d_*:B_n(\Sigma)\to B_n(\Sigma')$ be the homomorphism induced by a mapping class $d:\Sigma\to \Sigma'$. It sends $\sigma_1$ to $\sigma_1$ and therefore descends to a homomorphism $d_*:\Heis(\Sigma)\to \Heis(\Sigma')$. Since $d_*$ preserves the subgroup of pure braids on the level of surface braid groups, the subgroups $I_p$ are invariant under $d_*$ both for $p=2$ and for odd $p\geq 3$. Hence there is a well-defined homomorphism of finite Heisenberg groups
	\[
		d_*:\Heis_p(\Sigma)\to \Heis_p(\Sigma'),
	\]
	induced by $d$.
	
	
	
	
	
	
	\vspace{0.5cm}

	\noindent\textit{Heisenberg local systems.} We call a local system $\mathcal{L}_n$ on $\Conf_n(\Sigma)$ ($n\geq 2$) $p$-Heisenberg if the action of $B_n(\Sigma)$ on its monodromy representation $L_n$ at $*_n$ factors through $\Heis_p(\Sigma)$, \ie $L_n$ is a left $\Heis_p(\Sigma)$-module and a loop $\gamma\in B_n(\Sigma)$ acts by the left multiplication by $[\gamma]_{\Heis}$. We treat the $n=1$ case in a slightly different way since the quotient \eqref{eq:Heisenberg_def} is defined only for $n\geq 2$. There is an embedding $\Conf_1(\Sigma)\to \Conf_2(\Sigma)$ inserting one fixed point on $\partial_-\Sigma$. Then there is an induced homomorphism $\pi_1(\Conf_1(\Sigma),*_1)\to \pi_1(\Conf_2(\Sigma);*_2)\to \Heis_p(\Sigma)$. We call a local system $\calL_1$ on $\Conf_1(\Sigma)$ $p$-Heisenberg if its monodromy representation factors through this homomorphism. If $\calL=\{\calL_n\}_{n\geq 0}$ is a collection of $p$-Heisenberg local systems for each $\Conf_n(\Sigma)$, $n\geq 1$ and $\calL_0=\underline{R}$, we call it a \emph{$p$-Heisenberg local system} on $\Conf_*(\Sigma)$.
	
	Any $R$-module representation of $\Heis_p(\Sigma)$ is a representation of the group ring $R[\Heis_p(\Sigma)]$. It is equipped with a canonical basis given by elements of $\Heis_p(\Sigma)$. For convenience we introduce a formal variable $q$ such that
	\[
		\boxed{\sigma=-q^{-2}}
	\]
	and it commutes with all elements of $R[\Heis_p(\Sigma)]$. Although some explicit formulas below contain $q$, it only appears with even powers, therefore all such expressions are well-defined elements of $R[\Heis_p(\Sigma)]$.
	
	Note that if $\calL$ is a $p$-Heisenberg local system in $\Conf_*(\Sigma)$ it can also be viewed as a local system on $\Conf_*(\overline{\Sigma})$ up to the isomorphism $\Heis_p(\Sigma)\to \Heis_p(\overline{\Sigma})$ flipping the central generator: $\sigma\mapsto \sigma^{-1}$.

	\subsection{Twisted cycles and bases in twisted homology}\label{subs:Twisted_cycles_and_bases_in_twisted_homology}

	Let $\Sigma\in 3\Cob$. If $\calL$ is a $p$-Heisenberg local system, the Borel--Moore homology $H_*^{BM}(\Conf_n(\Sigma);\Conf_n^-(\Sigma);\calL)$ is concentrated in the middle dimension of $\Conf_n(\Sigma)$ in the following sense.
	
	\begin{proposition}[\cite{blanchet2025heisenberg}, Theorem A]\label{prop:middle_dimension}
		For any $n\geq 0$ let $\calL$ be a $p$-Heisenberg local system of $R$-modules on $\Conf_n(\Sigma)$ with monodromy representation $L$ at $*_n$. Then there is an isomorphism of $R$-modules
		\begin{equation}\label{eq:homology_decomposition}
			H^{BM}_k(\Conf_n(\Sigma);\Conf_n^-(\Sigma);\calL)\simeq\begin{cases}
				L^{\oplus d}, & k=n\\
				0, & k\neq n,
			\end{cases}
			\quad\quad d:=\binom{2g+n-1}{n}
		\end{equation}
		where $g$ is the genus of $\Sigma$.
	\end{proposition}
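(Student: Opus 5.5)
I would prove this by the ``compression'' strategy of Blanchet--Palmer--Shaukat: deformation retract the pair $(\Conf_n(\Sigma),\Conf_n^-(\Sigma))$, in the Borel--Moore sense, onto a cell-like decomposition indexed by configurations on a graph. Cut $\Sigma$ along the $2g$ symplectic arcs of Fig.~\ref{fig:symplectic_arcs}. The complement deformation retracts onto a collar of $\partial_-\Sigma$. More precisely, $\Sigma$ is obtained from a disc neighbourhood $D$ of $\partial_-\Sigma$ by thickening the union of $\partial_-\Sigma$ with the $2g$ arcs $a_1,\dots,a_{2g}$.

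Write $\Gamma=\bigcup_i a_i$. The first step is to build a proper deformation of $\Conf_n(\Sigma)$, relative to $\Conf^-_n(\Sigma)$, which pushes every point off $\Sigma\setminus\Gamma$ towards $\partial_-\Sigma$. Such a point escapes into $\Conf_n^-$. This should give an isomorphism
\[
H^{BM}_k(\Conf_n(\Sigma),\Conf_n^-(\Sigma);\calL)\cong H^{BM}_k\bigl(\Conf_n(\mathring\Gamma);\calL|\bigr),
\]
where $\mathring\Gamma$ is the disjoint union of the $2g$ open arcs. This is the compression trick itself: the flow is defined on $\Sigma$ minus the arcs and collapses the complement onto the boundary arc. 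I would make this rigorous with a filtration by the number of points lying off $\Gamma$, checking that each relative stratum has vanishing Borel--Moore homology because it fibres with contractible fibres over something already pushed into $\Conf^-$.

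The second step computes the right-hand side. The space $\Conf_n(\bigsqcup_{i=1}^{2g}(0,1))$ is a disjoint union, over compositions $n=n_1+\dots+n_{2g}$ with $n_i\ge0$, of products of open simplices $\Conf_{n_i}((0,1))$. Each such product is an open $n$-cell, hence contractible. The local system is therefore trivial on each component and isomorphic to $L$ after choosing a path to $*_n$. By Poincaré duality for open cells, the Borel--Moore homology of each component is $L$ in degree $n$ and $0$ otherwise. Counting components gives the number of weak compositions of $n$ into $2g$ parts, namely $\binom{2g+n-1}{n}=d$. This yields \eqref{eq:homology_decomposition}. The case $n=0$ is trivial, and the case $g=0$ gives $0$ for $n\ge1$, consistent with $d=0$.

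The main obstacle is step one. The issue is ensuring the retraction is proper and genuinely relative to $\Conf^-_n$, so that Borel--Moore homology is invariant: points collide or are pushed onto $\partial_-\Sigma$ at different times, and the twisted coefficients must be transported consistently. I would handle this by an inductive long exact sequence on the closed subspaces $X_j=\{\text{at least } j \text{ points on } \Gamma\}$, using excision and the Künneth formula for Borel--Moore homology of the strata. The strata are products of $\Conf_j(\mathring\Gamma)$ with configurations in the open disc $\Sigma\setminus\Gamma$ relative to its $\partial_-$ part, and the latter have zero relative Borel--Moore homology since that pair is properly contractible into the boundary. Notably, no Heisenberg property of $\calL$ is needed for the rank count. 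The Heisenberg property only enters in identifying the resulting $\Heis_p(\Sigma)$-action, so this plan treats any local system.
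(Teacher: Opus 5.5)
Your argument is correct. The paper does not prove this statement itself. It imports it from Blanchet--Palmer--Shaukat (Theorem A, stated there for $n\geq 2$) and then handles $n=0,1$ in the remark that follows: for $n=1$ it uses that $\Conf_1(\Sigma)=\Sigma$ is compact, so Borel--Moore homology reduces to ordinary homology of $(\Sigma,\partial_-\Sigma)$.

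Your filtration by the closed subspaces $X_j$ is a self-contained version of the cited compression argument, and it gives more than the paper's treatment:
\begin{itemize}
\item it covers all $n\geq 0$ (and $g=0$) uniformly;
\item it uses nothing about the Heisenberg property;
\item it shows the isomorphism is induced by the closed inclusion $\Conf_n(\Gamma)\hookrightarrow\Conf_n(\Sigma)$. So it is exactly $\bigoplus_{\mathbf{k}}(\Gamma_{\mathbf{k}})_*$, which proves the twisted-cycle decomposition $\bbH^{BM}_n(\Sigma;\calL)=\bigoplus(\Gamma_{(\mathbf{a},\mathbf{b})})_*L$ that the paper asserts right afterwards.
\end{itemize}

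A few points need to be stated correctly when you write it up.

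First, the single flow in your first step cannot be continuous if it fixes $\Gamma$ and carries every point of $\Sigma\setminus\Gamma$ into $\partial_-\Sigma$. Points of $\Sigma\setminus\Gamma$ converging to an interior point of an arc would have to end on $\partial_-\Sigma$ while that point stays put. So the filtration is the proof itself, not a way of making a flow rigorous.

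Second, in the vanishing step the second factor of a stratum is not an open disc. After passing from the relative pair to the complement of its relative part, it is $(\Sigma\setminus\Gamma)\setminus\partial_-\Sigma$. This is an open disc together with the open arc $\partial_+\Sigma$ minus its endpoints, i.e. a closed half-plane $\R\times[0,\infty)$. Subtracting the minimal height of a configuration gives $\Conf_m(\R\times[0,\infty))\cong Y\times[0,\infty)$, where $Y$ consists of configurations with a point on the boundary line. This space has vanishing Borel--Moore homology for $m\geq 1$ with any coefficients. It is the presence of $\partial_+\Sigma$ that makes this work: for an honest open disc the Borel--Moore homology does not vanish, already for $m=1$.

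Third, no twisted K\"unneth formula is needed, and the restricted local system need not be an exterior product anyway. Every component of $\Conf_j(\mathring\Gamma)$ is an open $j$-cell. Hence on each component of a stratum the local system is pulled back from the half-plane factor, and Borel--Moore homology simply shifts degree by $j$.
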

	\begin{remark}
		Theorem A in \cite{blanchet2025heisenberg} is only proven for $n\geq 2$, however it extrapolates to the cases $n=0,1$. For $n=0$ it is obvious. For $n=1$ one has
		\[
			H^{BM}_k(\Conf_1(\Sigma);\Conf_1^-(\Sigma);\calL)=H_k(\Sigma,\partial_-\Sigma;\calL)
		\]
		since $\Sigma$ is compact. Clearly the only non trivial group is $k=1$. Homology group
		\[
			H_1(\Sigma,\partial_-\Sigma;R)\simeq R^{\oplus 2g}
		\]
		is free over $R$ and therefore the twisted homology is simply
		\[
			H_1(\Sigma,\partial_-\Sigma;\calL)\simeq H_1(\Sigma,\partial_-\Sigma;R)\otimes L\simeq L^{\oplus 2g}.
		\]
	\end{remark}

	Let $\calL$ be a $p$-Heisenberg local system on $\Conf_*(\Sigma)$, $\calL_n=\calL|_{\Conf_n(\Sigma)}$. Then by Proposition~\ref{prop:middle_dimension} we have:
	\[
		\bbH^{BM}_n(\Sigma;\calL)=H^{BM}_n(\Conf_n(\Sigma);\Conf_n^-(\Sigma);\calL_n), \quad \bbH^{BM}(\Sigma;\calL)=\bigoplus_{n=0}^{\infty}H^{BM}_n(\Conf_n(\Sigma);\Conf_n^-(\Sigma);\calL_n).
	\]
	
		\vspace{0.5cm}




	We now introduce twisted cycles. Let $\{\gamma_i\}_{i=1}^{r}$ be a collection of disjoint embedded arcs $\gamma_i:(I,\partial I)\to (\Sigma,\partial_-\Sigma)$. 
	Choose a composition $\mathbf{k}\models n$ of $n$ (\ie a sequence of non-negative integers $\mathbf{k}=(k_1,k_2,\dots,k_{r})$ such that $k_1+k_2+\dots+k_{r}=n$).
	Each $\gamma_i$ induces a proper map
	\[
		\Gamma_i:(\Conf_{k_i}(I),\partial\Conf_{k_i}(I))\to (\Conf_{k_i}(\Sigma),\partial_-\Conf_{k_i}(\Sigma))
	\]
	and their product map
	\begin{multline*}
		\Gamma_{\mathbf{k}}:=\Gamma_1\times\dots\times\Gamma_{r}:\left(\Conf_{k_1}(I)\times\dots \times\Conf_{k_{r}}(I),\partial(\Conf_{k_1}(I)\times\dots \times\Conf_{k_{r}}(I))\right)\to\\
		\to(\Conf_n(\Sigma),\Conf_n^-(\Sigma)) 
	\end{multline*}
	is well defined since the arcs don't have common points. Since each configuration space $\Conf_{k_i}(I)$ is contractible the fundamental group of the product $\Conf_{k_1}(I)\times\dots \times\Conf_{k_{r}}(I)$ is trivial. Thus any local system on the product is trivial as well. In particular, this involves an isomorphism of $R$-modules:
	\[
		H^{BM}_n(\Conf_{k_1}(I)\times\dots \times\Conf_{k_{r}}(I),\partial(\Conf_{k_1}(I)\times\dots \times\Conf_{k_{r}}(I));\Gamma_{\mathbf{k}}^*\calL)\xrightarrow{\simeq} \Gamma_{\mathbf{k}}^*L.
	\]
	We choose some base configurations $*^I_{k_j}$ for each $\Conf_{k_j}(I)$, then $*^I:=*^I_{k_1}\times *^I_{k_2}\times \dots \times *^I_{k_{r}}$ is a base configuration in the product. Note that orientation of each arc induces the product orientation on each $\Conf_{k_1}(I)$. Then specifying a path from the base configuration $*_n$ to $*^I$ inside $\Conf_n(\Sigma)$ one obtains an identification of $R$-modules:
	\[
		L\xrightarrow{\simeq} \Gamma_{\mathbf{k}}^*L.
	\]
	Hence there is a homomorphism of $R$-modules:
	\begin{equation}
		(\Gamma_{\mathbf{k}}): L\to H^{BM}_n(\Conf_n(\Sigma),\Conf_n^{-}(\Sigma);\calL)
	\end{equation}
	induced by the isomorphisms above and the homomorphism:
	\begin{multline*}
		(\Gamma_{\mathbf{k}})_*:H^{BM}_n(\Conf_{k_1}(I)\times\dots \times\Conf_{k_{2g}}(I),\partial(\Conf_{k_1}(I)\times\dots \times\Conf_{k_{2g}}(I));\Gamma_{\mathbf{k}}^*\calL)\to\\
		\to H^{BM}_n(\Conf_n(\Sigma),\Conf_n^{-}(\Sigma);\calL).
	\end{multline*}
	For an element $\mathbf{v}\in L$ its image under $	(\Gamma_{\mathbf{k}})_*$ is denoted by
	\[
		\mathbf{\Gamma}(\mathbf{k})\otimes \mathbf{v}:=(\Gamma_{\mathbf{k}})_*\mathbf{v}.
	\]
	It is a homology class that can be represented by an explicit cycle called \emph{twisted cycle}. It is drawn as the collection of arcs $\{\gamma_i\}$ together with a path between the base points and the element $\mathbf{v}$. Different twisted cycles representing the same homology class can be related by a set of moves. For diagrammatic calculus on twisted cycles see Appendix~\ref{sec:Diag_calculus} and \cite{martel2022homological, de2022homological, blanchet2025heisenberg}.

	\vspace{0.5cm}
	
	The decomposition \eqref{eq:homology_decomposition} can be described in terms of twisted cycles. As a set of arcs $\{\gamma_i\}$ above choose a set of symplectic arcs $\{\alpha_i,\beta_i\}_{i=1}^{g}$ in $\Sigma$. We denote by $(\mathbf{a},\mathbf{b})=(a_1,b_1,\dots,a_g,b_g)$ compositions corresponding to these arcs. In particular, the number associated to the arc $\alpha_i$ is $a_i$ and the number associated to the arc $\beta_j$ is $b_j$. Choose a base-point $*^I$ as above and fix a path from $*_n$ to $*^I$. Then for any composition $(\mathbf{a},\mathbf{b})$ the map $(\Gamma_{(\mathbf{a},\mathbf{b})})_*$ is a monomorphism and
	\[
		\bbH_n^{BM}(\Sigma;\calL)=\bigoplus_{(\mathbf{a},\mathbf{b})\models n}(\Gamma_{(\mathbf{a},\mathbf{b})})_* L.
	\]
	In particular, if $L$ admits a basis $\{\mathbf{v}_c\}_{c\in C}$ parametrized by a set $C$, then a basis of $\bbH_n^{BM}(\Sigma;\calL)$ is given by the twisted cycles $\{\mathbf{\Gamma}(\mathbf{a},\mathbf{b})\otimes \mathbf{v}_c\ |\ (\mathbf{a},\mathbf{b})\models n,\ c\in C\}$.

	By considering the standard symplectic arcs in each $\Sigma_g$ twisted cycles in $\bbH(\Sigma_g;\calL)$ can be drawn as in Fig.~\ref{fig:BM_arcs}.
	
	\begin{figure}[h]
		\centering
		\includegraphics[width=0.5\linewidth]{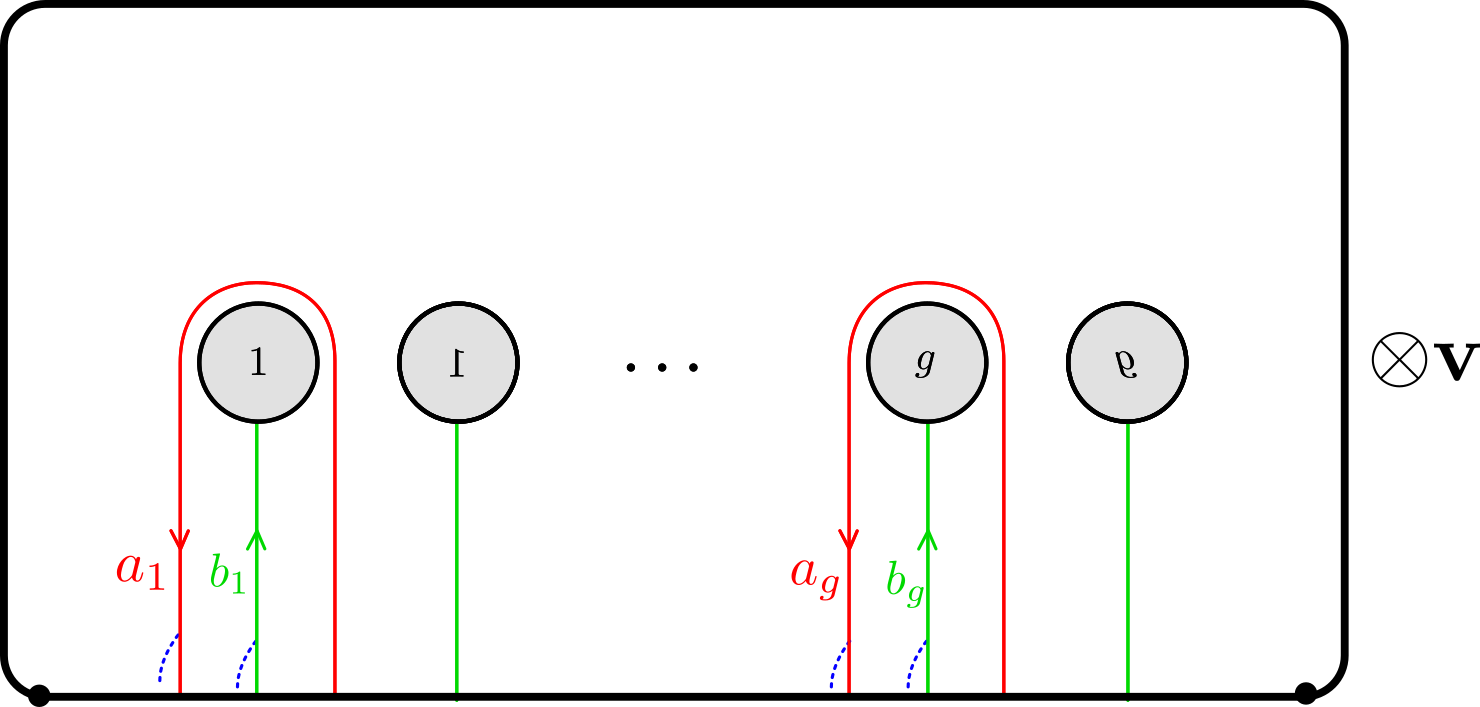}
		\caption{A twisted cycle $\mathbf{\Gamma}(\mathbf{a},\mathbf{b})\otimes \mathbf{v}$ representing a homology class in $\bbH^{BM}_{n}(\Sigma;\calL_n)$, where $\mathbf{a}=(a_1,\dots,a_g)$, $\mathbf{b}=(b_1,\dots,b_g)$ and $(a_1,b_1,\dots,a_g,b_g)$ is a composition of $n$, $\mathbf{v}\in L$. Blue dashed arcs represent a path connecting base configurations on arcs to the base configuration $*_k$: for an arc with label $r$, the corresponding $r$ points move in parallel along the blue line.}
		\label{fig:BM_arcs}
	\end{figure}

	\vspace{0.5cm}
	
	A similar construction of a decomposition of the ordinary homology $H_n(\Conf_n(\Sigma),\Conf^{+}_n(\Sigma);\calL)$ exists. Consider arcs in $\Sigma$ relative to $\partial_+\Sigma$. Similarly to the $\partial_-\Sigma$-relative case, we call a set of disjoint arcs $\{\alpha_i,\beta_i\}_{i=1}^{g}$, relative to $\partial_+\Sigma$, symplectic if their homology classes in $H_1(\Sigma,\partial_+\Sigma)$ form a standard symplectic basis with respect to the intersection pairing. Let $(\mathbf{a},\mathbf{b})=(a_1,b_1,\dots,a_g,b_g)$ be a composition of $n$. Replace each arc $\alpha_j$ (or $\beta_j$) by a collection of $a_j$ (resp $b_j$) parallel non-intersection copies of it. Then a proper map
	\[
		\Gamma^{\vee}_{(\mathbf{a},\mathbf{b})}: (I^{n};\partial I^n)\to (\Conf_n(\Sigma),\Conf^+_n(\Sigma))
	\]
	is defined. Since $I^n$ is contractible any local system on it is trivial, in particular there is an isomorphism:
	\[
		H_n(I^n,\partial I^n;(\Gamma^{\vee}_{(\mathbf{a},\mathbf{b})})^*\calL)\xrightarrow{\simeq}(\Gamma^{\vee}_{(\mathbf{a},\mathbf{b})})^*L.
	\]
	By choosing a base point on $I^n$ and connecting it with $*_n$ by a path one obtains a homomorphism:
	\[
		\left(\Gamma^{\vee}_{(\mathbf{a},\mathbf{b})}\right)_*:L\to H_n(\Conf_n(\Sigma),\Conf_n^+(\Sigma);\calL).
	\]
	It is in fact a monomorphism and there is an induced decomposition:
	\[
		H_n(\Conf_n(\Sigma),\Conf_n^+(\Sigma);\calL)=\bigoplus_{(\mathbf{a},\mathbf{b})\models n}(\Gamma^{\vee}_{(\mathbf{a},\mathbf{b})})_*L.
	\]

	Similarly to the Borel--Moore version we denote the image of $\mathbf{v}\in L$ under $(\Gamma^{\vee}_{(\mathbf{a},\mathbf{b})})_*$ by
	\[
		\mathbf{\Gamma}^{\vee}(\mathbf{a},\mathbf{b})\otimes \mathbf{v}:=(\Gamma^{\vee}_{(\mathbf{a},\mathbf{b})})_*\mathbf{v}.
	\]
	 If $L$ admits a basis $\{\mathbf{v}_c\}_{c\in C}$ then a basis of $H_n(\Conf_n(\Sigma),\Conf_n^+(\Sigma);\calL)$ is given by twisted cycles $\{\mathbf{\Gamma}^{\vee}(\mathbf{a},\mathbf{b})\otimes \mathbf{v}_c\ |\ (\mathbf{a},\mathbf{b})\models n,\ c\in C\}$. For a standard choice of twisted cycles in $\Sigma_g$ see Fig.~\ref{fig:BM_arcs_dual}.
	

	\begin{figure}[h]
		\centering
		\includegraphics[width=1\linewidth]{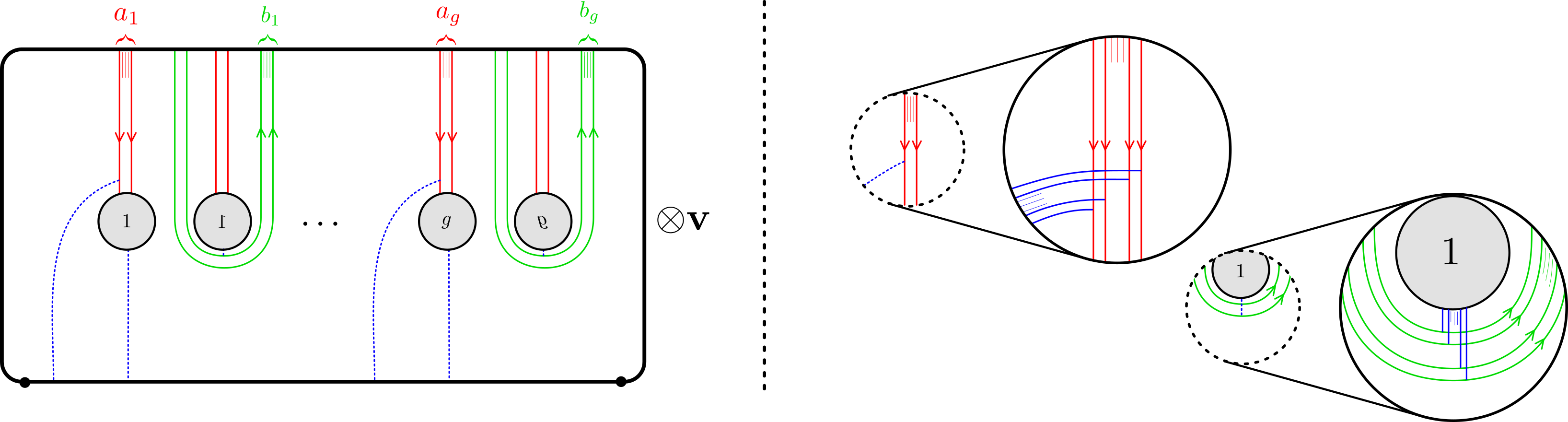}
		\caption{On the left is a twisted cycle $\mathbf{\Gamma}^{\vee}(\mathbf{a},\mathbf{b})\otimes \mathbf{v}$ representing an element of $H_n(\Conf_n(\Sigma),\Conf^+_n(\Sigma);\calL)$. Label $a_i$ (or $b_i$) means $a_i$ (resp. $b_i$) parallel copies of the same arc. Blue dashed lines represent a path between base configuration moving all points in parallel. On the right is the particular way this path connects to the base configuration on $I^n$.}
		\label{fig:BM_arcs_dual}
	\end{figure}

	\vspace{0.5cm}

	Let $\calT_M$ be the trajectory space of an elementary cobordism either of index 1 or 2 (topologically they are the same). Then a basis of twisted cycles can be described in a way similar to objects $\Sigma\in 3\Cob$. See Fig.~\ref{fig:trajectory_space_basis}.
	
		\begin{figure}[h]
		\centering
		\includegraphics[width=1\linewidth]{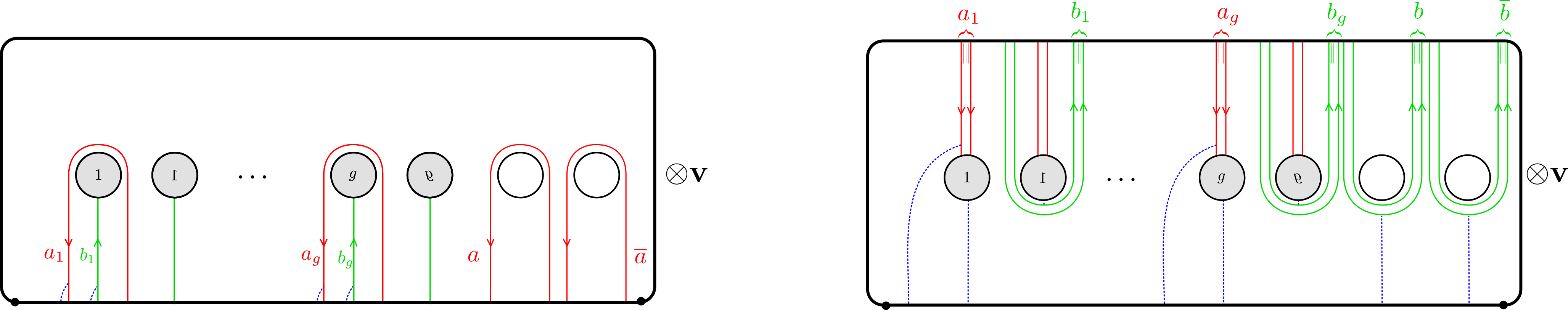}
		\caption{Let $\calL$ be a $p$-Heisenberg local system on $\Conf_n(\calT_M)$ with monodromy representation $L$ at $*_n$. On the left is a twisted cycle in $H^{BM}_n(\Conf_n(\calT_M),\Conf_n^-(\calT_M);\calL)$ for a $\mathbf{v}\in L$. On the right is a twisted cycle in $H_n(\Conf_n(\calT_M),\Conf_n^+(\calT_M);\calL)$ for a $\mathbf{v}\in L$. In particular, if $L$ admits a basis $\{\mathbf{v}_c\}_{c\in C}$, then bases of homology are given by $\{\mathbf{\Gamma}(\mathbf{a},\mathbf{b},a,\overline{a})\otimes \mathbf{v}_c|\ (\mathbf{a},\mathbf{b},a,\overline{a})\models n, c\in C\}$ and $\{\mathbf{\Gamma}^{\vee}(\mathbf{a},\mathbf{b},b,\overline{b})\otimes \mathbf{v}_c|\ (\mathbf{a},\mathbf{b},b,\overline{b})\models n, c\in C\}$.}
		\label{fig:trajectory_space_basis}
	\end{figure}

	\subsection{Pairing}\label{sec:pairing}
	Let $\Sigma\in 3\Cob$.
	

	\begin{proposition}[\cite{blanchet2025heisenberg}]
		Let $R$ be a commutative ring with involution. Suppose $\calL\in \Loc_R(\Conf_n(\Sigma))$ is a $p$-Heisenberg local system.
		Then it induces a perfect sesquilinear intersection pairing:
		\[
			\langle-,-\rangle:H^{BM}_n(\Conf_n(\Sigma),\Conf_n^-(\Sigma);\calL)\otimes_R H_n(\Conf_n(\Sigma),\Conf_n^+(\Sigma);\overline{\calL})\to R.
		\]
	\end{proposition}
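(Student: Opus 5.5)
We construct the pairing and then prove it is perfect by computing it explicitly on the twisted-cycle bases from Subsection~\ref{subs:Twisted_cycles_and_bases_in_twisted_homology}.

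\emph{Construction.} $\Conf_n(\Sigma)$ is an oriented $2n$-manifold whose boundary splits into the parts where some point lies on $\partial_-\Sigma$ or on $\partial_+\Sigma$. Lefschetz--Poincar\'e duality with local coefficients gives
\[
H^{BM}_n(\Conf_n(\Sigma),\Conf_n^-(\Sigma);\calL)\cong H^n(\Conf_n(\Sigma),\Conf_n^+(\Sigma);\calL).
\]
We then evaluate this cohomology class on $H_n(\Conf_n(\Sigma),\Conf_n^+(\Sigma);\overline{\calL})$, contracting coefficients with the sesquilinear pairing $\calL\otimes\overline{\calL}\to\underline{R}$. The result is a well-defined sesquilinear map $\langle-,-\rangle$. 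Equivalently, in geometric terms, for transverse representatives we sum over intersection points. Each point contributes its local sign times the pairing $(\mathbf{v},\gamma\cdot\mathbf{u})$, where $\gamma$ is the loop formed by the two tether paths to $*_n$. Equivariance of $(-,-)$ guarantees this does not depend on choices.

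\emph{Perfectness.} Choose symplectic arcs $\{\alpha_i,\beta_i\}$ relative to $\partial_-\Sigma$ and dual arcs relative to $\partial_+\Sigma$, arranged so that the $\partial_-$-arc $\alpha_i$ meets only the $\partial_+$-arc dual to it, once and transversally, and similarly for $\beta_i$. Then the product cycle $\mathbf{\Gamma}(\mathbf{a},\mathbf{b})$ meets $\mathbf{\Gamma}^{\vee}(\mathbf{a}',\mathbf{b}')$ only if $(\mathbf{a},\mathbf{b})=(\mathbf{a}',\mathbf{b}')$. In that case the intersection $\Conf_{k}(I)\cap(\text{$k$ parallel copies})$ consists of exactly one unordered configuration for each factor, so the two cycles meet in a single point.

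This gives
\[
\langle \mathbf{\Gamma}(\mathbf{a},\mathbf{b})\otimes\mathbf{v},\ \mathbf{\Gamma}^{\vee}(\mathbf{a}',\mathbf{b}')\otimes\mathbf{u}\rangle=\delta_{(\mathbf{a},\mathbf{b}),(\mathbf{a}',\mathbf{b}')}\,\epsilon\,(\mathbf{v},g\cdot\mathbf{u}),
\]
where $\epsilon=\pm1$ and $g\in\Heis_p(\Sigma)$ is determined by the tether paths. Both homologies decompose as $\bigoplus L$ over compositions, so the pairing matrix is block diagonal. Each block is $(\mathbf{v},\mathbf{u})\mapsto\epsilon(\mathbf{v},g\mathbf{u})$, which is perfect because $(-,-)$ is perfect on $L$ and $g$ acts invertibly. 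A block-diagonal pairing with perfect blocks is perfect, since the direct sums are finite for fixed $n$.

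\emph{Main obstacle.} The hardest part is making the single-intersection-point claim rigorous. One has to check that the $k$ points on $\alpha_i$ and the $k$ parallel copies of its dual arc meet in exactly one unordered configuration, and that this intersection is transverse in $\Conf_n(\Sigma)$. This reduces to the local model $\Conf_k(I)\times I^k\subset\Conf_k(I\times I)$, where transversality is clear. The exact sign $\epsilon$ and monodromy $g$ need not be computed, because invertibility is all that perfectness requires.
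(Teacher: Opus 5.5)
Your proof is correct and follows essentially the route the paper indicates: the paper cites \cite{blanchet2025heisenberg} for the statement and justifies perfectness through the dual-basis formula \eqref{eq:pairing_formula}, which rests on the same observation you make, namely that $\mathbf{\Gamma}(\mathbf{a},\mathbf{b})$ and $\mathbf{\Gamma}^{\vee}(\mathbf{a}',\mathbf{b}')$ meet, transversally and in a single configuration, only when the labels agree. Your block-wise version, which leaves the sign $\epsilon$ and the monodromy correction $g$ unspecified, is a mild refinement: it needs neither a basis of $L$ nor the choice of tether paths that makes the pairing exactly diagonal.
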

	
	In particular, consider the standard surface $\Sigma_g$. Let $\calL$ be as in the proposition above and suppose that its monodromy representation $L$ admits a basis $\{\mathbf{v}_c\}_{c\in C}$. Since the pairing on $L$ is perfect, there is a dual basis $\{\mathbf{v}_c^{\vee}\}_{c\in C}$ determined by the condition:
	\[
		(\mathbf{v}_c,\mathbf{v}^{\vee}_{c'})=\delta_{c,c'}.
	\]
	Then the standard basis of twisted cycles $\mathbf{\Gamma}^{\vee}(\mathbf{a},\mathbf{b})\otimes \mathbf{v}_c^{\vee}$ is dual to standard basis $\mathbf{\Gamma}(\mathbf{a},\mathbf{b})\otimes \mathbf{v}_c$:
	\begin{equation}\label{eq:pairing_formula}
		\langle\mathbf{\Gamma}(\mathbf{a},\mathbf{b})\otimes \mathbf{v}_{c}, \mathbf{\Gamma}^{\vee}(\mathbf{a}',\mathbf{b'})\otimes\mathbf{v}_{c'}^{\vee}\rangle=\delta_{\mathbf{a},\mathbf{a}'}\delta_{\mathbf{b},\mathbf{b}'}\delta_{c,c'}
	\end{equation}
	since the only intersection point appears when $a_i=a_i'$, $b_i=b_i'$ for all $i$ (compare configurations on arcs in Fig.~\ref{fig:BM_arcs} and \ref{fig:BM_arcs_dual}). Note that there is no such pairing on index 1 or 2 trajectory spaces since they have two additional boundary components.
	
	\vspace{0.5cm}

	Consider an orientation preserving diffeomorphism $d_{\partial}:\Sigma\to \Sigma$ given by the counter-clockwise half twist along the boundary, \ie $d_{\partial}$ is constant outside a small neighborhood of $\partial\Sigma$ and sends $\partial_+\Sigma$ to $\partial_-\Sigma$ (see Fig.~\ref{fig:boundary_half_twist}).

	\begin{figure}[h]
		\centering
		\includegraphics[width=0.8\linewidth]{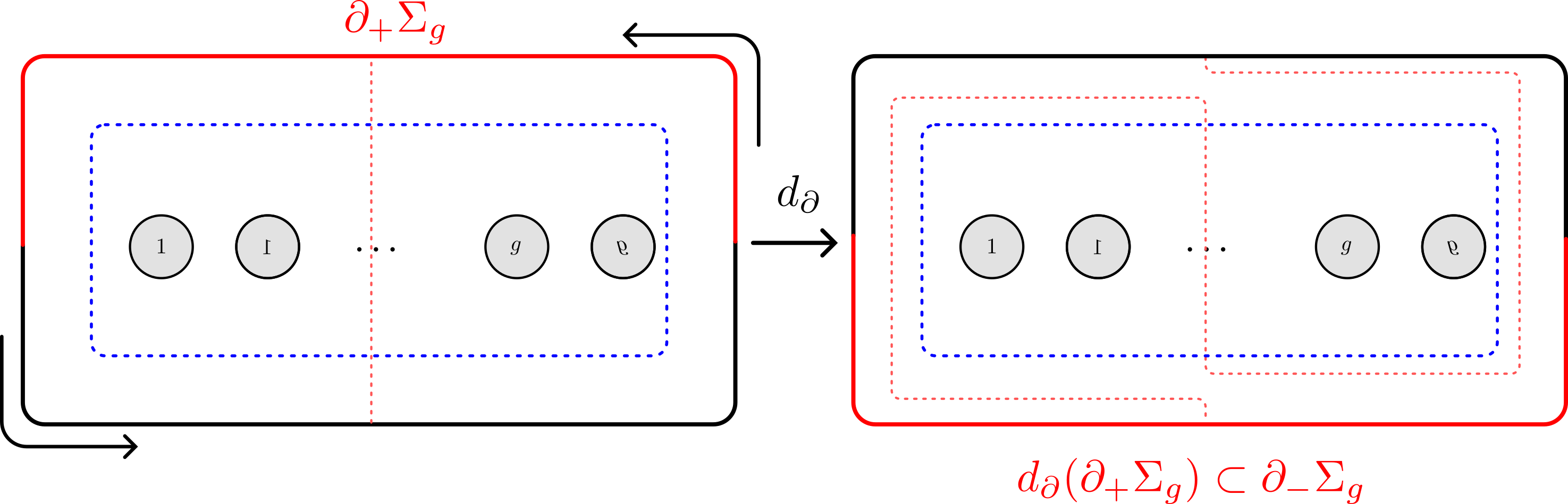}
		\caption{The boundary half twist $d_{\partial}$. It is the identity inside the blue rectangle, and rotates the boundary with its neighborhood counter-clockwise by $\pi$ exchanging $\partial_-\Sigma_g$ and $\partial_+\Sigma_g$.}
		\label{fig:boundary_half_twist}
	\end{figure}
	
	Since $d_{\partial}$ is isotopic to $\Id$, it induces a map $d_{\partial}=d_{\partial}^{\times n}:\Conf_n(\Sigma)\to\Conf_n(\Sigma)$ homotopic to $\Id$ for each $n$. Therefore one can identify $\calL\simeq d_{\partial}^*\calL$. Furthermore $d_{\partial}(\Conf^+_n(\Sigma))\subset \Conf^-_n(\Sigma)$, hence there is an induced isomorphism:
	\[
		H_*(\Conf_n(\Sigma),\Conf_n^+;\calL)\xrightarrow{\simeq}H_*(\Conf_n(\Sigma),\Conf_n^-;\calL)=\bbH_n(\Sigma;\calL).
	\]

	Then $\langle-,-\rangle$ can be interpreted as a pairing:
	\[
		\bbH^{BM}_n(\Sigma;\calL)\times \bbH_n(\Sigma;\calL)\to R.
	\]
	
	More generally, summing over all degrees we obtain the following corollary.

	\begin{corollary}
		Let $R$ be a unital commutative ring with involution. If $\calL\in \Loc_R(\Conf_*(\Sigma))$ is a $p$-Heisenberg local system, then there is an induced perfect intersection pairing of $R$-modules
		\[
			\langle-,-\rangle:\bbH^{BM}(\Sigma;\calL)\otimes \bbH(\Sigma;\calL)\to R.
		\]
	\end{corollary}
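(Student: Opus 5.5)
The corollary follows by assembling the degreewise statements. I will first fix a degree $n$ and apply the preceding proposition (from \cite{blanchet2025heisenberg}) to $\calL_n=\calL|_{\Conf_n(\Sigma)}$, which is a paired $p$-Heisenberg local system. This gives a perfect sesquilinear pairing
\[
H^{BM}_n(\Conf_n(\Sigma),\Conf_n^-(\Sigma);\calL_n)\otimes H_n(\Conf_n(\Sigma),\Conf_n^+(\Sigma);\overline{\calL_n})\to R .
\]
For $n=0,1$ I cannot quote the proposition as stated, since the Heisenberg quotient is only defined for $n\geq 2$. For $n=0$ both sides are $R$ and the pairing is multiplication. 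For $n=1$ I will use the remark after Proposition~\ref{prop:middle_dimension} and check directly that the bases of arcs relative to $\partial_-\Sigma$ and to $\partial_+\Sigma$ are dual, exactly as in \eqref{eq:pairing_formula}.

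Next I transport the second factor to $\bbH_n(\Sigma;\calL)$ via the boundary half twist $d_\partial$. Since $d_\partial$ is isotopic to the identity, the identification $\calL\simeq d_\partial^*\calL$ is canonical. Because $d_\partial(\Conf_n^+(\Sigma))=\Conf_n^-(\Sigma)$ and the same holds for $d_\partial^{-1}$ in reverse, the induced map on relative homology is an isomorphism. Precomposing with its inverse therefore keeps the pairing perfect. I also need to match the conjugated coefficient system $\overline{\calL}$ on the second factor with $\calL$ in $\bbH(\Sigma;\calL)$, and this is the one point requiring care. I will handle it by reading the pairing as $R$-linear in the first variable and conjugate-linear in the second, so that the same underlying abelian group $H_n(\ldots;\overline{\calL})=\overline{H_n(\ldots;\calL)}$ is used. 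Under this convention, perfectness means that $\bbH_n(\Sigma;\calL)\to\overline{\Hom_R(\bbH^{BM}_n,R)}$ is bijective, which is just a restatement of the degreewise result.

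Finally I sum over $n$. I define $\langle x,y\rangle=\sum_n\langle x_n,y_n\rangle$, so that distinct degrees pair to zero. The sum is finite because elements of a direct sum have finitely many nonzero components. Perfectness of this block-diagonal pairing with perfect blocks is the step needing attention, since $\Hom_R(\bigoplus_n A_n,R)=\prod_n\Hom(A_n,R)$ is generally not $\bigoplus_n$. I will take ``perfect'' in the same sense as in the proposition: nondegeneracy together with degreewise duality, which is what is used later because the state spaces are finite-rank in each degree. More precisely, the adjoint maps are isomorphisms in each graded piece, so the pairing is perfect as a pairing of graded modules. I expect this interpretive point, rather than any computation, to be the main obstacle. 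Degreewise, perfectness is immediate from the dual bases in \eqref{eq:pairing_formula} whenever $L$ is free, and from the cited proposition in general.
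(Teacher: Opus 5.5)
Your proposal is correct and follows the paper's route. You apply the degreewise pairing from \cite{blanchet2025heisenberg}, transport the second factor to $\bbH_n(\Sigma;\calL)$ via the boundary half twist $d_\partial$, and sum over $n$. Your extra care goes beyond the paper's one-line ``summing over all degrees'' and is sound on all three points: treating $n=0,1$ separately, identifying $H_n(\ldots;\overline{\calL})$ with $\overline{H_n(\ldots;\calL)}$, and reading ``perfect'' degreewise because $\Hom_R(\bigoplus_n A_n,R)=\prod_n \Hom_R(A_n,R)$.
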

	By some abuse of notation we denote by $\mathbf{\Gamma}^{\vee}(\mathbf{a},\mathbf{b})\otimes \mathbf{v}\in \bbH(\Sigma;\calL)$ the twisted cycle obtained by the boundary half twist from $\mathbf{\Gamma}^{\vee}(\mathbf{a},\mathbf{b})\otimes \mathbf{v}\in H(\Conf_*(\Sigma),\Conf_*^+(\Sigma);\calL)$.
	

	\subsection{Subspaces of small cycles}\label{sec:small_cycles}
	
	Let $\calL$ be a $p$-Heisenberg local system on $\Conf_*(\Sigma)$. Recall that $\iota$ is the natural homomorphism \eqref{eq:iota} from the ordinary homology to the Borel--Moore homology. We define the subspace of small cycles by:
	\begin{equation}
		\mathring{\bbH}(\Sigma;\calL):=\mathrm{Im}[\iota:\bbH(\Sigma;\calL)\to \bbH^{BM}(\Sigma;\calL)].
	\end{equation}
	
	Assume that all $1-q^{2k}$, for $0\leq k<p$, act invertibly on the monodromy representations $L_n$ of $\calL$ at $*_n$, and $q^{2p}$ acts trivially. 
	\begin{remark}\label{rem:quantum_invertibility}
		This condition implies that all $[k]_q!$, for $k<p$, act invertibly, and that $[p]_q!$ acts by $0$. The last condition in particular implies that $\sigma^p$ acts by $-1$, since $p$ is odd. 
	\end{remark} 
	
	Choose a set of symplectic arcs and consider the corresponding twisted cycles in the ordinary and the Borel--Moore homology. Then for a twisted cycle in the ordinary homology an arc with label $k$ is represented by $k$ parallel arcs. Hence their images in the Borel--Moore homology can be merged by the fusion rule \ref{fig:Fusion_rule} in diagrammatic calculus of twisted cycles (see an example in Fig.~\ref{fig:small_cycles_merging}).
	
	\begin{figure}[h]
		\centering
		\includegraphics[width=0.7\linewidth]{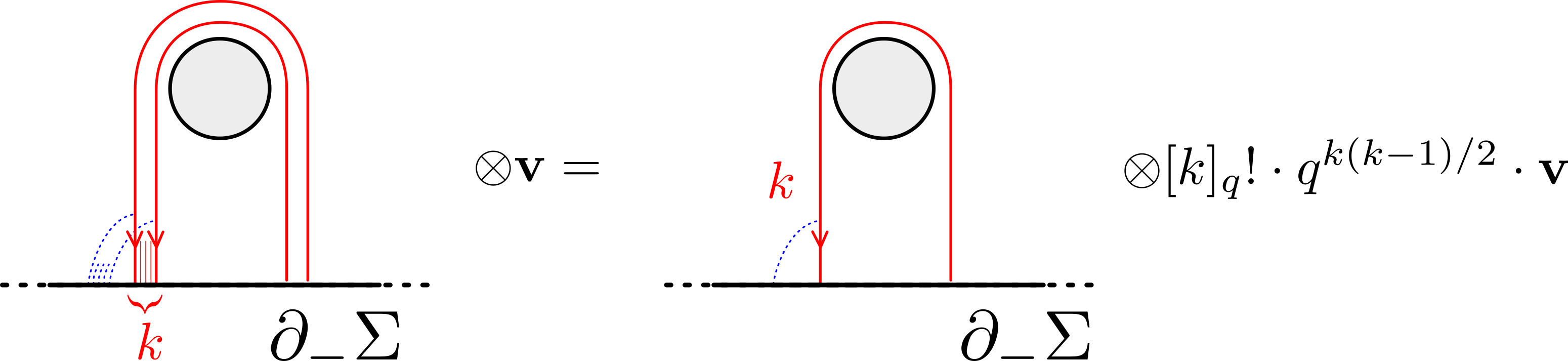}
		\caption{On the left is a collection of $k$ parallel arcs representing the image of a twisted cycle from $\bbH(\Sigma;\calL)$ under $\iota$, $\mathbf{v}\in L_n$. After merging all parallel arcs one obtains the twisted cycle on the right.}
		\label{fig:small_cycles_merging}
	\end{figure}
	
	Therefore one obtains a twisted cycle
	\[
		\mathbf{\Gamma}(\mathbf{a},\mathbf{b})\otimes [a_1]_q!\cdot[b_1]_q!\cdot \dots\cdot[a_g]_q![b_g]_q!\cdot q^{a_1(a_1-1)/2+b_1(b_1-1)/2+\dots a_g(a_g-1)/2+b_g(b_g-1)/2}\cdot\mathbf{v}.
	\]
	Since by our assumption a quantum factorial $[k]_q!$ is invertible for $k<p$ and vanishes for all $k\geq p$, we obtain that the subspace of small cycles is spanned by twisted cycles $\mathbf{\Gamma}(a_1,b_1,\dots,a_{g},b_{g})\otimes \mathbf{v}$ where $a_i,b_i<p$ for any $i\in\{1,\dots,g\}$ and any $\mathbf{v}\in L_n$. On the contrary, the kernel $\Ker (\iota)$ is spanned by ``big'' twisted cycles $\mathbf{\Gamma}(a_1,b_1,\dots,a_{g},b_{g})\otimes \mathbf{v}$ with at least one label $\geq p$ and any $\mathbf{v}\in L_n$. Then the intersection pairing $\langle-,-\rangle$ restricted to $\mathrm{Ker}(\iota)$ and $\mathring{\bbH}^{BM}_n(\Sigma;\calL(\Sigma))$ is trivial by \eqref{eq:pairing_formula}. Hence it descends to a pairing on the small cycles.
	
	\begin{proposition}\label{prop:pairing_small_cycles}
		Let $R$ be a unital commutative ring with involution and let $\calL\in \Loc_R(\Conf_*(\Sigma))$ be a $p$-Heisenberg local system.
		Suppose that the action of all $1-q^{2k}$, $0\leq k<p$, on the local system is invertible, and $q^{2p}$ acts trivially, then there is a well-defined perfect sesquilinear intersection pairing:
		\[
			\langle-,-\rangle:\mathring{\bbH}(\Sigma;\calL)\otimes \mathring\bbH(\Sigma;\overline{\calL})\to R,
		\]
		induced by the pairing $(-,-)$ on $\calL$.
	\end{proposition}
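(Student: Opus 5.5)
The plan is to define the pairing on small cycles by lifting one argument to ordinary homology, and then to check that it is well defined and perfect by computing it in the twisted-cycle bases. For $x\in\mathring{\bbH}(\Sigma;\calL)$ and $y\in\mathring{\bbH}(\Sigma;\overline{\calL})$, choose a preimage $\tilde y\in\bbH(\Sigma;\overline{\calL})$ with $\iota(\tilde y)=y$ and set $\langle x,y\rangle:=\langle x,\tilde y\rangle$, using the perfect pairing $\bbH^{BM}\otimes\bbH\to R$ of the Corollary. This is well defined provided $\langle \mathring{\bbH}(\Sigma;\calL),\Ker(\iota)\rangle=0$. Sesquilinearity and the fact that the pairing is induced by $(-,-)$ on $\calL$ follow directly from the Corollary. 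It will also be convenient to note that the construction is symmetric: lifting $x$ instead of $y$ gives the same value, because the intersection of two compact cycles does not depend on which one is regarded as Borel--Moore.

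\textbf{Step 1: describe $\Ker(\iota)$ and the image.} Work on $\Sigma\cong\Sigma_g$ with the standard symplectic arcs, and fix a basis $\{\mathbf{v}_c\}$ of $L_n$. (If $L_n$ is not free, the same argument runs with the direct-sum decomposition $\bigoplus(\Gamma^{\vee}_{(\mathbf{a},\mathbf{b})})_*L$.) By the fusion computation of Fig.~\ref{fig:small_cycles_merging},
\[
\iota(\mathbf{\Gamma}^{\vee}(\mathbf{a},\mathbf{b})\otimes\mathbf{v})=\mathbf{\Gamma}(\mathbf{a},\mathbf{b})\otimes c_{\mathbf{a},\mathbf{b}}\mathbf{v},
\]
where $c_{\mathbf{a},\mathbf{b}}$ is a product of quantum factorials and powers of $q$. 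One point needs care here: after the boundary half twist the dual arcs have to be matched with the BM arcs, possibly up to a monodromy factor, which is an invertible group element. By Remark~\ref{rem:quantum_invertibility}, $c_{\mathbf{a},\mathbf{b}}$ acts invertibly when all labels are $<p$ and acts by $0$ otherwise. Hence $\iota$ is block-diagonal with respect to the compositions. Its kernel is the span of the dual cycles having some label $\ge p$, and its image $\mathring{\bbH}$ is the span of the small BM cycles $\mathbf{\Gamma}(\mathbf{a},\mathbf{b})\otimes\mathbf{v}$ with all labels $<p$. Both are direct summands.

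\textbf{Step 2: vanishing and perfectness.} By \eqref{eq:pairing_formula}, a small BM cycle pairs to zero with every dual cycle carrying a different composition. In particular it pairs to zero with every big dual cycle, which gives the vanishing on $\Ker(\iota)$. For perfectness, restrict to the small compositions. There the map $\mathring{\bbH}(\Sigma;\overline{\calL})\to\Hom(\mathring{\bbH}(\Sigma;\calL),R)$ is obtained by composing the inverse of $\iota$ on the small summand (invertible, being given by the invertible scalars $c_{\mathbf{a},\mathbf{b}}$) with the restriction of the perfect pairing \eqref{eq:pairing_formula} to matching small blocks. Blockwise this restriction is the perfect pairing on $L_n$, twisted by an invertible element. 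So the induced map is an isomorphism, and the other adjoint is handled symmetrically.

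The main obstacle is Step 1. One has to justify that $\iota$ really acts diagonally in the chosen bases, that is, that the half-twisted dual cycle of composition $(\mathbf{a},\mathbf{b})$ is homologous in BM homology to a multiple of the BM cycle of the same composition, with an explicitly invertible or zero scalar. I would first isotope the parallel copies of each dual arc onto the corresponding $\partial_-$-relative arc using the half twist, and then apply the fusion rule arc by arc, keeping track of the path to the base point.
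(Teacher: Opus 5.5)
Your proposal is correct and follows the paper's own argument. You compute $\iota$ in the standard twisted-cycle bases via the fusion rule, identify the image with the span of small BM cycles and the kernel with the span of big ordinary cycles, get the vanishing on the kernel from the diagonal formula \eqref{eq:pairing_formula}, and get perfectness from diagonality on compositions with all labels $<p$. You are more explicit than the paper on two points it leaves implicit: the invertible quantum-factorial scalars $c_{\mathbf{a},\mathbf{b}}$ that enter the perfectness check, and the matching of the half-twisted dual arcs with the BM arcs.
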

	\begin{proof}
		It is clearly a perfect pairing since it restricts to basis elements with labels $<p$ for both versions of homology, where it is diagonal in the sense of \eqref{eq:pairing_formula}.
	\end{proof}
	
	Subspaces of small cycles naturally inherit functoriality of homology in both arguments in the following sense. If $\Sigma\in 3\Cob$, then
	\[
		\mathring{\bbH}(\Sigma, -): \ \mathrm{Loc}_R(\Conf_*(\Sigma))\to \mathrm{Mod}_R
	\]
	is a contravariant functor from the category of local systems $ \mathrm{Loc}_R(\Conf_*(\Sigma))$ of $R$-modules on $\Conf_*(\Sigma)$. This follows from the corresponding functoriality of both versions of homology. Given a diffeomorphism $d:\Sigma \to \Sigma'$ in $\mathrm{Surf}$, it induces a proper map $d:\Conf_*(\Sigma)\to \Conf_*(\Sigma')$. Then for any local system $\calL\in \mathrm{Loc}_R(\Conf(\Sigma'))$ there is a natural homomorphism of $R$-modules
	\[
		d_*:\mathring{\bbH}(\Sigma; d^*\calL)\to \mathring{\bbH}(\Sigma';\calL)
	\]
	induced by the corresponding homomorphisms \eqref{eq:functoriality_H} and \eqref{eq:functoriality_H_BM} on both versions of twisted homology.


	\section{Construction of Homological TQFTs}\label{sec:general_construction}
	In this section we provide a general construction of homological TQFTs. As explained in Introduction, the basic idea in the case of an index $1$ or $2$ cobordism is to insert the twisted fundamental class of $\Conf_{p-1}(\nu)$, where $\nu$ is the belt or attaching sphere (see Fig.~\ref{fig:Donaldson_action}). In Subsection~\ref{sec:topological_model} we first describe the topological data naturally associated with each elementary cobordism and with boundary connected sum. In Subsection~\ref{subs:Twisted_homology_of_Conf_S_1} we discuss the twisted homology of $\Conf_{p-1}(S^1)$. In Subsection~\ref{sec:nice_ls} we introduce the notion of Morse compatible local systems, an additional datum sufficient for constructing a homological TQFT. Subsection~\ref{sec:monodromy_reps} reformulates the action of elementary cobordisms on local systems in terms of their monodromy representations; this will be needed for some explicit computations. In Subsection~\ref{subs:Construction_of_homological_TQFTs} we define the action of elementary cobordisms on the state spaces of a Morse compatible collection of local systems. In Subsection~\ref{subs:Functorialty} we prove that this action indeed gives rise to a TQFT.

	
	\vspace{0.5cm}
	Fix an integer $p$ which is either $2$ or odd $\geq 3$. In this section all local systems are objects of $\Loc_R(X)$ and all morphisms of local systems are morphisms in $\Loc_R(X)$, for some $X$.

	\subsection{Topological data.}\label{sec:topological_model}
	The construction of homological TQFTs is motivated by a natural topological structure on elementary cobordisms given by multi-trajectory spaces. In this subsection we describe it for each basic piece of the TQFT: elementary cobordisms and the monoidal product in $3\Cob$.

	\vspace{0.5cm}
	
	\textit{Monoidal product.} For a pair of surfaces $\Sigma_1,\Sigma_2\in 3\Cob$ the structure of the boundary connected sum induces a natural (proper) map on configuration spaces. We consider both $\Sigma_1$ and $\Sigma_2$ as subsurfaces of $\Sigma:=\Sigma_1\natural\Sigma_2$ with $\partial_-\Sigma_1,\partial_-\Sigma_2\subset\partial_-\Sigma$, such that $\Sigma_1\cap\Sigma_2=\emptyset$ (see Fig.~\ref{fig:gluing}). Therefore there is an induced embedding 
	\begin{equation}\label{eq:mu_def}
		\mu:\Conf_*(\Sigma_1)\times\Conf_*(\Sigma_2)\hookrightarrow \Conf_{*}(\Sigma),
	\end{equation}
	\[
		\mathbf{x}_1\times \mathbf{x}_2\mapsto \mathbf{x}_1\cup \mathbf{x}_2.
	\]

	\textit{Mapping cylinder.} For the mapping cylinder $M_d$ given by a mapping class $d:\Sigma\to \Sigma'$ we have
	
	\[
		\begin{tikzcd}
			&\Conf_*(\calT_{M_d})\arrow[dl,"{\Id}"']\arrow[dr,"{d}"]&\\
			\Conf_*(\Sigma)&&\Conf_*(\Sigma'),
		\end{tikzcd}
	\]
	where $d=d\sqcup d^{\times 2}\sqcup d^{\times 3}\sqcup \dots$ is the map on configuration spaces induced by $d$.
	
	\textit{Index 1 cobordism.} For an elementary index 1 cobordism $M:\Sigma_-\to \Sigma_+$ with belt sphere $\nu:S^1\to \Sigma_+$ we consider
	\[
	\begin{tikzcd}
		&\Conf_*(\calT_M)\times\Conf_{p-1}(S^1)\arrow[dr,"\varphi"]\arrow[dl,"i_-\circ\pi"']&\\
		\Conf_*(\Sigma_-)&&\Conf_{*+p-1}(\Sigma_+),
	\end{tikzcd}
	\]
	where $i_-=(i_-)^{\times *}:\Conf_*(\calT_M)\to \Conf_*(\Sigma_-)$ is induced by the evaluation at an endpoint $i_-:\calT_M\to \Sigma_-$, $\pi$ is the projection on $\Conf_*(\calT_M)$ and $\varphi$ inserts $p-1$ points on the belt sphere:
	\begin{equation}\label{eq:varphi_def}
		\varphi: \Conf_*(\calT_M)\times \Conf_{p-1}(S^1)\to \Conf_{*+p-1}(\Sigma_+),
	\end{equation}
	\[
		\mathbf{x}\times \{s_1,\dots,s_{p-1}\}\mapsto i_+(\mathbf{x})\cup \{\nu(s_1),\dots,\nu(s_{p-1})\},\ \ \mathbf{x}\in \Conf_n(\calT_M)
	\]
	with $i_+=(i_+)^{\times *}$ induced by $i_+$.
	
	\textit{Index 2 cobordism.} 
	As it was discussed in Section~\ref{sec:Multi_trajectory}, the case of an index 2 cobordism is dual to index 1. Therefore we consider the dual span on the level of configuration spaces. Let $M':\Sigma_-'\to \Sigma_+'$ be an index 2 cobordism with attaching sphere $\nu:S^1\to \Sigma_-'$. Then we have:
	\[
		\begin{tikzcd}
			&\Conf_*(\calT_{M'})\times \Conf_{p-1}(S^1)\arrow[dl,"{\varphi}"']\arrow[rd,"i_+\circ \pi"]&\\
			\Conf_{*+p-1}(\Sigma_-')&& \Conf_*(\Sigma_+'),
		\end{tikzcd}
	\]
	where $i_+=(i_+)^{\times *}:\Conf_*(\calT_{M'})\to \Conf_*(\Sigma_+')$ is the evaluation at an endpoint, $\pi$ is the projection on $\Conf_*(\calT_{M'})$ and $\varphi$ is the same as in \eqref{eq:varphi_def}. 

	\subsection{Twisted homology of $\Conf_{p-1}(S^1)$}\label{subs:Twisted_homology_of_Conf_S_1}

	 In this subsection we give a description of the (top-degree) twisted Borel--Moore homology of $\Conf_{p-1}(S^1)$. 

	\vspace{0.5cm}
	
	We choose a base configuration $s_{p-1}$ of $p-1$ points on $S^1$. Let $t$ be the generator of $\pi_1(\Conf_{p-1}(S^1),s_{p-1})\simeq \Z$ given by sliding all points counter-clockwise along the circle (see Fig.~\ref{fig:t_loop}). 
	
	\begin{figure}[h]
		\centering
		\includegraphics[width=0.20\linewidth]{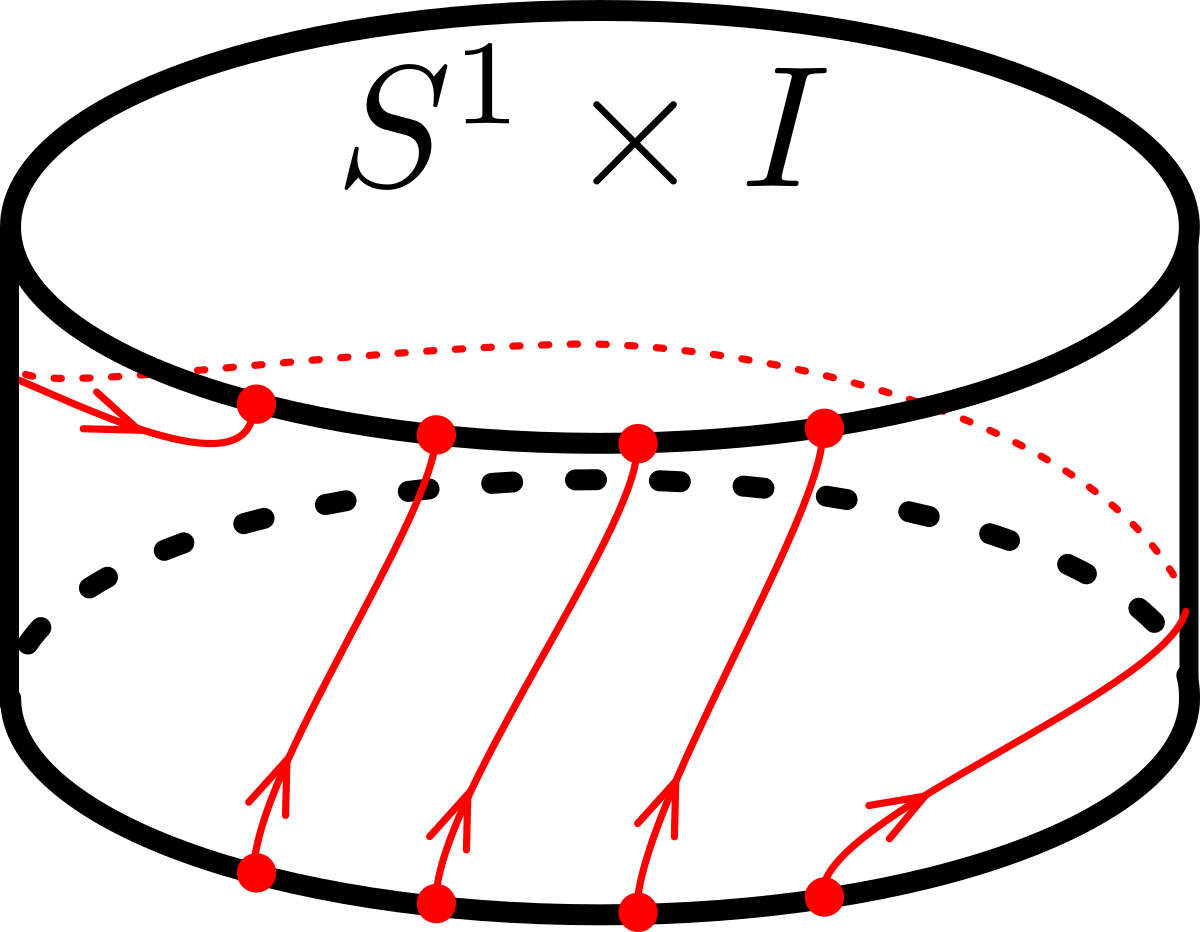}
		\caption{The loop $t:[0,1]\to \Conf_4(S^1)$ represented by trajectories of configuration points in $S^1\times I$, \ie if $t_i:[0,1]\to S^1$ is the trajectory of $i$-th point in $S^1$ the corresponding trajectory in $S^1\times I$ is given by the map $[0,1]\to S^1\times I$, $x\mapsto (t_i(x),x)$.}
		\label{fig:t_loop}
	\end{figure}
	
	Let $p'=2$ if $p=2$ and $p'=2p$ if $p$ is odd. Let $\widetilde{\Conf}_{p-1}(S^1)$ be the covering space associated with $p'\Z\subset \Z=\pi_1(\Conf_{p-1}(S^1),s_{p-1})$, \ie the $p'$-fold covering space with deck transformation group $\Z_{p'}$. Recall that for odd $p$ the manifold $\Conf_{p-1}(S^1)$ is non-orientable and admits a unique orientable 2-fold cover. There is only one double cover of $\Conf_{p-1}(S^1)$ corresponding to $2\Z\subset \Z$, and $p'\Z\subset 2\Z$ since $p'$ is even. Hence the covering space $\widetilde{\Conf}_{p-1}(S^1)$ factors through the orientable double covering and is orientable itself. We consider a fundamental Borel--Moore class $\left[\widetilde{\Conf}_{p-1}(S^1)\right]$ that can be viewed as a twisted fundamental class as follows. We consider the $\pi_1(\Conf_{p-1}(S^1),s_{p-1})$-module and the corresponding local system on $\Conf_{p-1}(S^1)$: 
	\begin{equation}\label{eq:calA_def}
		\boxed{A:=R[t]/(t^{p'}-1)=R[\Z_{p'}], \quad \calA:=\text{local system associated with }A.}
	\end{equation}
	Then
	\[
		[\widetilde{\Conf}_{p-1}(S^1)]\in H^{BM}_{p-1}(\Conf_{p-1}(S^1);\calA).
	\]
	Since $\Conf_{p-1}(S^1)$ can be obtained from the simplex (with some faces removed) $\Conf_{p-1}(I)$ by gluing two faces (which are not removed), a cycle representing this class has the form:
	\[
		\Delta \otimes \sum\limits_{k=0}^{2p-1}(-t)^k
	\]
	for odd $p\geq 3$, where $\Delta$ is the Borel--Moore chain spanning $\Conf_{p-1}(I)$. Note that the sign of $[\widetilde{\Conf}_{p-1}(S^1)]$ is fixed by choosing the product orientation on $\Conf_{p-1}(I)$ induced by the orientation of $S^1$, and then alternating it across the sheets of the covering space. For convenience we denote by $\eta^A$ the homomorphism:
	\begin{equation}\label{eq:eta_A_def}
		\eta^A:R\to A, \quad 1\mapsto\sum\limits_{k=0}^{2p-1}(-t)^k
	\end{equation}
	of $R$-modules. For $p=2$ the cycle has the form
	\[
	\Delta \otimes (1+t)
	\]
	since $\Conf_1(S^1)=S^1$ is oriented, and we denote
	\[
		\eta^A:R\to A,\quad 1\mapsto 1+t.
	\]
	
	We equip $A$ with a perfect pairing:
	\[
	(-,-):A\otimes \overline{A}\to R, \quad (t^a,t^{a'})\mapsto \delta_{a,a'}. 
	\]
	It is easy to see that this pairing is equivariant under the action of $\pi_1(\Conf_{p-1}(S^1),s_{p-1})$, therefore there is an induced perfect pairing on local systems:
	\[
		(-,-):\calA\otimes \overline{\calA}\to \underline{R}.
	\]
	Hence 
	\[
		\calA\in \Loc_R(\Conf_{p-1}(S^1)).
	\]

	\subsection{Morse compatible local systems on surfaces}\label{sec:nice_ls}
	In this subsection we define \emph{Morse compatible} local systems. We first define an action of elementary cobordisms on local systems and then formulate a set of conditions on such an action that is sufficient for constructing a homological TQFT. 
	
	\vspace{0.5cm}

	Fix a unital commutative ring $R$ with involution. Let $\calL$ be a collection of local systems on configuration spaces of all objects in $3\Cob$:
	\[
		\calL=\left\{\calL(\Sigma)\in \Loc_R(\Conf_*(\Sigma))\big|\, \Sigma\in 3\Cob\right\}.
	\]
	
	\vspace{0.5cm}
	
	We say that $\calL$ is \textit{monoidal} if for any pair of surfaces $\Sigma_1,\Sigma_2\in 3\Cob$ an isomorphism of local systems
	\begin{equation}\label{eq:nice_ls_monoidality}
		\mu_{\calL}:\calL(\Sigma_1)\boxtimes \calL(\Sigma_2)\xrightarrow{\simeq}\mu^*\calL(\Sigma_1\natural\Sigma_2)
	\end{equation}
	is defined and if $\Sigma$ is of genus $0$ then $\calL(\Sigma)=\underline{R}$, where $\mu$ is the map defined in \eqref{eq:mu_def}.
	
	We say that $\calL$ is equipped with an action of elementary cobordisms if:
	\begin{enumerate}
		\item For each mapping cylinder $M_d$ of a mapping class $d:\Sigma\to\Sigma'$ a morphism of local systems on the multi-trajectory space $\Conf_*(\calT_{M_d})$:
		\begin{equation}\label{eq:phi_d_def}
			\phi_d=\phi_{M_d}:\calL(\Sigma)\to d^*\calL(\Sigma')
		\end{equation}
		is chosen.
		
		\item Let $M:\Sigma_-\to \Sigma_+$ be an index 1 elementary cobordism with the belt sphere $\nu:S^1\to \Sigma_+$ and trajectory space $\calT_M$. Then a morphism of local systems on $\Conf_*(\calT_M)\times \Conf_{p-1}(S^1)$:
		\begin{equation}\label{eq:psi_M_def}
			\psi_M:i_-^*\calL(\Sigma_-)\boxtimes\calA\to \varphi^*\calL(\Sigma_+)
		\end{equation}
		is chosen, where $\calA$ is the local system defined in \eqref{eq:calA_def} and $\varphi$ is defined in \eqref{eq:varphi_def}.
		\item For each $\Sigma\in 3\Cob$ an identification
		\[
			\calL(\overline{\Sigma})=\overline{\calL(\Sigma)}
		\]
		is chosen. Recall that for $\Sigma\in 3\Cob$, $\overline{\Sigma}$ is obtained by switching the orientation. Recall that for a local system $\calL$, by $\overline{\calL}$ we denote the conjugated local system with respect to the involution on $R$. 
	\end{enumerate}

	\vspace{0.5cm}
	
	We call a monoidal collection of local systems $\calL$ \emph{Morse compatible} if it is equipped with an action of elementary cobordisms and the following conditions are satisfied:
	
	\begin{enumerate}
		\item\label{item:mcg_rep} \textit{Mapping class group representation}. Morphisms $\phi_{d}$ satisfy:
		\begin{itemize}
			\item For the mapping class of the identity $\Id:\Sigma\to \Sigma$ the corresponding morphism of local systems is trivial: $\phi_{\Id}=\Id$, for all $\Sigma\in 3\Cob$;
			\item For two composable mapping classes $\Sigma\xrightarrow{d}\Sigma'\xrightarrow{d'}\Sigma''$ the diagram
			\begin{equation}\label{eq:mcg_rep}
			\left[\begin{tikzcd}
				\calL(\Sigma)\arrow[rr,"\phi_{d'\circ d}"]\arrow[dr,"\phi_{d}"']& &(d'\circ d)^*\calL(\Sigma'')\\
				&d^*\calL(\Sigma')\arrow[ur,"d^*\phi_{d'}"']&
			\end{tikzcd}\right] \quad \in \Loc_R(\Conf_*(\Sigma))
			\end{equation}
			is commutative.
		\end{itemize}
		\item\label{item:invariance}\textit{Invariance of index 1 or 2 action}. Let $M:\Sigma_-\to \Sigma_+$ and $M':\Sigma_-'\to \Sigma_+'$ be two elementary index 1 cobordisms and $d:M\to M'$ is a diffeomorphism that restricts to $d_-:\Sigma_-\to \Sigma_-'$ and $d_+:\Sigma_+\to \Sigma_+'$ on the boundary. It also induces a diffeomorphism $d_{\calT}:\calT_M\to \calT_{M'}$ such that $i_-\circ d_{\calT}=d_-\circ i_-$ and $i_+\circ d_{\calT}=d_+\circ i_+$ up to isotopy. This data can be organized in the commutative diagram below on the left and induces the diagram on the right. 
		\begin{equation}\label{eq:invariance_diag}
			\begin{tikzcd}
				&\calT_{M}\arrow[ld,"i_-"']\arrow[dr,"i_+"]\arrow[d,"d_{\calT}"]&\\
				\Sigma_-\arrow[d, "d_-"] & \calT_{M'}\arrow[dl,"i_-"']\arrow[dr,"i_+"]&\Sigma_+\arrow[d,"d_+"]\\
				\Sigma_-'&&\Sigma_+',
			\end{tikzcd}\quad 
			\begin{tikzcd}[column sep=tiny]
				&\Conf_*(\calT_M)\times \Conf_{p-1}(S^1)\arrow[dr,"\varphi"]\arrow[dl,"i_-\circ \pi"']\arrow[d,"d_{\calT}\times\Id"]&\\
				\Conf_*(\Sigma_-)\arrow[d, "d_-"] &\Conf_*(\calT_{M'})\times \Conf_{p-1}(S^1)\arrow[dl,"i_-\circ \pi"]\arrow[dr,"\varphi"']&\Conf_{*+p-1}(\Sigma_+)\arrow[d,"d_+"]\\
				\Conf_*(\Sigma_-')&&\Conf_{*+p-1}(\Sigma_+'),
			\end{tikzcd}
		\end{equation}
		where $\varphi$ is defined in \eqref{eq:varphi_def}. Then the following diagram is commutative:
		\begin{equation}\label{eq:invariance}
		\begin{gathered}
			\left[\begin{tikzcd}[column sep=large]
				i_-^*\calL(\Sigma_-)\boxtimes\calA\arrow[r,"{i_-^*\phi_{d_-}\boxtimes \Id}"]\arrow[d,"\psi_M"]&i_-^* d_-^* \calL(\Sigma_-')\boxtimes\calA\arrow[r,equal]&d_{\calT}^* i_-^*\calL(\Sigma_-')\boxtimes \calA\arrow[d,"{d_{\calT}^*\psi_{M'}}"]\\
				\varphi^*\calL(\Sigma_+)\arrow[r,"\varphi^*\phi_{d_+}"]&\varphi^*d_+^*\calL(\Sigma_+')\arrow[r,equal]&d_{\calT}^*\varphi^*\calL(\Sigma_+')
			\end{tikzcd}\right]\in\\
			\in \Loc_R(\Conf_*(\calT_M)\times \Conf_{p-1}(S^1))
		\end{gathered}
		\end{equation}

		\item\label{item:locality}\textit{Locality.} Let $M:\Sigma_-\to \Sigma_+$ be an index 1 cobordism which splits into the boundary connected sum $M=M'\natural M''$ of an index 1 cobordism $M'':\Sigma_-''\to \Sigma_+''$ and a mapping cylinder $M'=M_{\Id}:\Sigma_-'\to \Sigma_+'$ of $\Id$, where $\Sigma_-=\Sigma_-'\natural\Sigma_-''$ and $\Sigma_+=\Sigma_+'\natural \Sigma_+''$. The trajectory space of $M$ naturally splits $\calT_{M}=\calT_{M'}\natural\calT_{M''}$. This data can be organized into the commutative diagram:
			\[
				\begin{tikzcd}
					&\calT_{M'}\sqcup \calT_{M''}\arrow[dl,"i_-\sqcup i_-"']\arrow[dr,"i_+\sqcup i_+"]\arrow[d,hook]&\\
					\Sigma_-'\sqcup\Sigma_-''\arrow[d,hook]&\calT_M\arrow[ld,"i_-"]\arrow[dr,"i_+"]&\Sigma_+'\sqcup \Sigma_+''\arrow[d,hook]\\
					\Sigma_-&&\Sigma_+.
				\end{tikzcd}
			\]
			It induces the commutative diagram:
			\begin{equation}\label{eq:Locality_topologiacl}
				\begin{tikzcd}[column sep=tiny]
						&\Conf_*(\calT_{M'})\times \Conf_*(\calT_{M''}) \times\Conf_{p-1}(S^1)\arrow[dr,"i_+\times \varphi"]\arrow[dl,"i_-\times i_-\circ \pi"']\arrow[d,"\mu\times \Id"]&\\
					\Conf_*(\Sigma_-')\times\Conf_*(\Sigma_-'')\arrow[d,"\mu"]&\Conf_*(\calT_M)\times \Conf_{p-1}(S^1)\arrow[ld,"i_-\circ \pi"] \arrow[dr,"\varphi"]&\Conf_{*}(\Sigma_+')\times \Conf_{*+p-1}(\Sigma_+'')\arrow[d,"\mu"]\\
					\Conf_*(\Sigma_-)&&\Conf_{*+p-1}(\Sigma_+),
				\end{tikzcd}
				\end{equation}
				
		where $\mu$ is defined in \eqref{eq:mu_def}. Then the following diagram is commutative:
		\begin{equation}\label{eq:locality}
			\begin{gathered}
			\left[\begin{tikzcd}[column sep=large]
					i_-^*\calL(\Sigma_-')\boxtimes i_-^*\calL(\Sigma_-'')\boxtimes \calA\arrow[r,"{\Id\boxtimes\psi_{M''}}"]\arrow[d,"{i_-^*\mu_{\calL}\boxtimes\Id} "]& i^*_+\calL(\Sigma'_+)\boxtimes\varphi^*\calL(\Sigma_+'')\arrow[r,equal]&(i_+\times\varphi)^*(\calL(\Sigma_+')\boxtimes\calL(\Sigma_+''))\arrow[d,"(i_+\times\varphi)^*\mu_{\calL}"]\\
					(\mu\times \Id)^*(i_-^*\calL(\Sigma_-)\boxtimes \calA)\arrow[r,"(\mu\times \Id)^*\psi_M"]&(\mu\times \Id)^*\varphi^*\calL(\Sigma_+)\arrow[r,equal]&(i_+\times\varphi)^*\mu^*\calL(\Sigma_+)
			\end{tikzcd}\right]\in\\
			\in \Loc_R\left(\Conf_*(\calT_{M'})\times \Conf_*(\calT_{M''})\times\Conf_{p-1}(S^1)\right),
			\end{gathered}
		\end{equation}
		where $\mu_{\calL}$ is the morphism \eqref{eq:nice_ls_monoidality}.

		\item\label{item:handle_cancellation}\textit{Handle cancellation.} Let $\Sigma_1,\Sigma_2\in 3\Cob$ be of genus 0. For a composition of two cancelling elementary cobordisms $M_{f}:\Sigma_1\xrightarrow{M_1}\Sigma\xrightarrow{M_2}\Sigma_2$ with belt sphere $\nu_1:=b(M_1)$ and attaching sphere $\nu_2=a(M_2)$ we have a pair of morphisms
		\[
			\psi_{M_1}:i_-^*\calL(\Sigma_1)\boxtimes\calA\to \varphi_1^*\calL(\Sigma), \quad \psi_{\overline{M}_2}:i_+^*\calL(\overline{\Sigma}_2)\boxtimes\calA\to \varphi_2^*\calL(\overline{\Sigma}),
		\]
		see \eqref{eq:psi_M_def}. Let 
		\[
			\mathring{\Sigma}:=i_+(\calT_{M_1})\cap i_-(\calT_{M_2}).
		\]
		Since both $\calT_{M_1}$ and $\calT_{M_2}$ are embedded into $\Sigma$, $\mathring{\Sigma}$ can be considered embedded into both of them and we have a commutative diagram (see also Fig.~\ref{fig:handle_cancellation}):
		\[
		\begin{tikzcd}
			&&\mathring{\Sigma}\arrow[dl,hook',"\rho_1"']\arrow[dr,hook,"\rho_2"]&&\\
			&\calT_{M_1}\arrow[dr,"i_+"]\arrow[ld,"i_-"']&&\calT_{M_2}\arrow[dl,"i_-"']\arrow[dr,"i_+"]&\\
			\Sigma_1\arrow[rrrr,bend right=10,"\Id"']&&\Sigma&&\Sigma_2.
		\end{tikzcd}
		\]
	
		\begin{figure}[h]
			\centering
			\includegraphics[width=1\linewidth]{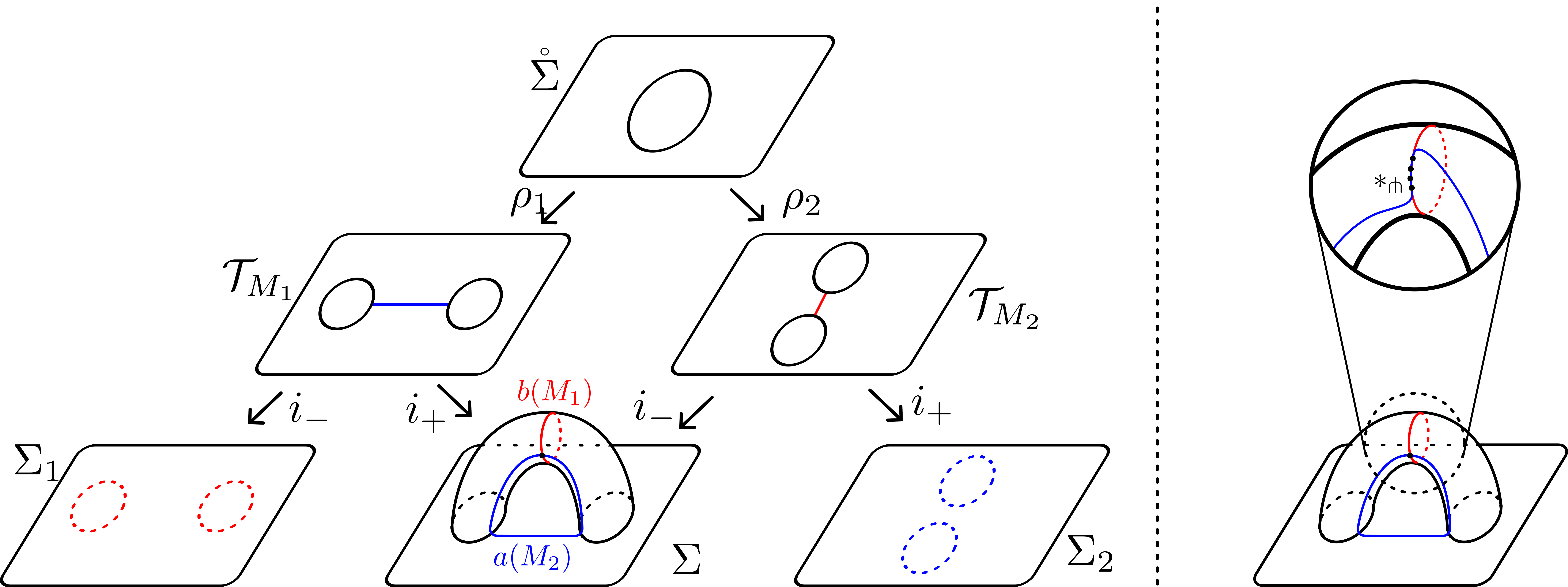}
			\caption{A composition of two cancelling cobordisms. Here $\mathring{\Sigma}$ is obtained from $\calT_{M_1}$ by cutting along the blue line, which is the preimage of $a(M_2)$ under $i_+$. Similarly, $\mathring{\Sigma}$ is obtained from $\calT_{M_2}$ by cutting along the preimage of $b(M_1)$ under $i_-$. We twist by an isotopy of $i_+$ a small segment of the attaching sphere $a(M_2)$ near the intersection point $a(M_2)\cap b(M_1)$ counterclockwise with respect to the orientation of $\Sigma$, in order to have a common segment. We choose a common configuration $*_{\pitchfork}$ and the base points on both copies of $\Conf_{p-1}(S^1)$ are taken to be preimages of $*_{\pitchfork}$ under $i_-$ and $i_+$.}
			\label{fig:handle_cancellation}
		\end{figure}

		Then it induces a commutative diagram (up to isotopy described in Fig.~\ref{fig:handle_cancellation}):
		
		\[
		\begin{tikzcd}[column sep=tiny]
			&&\Conf_*(\mathring{\Sigma})\arrow[dl,hook',"\rho_1"]\arrow[dr,hook,"\rho_2"']&&\\
			&\Conf_*(\calT_{M_1})\times \Conf_{p-1}(S^1)\arrow[dr,hook, "\varphi_1"']\arrow[ld,"{i_-\circ \pi}"]&&\Conf_*(\calT_{M_2})\times \Conf_{p-1}(S^1)\arrow[dl,hook',"\varphi_2"]\arrow[dr,"{i_+\circ \pi}"]&\\
			\Conf_*(\Sigma_1)\arrow[rrrr, bend right=10,"\Id"']&&\Conf_{*+p-1}(\Sigma)&&\Conf_*(\Sigma_2),
		\end{tikzcd}
		\]
		where with some abuse of notation $\rho_1$ sends a configuration $\mathbf{x}$ to $\rho_1(\mathbf{x})\times i_-^{-1}(*_{\pitchfork})$ and $\rho_2$ sends $\mathbf{x}$ to $\rho_2(\mathbf{x})\times  i_+^{-1}(*_{\pitchfork})$. Recall that $\calL(\Sigma_1)=\underline{R}$ and $\calL(\Sigma_2)=\underline{R}$ since $\calL$ is monoidal.
		Then the diagram
		\begin{equation}\label{eq:cancellation}
		\left[\begin{tikzcd}
			\underline{R}\arrow[dddd,rounded corners=3pt , to path={-- ++(-3cm,0)  |-  node [left, near start] {$\Id$} (\tikztotarget)}]\\
			\rho_1^*(\underline{R}\boxtimes \underline{R}) \otimes \overline{\rho^*_2(\underline{R}\boxtimes \underline{R})}\arrow[d,"\rho_1^*(\Id\boxtimes\eta^A) \otimes \rho^*_2(\Id\boxtimes\eta^A)"]\arrow[u,"{(-,-)}"]\\
			\rho_1^*(\underline{R}\boxtimes \calA)\otimes \overline{\rho_2^*(\underline{R}\boxtimes \calA)}\arrow[d,"\rho_1^*\psi_{M_1} \otimes \rho^*_2\psi_{\overline{M}_2}"]
			\\
			\rho_1^*\varphi_1^*\calL(\Sigma)\otimes \rho_2^*\varphi_2^*\calL(\overline{\Sigma})\arrow[d,"{\left(-,\mathbf{p}\cdot-\right)}"]\\
			\underline{R}
		\end{tikzcd}\right]\in\Loc_R(\Conf_*(\mathring{\Sigma}))
		\end{equation}
		is commutative, where in the bottom pairing we rescale one of the components by the inverse quantum factorial $\mathbf{p}=([p-1]_q!q^{(p-2)(p-1)/2})^{-1}$.
		\item\label{item:monoidal} \textit{Monoidal structure}.
		 Suppose $d_1:\Sigma_1\to \Sigma_1'$ and $d_2:\Sigma_2\to\Sigma_2'$ are two mapping classes, $\Sigma:=\Sigma_1\natural \Sigma_2$, $\Sigma'=\Sigma_1'\natural\Sigma_2'$ and $d:\Sigma\to \Sigma'$ is the induced mapping class. In particular, we have the commutative diagram:
		 \begin{equation}\label{eq:monoidal_topological}
		 	\begin{tikzcd}
		 		\Conf_*(\Sigma_1)\times \Conf_*(\Sigma_2)\arrow[r,"d_1\times d_2"]\arrow[d,"\mu"]&\Conf_*(\Sigma_1')\times \Conf_*(\Sigma_2')\arrow[d,"\mu"]\\
		 		\Conf_*(\Sigma)\arrow[r,"d"]&\Conf_*(\Sigma')
		 	\end{tikzcd}
		 \end{equation}
		 Then the diagram
		\begin{equation}\label{eq:Monoidal_structure}
		\begin{gathered}
			\left[\begin{tikzcd}[column sep=large]
				\calL(\Sigma_1)\boxtimes \calL(\Sigma_2)\arrow[r,"\phi_{d_1}\boxtimes \phi_{d_2}"]\arrow[d,"{\mu_{\calL}}"]&d_1^*\calL(\Sigma_1')\boxtimes d_2^*\calL(\Sigma_2')\arrow[r,equal]&(d_1\times d_2)^*(\calL(\Sigma'_1)\boxtimes \calL(\Sigma_2'))\arrow[d,"{(d_1\times d_2)^*\mu_{\calL}}"]\\
				\mu^*\calL(\Sigma)\arrow[r,"{\mu^*\phi_{d}}"]&\mu^*d^*\calL(\Sigma')\arrow[r,equal]&(d_1\times d_2) ^*\mu^*\calL(\Sigma')
			\end{tikzcd}\right]\in\\
			\in \Loc_R(\Conf_*(\Sigma_1)\times \Conf_*(\Sigma_2))
		\end{gathered}
		\end{equation}
		is commutative.
		\item\textit{Braiding.} For any pair $\Sigma_1,\Sigma_2\in 3\Cob$ one has a commutative (up to homotopy) diagram:
		\[
		\begin{tikzcd}
			\Conf_*(\Sigma_1)\times \Conf_*(\Sigma_2)\arrow[r,"\tau"]\arrow[d, "\mu"]&	\Conf_*(\Sigma_2)\times \Conf_*(\Sigma_1)\arrow[d,"\mu"]\\
			\Conf_*(\Sigma_1\natural\Sigma_2)\arrow[r,"d_{\beta}"]&	\Conf_*(\Sigma_2\natural\Sigma_1),
		\end{tikzcd}
		\]
		where $d_{\beta}$ is the braiding diffeomorphism (see definition \eqref{eq:d_beta_def} and Fig.~\ref{fig:braiding}) and $\tau:(\mathbf{x},\mathbf{y})\mapsto(\mathbf{y},\mathbf{x})$ switches the components. Then the diagram
		\begin{equation}\label{eq:braiding}
			\begin{gathered}
					\left[\begin{tikzcd}
						\calL(\Sigma_1)\boxtimes\calL(\Sigma_2)\arrow[r,"\Id"]\arrow[d,"\mu_{\calL}"]&\tau^*(\calL(\Sigma_2)\boxtimes\calL(\Sigma_1))\arrow[dr,"\tau^*\mu_{\calL}"]&\\
						\mu^*\calL(\Sigma_1\natural\Sigma_2)\arrow[r,"\mu^*\phi_{d_{\beta}}"]&\mu^*d_{\beta}^*\calL(\Sigma_2\natural\Sigma_1)\arrow[r,equal]&\tau^*\mu^*\calL(\Sigma_2\natural\Sigma_1)
					\end{tikzcd}\right]\in\\
					 \in\Loc_R(\Conf_*(\Sigma_1)\times \Conf_*(\Sigma_2))
				\end{gathered}
		\end{equation}
		is commutative.
	\end{enumerate}

	\subsection{Action of elementary cobordisms on monodromy representations}\label{sec:monodromy_reps}
	In order to work with twisted cycles or specific examples of Morse compatible local systems we need an explicit reformulation of the action of elementary cobordisms on monodromy representations. We derive it here.
	
	\vspace{0.5cm}

	Recall that for each $\Sigma\in 3\Cob$ and $n\in \N$ a base point $*_n\in \Conf_n^-(\Sigma)$ is fixed. Let $\{\calL(\Sigma)\}_{\Sigma\in 3\Cob}$ be a Morse compatible collection of $p$-Heisenberg local systems and let $\{L_n(\Sigma),\ n\in \N\}_{\Sigma\in 3\Cob}$ be their monodromy representations at $\{*_n\}$. Then for each mapping class $d:\Sigma\to \Sigma'$ the morphism $\phi_d$ (see \eqref{eq:phi_d_def}) simply induces a morphism of $\Heis_p(\Sigma)$-modules:
	\[
	\phi_d:L_n(\Sigma)\to d^* L_n(\Sigma').
	\]
	Consider now an index 1 cobordism $M:\Sigma_-\to \Sigma_+$. Choose a base point $s_{p-1}\in \Conf_{p-1}(S^1)$. The local system $i_-^*\calL(\Sigma_-)\boxtimes \calA$ has a monodromy representation $i_-^*L_n(\Sigma_-)\boxtimes A$ at the point $*_n\times s_{p-1}$. 
	Then $\psi_M$ (see \ref{eq:psi_M_def}) induces a morphism to the pull-back of the monodromy representation of $\calL_{n+p-1}(\Sigma_+)$ at the point $*_n\cup \nu(s_{p-1})$. Since it doesn't coincide with $*_{n+p-1}$ we need to choose a path $h:[0,1]\to \Conf_{n+p-1}(\Sigma_+)$ from $*_{n+p-1}$ to $*_n\cup \nu(s_{p-1})$ in order to identify the monodromy representations at these points. Therefore, we have a morphism of $B_{n}(\calT_M)\times\pi_1(\Conf_{p-1}(S^1))$ representations:
	\[
	\psi_M^h: i_-^*L_n(\Sigma_-)\boxtimes A\to\varphi^*_h L_{n+p-1}(\Sigma_+),
	\]
	where $\varphi_h^*$ denotes the pull-back induced by the homomorphism
	\[
	\varphi_*^h:B_n(\calT_M)\times\pi_1(\Conf_{p-1}(S^1))\to B_{n+p-1}(\Sigma_+)
	\] 
	\[
	\gamma\mapsto h\varphi_*(\gamma)h^{-1}.
	\]
	
	Then for two paths $h$, $h'$ the corresponding morphisms are related by:
	\begin{equation}\label{monodromy_rep_relation}
		\psi_M^h(x)=[h (h')^{-1}]_{\Heis}\cdot\psi_M^{h'}(x),\quad \forall x\in i_-^*L_n(\Sigma_-)
	\end{equation}
	

	\subsection{State spaces and the action of elementary cobordisms}\label{subs:Construction_of_homological_TQFTs}
	Suppose $\calL$ is a Morse compatible collection of Heisenberg local systems defined in the previous subsection. 
	
	\subsubsection{State spaces}
	The state space associated to $\Sigma\in 3\Cob$ is the $R$-module (subspace of small cycles):
	\[
		\boxed{\bbF_{\calL}(\Sigma):=\mathring{\bbH}(\Sigma;\calL(\Sigma)).}
	\]
	\subsubsection{Monoidality}\label{sec:monoidality}
	
	For a connected sum $\Sigma=\Sigma_1\natural \Sigma_2$ consider the composition:
	\[
		\mathring\bbH(\Sigma_1;\calL(\Sigma_1))\otimes \mathring\bbH(\Sigma_2;\calL(\Sigma_2))\to \mathring\bbH(\Sigma_1\sqcup\Sigma_2;\calL(\Sigma_1)\boxtimes\calL(\Sigma_2))\xrightarrow{(\mu_{\calL})_*} \mathring{\bbH}(\Sigma_1\sqcup\Sigma_2;\mu^*\calL(\Sigma))\xrightarrow{\mu_*} \mathring{\bbH}(\Sigma;\calL(\Sigma))
	\]
	where the first arrow is the K\"unneth homomorphism. We denote it
	\begin{equation}\label{eq:mu_bbF_def}
		\boxed{\mu^{\bbF}_{\calL}:\bbF_{\calL}(\Sigma_1)\otimes \bbF_{\calL}(\Sigma_2)\to \bbF_{\calL}(\Sigma_1\natural \Sigma_2).}
	\end{equation}

	It is easy to check that $\mu^{\bbF}_{\calL}$ is an isomorphism considering twisted cycles for an appropriate choice of a set of symplectic arcs. In particular, we choose a set of arcs $\{\alpha_i,\beta_i\}_{i=1}^{g_1+g_2}$ on $\Sigma$ such that none of them intersect the common boundary of $\Sigma_1$ and $\Sigma_2$ and all arcs for $i=1,\dots,g_1$ belong to $\Sigma_1$, all arcs for $i=g_1+1,\dots,g_1+g_2$ belong to $\Sigma_2$. Such a picture is always diffeomorphic to Fig.~\ref{fig:homology_splitting}. Let $L_1$ and $L_2$ be monodromy representations of $\calL(\Sigma_1)$ and $\calL(\Sigma_2)$ at $*_{k_1}$ and $*_{k_2}$, and $\mathbf{v}_1\in L_1$, $\mathbf{v}_2\in L_2$, then 
	\begin{multline}
		\mu_{\calL}^{\bbF}: \left(\mathbf{\Gamma}(a_1,b_1,\dots,a_{g_1},b_{g_1})\otimes \mathbf{v}_1\right)\bigotimes \left(\mathbf{\Gamma}(a_{g_1+1},b_{g_1+1},\dots,a_{g_1+g_2},b_{g_1+g_2})\otimes \mathbf{v}_2\right)\mapsto\\
		\mapsto \mathbf{\Gamma}(a_{1},b_{1},\dots,a_{g_1+g_2},b_{g_1+g_2})\otimes \mu_{\calL}(\mathbf{v}_1\otimes \mathbf{v}_2).
	\end{multline}
	It is an isomorphism since $\mu_{\calL}$ is an isomorphism ($\calL$ is monoidal) and all possible compositions \\$(a_1,b_1,\dots, a_{g_1+g_2},b_{g_1+g_2})$ are obtained uniquely from compositions $(a_1,b_1,\dots,a_{g_1},b_{g_1})$ and
	 \\$(a_{g_1+1},b_{g_1+1},\dots,a_{g_1+g_2},b_{g_1+g_2})$.

	\begin{figure}[h]
		\centering
		\includegraphics[width=0.7\linewidth]{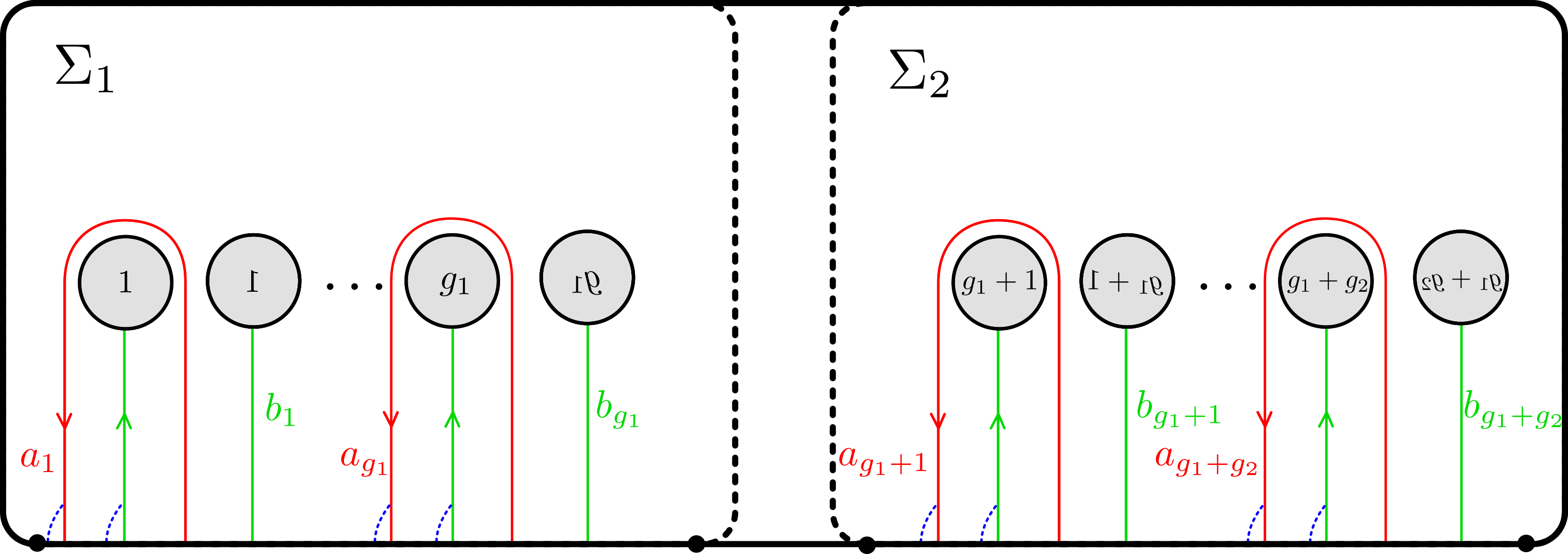}
		\caption{Twisted cycles giving a basis in $\bbH^{BM}(\Sigma;\calL(\Sigma))$ suitable for checking isomorphism $\mu^{\bbF}_{\calL}$.}
		\label{fig:homology_splitting}
	\end{figure}
	
	\subsubsection{Action of mapping classes}\label{sec:mapping_groupoid_rep}
	
	Let $d:\Sigma\to \Sigma'$ be a mapping class and $M_d$ the corresponding mapping cylinder. We define an action of $M_d$ as the composition
	\[
		\boxed{\bbF_{\calL}(d):\bbF_{\calL}(\Sigma)=\mathring{\mathbb{H}}(\Sigma;\calL(\Sigma))\xrightarrow{\phi_{d}} \mathring{\mathbb{H}}(\Sigma;d^*\calL(\Sigma'))\xrightarrow{d_*}\mathring{\mathbb{H}}(\Sigma';\calL(\Sigma'))=\bbF_{\calL}(\Sigma').}
	\]
	
	Note that $\bbF_{\calL}(d)$ preserves the pairing $\langle-,-\rangle$ since morphisms of local systems preserve $(-,-)$ and induced morphisms on twisted homology groups preserve intersections.
	
	\subsubsection{Index 1 cobordism action}\label{sec:index_1_action}
	Let $M:\Sigma_-\to\Sigma_+$ be an elementary index 1 cobordism with belt sphere $\nu:S^1\to \Sigma_+$. Recall definitions of $\psi_M$ (see \eqref{eq:psi_M_def}) and $\varphi$ (see \eqref{eq:varphi_def}).
	We first consider the following composition which we denote by $\Psi_M$:
	
	\begin{equation}\label{eq:Psi_M_def}
		\begin{tikzcd}
			\bbH^{BM}(\calT_M;i_-^*\calL(\Sigma_-))\arrow[d,"{\mathrm{Id}\otimes \left[\widetilde{\Conf}_{p-1}(S^1)\right]}"]\arrow[dddd,rounded corners=3pt , to path={-- ++(-4cm,0)  |-  node [left, near start] {$\Psi_M$} (\tikztotarget)}]
			\\ 
			\bbH^{BM}(\calT_M;i_-^*\calL(\Sigma_-))\otimes H^{BM}_{p-1}(\Conf_{p-1}(S^1);\calA)\arrow[d]\\
			\bbH^{BM}(\calT_M\sqcup S^1;i_-^*\calL(\Sigma_-)\boxtimes\calA)\arrow[d,"(\psi_M)_*"]\\
			\bbH^{BM}(\calT_M\sqcup S^1;\varphi^*\calL(\Sigma_+))\arrow[d,"\varphi_*"]\\
			\bbH^{BM}(\Sigma_+;\calL(\Sigma_+))
		\end{tikzcd}
	\end{equation}
	where the second vertical map is the K\"unneth homomorphism, and we use the notation
	\[
		\mathbb{H}^{BM}(\calT_M\sqcup S^1;\tilde\calL):=\bigoplus\limits_{n=0}^{\infty} H_{n+p-1}^{BM}(\Conf_n(\calT_M)\times \Conf_{p-1}(S^1),\Conf_n^-(\calT_M)\times \Conf_{p-1}(S^1);\tilde\calL)
	\]
	for any local system $\tilde{\calL}$.

	\begin{lemma}\label{lem:i_inversion}
	Suppose that the action of each $1-q^{2k}$, $0\leq k <p$, on $\calL$ is invertible and the action of $q^{2p}$ is trivial, for all $\Sigma\in 3\Cob$. Then in the diagram 
		\begin{equation}\label{eq:homological_index_1_span}
			\begin{tikzcd}
					&\bbH^{BM}(\calT_M;i_-^*\calL(\Sigma_-))\arrow[dl,"{(i_-)_*}"']\arrow[rd,"{\Psi_M}"]&\\
					\bbH^{BM}(\Sigma_-; \calL(\Sigma_-))\arrow[rr,"\exists !","\bbF_{\calL}^{BM}(M)"',blue]&& \bbH^{BM}(\Sigma_+;\calL(\Sigma_+))
			\end{tikzcd}
		\end{equation}
		the map $(i_-)_*$ is surjective and $\Ker(i_-)_*\subset \Ker(\Psi_M)$. Therefore, there is a unique homomorphism $\bbF^{BM}_{\calL}(M)$ making this diagram commutative.
	\end{lemma}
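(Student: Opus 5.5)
The plan is to compute $(i_-)_*$ and $\Psi_M$ explicitly on bases of twisted cycles, using the basis of $\bbH^{BM}(\calT_M;i_-^*\calL(\Sigma_-))$ from Fig.~\ref{fig:trajectory_space_basis}. Since the index $1$ trajectory space $\calT_M$ is $\Sigma_-$ with two open discs removed, I would choose symplectic arcs on $\calT_M$ that are compatible with those of $\Sigma_-$. Concretely, take arcs $\{\alpha_i,\beta_i\}_{i=1}^{g}$ of $\Sigma_-$ avoiding the attaching tube $\bbS_-$. Then add two further arcs: an arc $a$ from $\partial_-$ to one removed boundary circle, and an arc $\overline{a}$ to the other, with labels $a,\overline{a}$. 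By Proposition~\ref{prop:middle_dimension} and its analogue for $\calT_M$, $\bbH^{BM}(\calT_M;i_-^*\calL)$ is free on the cycles $\mathbf{\Gamma}(\mathbf{a},\mathbf{b},a,\overline{a})\otimes\mathbf{v}_c$. The map $(i_-)_*$ sends such a cycle to the cycle in $\Sigma_-$ in which the two extra arcs end on circles that now bound discs. These arcs can therefore be contracted.

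First, surjectivity. The cycle $\mathbf{\Gamma}(\mathbf{a},\mathbf{b},0,0)\otimes\mathbf{v}$ maps to $\mathbf{\Gamma}(\mathbf{a},\mathbf{b})\otimes\mathbf{v}$, because $i_-$ is an inclusion and the path to the base point can be chosen inside $\calT_M$. Every basis element of $\bbH^{BM}(\Sigma_-;\calL)$ is hit this way, so $(i_-)_*$ is surjective.

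Second, the kernel. I will show that $\Ker(i_-)_*$ is spanned by the cycles with $(a,\overline{a})\neq(0,0)$, after a triangular change of basis. An arc ending on a circle $\partial D$ can be closed up inside $\Sigma_-$: push the configuration points off the end of the arc into the disc $D$. Since points escape into the filled disc, a Borel--Moore cycle supported on $\Conf_k$ of an arc ending on a boundary circle of $\calT_M$ becomes a boundary in $\Sigma_-$. More precisely, its image is represented by a chain whose points can be swept off through $D$, which is not removed in $\Sigma_-$. I would make this precise with the diagrammatic calculus of Appendix~\ref{sec:Diag_calculus}. Writing the arc ending at $\partial D$ as a loop around $D$ minus a complementary arc relates the cycle to lower-label cycles. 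The independent kernel elements then come out as combinations $\mathbf{\Gamma}(\mathbf{a},\mathbf{b},a,\overline{a})\otimes\mathbf{v}$ with $(a,\overline{a})\ne 0$, modified by terms with $a=\overline{a}=0$. The rank count from Proposition~\ref{prop:middle_dimension} then identifies the kernel precisely.

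Third, and this is the main step: $\Psi_M$ kills these kernel generators. Apply $\Psi_M$ to a cycle with an arc of label $a\ge1$ ending at the foot of the handle. The $p-1$ points of $[\widetilde{\Conf}_{p-1}(S^1)]$ on the belt sphere, together with the $a$ points on an arc crossing or abutting the belt region, have a pushforward in $\Sigma_+$. Using the fusion rule and the twisted fundamental class, which carries the coefficient $\sum_k(-t)^k$ via $\eta^A$ and $\psi_M$, I expect this pushforward to acquire a factor $[p]_q!$ or a multiple of $1-(-\sigma)^p=1-q^{2p}$. Both vanish by Remark~\ref{rem:quantum_invertibility}, while the invertibility of $[k]_q!$ for $k<p$ is used to normalise the remaining cycles. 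Equivalently, I would compute $\Psi_M$ of the kernel element as a cycle of $p-1+a\ge p$ points merged on a single arc through the belt tube. I expect this computation to be the main obstacle, because the sign and power of $q$ bookkeeping across the $p'$ sheets must produce exactly the vanishing factor. Once it is done, uniqueness and existence of $\bbF^{BM}_{\calL}(M)$ follow from surjectivity and the kernel inclusion.
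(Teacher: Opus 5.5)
Your overall plan (compute $(i_-)_*$ and $\Psi_M$ on a basis of twisted cycles of $\calT_M$) is the paper's, but the basis you pick does not consist of cycles, and this derails the kernel step. Since $\calT_M=\mathrm{Cl}(\Sigma_-\setminus\bbS_-)$ is compact and contains the two new boundary circles, an arc running from $\partial_-\calT_M$ to such a circle does not give a relative Borel--Moore cycle: the face of $\Conf_k(\text{arc})$ where a point reaches the circle is neither at infinity nor in $\Conf_n^-(\calT_M)$.

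The basis of Fig.~\ref{fig:trajectory_space_basis} instead uses arcs $\alpha^M,\overline{\alpha}^M$ with \emph{both} endpoints on $\partial_-\calT_M$, each encircling one removed disc. With these the kernel step is immediate. In $\Sigma_-$ the discs are filled, so $i_-(\alpha^M)$ and $i_-(\overline{\alpha}^M)$ isotope into $\partial_-\Sigma_-$. Hence every basis cycle with $a+\overline{a}>0$ is pushed into $\Conf_n^-(\Sigma_-)$ and dies. Thus $(i_-)_*$ is the projection onto the $a=\overline{a}=0$ part, and $\Ker(i_-)_*$ is exactly the span of the cycles with $a+\overline{a}>0$; no change of basis or rank count is needed.

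Your mechanism (points ``escaping'' into the filled disc) is not a reason for a Borel--Moore class to vanish, since the interior of the disc is neither at infinity nor in $\Conf^-$. The resulting kernel description also clashes with your step three. If kernel generators carried corrections with $a=\overline{a}=0$, killing the uncorrected cycles would not suffice, because $\Psi_M$ is nonzero on the $a=\overline{a}=0$ cycles (see \eqref{eq:index_1_preaction}).

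The decisive step, $\Psi_M(\Ker(i_-)_*)=0$, is left as an expectation. The vanishing factor that actually appears is a quantum binomial, obtained from three concrete inputs.
(i) The monodromy $[h\varphi_*(1,t)h^{-1}]_{\Heis}=-(0,-[\alpha^+_{g+1}])$. Hence the coefficient $\eta^A(1)=\sum_k(-t)^k$ becomes $\sum_{k}(0,-k[\alpha^+_{g+1}])$ acting on $\psi_M^h(\mathbf{v}\boxtimes 1)$.
(ii) After dragging a segment of the belt sphere $\nu$ to $\partial_-\Sigma_+$ along $h^{-1}$, $\nu$ becomes an arc parallel to $i_+(\overline{\alpha}^M)$. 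So $\Delta$ turns into $p-1$ points on an arc parallel to the one carrying $\overline{a}$ points, and the fusion rule produces $\sqbinom{\overline{a}+p-1}{\overline{a}}_q$.
(iii) If $\overline{a}=0$, one slides $\Delta$ over the handle until it is parallel to $i_+(\alpha^M)$, reverses its orientation and fuses, producing $\sqbinom{a+p-1}{a}_q$.

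For $0<a<p$ this binomial vanishes, because $[a+p-1]_q!$ contains $[p]_q=0$ while $[a]_q!\,[p-1]_q!$ is invertible. It is not zero for every $a\geq 1$, however: for instance $\sqbinom{2p-1}{p}_q$ is a unit. So your claim that any merge of $p-1+a\geq p$ points carries a vanishing factor is false as stated. The argument really handles only labels $<p$; that is enough for the restriction to small cycles, but not for the lemma as literally stated on full Borel--Moore homology.
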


	\begin{proof}

		The proof is an explicit computation via twisted cycles. We consider a standard index 1 cobordism depicted in Fig.~\ref{fig:cointegral}. This computation is valid for any other index 1 cobordism up to diffeomorphisms of $\calT_M$, $\Sigma_-$ and $\Sigma_+$ commuting with $i_-$ and $i_+$.
		
		\begin{figure}[h]
			\centering
			\includegraphics[width=0.8\linewidth]{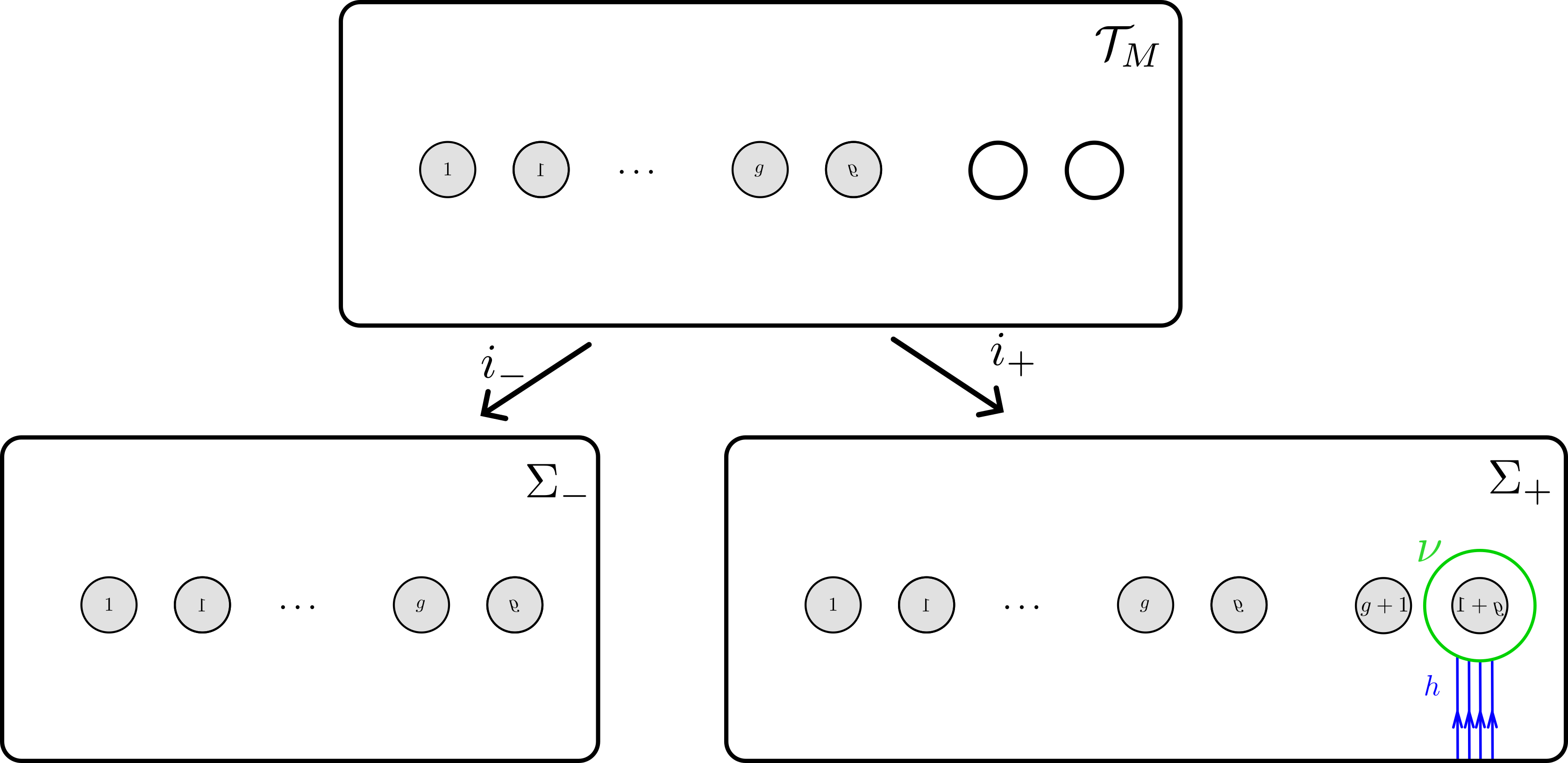}
			\caption{A standard index 1 cobordism with belt sphere $\nu$. Here $h$ is the path from $*_{n+p-1}$ to $*_{n}\cup\nu(s_{p-1})$ moving $p-1$ rightmost points in parallel vertically and constant on other points.}
			\label{fig:cointegral}
		\end{figure}
		
		We choose a set of symplectic arcs on both $\Sigma_-$ and $\Sigma_+$ and a set of arcs in $\calT_M$ consistent with those, see Fig.~\ref{fig:Sigma_W_arcs}. We identify $\Heis(\Sigma)$ and $\Heis'(\Sigma)$ by Proposition~\ref{prop:Heisenberg_iso} using these arcs. We consider twisted cycles given by these arcs. We assign labels $a_i$ to arcs $\alpha_i^{x}$, and $b_i$ to arcs $\beta_i^{x}$, $x\in\{+,-,M\}$, $a$ to $\alpha^M$ and $\overline{a}$ to $\overline{\alpha}^M$ (compare Fig.~\ref{fig:cointegral} and Fig.~\ref{fig:Sigma_W_arcs}).
		\begin{figure}[h]
			\centering
			\includegraphics[width=0.8 \linewidth]{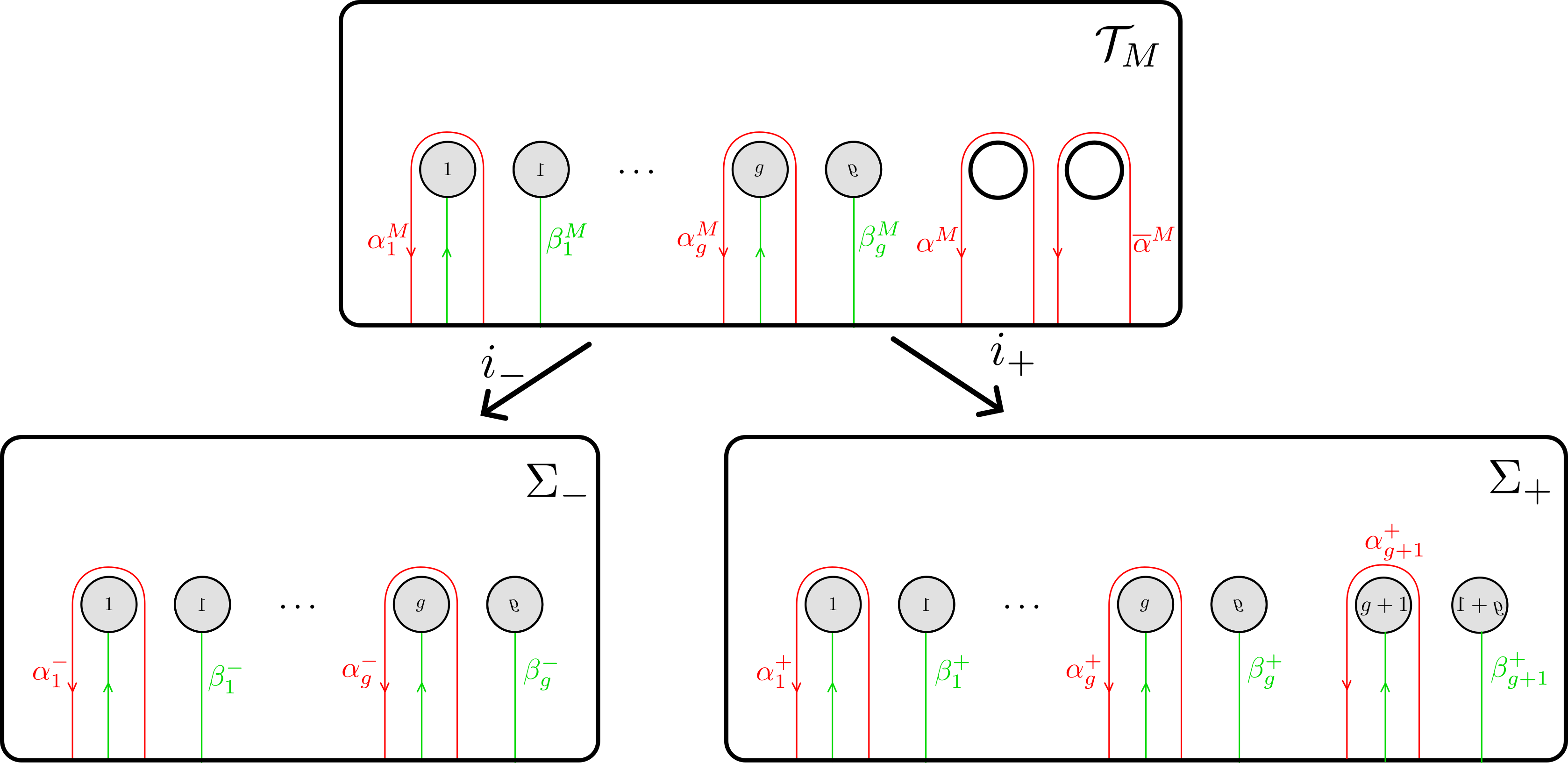}
			\caption{A choice of symplectic arcs for $\Sigma_+$, $\Sigma_-$ and $\calT_M$. The subset of arcs $\alpha_j^M,\beta_j^M$ for $j=1,\dots,g$ sent by $i_-$ to $\Sigma_-$ form a symplectic set of arcs $G^-:=\{\alpha_j^-,\beta_j^-\}$ in $\Sigma_-$ for $\alpha_j^-:=i_-(\alpha_j^M)$, $\beta_j^-:=i_-(\beta_j^M)$. Denote arcs in $\Sigma_+$ by $\alpha_j^+:=i_+(\alpha_j^M)$ and $\beta_j^+:=i_+(\beta_j^M)$ for $j=1,\dots,g$, and $\alpha_{g+1}^+:=i_+(\alpha^M)$. An arc $\beta_{g+1}$ in $\Sigma_+$ can be chosen such that $G^+:=\{\alpha_i^+,\beta_i^+\}_{i=1}^{g+1}$ is a symplectic set of arcs. For $\Sigma_-$ and $\Sigma_+$ we fix isomorphisms \eqref{eq:def_Heisenberg} $\rho_{G^-}$ and $\rho_{G^+}$ correspondingly.}
			\label{fig:Sigma_W_arcs}
		\end{figure}

		Fix $n\geq 0$. Let $L_-$ and $L_+$ be the monodromy representations for $\calL_n(\Sigma_-)$ and $\calL_{n+p-1}(\Sigma_+)$ at $*_n$ and $*_{n+p-1}$ correspondingly. For $\mathbf{v}\in L_-$ consider the element of $\bbH^{BM}(\calT_M;i_-^*\calL(\Sigma_-))$ given by the twisted cycle $\mathbf{\Gamma}(a_1,b_1,\dots,a_g,b_g,a,\overline{a})\otimes \mathbf{v}$. Its image in $\bbH^{BM}(\Sigma_-;i_-^*\calL(\Sigma_-))$ is:
		\[
			(i_-)_*:\mathbf{\Gamma}(a_1,b_1,\dots,a_g,b_g,a,\overline{a})\otimes \mathbf{v}\mapsto \delta_{a,0}\delta_{\overline{a},0}\cdot\mathbf{\Gamma}(a_1,b_1,\dots,a_g,b_g)\otimes \mathbf{v}
		\]
		since the images of arcs $\alpha^M$ and $\overline{\alpha}^M$ in $\Sigma_-$ can be contracted to $\partial_-\Sigma_-$, the corresponding cycles for non-zero $a$ or $\overline{a}$ vanish.
		Clearly $(i_-)_*$ is surjective and the kernel of $(i_-)_*$ is spanned by twisted cycles with $a+\overline{a}>0$:
		\[
			\mathrm{Ker}(i_-)_*=R\langle \mathbf{\Gamma}(a_1,b_1,\dots,a_g,b_g,a,\overline{a})\otimes \mathbf{v}| \ a+\overline{a}>0, \ \mathbf{v}\in L_- \rangle.
		\]
		
		We now compute the action of $\Psi_M$. Fix $\mathbf{v}\in L_-$, then $\psi_M(\mathbf{v}\boxtimes 1)$ is an element of the monodromy representation of $\calL(\Sigma_+)$ at $*_n\cup s_{p-1}$. We connect $*_n\cup s_{p-1}$ with $*_{n+p-1}$ by the path $h$ as in Fig.~\ref{fig:cointegral}. This allows us to identify the monodromy representation at $*_n\cup s_{p-1}$ with the monodromy representation at $*_{n+p-1}$, namely $L_+$. Recall that $\psi^h_M(\mathbf{v}\boxtimes 1)$ is the element of $L_+$ obtained by this identification from $\psi_M(\mathbf{v}\boxtimes 1)$ (see Subsection \ref{sec:monodromy_reps}).
		
		Recall that $\varphi$ (defined in \eqref{eq:varphi_def}) inserts $\Conf_{p-1}(S^1)$ by sending its configuration points to the belt sphere $\nu:S^1\to \Sigma_+$. Recall that $t$ is the generator of $ \pi_1(\Conf_{p-1}(S^1),s_{p-1})$ (see Fig.~\ref{fig:t_loop}). Then $\varphi_* (1,t)\in \pi_1(\Conf_{n+p-1}(\Sigma_+), *_n\cup \nu(s_{p-1}))$ is the loop moving $p-1$ configuration points along the belt sphere and constant on $*_n$. We conjugate it by the path $h$ in order to obtain the loop $h\varphi_* (1,t)h^{-1}\in \pi_1(\Conf_{n+p-1}(\Sigma_+), *_{n+p-1})$, see Fig.~\ref{fig:loop_insertion}.
		
		\begin{figure}[h]
			\centering
			\includegraphics[width=0.7\linewidth]{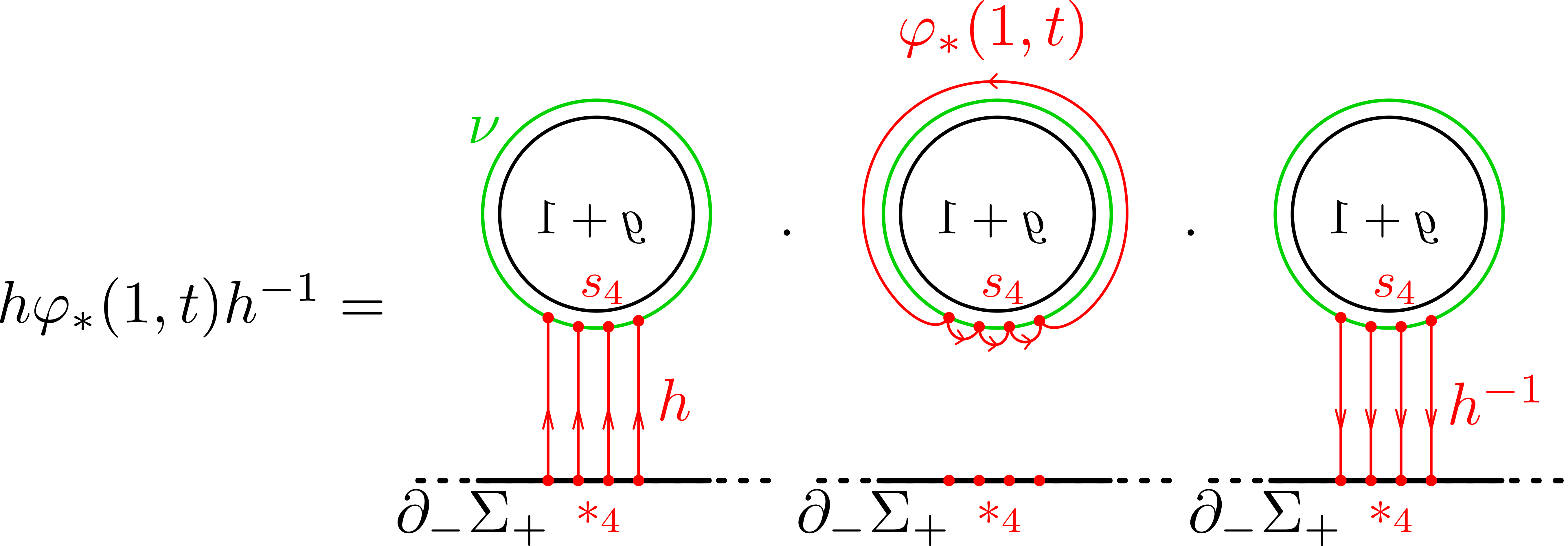}
			\caption{Red lines show the trajectories of each point under $h\varphi_*(1,t)h^{-1}$. There are two contributions in the Heisenberg evaluation of the loop $h\varphi_*(1,t)h^{-1}$: the first is $[i_+(\overline{\alpha})]_{\Heis}=(-2,-[\alpha_{g+1}^+])$ and given by a point moving around the meridian of the handle; the second is $[\sigma^{p-2}]_{\Heis}=-[\sigma^{-2}]_{\Heis}$ and given by the rightmost point moving to the left of the other $p-2$ points counter-clockwise.}
			\label{fig:loop_insertion}
		\end{figure}
		
		As an operator on the monodromy representation, we have
		\[
			[h\varphi_* (1,t)h^{-1}]_{\Heis}=[\sigma^{p-2} i_+(\overline{\alpha})]_{\Heis}=-(2,0)\cdot[(\beta_{g+1}^+)^{-1}(\alpha_{g+1}^+)^{-1}\beta_{g+1}^+]_{\Heis}=-(0,-[\alpha_{g+1}^+])
		\]
		where we used the assumption that $\sigma^p$ acts by $-1$. Recall that $\eta^A_*$ appearing in the definition of $\Psi_M$ sends $1$ to $\mathbf{t}=\sum_k (-t)^k$ (see \eqref{eq:eta_A_def}). Therefore, one can compute:
		\[
			\psi_M^h\circ (\Id\boxtimes \eta^A) (\mathbf{v}\boxtimes 1)=\psi_M^{h}\left(\sum\limits_{k=0}^{2p-1}(\mathbf{v}\boxtimes(-t)^k)\right)=\sum\limits_{k=0}^{2p-1}(0,-k[\alpha_{g+1}^+])\cdot \psi_M^h(\mathbf{v}\boxtimes 1).
		\]
		It remains to check where the chain $\Delta$ (spanning $\Conf_{p-1}(I)$) is sent under the map $\varphi$. The belt sphere $\nu$ can be isotoped into an arc parallel to $i_+(\overline{\alpha})$ by moving all configuration points of $s_{p-1}$ along $h^{-1}$ together with a segment of $\nu$. Hence one obtains the twisted cycle in Fig.~\ref{fig:d_r_action_BM}. Because of the quantum binomial $\sqbinom{\overline{a}+p-1}{\overline{a}}_q$ this twisted cycle vanishes for any $\overline{a}>0$.
		
		\begin{figure}[h]
			\centering
			\includegraphics[width=1\linewidth]{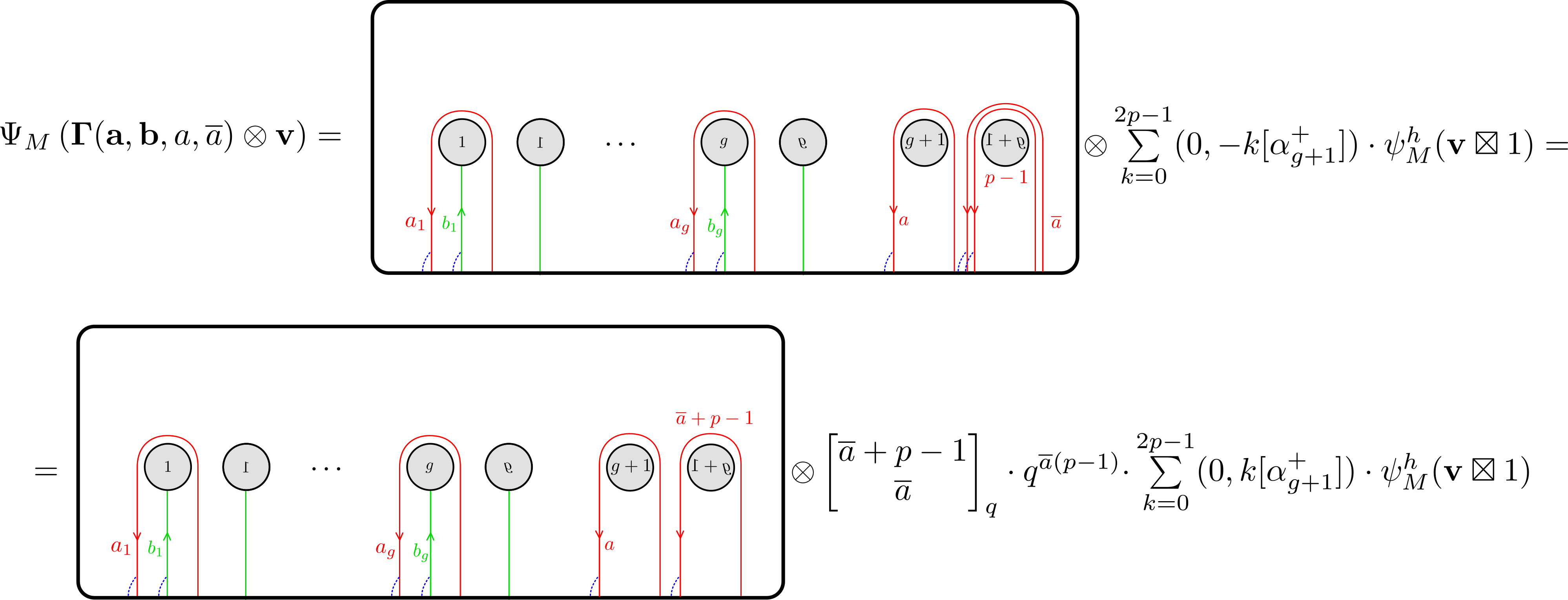}
			\caption{The twisted cycle on the top is obtained by isotoping the chain $\Delta$ into an arc parallel to $i_+(\overline{\alpha}^M)$, where it becomes a cycle relative to $\Conf_*^-(\Sigma_+)$. At the bottom is the twisted cycle obtained after applying the fusion rule \ref{fig:Fusion_rule} to the parallel arcs.}
			\label{fig:d_r_action_BM}
		\end{figure}

		Let now $\overline{a}$ be zero. Similarly, one can isotope $\Delta$ into an arc parallel to $i_+(\alpha)$; see Fig.~\ref{fig:second_insertionM}. A quantum binomial $\sqbinom{a+p-1}{a}_q$ appears after switching the orientation of the arc labelled by $(p-1)$, and applying the fusion rule. It vanishes for any non-zero $a$. Therefore any twisted cycle with $a+\overline{a}>0$ is sent to zero by $\Psi_M$.
		
		\begin{figure}[h]
			\centering
			\includegraphics[width=0.75\linewidth]{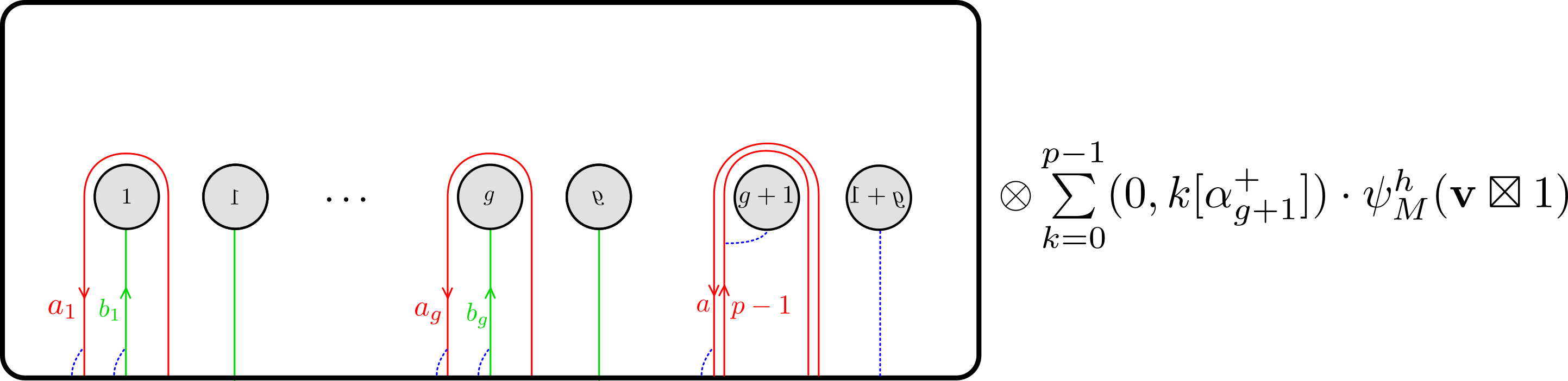}
			\caption{Twisted cycle obtained after sliding $\Delta$ along the attached handle and isotoping it into an arc parallel to $\alpha_{g+1}^+$.}
			\label{fig:second_insertionM}
		\end{figure}

	\end{proof}

	Putting $a=0$ in Fig.~\ref{fig:second_insertionM} and using the braid rule (see Appendix~\ref{sec:Diag_calculus}) we obtain an explicit formula for the action of $\bbF_{\calL}(M)$ for the choice of symplectic arcs as above:
	
	\begin{equation}\label{eq:index_1_preaction}
		\boxed{\bbF^{BM}_{\calL}(M):\mathbf\Gamma(\mathbf{a},\mathbf{b})\otimes \mathbf{v}\mapsto \mathbf{\Gamma}(\mathbf{a},\mathbf{b},p-1,0)\otimes ((p-2)(p-1)/2, (p-1)[\beta_{g+1}^+])\sum\limits_{k=0}^{p-1}(0,k[\alpha_{g+1}^+])\psi_M^h(\mathbf{v}\boxtimes 1).}
	\end{equation}

	Note that by this formula the action restricts well to the subspace of small cycles and we denote
	\[
		\boxed{\bbF_{\calL}(M):=\bbF^{BM}_{\calL}(M)\Big|_{\mathring{\bbH}(\Sigma_-;\calL(\Sigma_-))}:\mathring{\bbH}(\Sigma_-;\calL(\Sigma_-))\to \mathring{\bbH}(\Sigma_+;\calL(\Sigma_+)).}
	\]
	
	\subsection{Index 2 cobordism action}\label{sec:index_2_action_def}
	
	Let $M':\Sigma_-'\to \Sigma_+'$ be an elementary cobordism of index 2 with attaching sphere $\nu:S^1\to \Sigma_-'$. Consider the dual cobordism $\overline{M}:\overline{\Sigma}'_+\to \overline{\Sigma}'_-$, which is of index 1. Assuming that the monodromy action of each $1-q^{2k}$, $0\leq k<p$ on $\calL$ is invertible, and $q^{2p}$ acts trivially, the homomorphism
	\[
		\bbF_{\calL}(\overline{M}'):\bbF_{\calL}(\overline{\Sigma}'_+)\to \bbF_{\calL}(\overline{\Sigma}'_-)
	\]
	is defined by Lemma~\ref{lem:i_inversion} and discussion after.
	
	Since $\calL(\overline{\Sigma})$ is identified with $\overline{\calL(\Sigma)}$ for each $\Sigma$, the intersection pairing on small cycles (see Proposition~\ref{prop:pairing_small_cycles}) can be viewed as a perfect pairing on state spaces:
	\[
		\langle-,-\rangle:\bbF_{\calL}(\Sigma)\otimes \bbF_{\calL}(\overline{\Sigma})\to R.
	\]
	Hence the dual to $\bbF_{\calL}(\overline{M}')$ with respect to this pairing exists. We define the action of $M'$ as the dual. Specifically, let $\bbF_{\calL}(M')$ be the homomorphism
	\[
		\bbF_{\calL}(M'):\bbF_{\calL}(\Sigma_-')\to \bbF_{\calL}(\Sigma_+')
	\]
	determined by
	\[
	\langle\bbF_{\calL}(M') \Gamma_-,\Gamma_+\rangle=\langle\Gamma_-,\bbF_{\calL}(\overline{M}')\Gamma_+\rangle, \quad \forall \Gamma_-\in \bbF_{\calL}(\Sigma_-'), \ \forall \Gamma_+\in \bbF_{\calL}(\overline{\Sigma}_+').
	\]
	
	For explicit computations it is useful to derive $\bbF_{\calL}(\overline{M}')$ in the dual basis of twisted cycles. For this, we take $M'=\overline{M}$, where $M$ is the standard index 1 cobordism $M$ as in Fig.~\ref{fig:Sigma_W_arcs}. The corresponding formula for $\Psi_{M}$ in the dual basis of twisted cycles is computed in Fig.~\ref{fig:Psi_M_dual}, resulting in the formula:
	\begin{equation}\label{eq:index_2_preaction}
		\boxed{\bbF_{\calL}(M):\mathbf\Gamma^{\vee}(\mathbf{a},\mathbf{b})\otimes \mathbf{v}\mapsto \mathbf{\Gamma}^{\vee}\left(\mathbf{a},\mathbf{b},0,p-1)\otimes ((p-2)(p-1)/2, (p-1)[\beta_{g+1}^+]\right)\sum\limits_{k=0}^{p-1}(0,k[\alpha_{g+1}^+])\psi_M^h(\mathbf{v}\boxtimes 1).}
	\end{equation}
	
	\begin{figure}[h]
		\centering
		\includegraphics[width=1\linewidth]{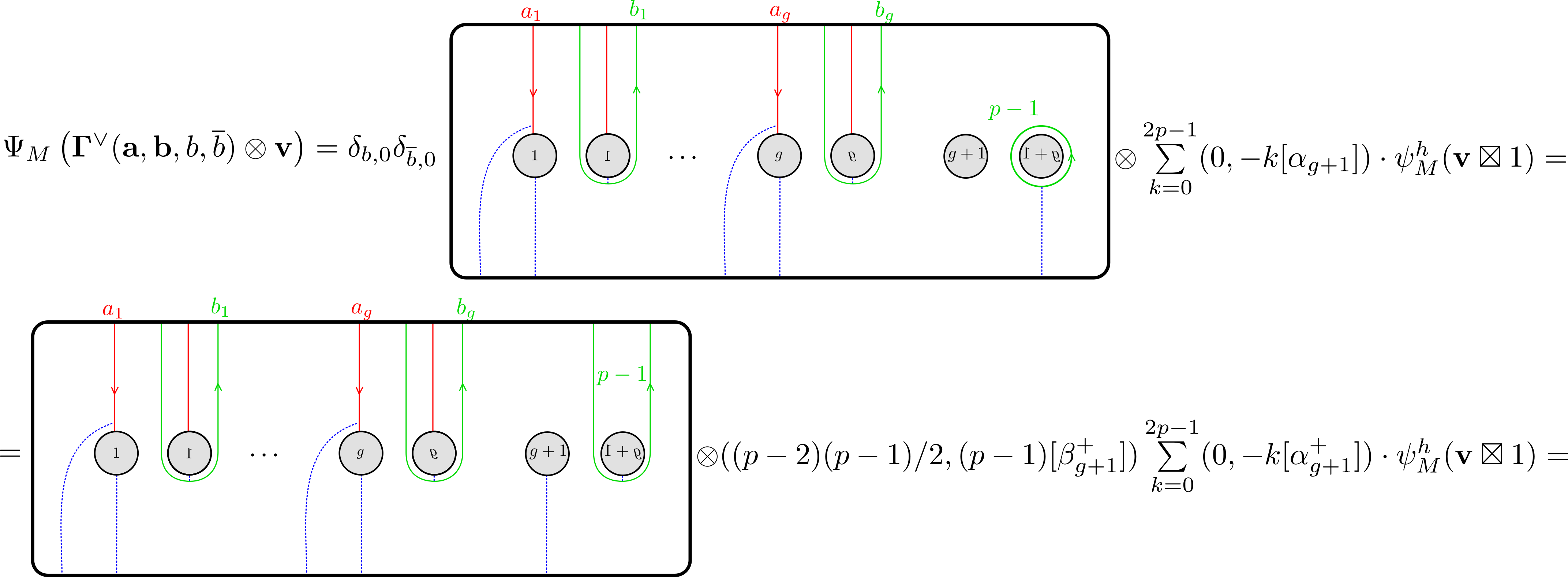}
		\caption{The result of the action of $\Psi_{M}$ on the dual basis of twisted cycles. Similarly to the computations above, the images of all cycles with $b+\overline{b}>0$ vanish. In the case $b=\overline{b}=0$, we isotope the inserted circle into an arc relative to $\partial_+\Sigma_+$ and apply the braid rule from Fig.~\ref{fig:Braid_rule}.}
		\label{fig:Psi_M_dual}
	\end{figure}

	\subsection{Functoriality}\label{subs:Functorialty}
	
	Denote by $\mathrm{Mod}_R^{\pm}$ the category of $R$-modules and $R$-linear maps up to sign.
	
	\begin{maintheorem}\label{thm:main_thm}
	Let $R$ be a unital commutative ring with involution. Let $\calL=\{\calL(\Sigma)\}_{\Sigma\in 3\Cob}$ be a collection of $p$-Heisenberg local systems of $R$-modules, such that each $\calL(\Sigma)$ is equipped with a perfect sesquilinear pairing, the monodromy action of all $1-q^{-2k}$, $0\leq k<p$, on $\calL(\Sigma)$ is invertible, $q^{2p}$ acts trivially, and $\overline{\calL(\Sigma)}$ is identified with $\calL(\overline{\Sigma})$. Suppose $\calL$ is Morse compatible. Then the assignments $\Sigma \mapsto\bbF_{\calL}(\Sigma)$ for all objects $\Sigma\in 3\Cob$ and $M\mapsto\bbF_{\calL}(M)$ for all elementary cobordisms $M\in \mathrm{Mor}(3\Cob)$ give rise to a monoidal functor
	\[
	\bbF_{\calL}:3\Cob\to \mathrm{Mod}_R
	\]
	for odd $p\geq 3$, and to a monoidal functor
	\[
	\bbF_{\calL}:3\Cob\to \mathrm{Mod}_R^{\pm}
	\]
	for $p=2$.
	\end{maintheorem}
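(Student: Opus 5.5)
The plan is to use Juhász's presentation (Theorem~\ref{thm:Juhasz_relations}). By the discussion following it, it suffices to do three things. First, give a monoidal functor $\bbF_{\calL}:\mathrm{Surf}\to\mathrm{Mod}_R$. Second, assign to each edge $e_{\Sigma,\bbS}$ the morphism $\bbF_{\calL}(M_{\bbS})$ constructed in Subsections~\ref{sec:index_1_action} and~\ref{sec:index_2_action_def}. Third, check relations (1)--(5) together with the monoidality squares \eqref{eq:monoidality_condition}.

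\textbf{Mapping cylinders and relation (1).} Functoriality on $\mathrm{Surf}$ follows from condition~\eqref{item:mcg_rep} of Morse compatibility. Naturality of the induced maps on twisted homology in both arguments gives $d'_*\circ(\phi_{d'})_*\circ d_*\circ(\phi_d)_*=(d'\circ d)_*\circ(d^*\phi_{d'}\circ\phi_d)_*$, and \eqref{eq:mcg_rep} identifies the right-hand side with $\bbF_{\calL}(d'\circ d)$. Isotopy invariance comes from homotopy invariance of homology. The fact that $\mu^{\bbF}_{\calL}$ is a monoidal structure uses condition~\eqref{item:monoidal}, and compatibility with the braiding uses the braiding condition. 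In both cases one pushes Künneth classes through the commutative squares \eqref{eq:monoidal_topological}.

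\textbf{Relations (2), (3), (5) and monoidality for index 1 and 2.} For index~1 these are obtained from the span \eqref{eq:homological_index_1_span}. Lemma~\ref{lem:i_inversion} shows that $\bbF^{BM}_{\calL}(M)$ is characterized by $\bbF^{BM}_{\calL}(M)\circ(i_-)_*=\Psi_M$, so each identity reduces to the corresponding identity between $\Psi$'s precomposed with a surjection.

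For relation (2), the diffeomorphism $d$ induces $d_{\calT}$ as in \eqref{eq:invariance_diag}, and condition~\eqref{item:invariance} gives $\Psi_{M'}\circ (d_{\calT})_*(\phi_{d_-})_*=(d_+)_*(\phi_{d_+})_*\Psi_M$. Relation (5) is the special case in which $d$ reverses the framing. Here one must also check that the fundamental class $[\widetilde{\Conf}_{p-1}(S^1)]$ is preserved up to the sign ambiguity. This is exactly where $p=2$ forces us into $\mathrm{Mod}_R^{\pm}$: reversing $S^1$ maps $1+t$ to $\pm(1+t^{-1})$. For odd $p$, the alternating sum $\sum(-t)^k$ is invariant.

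The monoidality squares and the disjoint index-1 commutations in (3) follow from condition~\eqref{item:locality}, via \eqref{eq:Locality_topologiacl} and Künneth. For index~2 cobordisms, every such relation is obtained by taking adjoints with respect to the perfect pairing of Proposition~\ref{prop:pairing_small_cycles}. This uses $\overline{\calL(\Sigma)}=\calL(\overline\Sigma)$ and the fact that mapping classes preserve $\langle-,-\rangle$. Commutation of an index-1 and an index-2 cobordism on disjoint spheres in (3) needs one more check. I would verify it directly on bases by comparing the explicit formulas \eqref{eq:index_1_preaction} and \eqref{eq:index_2_preaction}, using disjoint symplectic arcs and locality.

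\textbf{Main obstacle: handle cancellation (4).} By (2) and locality, it suffices to treat the local model: a genus-$0$ surface, an index-2 cobordism, then an index-1 cobordism whose attaching sphere meets the belt sphere once; everything else is tensored on by $\mu^{\bbF}_{\calL}$. For this local model, the $n=0$ part of the state spaces is just $R$. I would compute the composite through the pairing, by writing $\langle \bbF(M_2)\bbF(M_1)x, y\rangle=\langle\bbF(M_1)x,\bbF(\overline{M}_2)y\rangle$. Both factors are explicit twisted cycles in $\Sigma$ carrying $p-1$ points on the belt sphere and on the attaching sphere respectively, given by \eqref{eq:index_1_preaction} and \eqref{eq:index_2_preaction}. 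These cycles intersect only in configurations of $\Conf_*(\mathring\Sigma)\times\{*_\pitchfork\}$. Evaluating the local intersection with the fused arcs produces the factor $[p-1]_q!\,q^{(p-2)(p-1)/2}$ from the fusion rule. The remaining coefficient is precisely the left column of \eqref{eq:cancellation}, which Morse compatibility sets equal to the identity. Controlling the signs, the $q$-powers and the sum over $k$ in this intersection computation is the delicate step. I would first carry it out for $p=2$, where it is a single transverse point, and then generalize.
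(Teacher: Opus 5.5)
Your proposal is correct and follows essentially the same route as the paper. The steps match:
\begin{itemize}
\item You use Juh\'asz's presentation.
\item You get relations (1)--(2) from the corresponding Morse compatibility conditions, using that $(i_-)_*$ is surjective with $\Ker(i_-)_*\subset\Ker\Psi_M$.
\item You obtain the index-$2$ statements by adjunction with respect to the small-cycle pairing.
\item You get relation (5) from invariance of $\eta^A$, with a sign ambiguity at $p=2$.
\item You prove relation (4) by computing $\langle\bbF_{\calL}(M_1)(1),\bbF_{\calL}(\overline{M}_2)(1)\rangle$ at a single intersection point. The factor $\mathbf{p}$ is absorbed there and \eqref{eq:cancellation} concludes. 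The composite you actually compute is index $1$ followed by index $2$, despite your verbal description.
\end{itemize}
One point to tighten: the Locality condition only gives the monoidality square with the index-$1$ cobordism in the right tensor factor. The paper uses the Braiding condition precisely to obtain the other square, which you need for relation (3); it is not used to make $\bbF_{\calL}$ braided, which the theorem does not assert.
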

	
	\begin{proof}

	We first check that $\bbF_{\calL}$ is monoidal on the level of $\mathrm{Surf}$.
	Recall that if a surface $\Sigma$ has a decomposition $\Sigma=\Sigma_1\natural \Sigma_2$ then by \ref{sec:monoidality} there is a natural isomorphism 
	 \[
		\mu^{\bbF}_{\calL}:\mathring{\bbH}(\Sigma_1;\calL(\Sigma_1))\otimes\mathring{\bbH}(\Sigma_2;\calL(\Sigma_2))\to \mathring{\bbH}(\Sigma;\calL(\Sigma)).
	\]
	 Suppose $d_1:\Sigma_1\to \Sigma_1'$ and $d_2:\Sigma_2\to\Sigma_2'$ are two mapping classes, $\Sigma:=\Sigma_1\natural \Sigma_2$, $\Sigma'=\Sigma_1'\natural\Sigma_2'$ and $d:\Sigma\to \Sigma'$ is the induced mapping class. It is exactly the situation described in diagram \eqref{eq:monoidal_topological} for the \emph{Monoidal structure} condition on Morse compatible local systems. Denote $\calL_1=\calL(\Sigma_1)$, $\calL_2=\calL(\Sigma_2)$, $\calL_1'=\calL(\Sigma'_1)$, $\calL_2'=\calL(\Sigma_2')$, $\calL=\calL(\Sigma)$ and $\calL'=\calL(\Sigma')$. Consider the diagram
		
	\[
		\begin{tikzcd}[column sep=large]
			\mathring{\bbH}(\Sigma_1;\calL_1)\otimes \mathring{\bbH}(\Sigma_2;\calL_2)\arrow[r,"\phi_{d_1*}\otimes \phi_{d_2*}"]\arrow[d]&\mathring{\bbH}(\Sigma_1,d_1^*\calL_1')\otimes \mathring{\bbH}(\Sigma_2,d_2^*\calL_2')\arrow[r,"(d_1)_*\otimes(d_2)_*"]\arrow[d]&\mathring{\bbH}(\Sigma_1',\calL_1')\otimes\mathring{\bbH}(\Sigma_2';\calL_2')\arrow[d]\\
			\mathring{\bbH}(\Sigma_1\sqcup \Sigma_2;\calL_1\boxtimes\calL_2)\arrow[r,"(\phi_{d_1}\boxtimes\phi_{d_2})_*"]\arrow[d,"(\mu_{\calL})_*"]&\mathring{\bbH}(\Sigma_1\sqcup\Sigma_2;d_1^*\calL_1'\boxtimes d_2^*\calL_2')\arrow[r,"(d_1\sqcup d_2)_*"]\arrow[d,"(d_1\sqcup d_2)^*(\mu_{\calL})_*"]&\mathring{\bbH}(\Sigma_1'\sqcup\Sigma_2';\calL_1'\boxtimes \calL_2')\arrow[d,"(\mu_{\calL})_*"]\\
			\mathring{\bbH}(\Sigma_1\sqcup \Sigma_2;\mu^*\calL)\arrow[r,"\mu^*(\phi_d)_*"]\arrow[d,"\mu_*"]&\mathring{\bbH}(\Sigma_1\sqcup \Sigma_2,\mu^*d^*\calL')\arrow[r,"(d_1\sqcup d_2)_*"]\arrow[d,"d^*\mu_*"]&\mathring{\bbH}(\Sigma_1'\sqcup\Sigma'_2;\mu^*\calL')\arrow[d,"\mu_*"]\\
			\mathring{\bbH}(\Sigma;\calL)\arrow[r,"(\phi_d)_*"]&\mathring{\bbH}(\Sigma;d^*\calL')\arrow[r,"d_*"]&\mathring{\bbH}(\Sigma';\calL')
		\end{tikzcd}
	\]		
	Commutativity of two squares in the first row follows from naturality of twisted K\"unneth homomorphism. The left square in the second row is commutative since it is obtained from \eqref{eq:Monoidal_structure} by applying functor $\mathrm{\bbH}(\Sigma_1\sqcup \Sigma_2;-)$. All other squares are commutative by functoriality of $\mathring{\bbH}$ in both arguments. 
	
	The monoidality condition \eqref{eq:monoidality_condition} can be checked in a similar way. Suppose $\Sigma_-=\Sigma_-'\natural\Sigma_-''$ and $\bbS\subset \Sigma_-''$ is an attaching tube for an index 1 cobordism, then we are exactly in the situation of the \emph{Locality} condition \eqref{eq:Locality_topologiacl}. Since the K\"unneth homomorphism is natural with respect to both arguments of homology, the square
	\[
		\begin{tikzcd}
			\bbH^{BM}(\calT_{M'};i_-^*\calL(\Sigma_-'))\otimes \bbH^{BM}(\calT_{M''};i_-^*\calL(\Sigma_-''))\arrow[r]\arrow[d,"\Id\otimes\Psi_{M''}"]&\bbH^{BM}(\calT_{M'}\sqcup \calT_{M''};i_-^*\calL(\Sigma_-')\boxtimes i_-^*\calL(\Sigma_-''))\arrow[d,"\Psi_{M''}"]\\
			\bbH^{BM}(\Sigma_+';\calL(\Sigma_+'))\otimes \bbH^{BM}(\Sigma_+'';\calL(\Sigma_+''))\arrow[r]&\bbH^{BM}(\Sigma_+'\sqcup\Sigma_+'';\calL(\Sigma_+')\boxtimes\calL(\Sigma_+''))
		\end{tikzcd}
	\]
	is commutative, where $\Psi_{M''}$ on the right is the same as in $\eqref{eq:Psi_M_def}$, but with $\calT_{M'}\sqcup\calT_{M''}$ instead of $\calT_M$. On the other hand, the diagram
	\[
		\begin{tikzcd}
			\bbH(\calT_{M'}\sqcup \calT_{M''};\mu^*i_-\calL(\Sigma_-))\arrow[r,"\mu_*"]\arrow[d,"\mu^*\Psi_M"]&\bbH^{BM}(\calT_M;i_-^*\calL(\Sigma_-))\arrow[d,"\Psi_M"]\\
			\bbH^{BM}(\Sigma_+'\sqcup \Sigma_+'';\mu^*\calL(\Sigma_+))\arrow[r,"\mu_*"]&\bbH^{BM}(\Sigma_+;\calL(\Sigma_+))
		\end{tikzcd}
	\]
	is commutative, where $\mu^*\Psi_M$ is obtained from $\Psi_M$ by pulling back all local system morphisms in \eqref{eq:Psi_M_def} along $\mu$ and replacing $\phi_M$ by $\Id \times \phi_M$. Therefore it remains to check that $\mu_{\calL}$ commutes with $\Psi_{M''}$. We verify this step by step, following the composition of homomorphisms in \eqref{eq:Psi_M_def}. The first square:
	\[
		\begin{tikzcd}
			\bbH^{BM}(\calT_{M'}\sqcup\calT_{M''};i_-^*(\calL(\Sigma_-')\boxtimes \calL(\Sigma_-'')))\arrow[d,"{(i_-^*\mu_{\calL})_*}"]\arrow[r,"{\Id\otimes[\widetilde{\Conf}_{p-1}(S^1)]} "{yshift=5pt}]&\bbH^{BM}(\calT_{M'}\sqcup\calT_{M''};i_-^*(\calL(\Sigma_-')\boxtimes \calL(\Sigma_-'')))\otimes \bbH^{BM}_{p-1}(S^1;\calA)\arrow[d,"{(i_-^*\mu_{\calL})_*\otimes \Id}"]\\
			\bbH^{BM}(\calT_{M'}\sqcup\calT_{M''};\mu^{*}i_-^*\calL(\Sigma_-))\arrow[r,"{\Id\otimes [\widetilde{\Conf}_{p-1}(S^1)]}"{yshift=5pt}] &\bbH^{BM}(\calT_{M'}\sqcup\calT_{M''};\mu^{*}i_-^*\calL(\Sigma_-))\otimes \bbH^{BM}_{p-1}(S^1;\calA)\\
		\end{tikzcd}
	\]
	clearly commutes, where we use notation 
	\[
		\bbH^{BM}_{p-1}(S^1;\calA):=H^{BM}_{p-1}(\Conf_{p-1}(S^1);\calA).
	\]
	The second square:
	
	\[
	\adjustbox{max width=\textwidth}{$
		\begin{tikzcd}[column sep=tiny]
			\bbH^{BM}(\calT_{M'}\sqcup\calT_{M''};i_-^*(\calL(\Sigma_-')\boxtimes \calL(\Sigma_-'')))\otimes \bbH^{BM}_{p-1}(S^1;\calA) \arrow[r]\arrow[d,"{(i_-^*\mu_{\calL})\otimes \Id}"]&\bbH^{BM}(\calT_{M'}\sqcup\calT_{M''}\sqcup S^1; i_-^*\calL(\Sigma_-')\boxtimes i_-^*\calL(\Sigma_-'')\boxtimes\calA)\arrow[d, "{(\mu_{\calL}\boxtimes \Id)}"]\\
			\bbH^{BM}(\calT_{M'}\sqcup\calT_{M''};\mu^{*}i_-^*\calL(\Sigma_-))\otimes\bbH^{BM}_{p-1}(S^1;\calA)\arrow[r]& \bbH^{BM}(\calT_{M'}\sqcup\calT_{M''}\sqcup S^1;\mu^*i_-^*\calL(\Sigma_-)\boxtimes \calA)
		\end{tikzcd}$
	}
	\]
	by naturality of K\"unneth homomorphism. Commutativity of the next square
	\[
		\begin{tikzcd}[column sep=large]
			\bbH^{BM}(\calT_{M'}\sqcup\calT_{M''}\sqcup S^1; i_-^*\calL(\Sigma_-')\boxtimes i_-^*\calL(\Sigma_-'')\boxtimes\calA)\arrow[d,"{(i_-^*\mu_{\calL}\boxtimes\Id)_*}"]\arrow[r,"{(\Id\sqcup \psi_M)_*}"]&\bbH^{BM}(\calT_{M'}\sqcup\calT_{M''}\sqcup S^1;\calL(\Sigma_+')\boxtimes \varphi^*\calL(\Sigma_+''))\arrow[d,"{\varphi^*\mu_{\calL}}"]\\
			\bbH^{BM}(\calT_{M'}\sqcup\calT_{M''}\sqcup S^1;\mu^*i_-^*\calL(\Sigma_-)\boxtimes \calA)\arrow[r,"{(\mu^*\psi_{M''})_*}"]&\bbH^{BM}(\calT_{M'}\sqcup\calT_{M''}\sqcup S^1;\mu^*\varphi^*\calL(\Sigma_+))
		\end{tikzcd}
	\]
	follows from \emph{Locality condition} \eqref{eq:locality} by applying the functor $\bbH^{BM}(\calT_{M'}\sqcup \calT_{M''}\sqcup S^1;-)$. Finally, the last square is simply commutative by functoriality of Borel--Moore homology in the first argument:
	\[
		\begin{tikzcd}
			\bbH^{BM}(\calT_{M'}\sqcup\calT_{M''}\sqcup S^1;\calL(\Sigma_+')\boxtimes \varphi^*\calL(\Sigma_+''))\arrow[r,"{(\Id\times \varphi)_*}"]\arrow[d,"{\varphi^*\mu_{\calL}}"]&\bbH^{BM}(\Sigma_+'\sqcup\Sigma_+'';\calL(\Sigma_+')\boxtimes \calL(\Sigma_+''))\arrow[d,"\mu_{\calL}"]\\
			\bbH^{BM}(\calT_{M'}\sqcup\calT_{M''}\sqcup S^1;\mu^*\varphi^*\calL(\Sigma_+))\arrow[r,"{(\Id\times \varphi)_*}"]&\bbH^{BM}(\Sigma_+'\sqcup \Sigma_+'';\mu^*\calL(\Sigma_+))
		\end{tikzcd}
	\]
	
		Combining everything, we obtain
	\[
		\Psi_M\circ\mu^{\bbF}_{\calL}=\mu_{\calL}^{\bbF}\circ (\Id\otimes \Psi_{M''}).
	\]
	By functoriality of $\bbH^{BM}$ in the first argument we also have:
	\[
		(i_-)_*\circ \mu_{\calL}^{\bbF}=\mu^{\bbF}_{\calL}\circ (\Id\otimes (i_-)_*).
	\]
	Hence by definition of $\bbF^{BM}_{\calL}$ (see Lemma~\ref{lem:i_inversion}) we have the commutative square:
	\begin{equation}\label{eq:proof_mon_1}
		\begin{tikzcd}
			\bbF_{\calL}^{BM}(\Sigma_-')\otimes \bbF_{\calL}^{BM}(\Sigma_-'')\arrow[r,"\mu^{\bbF}_{\calL}"]\arrow[d,"\Id\otimes \bbF^{BM}_{\calL}(M'')"]&\bbF_{\calL}^{BM}(\Sigma_-)\arrow[d,"\bbF^{BM}_{\calL}(M)"]\\
			\bbF_{\calL}^{BM}(\Sigma_+')\otimes \bbF_{\calL}^{BM}(\Sigma_+'')\arrow[r,"\mu^{\bbF}_{\calL}"]&\bbF_{\calL}^{BM}(\Sigma_+)
		\end{tikzcd}
	\end{equation}
	It restricts well to small cycles since all maps preserve them. Thus we proved commutativity of the right diagram in \eqref{eq:monoidality_condition}. In order to obtain the left diagram in \eqref{eq:monoidality_condition} we use the \emph{Braiding} condition \eqref{eq:braiding}. In the diagram:
	\[
		\begin{tikzcd}[column sep=large]
			i_-^*\calL(\Sigma_-'')\boxtimes i_-^*\calL(\Sigma_-')\boxtimes \calA\arrow[r,"\Id"]\arrow[d,"i_-^*\mu_{\calL}\boxtimes\Id"]&\tau^*i_-^*(\calL(\Sigma_-')\boxtimes\calL(\Sigma_-''))\boxtimes \calA\arrow[r,"{\tau^*(\Id\boxtimes\psi_{M''})}"]\arrow[d,"\tau^*i_-^*\mu_{\calL}\boxtimes \Id"]&\tau^*(i_+^*\calL(\Sigma_+')\boxtimes \varphi^*\calL(\Sigma_+''))\arrow[d,"\tau^*\varphi^*\mu_{\calL}"]\\
			i_-^*\mu^*\calL(\Sigma_-''\natural\Sigma_-')\boxtimes\calA\arrow[r,"{i_-^*\mu^*\phi_{\beta}\boxtimes \Id}"]&i_-^*\tau^*\mu^*(\calL(\Sigma_-')\natural\calL(\Sigma_-''))\boxtimes\calA\arrow[r,"{\tau^*\mu^*\psi_M}"]&\tau^*\mu^*\varphi^*\calL(\Sigma_+'\natural\Sigma_+'')
		\end{tikzcd}
	\]
 	the left square is commutative by \eqref{eq:braiding}, the right square is commutative by \eqref{eq:locality}. One can consider the $\varphi^*$ pull-back of \eqref{eq:braiding} to show that an alternative version of \emph{Locality} holds (with the action of $\psi_M$ on left component). Therefore an argument, analogous to the one above, proves commutativity of the diagram:
	\begin{equation}\label{eq:proof_mon_2}
	\begin{tikzcd}
		\bbF_{\calL}^{BM}(\Sigma_-'')\otimes \bbF_{\calL}^{BM}(\Sigma_-')\arrow[r,"\mu^{\bbF}_{\calL}"]\arrow[d," \bbF^{BM}_{\calL}(M'')\otimes \Id"]&\bbF_{\calL}^{BM}(\Sigma_-''\natural \Sigma_-')\arrow[d,"\bbF^{BM}_{\calL}(M)"]\\
		\bbF_{\calL}^{BM}(\Sigma_+'')\otimes \bbF_{\calL}^{BM}(\Sigma_+')\arrow[r,"\mu^{\bbF}_{\calL}"]&\bbF_{\calL}^{BM}(\Sigma_+''\natural\Sigma_+')
	\end{tikzcd}
	\end{equation}
	Restricting to small cycles, one obtains the left diagram in \eqref{eq:monoidality_condition} for index $1$ cobordisms. Since $\mu_{\calL}^{\bbF}$ preserves the pairing, the \emph{Monoidality condition} \eqref{eq:monoidality_condition} is satisfied for both elementary cobordisms.

	\vspace{0.5cm}
	
	Now we check the Juh\'asz's relations from Theorem~\ref{thm:Juhasz_relations}. For each relation we use the notation from it.
		
	\begin{enumerate}
		\item By the property \ref{item:mcg_rep} the first relation is satisfied. Let $\Sigma\xrightarrow{d}\Sigma'\xrightarrow{d'}\Sigma''$ be a composition of two diffeomorphisms. Consider the diagram
		\[
			\begin{tikzcd}[column sep=tiny]
				\mathring{\bbH}(\Sigma;\calL(\Sigma))\arrow[rr,"{\phi_{d'\circ d}}"]\arrow[dr,"{\phi_{d}}"]&&\mathring{\bbH}(\Sigma,(d'\circ d)^*\calL(\Sigma''))\arrow[rr,"{(d'\circ d)_*}"]\arrow[dr,"{d_*}"]&&\mathring{\bbH}(\Sigma'',\calL(\Sigma''))\\
				&\mathring{\bbH}(\Sigma,d^*\calL(\Sigma'))\arrow[ur, "{(d')^*\phi_{d'}}"]\arrow[dr,"d_*"]&&\mathring{\bbH}(\Sigma',(d')^*\calL(\Sigma''))\arrow[ur,"{d'_*}"]&\\
				&&\mathring{\bbH}(\Sigma',\calL(\Sigma'))\arrow[ur,"{\phi_{d'}}"]&&
			\end{tikzcd}
		\]
		Here the middle square and the right triangle are commutative by functoriality of $\bbH$ in both arguments. The left triangle is commutative by property \ref{item:mcg_rep}. Therefore, $\bbF_{\calL}(M_{d'\circ d})=\bbF_{\calL}(M_{d'})\circ\bbF_{\calL}(M_d)$.
		Clearly $\bbF_{\calL}(M_{\mathrm{Id}})=\mathrm{Id}$.
			
		\item The second relation follows from property \ref{item:invariance}. Consider an elementary index 1 cobordism $M$. Let $d_-:\Sigma_-\to \Sigma_-'$ be a diffeomorphism sending $\bbS_-$ to $\bbS_-'$, then it extends to a diffeomorphism $d:M\to M'$ where $M=M_{\bbS_-}$, $M'=M_{\bbS_-'}$ and induces diffeomorphisms $d_{\calT}:\calT_M\to \calT_{M'}$ and $d_+:\Sigma_+\to \Sigma_+'$, $\Sigma_+=\Sigma_-(\bbS_-)$ and $\Sigma_+'=\Sigma_-'(\bbS_-')$. They satisfy $i_-\circ d_{\calT}=d_-\circ i_-$ and $i_+\circ d_{\calT}=d_+\circ i_+$ up to homotopy (see diagram \eqref{eq:invariance_diag}). Consider the diagram
		\begin{equation}\label{eq:monoidality}
			\begin{tikzcd}
				\bbH^{BM}(\calT_M;i_-^*\calL(\Sigma_-))\arrow[r,"(i_-^*\phi_{d_-})_*"]\arrow[d,"\mathrm{Id}\otimes \eta^A"]&\bbH^{BM}(\calT_M;i_-^*d_-^*\calL(\Sigma_-'))\arrow[r,"(d_{\calT})_*"]\arrow[d,"\mathrm{Id}\otimes \eta^A"]&\bbH^{BM}(\calT_{M'};i_-^*\calL(\Sigma_-'))\arrow[d,"\mathrm{Id}\otimes \eta^A"]\\
				\bbH^{BM}(\calT_M\sqcup S^1;i_-^*\calL(\Sigma_-)\boxtimes \calA)\arrow[r,"{(i_-^*\phi_{d_-}\boxtimes \mathrm{Id})_*}"{yshift=4pt}]\arrow[d,"\psi_M"]&\bbH^{BM}(\calT_M\sqcup S^1;i_-^*d_-^*\calL(\Sigma_-')\boxtimes \calA)\arrow[r,"(d_{\calT}\times\mathrm{Id})_*"{yshift=5pt}]\arrow[d,"d_{\calT}^*\psi_M"]&\bbH^{BM}(\calT_{M'}\sqcup S^1;i_-^*\calL(\Sigma_-')\boxtimes \calA)\arrow[d,"\psi_{M'}"]\\
				\bbH^{BM}(\calT_M\sqcup S^1;\varphi^*\calL(\Sigma_+))\arrow[r,"{(\varphi^*\phi_{d_-})_*}"]\arrow[d,"\varphi"]&\bbH^{BM}(\calT_M\sqcup S^1;\varphi^*d_-^*\calL(\Sigma_-'))\arrow[r,"(d_-\times \mathrm{Id})_*"]\arrow[d,"\varphi"]&\bbH^{BM}(\calT_{M'}\sqcup S^1;\varphi^*\calL(\Sigma_-'))\arrow[d,"\varphi"]\\
				\bbH^{BM}(\Sigma_+;\calL(\Sigma_+))\arrow[r,"(\phi_{d_+})_*"]&\bbH^{BM}(\Sigma_+;d_{\calT}^{*}\calL(\Sigma_+'))\arrow[r,"(d_{+})_*"]&\bbH^{BM}(\Sigma_+';\calL(\Sigma_+'))
			\end{tikzcd}
		\end{equation}
		Commutativity of two squares in the first row follows from naturality of twisted K\"unneth homomorphism. The left square in the second row is commutative since it is obtained from \eqref{eq:invariance} by applying functor $\mathrm{\bbH}^{BM}(\calT_{M}\sqcup S^1;-)$. All other squares are commutative by functoriality of $\mathring{\bbH}^{BM}$ in both arguments. 
		
		 On the other hand, the diagram
		\[
			\begin{tikzcd}
				\bbH^{BM}(\calT_M;i_-^*\calL(\Sigma_-))\arrow[r,"(i_-)_*"]\arrow[d,"i_-^*\phi_{d_-}"]&\bbH^{BM}(\Sigma_-;\calL(\Sigma_-))\arrow[d,"\phi_{d_-}"]\\
				\bbH^{BM}(\calT_M;i_-^*d_-^* \calL(\Sigma_-'))\arrow[r,"(i_-)_*"]&\bbH^{BM}(\Sigma_-;d_-^*\calL(\Sigma_-'))
			\end{tikzcd}
		\]
		is clearly commutative. Combining everything we obtain that $\bbF_{\calL}^{BM}(M)$ intertwines $\bbF_{\calL}^{BM}(M_{d_-})$ and $\bbF_{\calL}^{BM}(M_{d_+})$. Hence the same is true for small cycles:
		\[
			\bbF_{\calL}(M')\circ \bbF_{\calL}(M_{d_-})=\bbF_{\calL}(M_{d_+})\circ\bbF_{\calL}(M).
		\]
		Since $\bbF_{\calL}(M_{d_-})$ and $\bbF_{\calL}(M_{d_+})$ preserve the pairing, the same property is satisfied for index $2$ elementary cobordisms as well.
		
	
		\item	
		We use both versions of the monoidal condition \eqref{eq:proof_mon_1}, \eqref{eq:proof_mon_2} discussed above:
		\[
			(\bbF_{\calL}(M_{\bbS_1}')\otimes \mathrm{Id})\circ(\mathrm{Id}\otimes\bbF_{\calL}(M_{\bbS_2}''))\simeq (\mathrm{Id}\otimes\bbF_{\calL}(M_{\bbS_2}''))\circ(\bbF_{\calL}(M_{\bbS_1}')\otimes \mathrm{Id})
		\]
		It proves the third relation.

		\item Let $M_1:\Sigma_-\to \Sigma$ be an index 1 cobordism with belt sphere $\nu:S^1\to \Sigma$. Let $M_2:\Sigma'\to\Sigma_+$ be an index 2 cobordism with attaching tube $\nu':S^1\to \Sigma$, such that $\nu$ and $\nu'$ intersect once transversely. Let $f:\Sigma_-\to \Sigma_+$ be the mapping class such that $M_f\simeq M_2\circ M_1$ (see Theorem~\ref{thm:Juhasz_relations}). Since $\bbF_{\calL}(M_2)$ is defined as the dual to $\overline{\bbF}_{\calL}(\overline{M}_2)$, it is enough to prove that the diagram:
		\[
			\begin{tikzcd}[column sep=3cm]
				\bbF_{\calL}(\Sigma_-)\otimes \bbF_{\calL}(\overline{\Sigma}_-)\arrow[r,"{\bbF_{\calL}(M_1)\otimes \bbF_{\calL}(\overline{M}_2)\circ\bbF_{\calL}(M_f)}"]\arrow[d,"{\langle-,-\rangle}"]&\bbF_{\calL}(\Sigma)\otimes \bbF_{\calL}(\overline{\Sigma})\arrow[d,"{\langle-,-\rangle}"]\\
				R\arrow[r,"\mathrm{Id}"]&R
			\end{tikzcd}
		\]
		is commutative. In fact, using locality \ref{item:locality} we can assume that both $\Sigma_-$ and $\Sigma_+$ are of genus 0 and $f=\mathrm{Id}$. In particular, it means that $\bbF_{\calL}(\Sigma_-)=\underline{R}=\bbF_{\calL}(\Sigma_+)$. We then use an explicit computation in a basis of twisted cycles. A standard picture for the composition $M_2\circ M_1$ is depicted in Fig.~\ref{fig:relation_4}, where a choice of symplectic arcs in $\Sigma$ is made. Note that $M_1$ is exactly the standard index 1 cobordism considered in Fig.~\ref{fig:Sigma_W_arcs}, for $g=0$. We use the same notation for the inverse quantum factorial as in \eqref{eq:cancellation}:
		\[
			\mathbf{p}=([p-1]_q!q^{(p-2)(p-1)/2})^{-1}.
		\]
		 In Fig.~\ref{fig:overline_M_2_action} we compute the action of $\bbF_{\calL}(\overline{M}_2)$ in the dual basis using \eqref{eq:index_2_preaction}. 
		\[
			\bbF_{\calL}(\overline{M}_2)(1)=\mathbf{\Gamma}^{\vee}(p-1,0)\otimes ((p-2)(p-1)/2, (1-p)[\alpha_{g+1}^+])\psi_{\overline{M}_2}^h( 1\boxtimes \eta_{\calA}(1)).
		\]
		
		\begin{figure}[h]
			\centering
			\includegraphics[width=0.5\linewidth]{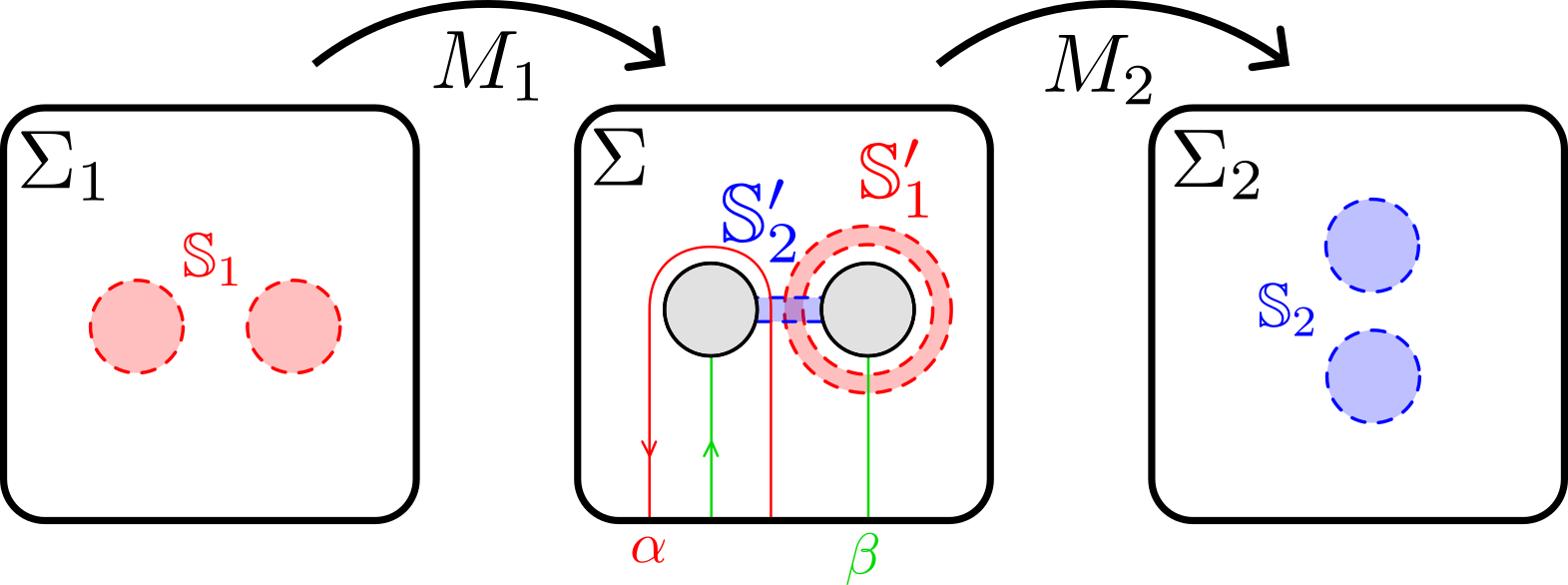}
			\caption{Composition of cancelling cobordisms, $\bbS_1$ and $\bbS_1'$ are attaching and belt tubes of $M_1$, $\bbS_2'$ and $\bbS_2$ are attaching and belt tubes of $M_2$.}
			\label{fig:relation_4}
		\end{figure}
		
			\begin{figure}[h]
			\centering
			\includegraphics[width=0.9\linewidth]{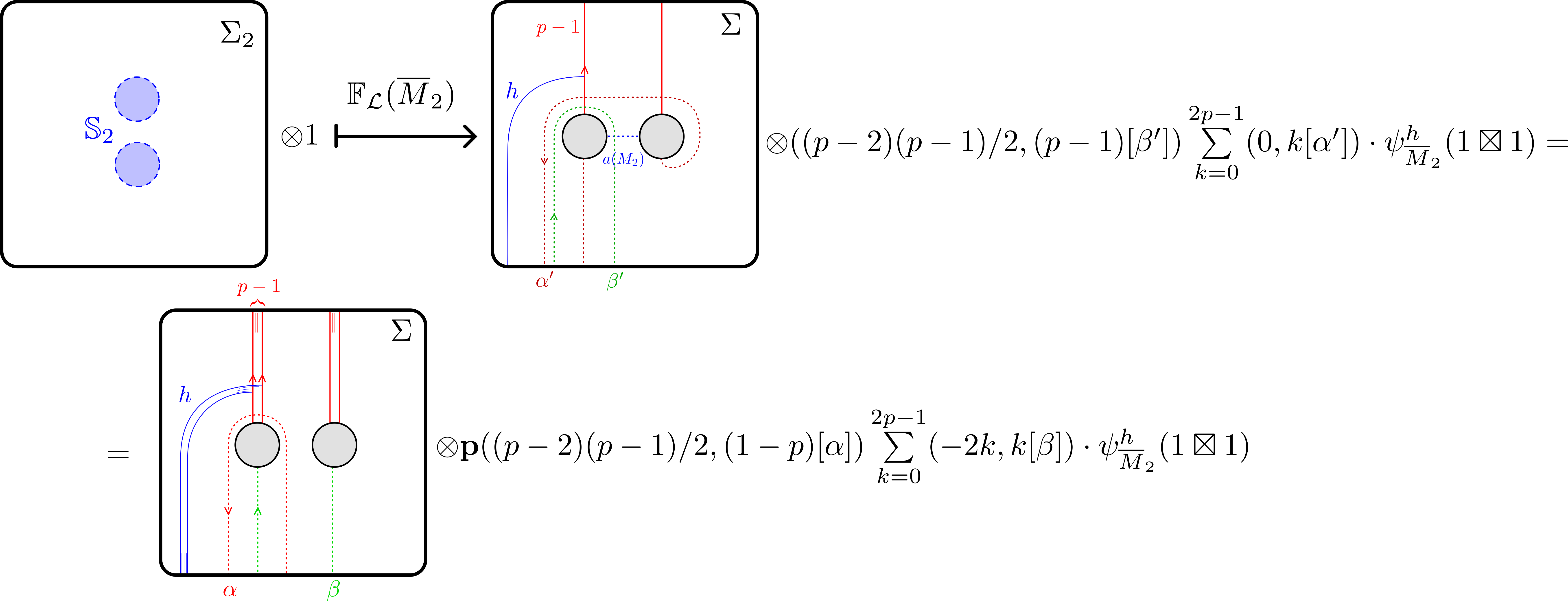}
			\caption{Action of $\bbF_{\calL}(\overline{M}_2)$. The right hand side of the first row is obtained from formula \eqref{eq:index_2_preaction} by a diffeomorphism sending arcs $\alpha_{1}^+$, $\beta_{1}^+$ ($g=0$) into arcs $\alpha'$, $\beta'$ in the picture. As elements of $\pi_1(\Sigma)$ they are related to the generators of Fig.~\ref{fig:relation_3} by $\alpha'=\alpha^{-1}\beta\alpha$ and $\beta'=\alpha^{-1}$. Hence on the level of the Heisenberg group $(0,-[\beta'])=(0,[\alpha])$ and $(0,[\alpha'])=(-2,[\beta])$. The resulting small cycle is then lifted to a cycle in the ordinary twisted homology in order to make easier computing intersection pairing. This results in an additional inverse action of $[p-1]_q!q^{(p-2)(p-1)/2}=\mathbf{p}^{-1}$.}
			\label{fig:overline_M_2_action}
		\end{figure}

		Then $\bbF_{\calL}(M_1)(1)$ and $\bbF_{\calL}(\overline{M}_2)(1)$ intersect at exactly one point. Since the number of configuration points at the intersection point is even and all of them contribute with the same sign, the total contribution to the sign is positive. Hence
		\begin{multline*}
			\langle \bbF_{\calL}(M_1)(1),\bbF_{\calL}(\overline{M}_2)(1)\rangle=\\
			=\left(((p-2)(p-1)/2, (p-1)[\beta])\psi_{M_1}^h( 1\boxtimes \eta^A(1)),\mathbf{p}((p-2)(p-1)/2, (1-p)[\alpha])\psi_{\overline{M}_2}^h( 1\boxtimes \eta^A(1)) \right)=\\
			=\left(\psi_{M_1}^h(1\boxtimes \eta^A(1)), \psi^{h}_{\overline{M}_2}(1\boxtimes\eta^A(1))\right)=(1,1)=1
		\end{multline*}
		where we used equivariance of the pairing, formulas
		\[
			\sum\limits_{k=0}^{2p-1}(0,k[\alpha])\psi_{M_1}^h(1\boxtimes 1)=\psi_{M_1}(1\boxtimes \eta^A(1)), \quad \sum\limits_{k=0}^{2p-1}(-2k,k[\beta])\psi_{\overline{M}_2}^h(1\boxtimes 1)=\psi_{\overline{M}_2}(1\boxtimes \eta^A(1))
		\]
		and the property \eqref{eq:cancellation} of Morse compatible local systems.

		\item 	For $p$ odd, the sign of the fundamental classes of the covering space $\widetilde{\Conf}_{p-1}(S^1)$ (see \ref{subs:Twisted_homology_of_Conf_S_1}) is invariant under change of sign of $S^1$. 
		
		More explicitly one can use the formula \eqref{eq:index_1_preaction}. Switching orientation of the belt sphere leads to the opposite orientation on the inserted arc, and sends $t\mapsto t^{-1}$. Since $p-1$ is even and the homomorphism $\eta^A$ is invariant under inversion of $t$, it doesn't change $\bbF_{\calL}(M)$.
		
		For $p=2$ a sign ambiguity appears.

		\end{enumerate}

	\end{proof}

\begin{remark}\label{rem:Hermitian}
	\textit{Hermitian structure.} Let $\mathrm{Mod}_R^{\langle-,-\rangle}$ be the category of $R$-modules equipped with a Hermitian pairing. Recall that a TQFT:
	\[
		Z:3\Cob\to \mathrm{Mod}_R^{\langle-,-\rangle}
	\]
	is Hermitian if for any cobordism $M\in \mathrm{Mor}(3\Cob)$
	\[
		Z(\overline{M})=Z(M)^{\vee},
	\]
	where $Z(M)^{\vee}$ is the dual homomorphism with respect to the pairing. This property is satisfied for $\bbF_{\calL}$ on elementary cobordisms by definition, and on mapping cylinders because $\bbF_{\calL}$ preserves the pairing there. Thus $\bbF_{\calL}$ \emph{is Hermitian}.
\end{remark}

\section{Trivial local systems and Frohman--Nicas--Donaldson TQFT}\label{sec:trivial_ls}

In this section we consider the somewhat degenerate case of trivial local systems with $p=2$. We show that the corresponding homological TQFT is isomorphic to the Frohman--Nicas--Donaldson TQFT.

\vspace{0.5cm}
	Let $R=\Bbbk$ be a field with trivial involution and $\mathrm{char}(\Bbbk)\not = 2$, then $\mathrm{Mod}_R=\mathrm{Vect}_{\Bbbk}$. Denote by $\mathrm{Vect}_{\Bbbk}^{\pm}$ the category of $\Bbbk$ vector spaces as objects and $\Bbbk$-linear homomorphisms \emph{up to sign} as morphisms. We put $p=2$, hence by Theorem~\ref{thm:main_thm} any Morse compatible $2$-Heisenberg local system induces a homological TQFT with target category $\mathrm{Vect}_{\Bbbk}^{\pm}$. 
	
	
	Let $\underline{\Bbbk}$ be the trivial collection of local systems
	\[
		\underline{\Bbbk}:=\{\underline{\Bbbk}(\Sigma)=\underline{\Bbbk}\ |\ \Sigma\in 3\Cob\}.
	\]
	Each $\underline{\Bbbk}(\Sigma)$ is equipped with the obvious pairing, hence $\underline{\Bbbk}(\Sigma)\in \Loc_{\Bbbk}(\Conf_*(\Sigma))$. The collection $\underline{\Bbbk}$ is monoidal (see \ref{eq:monoidality_condition}) with $\mu_{\underline{\Bbbk}}$ given by
	\[
		\mu_{\underline{\Bbbk}}:\underline{\Bbbk}(\Sigma_1)\boxtimes \underline{\Bbbk}(\Sigma_2)\to \mu^*\underline{\Bbbk}(\Sigma_1\natural\Sigma_2), \quad 1\boxtimes 1\mapsto 1.
	\]

	 We define an action of elementary cobordisms as follows.
	 For each mapping class $d$ let $\phi_{d}=\mathrm{Id}$ (see \eqref{eq:phi_d_def} for definition). For an index 1 cobordism $M:\Sigma_-\to \Sigma_+$ let (see \eqref{eq:psi_M_def} for definition) 
	\[
		\psi_M:\underline{\Bbbk}\boxtimes A \to \varphi^*\underline{\Bbbk}
	\]
	\[
		1\boxtimes 1\mapsto \frac{1}{2}.
	\]

	Since the involution is trivial, we tautologically identify $\overline{\underline{\Bbbk}(\Sigma)}=\underline{\Bbbk}=\underline{\Bbbk}(\overline{\Sigma})$ for each $\Sigma$. 	
	
	\vspace{0.5cm}

	Recall that there is a version of 3-dimensional Frohman--Nicas--Donaldson TQFT adapted to the category $3\Cob$:
		\[
			D:3\Cob\to\mathrm{Vect}_{\Bbbk}^{\pm}
		\]
	A detailed description is given in Appendix~\ref{sec:Donaldson}.
	\begin{maintheorem}\label{thm:Donaldson_iso}
		Let $\Bbbk$ be a field with $\mathrm{char}(\Bbbk)\not= 2$. Then the collection of trivial local systems $\underline{\Bbbk}$ together with the action of elementary cobordisms is Morse compatible. It gives rise to a homological TQFT
		\[
			\bbF_{\underline{\Bbbk}}:3\Cob\to \mathrm{Vect}_{\Bbbk}^{\pm}
		\]
		isomorphic to the Frohman--Nicas--Donaldson TQFT.
	\end{maintheorem}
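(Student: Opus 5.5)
The plan has two parts. First, verify the Morse compatibility axioms for $\underline{\Bbbk}$ and invoke Theorem~\ref{thm:main_thm} with $p=2$. Second, compare the resulting TQFT with $D$ on generators via Juh\'asz's presentation (Theorem~\ref{thm:Juhasz_relations}).

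For the first part, note that with $p=2$ and trivial coefficients $q^2=-\sigma^{-1}$ acts by $-1$ (since $\sigma$ acts by $1$). Hence $1-q^{2k}$ for $k=0$ is $0$, but the relevant invertibility condition in Remark~\ref{rem:quantum_invertibility} only concerns $[k]_q!$ for $k<2$, namely $[1]_q!=1$. In addition $[2]_q=q+q^{-1}$ must act by $0$, which is consistent with $q^2=-1$. I would check this degenerate case directly rather than through the stated hypotheses.

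Most Morse compatibility axioms are then tautological, because every morphism of local systems is a scalar multiple of the identity on trivial systems.
\begin{itemize}
\item The mapping class group condition, the monoidal structure and the braiding hold because $\phi_d=\Id$ and $\mu_{\underline{\Bbbk}}(1\boxtimes 1)=1$.
\item Invariance and locality hold because $\psi_M$ is the constant $\tfrac12$ for every index $1$ cobordism, and all pullbacks of $\underline{\Bbbk}$ are canonically trivial.
\item One must still check that $\psi_M$ is a morphism of local systems, i.e.\ that $t$ acts on $\varphi^*\underline{\Bbbk}$ compatibly with its action on $A$. Since $t$ acts nontrivially on $A=\Bbbk[t]/(t^2-1)$, I would interpret $\psi_M$ as $1\boxtimes t^k\mapsto\tfrac12$, which is equivariant because the target is trivial.
\item The pairing-preservation requirement on $\psi_M$ has to be read up to the normalization built into the handle cancellation axiom.
\end{itemize}
The substantive check is handle cancellation \eqref{eq:cancellation}. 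Here $\eta^A(1)=1+t$ is sent by $\psi$ to $\tfrac12+\tfrac12=1$, on both the $M_1$ side and the $\overline{M}_2$ side. The factor $\mathbf{p}=([1]_q!\,q^{0})^{-1}=1$, so the composite is $(1,1)=1$, as required. This gives a functor $\bbF_{\underline{\Bbbk}}:3\Cob\to\mathrm{Vect}_\Bbbk^{\pm}$.

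For the isomorphism with $D$, I would compute the state spaces. Small cycles are spanned by $\mathbf{\Gamma}(\mathbf{a},\mathbf{b})$ with all labels $<2$, so in degree $n$ they correspond to $n$-element subsets of the $2g$ symplectic arcs. This yields $\bbF_{\underline{\Bbbk}}(\Sigma_g)\cong\Lambda^*H_1(\Sigma_g;\Bbbk)$, sending $\mathbf{\Gamma}$ to the wedge of the corresponding arc classes. Orientation conventions on $\Conf_n$ of a union of arcs produce the exterior sign rule, and this is where the $\pm$ ambiguity enters. It is natural to define the isomorphism using the Borel--Moore class of a product of arcs, so that it is manifestly mapping-class equivariant. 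The action of $d$ is then $\Lambda^*d_*$, matching the FND action from Appendix~\ref{sec:Donaldson}, and $\mu^{\bbF}$ becomes the wedge product isomorphism $\Lambda^*H_1(\Sigma_1)\otimes\Lambda^*H_1(\Sigma_2)\cong\Lambda^*H_1(\Sigma_1\natural\Sigma_2)$.

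The main obstacle is matching elementary index $1$ and $2$ cobordisms. By \eqref{eq:index_1_preaction} with $p=2$, the index $1$ action sends $\omega\mapsto\omega\wedge[\alpha_{g+1}^+]$, where $\alpha_{g+1}^+$ is the belt circle. The prefactor $(0,[\beta^+_{g+1}])\sum_{k=0}^{1}(0,k[\alpha^+_{g+1}])$ acts on the trivial system by $2$, and this cancels the $\tfrac12$ in $\psi_M$. So the action is wedging with the belt class, precomposed with the inclusion induced by $i_-$; this should match FND's formula for $1$-handles. The index $2$ action is the adjoint with respect to the intersection pairing, which under the isomorphism is the Poincar\'e/exterior pairing. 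It is therefore contraction with the attaching circle, again matching FND's description, since $D$ is Hermitian in the same sense as Remark~\ref{rem:Hermitian}.

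Since both functors agree on the generators of Theorem~\ref{thm:Juhasz_relations} and respect the monoidal structure, they are isomorphic up to sign. The step requiring the most care is pinning down the normalization and sign conventions of $D$ on handles, to confirm there is no stray scalar such as a factor of $2$.
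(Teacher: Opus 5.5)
Your plan follows the paper's proof for most of the theorem: the Morse compatibility axioms, the handle cancellation check ($\eta^A(1)=1+t\mapsto\tfrac12+\tfrac12=1$ with $\mathbf{p}=1$), the identification of small cycles with $\wedge^*H_1$, and the index $1$ computation (the prefactor in \eqref{eq:index_1_preaction} acts by $2$ and cancels $\tfrac12$, giving \eqref{eq:index_1_trivial}) are all as in the paper. The gap is in the index $2$ step. You argue that both index $2$ actions are adjoints of the matching index $1$ actions, and that under your isomorphism the intersection pairing becomes the exterior pairing. That second claim is false.

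\begin{itemize}
\item The pairing $\bbF_{\underline{\Bbbk}}(\Sigma)\otimes\bbF_{\underline{\Bbbk}}(\overline{\Sigma})\to\Bbbk$ is a middle-dimensional intersection in $\Conf_n(\Sigma)$. It pairs $\mathring{\bbH}_n$ with $\mathring{\bbH}_n$ and is diagonal in the bases $\mathbf{\Gamma}(\mathbf{a},\mathbf{b})$, $\mathbf{\Gamma}^{\vee}(\mathbf{a},\mathbf{b})$ by \eqref{eq:pairing_formula}.
\item The FND pairing \eqref{eq:explicit_duality} instead pairs $\wedge^k$ with $\wedge^{2g-k}$.
\item Your isomorphism preserves degree on both $\Sigma$ and $\overline{\Sigma}$, so it cannot carry one pairing to the other. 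For $g\geq 1$ the degree-$0$ generators pair to $1$ in $\bbF_{\underline{\Bbbk}}$ and to $0$ in $D$.
\end{itemize}

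So Hermitianity of both theories does not by itself transfer agreement from index $1$ to index $2$. The intersection pairing corresponds instead to a degree-preserving pairing on $\wedge^*H_1$, essentially the one induced by the intersection form on $H_1$. The adjoint of $\omega\mapsto\pm\omega\wedge[\alpha_{g+1}]$ for that pairing happens to agree with the FND adjoint, but this has to be verified.

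Your description ``contraction with the attaching circle'' is also incomplete. It sends $\omega\wedge[\alpha_{g+1}]\wedge[\beta_{g+1}]$ to $\pm\omega\wedge[\alpha_{g+1}]\neq0$, whereas both $D(M')$ and $\bbF_{\underline{\Bbbk}}(M')$ kill this element. Contraction has to be followed by the quotient $[\alpha_{g+1}]^{\perp}\to[\alpha_{g+1}]^{\perp}/\langle[\alpha_{g+1}]\rangle$.

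The paper closes this step by direct computation in the dual basis, and you can do the same:
\begin{itemize}
\item By \eqref{eq:index_2_preaction} with $p=2$, the index $1$ cobordism acts by $\mathbf{\Gamma}^{\vee}(\mathbf{a},\mathbf{b})\otimes 1\mapsto\pm\mathbf{\Gamma}^{\vee}(\mathbf{a},\mathbf{b},0,1)\otimes 1$. The new label sits on the arc dual to $\beta_{g+1}$, not on the one dual to $\alpha_{g+1}$.
\item Dualizing with \eqref{eq:pairing_formula} gives $\mathbf{\Gamma}(\mathbf{a},\mathbf{b},a_{g+1},b_{g+1})\mapsto\pm\delta_{a_{g+1},0}\delta_{b_{g+1},1}\mathbf{\Gamma}(\mathbf{a},\mathbf{b})$.
\item This matches the FND formula \eqref{eq:Donaldson_index_2} term by term.
\end{itemize}
Replacing your adjointness shortcut with this computation, or with an explicit check that the two adjoints coincide, completes the argument.
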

	\begin{proof}
		Conditions \eqref{eq:mcg_rep}, \eqref{eq:invariance}, \eqref{eq:locality}, \eqref{eq:Monoidal_structure}, and \eqref{eq:braiding} are clearly satisfied ($1$ is always sent to $1$ or $1/2$). To check \eqref{eq:cancellation} note first that $\eta^A$ composed with $\psi_{M_1}$ (resp. $\overline{\psi}_{M_2}$) sends $1$ to $2\cdot 1/2=1$. Therefore
		\[
			\left(\psi_{M_1}\circ(\Id\boxtimes \eta^A)(1), \overline{\psi}_{M_2}\circ (\Id\boxtimes \eta^A)(1)\right)=(1,1)=1.
		\]
		 Hence $\underline{\Bbbk}$ is a Morse compatible collection of local systems and by Theorem~\ref{thm:main_thm} the functor $\bbF_{\underline{\Bbbk}}$ is defined.
		\vspace{0.5cm}
		
		We now compare $\bbF_{\underline{\Bbbk}}$ and the FND TQFT.
		Fix $\Sigma\in 3\Cob$. For a choice of symplectic arcs in $\Sigma$ the space of small cycles is spanned by twisted cycles $\mathbf{\Gamma}(a_1,b_1,\dots,a_g,b_g)\otimes 1$ with labels $\leq 1$ (see Fig.~\ref{fig:BM_arcs} and compare with Fig.~\ref{fig:Donaldson_basis}). Therefore there is a natural isomorphism of vector spaces:
		\begin{equation}\label{eq:homological_Donaldson_iso}
			\mathring{\bbH}(\Sigma;\underline{\Bbbk})\to \wedge^*H_1(\Sigma,\partial_-\Sigma;\Bbbk),
		\end{equation}
		\[
			\mathbf{\Gamma}(a_1,b_1,\dots,a_g,b_g)\otimes 1\mapsto [\alpha_1]^{a_1}\wedge[\beta_1]^{b_1}\wedge\dots\wedge [\alpha_g]^{a_g}\wedge [\beta_g]^{b_g}.
		\]

		We now check that the actions of elementary cobordisms coincide.
		\begin{itemize}
			\item Mapping classes act on a twisted cycle $\mathbf{\Gamma}(\mathbf{a},\mathbf{b})\otimes 1$ by sending it to the twisted cycle given by the arcs $d(\alpha_i)$ and $d(\beta_i)$ with the same labels. Similarly, the classes $[\alpha_i]$, $[\beta_i]$ are sent by $d_*$ to $[d(\alpha_i)]$ and $[d(\beta_i)]$. Hence, the actions of mapping cylinders coincide: $\bbF_{\underline{\Bbbk}}(M_d)=D(M_d)$.
			\item For an index 1 cobordism $M:\Sigma_-\to \Sigma_+$ we compare explicit formulas. By the formula \eqref{eq:index_1_preaction} for the standard index 1 cobordism we have 
			\begin{equation}\label{eq:index_1_trivial}
				\bbF_{\underline{\Bbbk}}(M):\mathbf{\Gamma}(a_1,b_1,\dots,a_g,b_g)\otimes 1\mapsto \pm\mathbf{\Gamma}(a_1,b_1,\dots,a_g,b_g,1,0)\otimes 1.
			\end{equation}
			On the other hand:
			\[
				D(M):[\alpha_1]^{a_1}\wedge[\beta_1]^{b_1}\wedge\dots\wedge [\alpha_g]^{a_g}\wedge [\beta_g]^{b_g}\mapsto \pm [\alpha_1]^{a_1}\wedge[\beta_1]^{b_1}\wedge\dots\wedge [\alpha_g]^{a_g}\wedge [\alpha_{g+1}]
			\]
			by \eqref{eq:Donaldson_index_1}. Thus $\bbF_{\underline{\Bbbk}}(M)$ coincides with $D(M)$ up to the isomorphism \eqref{eq:homological_Donaldson_iso}.
			\item By \eqref{eq:Donaldson_index_2} an index 2 cobordism $M'=\overline{M}$ in standard basis acts by: 
			\begin{multline}
				D(M'):[\alpha_1]^{a_1}\wedge[\beta_1]^{b_1}\wedge\dots\wedge [\alpha_{g+1}]^{a_{g+1}}\wedge [\beta_{g+1}]^{b_{g+1}}\mapsto\\
				\mapsto \pm[\alpha_1]^{a_1}\wedge[\beta_1]^{b_1}\wedge\dots\wedge [\alpha_{g}]^{a_g}\wedge [\beta_{g}]^{b_g} \cdot \delta_{a_{g+1},0}\delta_{b_{g+1},1}.
			\end{multline}
			On the other hand by formula \eqref{eq:index_2_preaction} one obtains
			\[
				\bbF_{\underline{\Bbbk}}(\overline{M}'):\mathbf{\Gamma}^{\vee}(a_1,b_1,\dots,a_{g+1},b_{g+1})\otimes 1\mapsto \pm \mathbf{\Gamma}^{\vee}(a_1,b_1,\dots,a_g,b_g,0,1)\otimes 1.
			\]
			Then the dual map is:
			\begin{equation}\label{eq:index_2_trivial}
				\bbF_{\underline{\Bbbk}}(M'):\mathbf{\Gamma}(a_1,b_1,\dots,a_g,b_g,a_{g+1},b_{g+1})\mapsto \pm\mathbf{\Gamma}(a_1,b_1,\dots,a_g,b_g)\delta_{a_{g+1},0}\delta_{b_{g+1},1}.
			\end{equation}
			Those actions coincide up to the isomorphism \eqref{eq:homological_Donaldson_iso}.
			
		\end{itemize}
	\end{proof}

	\section{Schr\"odinger local systems and Kerler--Lyubashenko TQFT}\label{sec:Schroedinger_ls}
	In this section we study the main example of a (projective) homological TQFT, namely the one arising from Schr\"odinger local systems. In Section~\ref{subs:Schroedinger_local_systems} we define these local systems and provide a projectively Morse compatible action of elementary cobordisms on them. We then show in Section~\ref{sec:KL_TQFT} that the corresponding projective homological TQFT $\bbF_{\calW_p}$ is projectively isomorphic to the Kerler--Lyubashenko TQFT. In Section~\ref{sec:sl_2_action} we show that $\bbF_{\calW_p}$ intertwines the action of quantum $\mathfrak{sl}_2$ on state spaces, constructed in \cite{martel2022homological}, and hence is projectively isomorphic to the Kerler--Lyubashenko TQFT as a braided monoidal functor.
	
	\vspace{0.5cm}

	For this section we fix:
	\begin{itemize}
		\item an odd integer $p\geq 3$;
		\item $R=\C$ with involution given by the complex conjugation;
		\item $\zeta=e^{2\pi i/p}$.
	\end{itemize}
	Therefore $\mathrm{Mod}_R=\mathrm{Vect}_{\C}$. We denote by $\bbP\mathrm{Vect}_{\C}$ the projective category: objects are vector spaces, morphisms are linear maps up to scalar $\in \C\setminus\{0\}$.
	
	\subsection{Schr\"odinger local systems}\label{subs:Schroedinger_local_systems}
	
	\noindent\textit{Schr\"odinger representations.} Fix $\Sigma\in 3\Cob$. A canonical example of a Heisenberg group representation is the Schr\"odinger representation. There is an analog of the famous Stone--von~Neuman in our context, see for example \cite{gelca2010classical}, Theorem 2.4.
	
	\begin{theorem}[Stone--von~Neumann]\label{thm:Stone_von_Neumann}
		There exists a unique irreducible unitary representation $W_{\zeta}(\Sigma)$ of $\Heis_p(\Sigma)$, up to unitary isomorphism, where the central generator $\sigma$ acts by $-\zeta^{-2}$. This representation is called the Schr\"odinger representation.
	\end{theorem}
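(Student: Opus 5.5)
The plan is to reduce the statement to the classical Stone--von~Neumann theorem for a finite Heisenberg group by first making $\Heis_p(\Sigma)$ explicit, and then to use character theory. First I will fix a symplectic set of generators $G$ and transport the problem through the isomorphism $\rho_G$ of Proposition~\ref{prop:Heisenberg_iso}. Under it, $\Heis(\Sigma)\simeq\Heis'(\Sigma)=\Z\times H_1(\Sigma)$, and $\sigma$ corresponds to $(-1,0)$. The next task is to compute the image of $I_p$. For a pure braid $\gamma$ with $\rho_G([\gamma]_{\Heis})=(k,x)$, the multiplication rule gives
\[
(k,x)^p=\bigl(pk+\tbinom{p}{2}\,x.x,\;px\bigr)=(pk,px),
\]
since $x.x=0$. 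Pure braids can realize every $x\in H_1(\Sigma)$ and every even central part, because $\sigma_1^2$ is pure. I therefore expect to find
\[
I_p=\{(2pm,\,py)\mid m\in\Z,\ y\in H_1(\Sigma)\}.
\]
This would give $\Heis_p(\Sigma)$ as the central extension of $H_1(\Sigma;\Z_p)\simeq\Z_p^{2g}$ by $\langle\sigma\rangle\simeq\Z_{2p}$, with commutator formula $[x,y]=\sigma^{-2x.y}$ from \eqref{eq:commutator_formula}. Checking this description of $I_p$ carefully, in particular that the central parts of the $p$-th powers are exactly the multiples of $2p$, is the step I expect to be most delicate. If it fails, the argument below still works as long as $\sigma$ has order dividing $2p$ and the quotient by the center is $\Z_p^{2g}$.

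Second, I will determine the center $Z$. Because $p$ is odd, $-2$ is invertible mod $p$. The intersection form on $\Z_p^{2g}$ is nondegenerate. Together these show that an element commuting with everything must project to $0$, so $Z=\langle\sigma\rangle$. Let $\chi_0$ be the central character $\sigma\mapsto-\zeta^{-2}$. It is well defined since $(-\zeta^{-2})^{2p}=1$. It sends $\sigma^{-2}$ to $\zeta^{4}$, which is a primitive $p$-th root of unity.

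Third, I will prove uniqueness using the standard character argument. Let $V$ be a representation on which $\sigma$ acts by $-\zeta^{-2}$, and let $x\notin Z$. Choose $y$ with $x.y\not\equiv0 \pmod p$. Then
\[
\chi_V(x)=\chi_V(yxy^{-1})=\chi_0([y,x])\,\chi_V(x),
\]
and the scalar $\chi_0([y,x])$ differs from $1$, so $\chi_V(x)=0$. Hence every such character is supported on $Z$, where it equals $(\dim V)\chi_0$. Suppose $V$ is irreducible. Then
\[
1=\langle\chi_V,\chi_V\rangle=\frac{|Z|}{|\Heis_p(\Sigma)|}(\dim V)^2,
\]
so $\dim V=p^g$. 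Moreover any two such irreducibles $V,V'$ satisfy $\langle\chi_V,\chi_{V'}\rangle=1$, so they are isomorphic.

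Finally, I will handle existence and unitarity. For existence I will write down the explicit model $W=\C[\Z_p^g]$. Here $\sigma$ acts by the scalar $-\zeta^{-2}$, each $\alpha_j$ acts by multiplication by a character $\zeta^{\pm 2e_j(\cdot)}$ with the appropriate sign and exponent, and each $\beta_j$ acts by translation. I will then check the commutator relation and the relations defining $I_p$. Irreducibility follows from the character computation above, since $\dim W=p^g$. This model is already unitary for the standard inner product; in any case averaging an inner product over the finite group produces an invariant one. For the uniqueness up to unitary isomorphism, Schur's lemma shows that an invariant inner product on an irreducible representation is unique up to a positive scalar. Rescaling a linear isomorphism therefore yields a unitary one.
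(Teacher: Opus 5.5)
\emph{Overall assessment.} Your character-theoretic route is a legitimate alternative to the paper's argument. The paper takes a joint eigenvector of the commuting operators $[\alpha_i]_{\Heis}$ and uses the $[\beta_j]_{\Heis}$ as shift operators to get a basis of $p^g$ joint eigenvectors. It then matches the vector whose eigenvalues are all $1$, and gets existence afterwards from the induced module $W_\zeta(L)$. Once $\Heis_p(\Sigma)$ is correctly identified, the rest of your argument is fine: the center, the vanishing of characters off the center, the norm computation giving $\dim V=p^g$ and uniqueness, and Schur/averaging for unitarity. The gap is your identification of $I_p$, and it is wrong in a way that matters.

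\emph{The error in $I_p$.} Images of pure braids do not always have even central coordinate. For example, $\tilde\alpha_1\tilde\beta_1$ is pure and $\rho_G$ sends it to $(0,[\alpha_1])(0,[\beta_1])=(1,[\alpha_1]+[\beta_1])$, so $I_p$ contains $(p,\,p([\alpha_1]+[\beta_1]))$. Your set $\{(2pm,py)\}$ is not even closed under multiplication, since $(0,p[\alpha_1])(0,p[\beta_1])=(p^2,p([\alpha_1]+[\beta_1]))$. The subgroup it generates contains $(p^2,0)$ and $(2p,0)$, hence $\sigma^{p}$. In that quotient no representation with $\sigma\mapsto-\zeta^{-2}$ exists, because $(-\zeta^{-2})^p=-1$. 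The correct set is $I_p=\{(pk,px)\mid k\equiv q(x)\pmod 2\}$, where $q$ is the mod-$2$ quadratic refinement of the intersection form with $q([\alpha_j])=q([\beta_j])=0$ (it comes from the sign of the permutation). Your fallback hypothesis is also the wrong one: ``order dividing $2p$'' allows $\sigma^p=1$, and then $\chi_0$ is not defined. What you need is that $\sigma$ has order exactly $2p$.

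\emph{The repair.} Your argument only uses two facts:
(i) $\pi(I_p)=pH_1(\Sigma)$, which is clear;
(ii) $I_p\cap\langle\sigma\rangle=\langle\sigma^{2p}\rangle$.
For (ii), the sign homomorphism $B_n(\Sigma)\to\Z_2$ kills $[B_n(\Sigma),\sigma_1]$, so it factors through $\Heis(\Sigma)$ with $\sigma\mapsto 1$. Suppose $\gamma$ is pure and $[\gamma]_{\Heis}^p$ is central. Then $p\,\pi(\gamma)=0$ in the torsion-free group $H_1(\Sigma)$, so $[\gamma]_{\Heis}=\sigma^k$, and $k$ is even by the sign. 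Hence $[\gamma]_{\Heis}^p\in\langle\sigma^{2p}\rangle$. Conversely, $\sigma^{2p}=[\sigma_1^2]^p_{\Heis}$. These give $|\Heis_p(\Sigma)|=2p\cdot p^{2g}$, center $\langle\sigma\rangle\cong\Z_{2p}$, and the commutator formula, which is all your character argument needs.

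\emph{The existence step.} Check your model against the true generators $\sigma^{2p}$, $[\tilde\alpha_j]^p_{\Heis}$, $[\tilde\beta_j]^p_{\Heis}$ of $I_p$. These relations hold because your operators satisfy $A_j^p=B_j^p=1$. Do not check against your set: with $\xi=-\zeta^{-2}$ and $A^p=B^p=1$, one gets $(\xi AB)^p=\xi^p=-1$. So $(0,p([\alpha_1]+[\beta_1]))$ acts by $-1$, and the verification you planned would fail.
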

\begin{proof}
	Let $\xi=-\zeta^{-2}$. Then $\xi$ is a primitive root of unity of order $2p$. Choose a collection of symplectic arcs $\{\alpha_i,\beta_i\}_{i=1}^g$ in $\Sigma$. The commutator formula \eqref{eq:commutator_formula} gives the relations
	\[
	\big[[\alpha_i]_{\Heis},[\beta_j]_{\Heis}\big]
	=\sigma^{-2\delta_{i,j}},\qquad
	\big[[\alpha_i]_{\Heis},[\alpha_j]_{\Heis}\big]=1,\qquad
	\big[[\beta_i]_{\Heis},[\beta_j]_{\Heis}\big]=1,
	\]
	for all $1\leq i,j\leq g$. Moreover,
	\[
	\sigma^{2p}=1,\qquad
	[\alpha_i]_{\Heis}^{p}=1,\qquad
	[\beta_i]_{\Heis}^{p}=1,
	\qquad 1\leq i\leq g.
	\]
	Here we omit the tilde over $\alpha_i$ and $\beta_i$, viewing them as loops in $B_n(\Sigma)$ for some $n\geq 2$. The second set of relations follows from the fact that $\alpha_i$ and $\beta_i$ are pure braids, while $\sigma$ is not represented by a pure braid but $\sigma^2$ is.
	
	Let $W$ be a representation satisfying the assumptions of the theorem. Since the operators $[\alpha_i]_{\Heis}$ commute pairwise, they admit a common eigenvector $\mathbf{v}\in W$. Because each $[\alpha_i]_{\Heis}$ has order $p$, all eigenvalues are $p$-th roots of unity.
	
	Using the commutation relations, for every $j$ the vector $[\beta_j]_{\Heis}\mathbf{v}$ is again a common eigenvector of the operators $[\alpha_i]_{\Heis}$. Its eigenvalues agree with those of $\mathbf{v}$ except for the $j$-th eigenvalue, which is multiplied by $\xi^{-2}$. Consequently, the vectors
	\[
	\prod_{j=1}^{g}[\beta_j]_{\Heis}^{k_j}\mathbf{v},
	\qquad
	0\leq k_j\leq p-1,
	\quad 1\leq j\leq g,
	\]
	are common eigenvectors of all $[\alpha_i]_{\Heis}$ with pairwise distinct sets of eigenvalues. Hence they are linearly independent. Since $W$ is irreducible, they form a basis of $W$. Furthermore, we may choose a basis vector $\mathbf{v}_0$ whose eigenvalues under all $[\alpha_i]_{\Heis}$ are equal to $1$. This gives a canonical model of the representation: any other irreducible representation $W'$ satisfying the assumptions has an analogous basis obtained from a vector $\mathbf{v}_0'\in W'$ with the same eigenvalues. Sending $\mathbf{v}_0$ to $\mathbf{v}_0'$ and extending equivariantly gives an isomorphism
	\[
	W\cong W'.
	\]
	Thus the representation is unique up to isomorphism.
	
	Finally, the invariant Hermitian pairing is unique up to an overall scalar. Indeed, two distinct basis vectors above have different eigenvalues for at least one operator $[\alpha_i]_{\Heis}$, and hence are orthogonal with respect to any invariant Hermitian form. The norm of every basis vector is determined by the norm of $\mathbf{v}_0$ using the unitary action of the operators $[\beta_j]_{\Heis}$. Therefore, after imposing $(\mathbf{v}_0,\mathbf{v}_0)=1$, the invariant Hermitian pairing is uniquely determined.
\end{proof}

\begin{remark}
	The proof essentially follows the argument of Gelca and Uribe \cite{gelca2010classical}, where the finite Heisenberg group is realized using a specific coordinate model. We avoid this realization here because it is not invariant under the mapping class group action. Instead, the quotient description of $\Heis_p(\Sigma)$ via pure braids in \eqref{eq:finite_Heisenbeg_def} is intrinsic to the surface $\Sigma$ and makes the mapping class group invariance transparent.
\end{remark}

	Note that by Stone--von~Neumann theorem $W_{\zeta}(\Sigma)$ is equipped with a unique sesquilinear pairing $(-,-)$, up to scalar, equivariant under the action of the Heisenberg group.
	
	\vspace{0.5cm}
	
	We use an explicit model for $W_{\zeta}(\Sigma)$ as follows. Let $L\subset H_1(\Sigma)$ be a Lagrangian subspace with respect to the intersection form, \ie a maximal subgroup with vanishing intersection pairing. Choose a \emph{section} of $L$ in the Heisenberg group, \ie a homomorphism $s:L\to \Heis(\Sigma)$ such that its composition with the natural projection $\pi$ in \eqref{s_e_s} is identical: $\pi\circ s=\mathrm{Id}_{L}$. Denote by $\mathbf{L}\subset \Heis_p(\Sigma)$ the image of $s(L)$ under the quotient homomorphism to the finite Heisenberg group. The central element $\sigma$ is of order $2p$ in $\Heis_p(\Sigma)$ since $\sigma_1^2$ is a pure braid while $\sigma_1$ is not (see definition \eqref{eq:finite_Heisenbeg_def}). Denote
	\[
		\tilde{\bfL}:=\Z_{2p}\times \bfL\subset \Heis_p(\Sigma)
	\]
	the (maximal) abelian subgroup determined by $L$ and $s$, where $\Z_{2p}$ is the cyclic subgroup generated by $\sigma$. With this datum define a $\Heis_p(\Sigma)$-representation
	\[
		W_{\zeta}(L):=\C[\Heis_p(\Sigma)]\otimes_{\tilde{\bfL}}\C_{\zeta}
	\]
	where $\C_{\zeta}$ is the one-dimensional representation of $\tilde{\bfL}$ with an element of the form $(k,x)\in\Z_{2p}\times \bfL$, acting by $(-\zeta^{-2})^k$. Note that $W_{\zeta}(L)$ is as well a representation of $\Heis(\Sigma)$ via the quotient homomorphism $\Heis(\Sigma)\to \Heis_p(\Sigma)$.
	
		Let $L^{\vee}$ be a complementary Lagrangian to $L$, \ie a Lagrangian subspace of $H_1(\Sigma)$ such that $L\cap L^{\vee}=\{0\}$ (and therefore $H_1(\Sigma)=L\oplus L^{\vee}$). Choose any section $s^{\vee}:L^{\vee}\to \Heis(\Sigma)$. Then one can choose a basis of $W_{\zeta}(L)$ parametrized by elements of $L^{\vee}$:
	\[
	\mathbf{v}_{l^{\vee}}:=s^{\vee}(l^{\vee})\otimes 1\in \C[\Heis_p(\Sigma)]\otimes_{\tilde{\bfL}}\C_{\zeta}, \quad l^{\vee}\in L^{\vee},
	\]
	where with some abuse of notation elements $s^{\vee}(l^{\vee})$ are understood as elements of the finite Heisenberg group via the quotient homomorphism.
	Note that in particular, each $\mathbf{v}_{l^{\vee}}$ is an eigenvector of any element in $L$ and uniquely determined (up to scalar) by all eigenvalues:
	\[
		s(l)\cdot\mathbf{v}_{l^{\vee}}=\zeta^{4l.l^{\vee}}\mathbf{v}_{l^{\vee}}, \quad l\in L.
	\]
	 In particular, this implies that $W_{\zeta}(L)$ is irreducible. One can define an equivariant sesquilinear pairing:
	 \[
	 (\mathbf{v}_{l_1^{\vee}},\mathbf{v}_{l_2^{\vee}})=\delta_{l_1^{\vee},l_2^{\vee}},\quad l^{\vee}_1, l_2^{\vee}\in L^{\vee}.
	 \]
  Since $\sigma$ acts on $W_{\zeta}(L)$ by $-\zeta^{-2}$ there is a (unique up to scalar) isomorphism of unitary $\Heis_p(\Sigma)$-representations
	 \[
	 	W_{\zeta}(L)\simeq W_{\zeta}(\Sigma)
	 \]
	 by Stone--von~Neumann theorem.

	For a Lagrangian $L\subset H_1(\Sigma)$ and a section $s:L\to \Heis(\Sigma)$ we denote
	\[
		\mathbf{1}_{L}\in W_{\zeta}(\Sigma), \quad s(l)\cdot \mathbf{1}_{L}=\mathbf{1}_L, \ \forall l\in L
	\]
	the unique up to scalar vector with trivial action of $L$. 
	
		For the standard choice of symplectic arcs in $\Sigma_g$ as in Fig.~\ref{fig:BM_arcs} we put $L_g=\langle[\alpha_1],\dots, [\alpha_g]\rangle$ with the section $s([\alpha_i])=[\alpha_i]_{\Heis}$. Let $L_g^{\vee}=\langle[\beta_1],\dots, [\beta_g]\rangle$ and $s^{\vee}([\beta_i])=[\beta_i]_{\Heis}$. The standard basis is given by:
	\begin{equation}\label{standard_Schroedinger_basis}
		\mathbf{v}_{c_1,\dots,c_g}:=s^{\vee}(c_1[\beta_1]+\dots +c_g[\beta_g])\otimes  1,\quad c_1,\dots,c_g \in \{0,1,\dots,p-1\}.
	\end{equation}
	In particular, $\mathbf{1}_{L_g}=\mathbf{v}_{0,\dots,0}$.

	\vspace{0.5cm}
	
  	\noindent\textit{Schr\"odinger local systems.}	For each $\Sigma\in 3\Cob$ set $\calW_p(\Sigma)$ to be the $p$-Heisenberg local system on $\Conf_*(\Sigma)$ with monodromy representations $W_{\zeta}(\Sigma)$ at $*_n$ for all $n\geq 1$. Denote by
	\[
		\calW_p:=\{\calW_p(\Sigma)|\Sigma\in 3\Cob\}
	\]
	the corresponding collection of Schr\"odinger local systems. Recall that, on the level of Heisenberg groups, reversing the orientation of $\Sigma\in 3\Cob$ is equivalent to inverting the central generator $\sigma$. We have a fiberwise tautological identification
	\[
		\calW_p(\overline{\Sigma})=\overline{\calW_p(\Sigma)}, \quad \forall\Sigma\in 3\Cob,
	\]
	since inverting $\sigma$ is equivalent to the complex conjugation of the Schr\"odinger representation ($\zeta\mapsto \zeta^{-1}$). Hence the equivariant pairing on $W_{\zeta}(\Sigma)$ defined above, gives rise to a perfect sesquilinear pairing:
	\[
		(-,-):\calW_p(\Sigma)\otimes\overline{\calW_p(\Sigma)}\to \underline{\C}.
	\]
	
	\noindent\textit{Monoidal structure.} We define a monoidal structure on Schr\"odinger local systems as follows. Let $\Sigma_1,\Sigma_2\in 3\Cob$, and let $L_1$, $L_2$ be Lagrangians together with sections $s_1$, $s_2$. Define $L=L_1\times L_2$ and $s=\mu_*(s_1\times s_2)$, where $\mu_*$ is the homomorphism $\Heis(\Sigma_1)\times\Heis(\Sigma_2)\to \Heis(\Sigma_1\natural \Sigma_2)$ induced by the embedding $\Sigma_1\sqcup \Sigma_2\to \Sigma_1\natural \Sigma_2$. Then for any $k,l\geq 0$ there is a morphism of $B_k(\Sigma_1)\times B_l(\Sigma_2)$-modules:
	\begin{equation}\label{eq:mu_calW}
		W_{\zeta}(\Sigma_1)\otimes W_{\zeta}(\Sigma_2)\to W_{\zeta}(\Sigma_1\natural \Sigma_2)
	\end{equation}
	\[
		\mathbf{1}_{L_1}\otimes \mathbf{1}_{L_2}\mapsto \mathbf{1}_L.
	\]
	One can choose complementary Lagrangians $L_1^{\vee}$ and $L_2^{\vee}$ to $L_1$ and $L_2$, then $L_1^{\vee}\oplus L_2^{\vee}$ is a complementary Lagrangian to $L_1\oplus L_2$. Since the morphism above sends 
	\[
	\mathbf{v}_{l_1}\otimes \mathbf{v}_{l_2}\to \mathbf{v}_{l_1+l_2},\quad\forall l_1\in L_1^{\vee}, \, \forall l_2\in L_2^{\vee},
	\]
	it is an isomorphism. Note that this isomorphism is unique up to scalar and does not depend on the choice of Lagrangians $L_1$, $L_2$ or sections $s_1$, $s_2$, since both $W_{\zeta}(\Sigma_1)\boxtimes W_{\zeta}(\Sigma_2)$ and $W_{\zeta}(\Sigma_1\natural \Sigma_2)$ are isomorphic irreducible representations of $\Heis_p(\Sigma)\times \Heis_p(\Sigma)$. Therefore we obtain a projective isomorphism of local systems:
	\[
		\mu_{\calW_p}:\calW_{p}(\Sigma_1)\otimes \calW_{p}(\Sigma_2)\to \calW_{p}(\Sigma_1\natural \Sigma_2).
	\]
	
	\vspace{0.5cm}
	
	\noindent\textit{Action of mapping cylinders.} If $d:\Sigma\to \Sigma'$ is a mapping class, we set $\phi_d:W_{\zeta}(\Sigma)\to d^*W_{\zeta}(\Sigma')$ to be the unique isomorphism between the corresponding irreducible representations, defined up to scalar.

	\vspace{0.5cm}
	
	\noindent\textit{Action of index 1 cobordisms}. Consider an index 1 cobordism $M:\Sigma_-\to \Sigma_+$ with belt sphere $\nu:S^1\to \Sigma_+$. In order to define its action on Schr\"odinger local systems we choose a path $h$ in $\Conf_{n+p-1}(\Sigma_+)$ from $*_{n+p-1}$ to $*_n\cup \nu(s_{p-1})$ and choose compatible Lagrangians on $\Sigma_-$ and $\Sigma_+$ as follows. 
	 Denote $t_h:=h \varphi_*(1,t)h^{-1}\in B_{n+p-1}(\Sigma_+)$ (see \ref{sec:monodromy_reps}) which evaluates to a non-trivial element $[t_h]_{\Heis}\in \Heis(\Sigma_+)$ since its evaluation in $H_1(\Sigma_+)$ is $[\nu]$. Choose a Lagrangian $L_-\subset H_1(\Sigma_-)$ and a section $s_-:L_-\to\Heis_p(\Sigma_-)$. Since the homomorphisms of Heisenberg and $H_1$ groups induced by $i_-$ are surjective, it is possible to choose a subgroup $L_{\calT}\subset H_1(\calT_{M})$ and a section $s_{\calT}:L_{\calT}\to \Heis_p(\calT_M)$ such that $L_-=(i_-)_*(L_{\calT})$ and $s_-\circ (i_-)_*=(i_-)_*\circ s_{\calT}$. We call $L_{\calT}$ a Lagrangian for convenience. Let $L_+:=(i_+)_*(L_{\calT})+\Z\langle[\nu]\rangle$ be a Lagrangian in $H_1(\Sigma_+)$. Choose a section $s_+:L_+\to \Heis(\Sigma_+)$ such that $[\nu]\mapsto [t_h]_{\Heis}$ and the diagram:
	 \begin{equation}\label{eq:Heisenberg_span}
		 \begin{tikzcd}
		 	&\Heis(\calT_M)\times\Z\arrow[dl,"(i_-)_*\times 0"']\arrow[dr,"\varphi_*^h"]&\\
		 	\Heis(\Sigma_-)&L_{\calT}\times \Z\arrow[dl,"(i_-)_*\times 0"]\arrow[dr,"(i_+)_*\times \nu_*"']\arrow[u,"s_{\calT}\times \mathrm{Id}"]&\Heis(\Sigma_+)\\
		 	L_-\arrow[u,"s_-"]&&L_+\arrow[u,"s_+"]
		 \end{tikzcd}
	 \end{equation}
	 is commutative, where $\nu_*:\Z\to H_1(\Sigma_+)$ sends $1$ to $[\nu]$. It is always possible since $(i_+)_*\times \nu_*$ is injective (and therefore an isomorphism) and $1\in\Z$ is sent to $[t_h]_{\Heis}$ by $\varphi_*^h$.
	 

	 

	\begin{lemma}
		There is a unique projective morphism of local systems:
		\[
			\psi_M:i_-^*\calW_p(\Sigma_-)\boxtimes \calA\to \varphi^*\calW_p(\Sigma_+),
		\]
		such that the corresponding morphism of the monodromy representations 
		\begin{equation}\label{W_monodromy1}
			\psi_M^h:W_{\zeta}(\Sigma_-)\otimes A\to W_{\zeta}(\Sigma_+)
		\end{equation}
		for any choice of Lagrangians and sections as above sends $\mathbf{1}_{L_-}\boxtimes 1$ to $\mathbf{1}_{L_+}$.
	\end{lemma}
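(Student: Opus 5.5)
We will work entirely with monodromy representations at the base points $*_n\times s_{p-1}$ and $*_{n+p-1}$, where the path $h$ identifies the fibres, as in Subsection~\ref{sec:monodromy_reps}. A morphism of local systems $\psi_M$ then amounts to a homomorphism
\[
\psi^h_M:W_\zeta(\Sigma_-)\otimes A\to W_\zeta(\Sigma_+)
\]
that is equivariant for $B_n(\calT_M)\times\pi_1(\Conf_{p-1}(S^1))$. On the source, this group acts through $(i_-)_*\times(t\mapsto t)$. On the target, it acts through $\varphi^h_*$. The plan is to build $\psi^h_M$ as an induced map and then check that the normalization fixes it up to scalar.

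\textbf{Step 1: reduce to a statement about group representations.} The action of $B_n(\calT_M)$ factors through $\Heis_p(\calT_M)$, and $t$ acts on $A=\C[\Z_{2p}]$ by the regular representation. We first check that $\varphi^h_*$ sends $t^{2p}$ to $1$ in $\Heis_p(\Sigma_+)$. Indeed, $[t_h]_{\Heis}^{p}$ lies in $\sigma^{\Z}\cdot I_p$, since $t_h^2$ is conjugate to a pure braid up to $\sigma$-powers. Using the computation in the proof of Lemma~\ref{lem:i_inversion}, we have $[t_h]=-(0,-[\nu])$, so $[t_h]^{2p}=1$.

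\textbf{Step 2: construct the map.} The source $W_\zeta(\Sigma_-)\otimes A$ is generated as a module over $G:=\Heis_p(\calT_M)\times\Z_{2p}$ by the single vector $\mathbf{1}_{L_-}\otimes 1$. Its stabiliser-character data are as follows: $\sigma$ acts by $-\zeta^{-2}$, and $s_{\calT}(L_{\calT})$ acts trivially, because $(i_-)_*s_{\calT}=s_-(i_-)_*$. By Frobenius reciprocity, the module $\C[G]\otimes_{\tilde{\bfL}_{\calT}}\C_\zeta$, with $\tilde{\bfL}_{\calT}=\Z_{2p}\times s_{\calT}(L_{\calT})$, surjects onto the source. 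We then send $g\otimes1\mapsto \varphi^h_*(g)\mathbf{1}_{L_+}$.

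This is well defined because $\varphi^h_*\circ(s_{\calT}\times\Id)=s_+\circ((i_+)_*\times\nu_*)$ by diagram \eqref{eq:Heisenberg_span}, and $\mathbf{1}_{L_+}$ is fixed by $s_+(L_+)$. It remains to show that this map descends through the surjection onto $W_\zeta(\Sigma_-)\otimes A$. For this, compare kernels. The kernel of $\Heis_p(\calT_M)\to\Heis_p(\Sigma_-)$ is generated by the classes of loops around the two removed discs. Their images under $\varphi^h_*$, combined with powers of $t$, must act on $\mathbf{1}_{L_+}$ compatibly. Concretely, we would decompose both sides into isotypic pieces for the abelian group generated by $L_{\calT}$, the meridian classes, and $t$, and then check eigenvalue matching on the basis $\mathbf{v}_{l^\vee}$.

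\textbf{Main obstacle.} The descent check in Step 2 is where we expect the difficulty. The meridian class of the attaching disc in $\calT_M$ maps to $[\nu]^{\pm1}$ in $H_1(\Sigma_+)$ but to $0$ in $\Sigma_-$. The induced constraint therefore ties the $A$-factor to the $[\nu]$-eigenvalue, and exactly the $p$-Heisenberg quotient (with $\sigma^p=-1$) must make the signs agree. We would first verify this in the standard model of Fig.~\ref{fig:Sigma_W_arcs} with explicit bases.

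\textbf{Step 3: uniqueness and independence of choices.} Uniqueness up to scalar is immediate, since the source is cyclic, generated by $\mathbf{1}_{L_-}\otimes1$, and the image of this generator is prescribed. Changing $L_-,s_-$ (and consequently $L_{\calT},L_+,s_+$), or changing $h$, transforms both $\mathbf{1}_{L_-}$ and $\mathbf{1}_{L_+}$ by the same Heisenberg elements up to scalar, using \eqref{monodromy_rep_relation} and Stone--von~Neumann (Theorem~\ref{thm:Stone_von_Neumann}). Hence the resulting projective morphism is independent of these choices.
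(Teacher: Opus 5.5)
\textbf{Gap: existence of $\psi^h_M$ for a fixed $h$ is never established, and as you set it up it cannot be.}

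Your Step~3 is essentially the paper's whole proof. Uniqueness holds because $\mathbf{1}_{L_-}\boxtimes 1$ generates $W_\zeta(\Sigma_-)\otimes A$. Independence of $h$ and of the Lagrangian data holds because $[h'h^{-1}]_{\Heis}\,\psi^{h'}_M(\mathbf{1}_{L_-}\boxtimes 1)$ is fixed by a lift of $L_+$, hence is proportional to $\mathbf{1}_{L_+}$. The paper takes for granted that, for a \emph{fixed} $h$, an equivariant $\psi^h_M$ with the prescribed value exists. Your Step~2 correctly singles this out as the real content. However, you leave the descent check as something you ``would'' verify, so the existence part of your proposal is a genuine gap.

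\textbf{What the descent check amounts to.} Since $A$ is the regular $\Z_{2p}$-module, $t$ imposes no condition beyond $[t_h]_{\Heis}^{2p}=1$ (your Step~1), so the ``powers of $t$'' play no role. The two boundary classes $m_1=[\alpha^M]_{\Heis}$ and $m_2=[\overline{\alpha}^M]_{\Heis}$ of $\calT_M$ are central and die under $(i_-)_*$. The kernel of $\C[G]\otimes_{\tilde{\bfL}_{\calT}}\C_\zeta\to W_\zeta(\Sigma_-)\otimes A$ is generated by $(m-1)\otimes 1$ for $m\in\{m_1,m_2\}$. So descent holds if and only if $\varphi^h_*(m_1)$ and $\varphi^h_*(m_2)$ both fix the target vector.

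\emph{(i) The lift $s_+([\nu])=[t_h]_{\Heis}$ is impossible.} The loop $t$ induces a $(p-1)$-cycle, which is an odd permutation. Hence $[t_h]_{\Heis}$ is an odd power of $\sigma$ times a pure-braid class, and $[t_h]_{\Heis}^p=\sigma^p$ acts by $-1$; this is your own formula $[t_h]=-(0,-[\nu])$. So $[t_h]_{\Heis}$ has no nonzero fixed vector, and the $\mathbf{1}_{L_+}$ you use does not exist. The lift of $\pm[\nu]$ must instead be the pure class $\varphi^h_*(m_1)$: a point of $*_n$ runs once around one removed disc while the $p-1$ points sit on $\nu$. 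The paper's later computations, where $\psi^h_{M_1}(\mathbf{v}_0\boxtimes1)$ is fixed by $(0,[\alpha])$, implicitly use such a pure lift.

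\emph{(ii) You still need $\varphi^h_*(m_2)$ to fix $\mathbf{1}_{L_+}$.} The loop $m_1m_2$ encircles both removed discs. In $\Sigma_+$ it bounds a genus-one subsurface containing the $p-1$ inserted points. The handle and each enclosed point contribute $\sigma^{2}$ with the same sign; in the paper's conventions the handle gives $(0,[\alpha^+_{g+1}])(-2,-[\alpha^+_{g+1}])=\sigma^2$. Therefore $\varphi^h_*(m_1m_2)=\sigma^{\pm 2p}=1$ in $\Heis_p(\Sigma_+)$. It follows that $\varphi^h_*(m_2)=\varphi^h_*(m_1)^{-1}$, and the map descends.

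This is where exactly $p-1$ points and $\sigma^{2p}=1$ (not $\sigma^p=-1$) enter. Your expectation that the constraint ``ties the $A$-factor to the $[\nu]$-eigenvalue'' points the wrong way. The same replacement of $t$ by $m_1$ is also what makes the Step~3 independence argument legitimate, since $t$ does not fix $\mathbf{1}_{L_-}\boxtimes 1$.
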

	
	\begin{proof}
		Fix some choice of Lagrangians as above first. Since $\pi_1=\pi_1(\Conf_n(\calT_M)\times \Conf_{p-1}(S^1))$ generates $W_{\zeta}(\Sigma_-)\boxtimes A$ from $\mathbf{1}_{L_-}\boxtimes 1$, any morphism of $\pi_1$-modules from $W_{\zeta}(\Sigma_-)\boxtimes A$ is fully determined by its action on $\mathbf{1}_{L_-}\boxtimes 1$. Therefore, if $\psi_M$ exists it is unique.
		
		Suppose $h$, $h'$ are two arbitrary paths from $*_{n+p-1}$ to $*_n\cup \nu(s_{p-1})$ and arbitrary Lagrangians $L_X$, $L_{X}'$ together with their sections $s_X$, $s_X'$, $X\in\{-,+,\calT\}$, are chosen as above (\ie two corresponding diagrams \eqref{eq:Heisenberg_span} are commutative). 
		In order to check the existence of $\psi_M$, we need to show that the morphisms $\psi_M^h$ and $\psi_M^{h'}$ are related by \eqref{monodromy_rep_relation}.
		
		 Let $l_{\calT}\in s_{\calT}(L_{\calT})\times \Z$. Then on the one hand
		\[
			\varphi^{h'}_*(l_{\calT})\cdot \psi_M^{h'}(\mathbf{1}_{L_-}\otimes 1)=\psi_M^{h'}((i_-)_*(l_{\calT})\cdot \mathbf{1}_{L_-'}\otimes 1)=\psi_M^{h'}(\mathbf{1}_{L_-}\otimes 1),
		\]
		since $(i_-)_*(l_{\calT})\in s_-(L_-)$ by \eqref{eq:Heisenberg_span}.
		On the other hand:
		\[
			\varphi^h_*(l_{\calT})\cdot \mathbf{1}_{L_+}=\mathbf{1}_{L_+},
		\]
		since $\varphi_*^h(l_{\calT})\in s_+(L_+)$ by \eqref{eq:Heisenberg_span}.
		By definition of $\varphi_*^h$ there is a relation:
		\[
			\varphi^{h'}_*(l_{\calT})=[h' h^{-1}]_{\Heis}\varphi^{h}_*(l_{\calT})[h (h')^{-1}]_{\Heis}
		\]
		We use it to write:
		\[
			\varphi^{h}_*(l_{\calT})\cdot\left[ [h(h')^{-1}]_{\Heis}\psi_M^{h'}(\mathbf{1}_{L_-}\otimes 1)\right]=[h'h^{-1}]_{\Heis}\varphi_*^{h'}(l_{\calT})\cdot \psi_M^{h'}(\mathbf{1}_{L_-}\otimes 1)=[h' h^{-1}]_{\Heis} \psi_M^{h'}(\mathbf{1}_{L_-}\otimes 1).
		\]
		
		Since $L_{\calT}\times \Z$ maps surjectively on $L_+$ we obtain that $\mathbf{1}_{L_+}=\psi_{M}^{h}(\mathbf{1}_{L_-}\boxtimes 1)$ and $[h' h^{-1}]_{\Heis} \psi_M^{h'}(\mathbf{1}_{L_-}\otimes 1)$ are both eigenvectors of any element in $L_+$ with all eigenvalues 1, hence they coincide up to a scalar. Acting by $\pi_1\times \Z$ we obtain:
		\[
			\psi_{M}^{h}(w)=C\cdot[h' h^{-1}]_{\Heis} \psi_M^{h'}(w),\quad \forall w\in W_{\zeta}(\Sigma)\boxtimes A,
		\]
		for some $C\in \C\setminus\{0\}$, which is projectively the relation \eqref{monodromy_rep_relation} sufficient for $\psi_M$ to be a well-defined projective morphism of local systems.
	\end{proof}

			\begin{remark}
		The first step toward constructing a homological TQFT was taken in \cite{andreev2024abelian}, where an action of cobordisms on Schr\"odinger local systems was provided for a specific category of Lagrangian cobordisms. In that setting, the monodromy representations $W_{\zeta}(L)$ themselves become the state spaces of a TQFT. The action suggested here, when restricted to a subcategory of Lagrangian cobordisms in $3\Cob$, coincides projectively with the action of index $1$ and $2$ cobordisms (up to taking duals with respect to $(-,-)$) and with the twisted projective action of mapping classes denoted by $\calS_{\Heis}$ there.
	\end{remark}
	
	\begin{theorem}\label{thm:Schroedinger_nice}
		The collection of Schr\"odinger local systems $\calW_p:=\{\calW_{p}(\Sigma)\}$, together with the action of elementary cobordisms defined above, is a projectively Morse compatible collection of local systems, \ie all the conditions are satisfied projectively.  
	\end{theorem}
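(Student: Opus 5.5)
The plan is to verify the six conditions of Morse compatibility one at a time. Projectively, every morphism in sight is determined up to scalar by irreducibility. The guiding principle is Schur's lemma. Each $W_\zeta(\Sigma)$ is irreducible over $\Heis_p(\Sigma)$ by Theorem~\ref{thm:Stone_von_Neumann}, and $W_\zeta(\Sigma_1)\boxtimes W_\zeta(\Sigma_2)$ is irreducible over $\Heis_p(\Sigma_1)\times\Heis_p(\Sigma_2)$. Hence any two morphisms of local systems between pullbacks of Schr\"odinger systems that are equivariant for a group acting irreducibly on the source must agree up to a scalar. The same holds for the relevant index-$1$ sources $i_-^*\calW_p(\Sigma_-)\boxtimes\calA$, which are cyclic modules generated by $\mathbf{1}_{L_-}\boxtimes 1$, as shown in the lemma defining $\psi_M$.

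\emph{Mapping class group, monoidal structure and braiding.} Conditions \eqref{eq:mcg_rep}, \eqref{eq:Monoidal_structure} and \eqref{eq:braiding} involve only the $\phi_d$ and $\mu_{\calW_p}$. These are, by definition, the unique-up-to-scalar intertwiners between irreducible representations. The two composites in each diagram are therefore intertwiners between the same irreducible modules, so they coincide projectively. For \eqref{eq:braiding}, I will note that $\mu^*d_\beta^*$ and $\tau^*\mu^*$ induce the same homomorphism on Heisenberg groups up to the homotopy in the braiding square. Hence both sides are intertwiners of the same $\Heis_p(\Sigma_1)\times\Heis_p(\Sigma_2)$-modules.

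\emph{Invariance and locality.} For \eqref{eq:invariance}, I start from a choice of $(L_-,s_-,L_\calT,s_\calT,L_+,s_+)$ for $M$ and transport it along $d_-,d_\calT,d_+$ to get compatible data for $M'$, as in \eqref{eq:Heisenberg_span}. Since $\phi_{d_\pm}$ sends $\mathbf{1}_{L_\pm}$ to a multiple of $\mathbf{1}_{d_\pm(L_\pm)}$, both composites send $\mathbf{1}_{L_-}\boxtimes 1$ to a multiple of $\mathbf{1}_{d_+(L_+)}$. Cyclicity of the source then gives equality up to scalar. Locality \eqref{eq:locality} is handled the same way. I take product Lagrangians $L'\times L''_-$ on $\Sigma_-$ and $L'\times L''_+$ on $\Sigma_+$, so that $\mu_{\calW_p}$ sends $\mathbf{1}\otimes\mathbf{1}$ to $\mathbf{1}$. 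Both composites then send the generating vector $\mathbf{1}_{L'}\boxtimes\mathbf{1}_{L''_-}\boxtimes 1$ to a multiple of $\mathbf{1}_{L'\times L''_+}$.

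\emph{Handle cancellation.} This is the main obstacle, because \eqref{eq:cancellation} is a scalar identity and not merely an identification of morphisms up to scalar. The projective version only requires the composite $\underline{\C}\to\underline{\C}$ on $\Conf_*(\mathring\Sigma)$ to be a \emph{nonzero} multiple of the identity, i.e.\ the pairing must not vanish. The composite is automatically equivariant, so it is a constant $c$, and the issue is to show $c\neq 0$. I will work in genus $0\to1\to0$ with the arcs of Fig.~\ref{fig:relation_4}, where $L_+=\langle[\nu_1]\rangle$ for $M_1$ and $\langle[\nu_2]\rangle$ for $\overline M_2$.

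The computation runs as follows. Applying $\eta^A$ gives
\[
\psi^h_{M_1}(1\boxtimes\eta^A(1))=\sum_k(0,k[\alpha])\mathbf{1}_{\langle\alpha\rangle},
\]
which is a multiple of $\mathbf{1}_{\langle\alpha\rangle}$ because $\alpha$ acts trivially on it. Similarly, $\psi^h_{\overline M_2}(1\boxtimes\eta^A(1))$ is a multiple of $\mathbf{1}_{\langle\beta\rangle}$, up to the central twist $(-2k,\cdot)$. Both multiples are nonzero: with $\sigma=-\zeta^{-2}$, the sums $\sum_{k=0}^{2p-1}(-t)^k$ reduce to $2\sum_{k=0}^{p-1}(\text{trivial})$. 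Finally, $(\mathbf{1}_{\langle\alpha\rangle},\mathbf{1}_{\langle\beta\rangle})\neq0$, since in the standard basis $\mathbf{1}_{\langle\beta\rangle}$ is proportional to $\sum_c\mathbf{v}_c$ and $\mathbf{1}_{\langle\alpha\rangle}=\mathbf{v}_0$. The scalar $\mathbf{p}$ is invertible by Remark~\ref{rem:quantum_invertibility}, so $c\neq0$.

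The delicate points are the sign and central factors coming from $h$ in Fig.~\ref{fig:handle_cancellation}. These could in principle produce a cancelling root-of-unity sum. I would track them explicitly first, and fall back on Schur-type arguments only where the eigenvalue is provably $1$.
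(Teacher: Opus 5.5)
Your proposal is correct and follows essentially the paper's route. For the structural conditions you use irreducibility and cyclicity together with tracking the invariant vectors $\mathbf{1}_L$; your direct Schur argument for isomorphisms out of an irreducible source makes the paper's Lagrangian choices for the monoidal condition, and its computation of $\beta_*$ for the braiding condition, unnecessary. For handle cancellation you use the same genus-one computation, where only non-vanishing matters.

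The sign you flag is settled as in the proof of Lemma~\ref{lem:i_inversion}. There $[t_h]_{\Heis}=\sigma^{p-2}[i_+(\overline{\alpha})]_{\Heis}$ with $\sigma^p=-1$. Hence $[t_h]_{\Heis}^p$ acts by $-1$, while $-[t_h]_{\Heis}$ acts as an order-$p$ lift of $[\nu]$. So it is $-t$, not $t$, that fixes $\psi^h(\mathbf{1}_{L_-}\boxtimes 1)$, and $\eta^A$ contributes the nonzero factor $2p$.
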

	
	\begin{proof}
		We use notation from Section~\ref{sec:nice_ls} for each condition here:
		\begin{enumerate}
			\item Clearly, $\phi_{\mathrm{Id}}=\mathrm{Id}$. A composition $\phi_{d'}\circ\phi_d$ is an isomorphism of Schr\"odinger representations, as well as $\phi_{d'\circ d}$, therefore they coincide up to a scalar since the Schr\"odinger representation is irreducible.
			\item We choose paths $h$, $h'$, Lagrangians $L_X$, $L_X'$ and their sections $s_X$, $s_X'$ for $X=\{-,+,\calT\}$ as in \eqref{eq:Heisenberg_span} such that they are compatible with $(d_{\calT})_*$. Then:
			\[
				\phi_{d_-}:W_{\zeta}(\Sigma_-)\to W_{\zeta}(\Sigma_-'),\quad 	\phi_{d_+}:W_{\zeta}(\Sigma_+)\to W_{\zeta}(\Sigma_+'),
			\]
			\[
					\phi_{d_-}(\mathbf{1}_{L_-})=\mathbf{1}_{L_-'},\quad 	\phi_{d_+}(\mathbf{1}_{L_+})=\mathbf{1}_{L_+'}.
			\]
			And on the other hand 
			\[
				\psi_M^h(\mathbf{1}_{L_-}\boxtimes 1)=\mathbf{1}_{L_+},\quad \psi_{M'}^{h'}(\mathbf{1}_{L_-'}\boxtimes 1)=\mathbf{1}_{L_+'}
			\]
			The relation follows immediately since $W_{\zeta}(\Sigma_-)$ is generated by $\Heis_p(\Sigma_-)$ from $\mathbf{1}_{L_-}$.
			\item By choosing a path $h$ inside $\Sigma_+''$ and a Lagrangian $L_{\calT}$ such that $L_{\calT}=L'\oplus L_{\calT}''$ together with the corresponding sections, we obtain in the standard basis:
			\[
				\begin{tikzcd}
					\mathbf{v}_{c_1,\dots,c_{g_1}}\otimes \mathbf{v}_{c'_1,\dots,c'_{g_2}} \arrow[r,maps to, "{\mathrm{Id}\otimes \psi_{M'}^h}"]\arrow[d,maps to,"{\mu_{\calW_p}}"]&	\mathbf{v}_{c_1,\dots,c_{g_1}}\otimes \mathbf{v}_{c'_1,\dots,c'_{g_2},0}\arrow[d,maps to,"\mu_{\calW_p}"]\\
					\mathbf{v}_{c_1,\dots,c_{g_1},c'_1,\dots,c_{g_2}'}\arrow[r,maps to,"{\psi_{M}^h}"]&\mathbf{v}_{c_1,\dots,c_{g_1},c'_1,\dots,c_{g_2}',0}
				\end{tikzcd}	
			\]
			\item We choose symplectic arcs in $\Sigma$ as in Fig.~\ref{fig:relation_3} (compare it with Fig.~\ref{fig:handle_cancellation}). We also choose a path $h$ to the intersection point $*_{\pitchfork}$. Therefore it is enough to show that
			\[
				\left(\psi_{M_1}^h\circ(\Id\boxtimes\eta^A)(1),\psi_{\overline{M}_2}^h\circ(\Id\boxtimes\eta^A)(1)\right)\sim 1
			\]
			in order to prove \eqref{eq:cancellation}.
			\begin{figure}[h]
				\centering
				\includegraphics[width=0.5\linewidth]{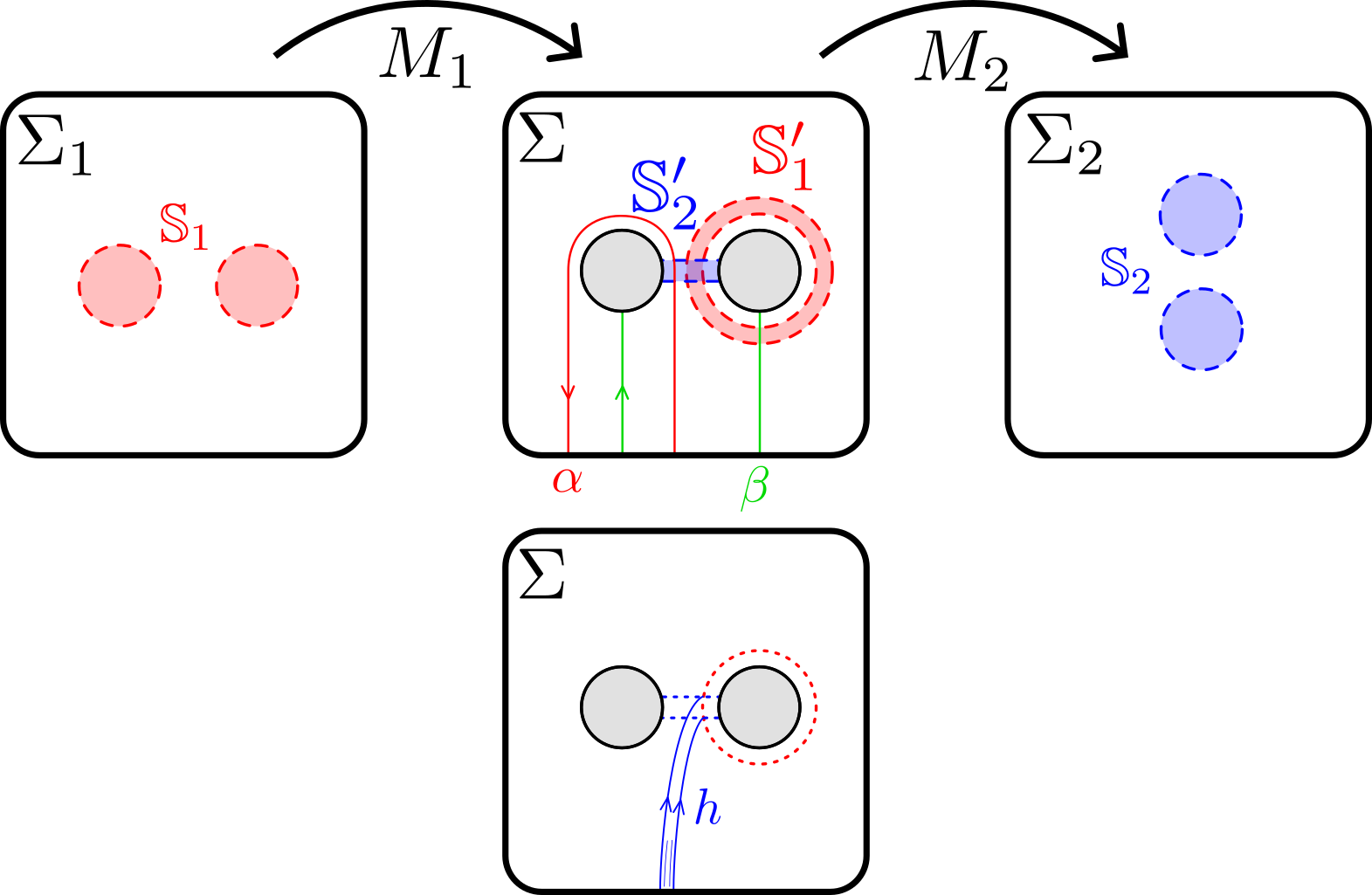}
				\caption{Composition of cancelling cobordisms, $\bbS_1$ and $\bbS_1'$ are attaching and belt tubes of $M_1$, $\bbS_2'$ and $\bbS_2$ are attaching and belt tubes of $M_2$.}
				\label{fig:relation_3}
			\end{figure}

			We identify $\Heis_p(\Sigma)\simeq\Heis_p'(\Sigma)$ via isomorphism $\rho_{\alpha,\beta}$ (see \eqref{prop:Heisenberg_iso}). Then we have
			\[
				[t_{h}]_{\Heis}=(0,[\alpha])
			\]
			by computation in Fig.~\ref{fig:loop_insertion}. Let $L\subset H_1(\Sigma)$ be the standard Lagrangian (\ie spanned by $[\alpha]$) and $s:L\to \Heis_p(\Sigma)$ is the standard section (\ie $s([\alpha])=[\alpha]_{\Heis}=(0,[\alpha])$). Hence $L$ and $s$ satisfy \eqref{eq:Heisenberg_span} and by \eqref{W_monodromy1}:
			\[
				\psi_{M_1}^h\circ(\Id\boxtimes\eta^A)(1)=\sum\limits_{k=0}^{2p-1}(0,k[\alpha])\cdot\mathbf{v}_{0}=2p\mathbf{v}_0\in W_{\zeta}(\Sigma).
			\]

			Let $L'\subset H_1(\Sigma)$ be the Lagrangian spanned by $[\beta]$ and $s':L'\to \Heis_p(\Sigma)$, $s'([\beta])=(-2,[\beta])$. Hence $L'$ and $s'$ satisfy \eqref{eq:Heisenberg_span} and by \eqref{W_monodromy1}:
			\[
				\psi_{\overline{M}_2}^h\circ(\Id\boxtimes\eta^A)(1)=\mathbf{1}_{L'}=\sum\limits_{k=0}^{2p-1}(2k,-k[\beta])\cdot \mathbf{1}_{L}=2\sum\limits_{k=0}^{p-1}\zeta^{-4k}\mathbf{v}_k.
			\]
			Hence the intersection:
			\[
					\left(\psi_{M_1}^h\circ(\Id\boxtimes\eta^A)(1),\overline{\psi}_{M_2}^h\circ(\Id\boxtimes\eta^A)(1)\right)\sim \left(\mathbf{v}_{0},\sum\limits_{k=0}^{p-1}\zeta^{-4k}\mathbf{v}_k\right)=1.
			\]

			\item Let $d_1:\Sigma_1\to \Sigma_1'$ and $d_2:\Sigma_2\to \Sigma_2'$. The proof is similar to (2), we choose Lagrangians and sections invariant under the diffeomorphisms $d_-$ and $d_+$. Then the induced Lagrangian on the boundary connected sum is invariant under $d_1\natural d_2$. Therefore all $\phi_{d_1}$, $\phi_{d_2}$ and $\phi_{d_1\natural d_2}$ send $\mathbf{1}_{X}$ to $\mathbf{1}_{Y}$ for some Lagrangians $X,Y$, so does $\mu_{\calW_p}$ by definition \eqref{eq:mu_calW}. Hence the diagram is clearly commutative.
			\item Consider the homomorphism
			\[
				\beta_*:\Heis_p(\Sigma_1)\times \Heis_p(\Sigma_2)\to \Heis_p(\Sigma_2)\times \Heis_p(\Sigma_1)
			\]
			induced by the braiding diffeomorphism $\beta:\Sigma_1\natural\Sigma_2\to \Sigma_2\natural\Sigma_1$. It sends
			\[
				\beta_*:[\gamma_1]_{\Heis}\boxtimes[\gamma_2]_{\Heis}\mapsto [\delta\gamma_2\delta^{-1}]_{\Heis} \boxtimes [\gamma_1]_{\Heis},
			\]
			where $\delta$ is the loop circling around $\Sigma_1$ counter-clockwise. Since $\delta$ is a product of commutators $[\beta_i^{-1},\alpha_i^{-1}]$ it is central in $\Heis$. Hence conjugation by $[\delta]_{\Heis}$ is trivial and we have:
			\[
				\beta_*:x\times y\mapsto y\times x.
			\]
			It in particular means that 
			\[
				W_{\zeta}(\Sigma_1)\boxtimes W_{\zeta}(\Sigma_2)\xrightarrow{\phi_{\beta}}\beta^*W_{\zeta}(\Sigma_2)\boxtimes W_{\zeta}(\Sigma_1),
			\]
			\[
				\mathbf{v}\boxtimes \mathbf{v}'\mapsto \mathbf{v}'\boxtimes\mathbf{v}
			\]
			since the action of any Lagrangian on $\mathbf{v}\boxtimes \mathbf{v}'$ coincides with the action on $\mathbf{v}'\boxtimes\mathbf{v}$.
		\end{enumerate}
	\end{proof}

	We derive explicit formulas for the action of index 1 and index 2 cobordisms $M_1$ and $M_2$ (see Fig.~\ref{fig:Psi_M_dual}) in the associated projective TQFT $\bbF_{\calW_p}$. By \eqref{eq:index_1_preaction} and \eqref{W_monodromy1}
	\begin{equation}\label{eq:bbF_M_1_formula}
		\boxed{\begin{gathered}
				\bbF_{\calW_p}(M_1)(1)=\mathbf{\Gamma}(p-1,0)\otimes ((p-1)(p-2)/2,(p-1)[\beta])\sum\limits_{k=0}^{2p-1}(0,k[\alpha])\cdot \mathbf{v}_{0}=\\
			=\mathbf{\Gamma}(p-1,0)\otimes 2p(-\zeta^2)^{(p-2)(p-1)/2}\mathbf{v}_{-1}.
		\end{gathered}}
	\end{equation}
	
	By computation in Fig.~\ref{fig:overline_M_2_action}:
	\[
		\bbF_{\calW_p}(\overline{M}_2)(1)=\mathbf{\Gamma}^{\vee}(p-1,0)\otimes 2(-\zeta^2)^{(p-2)(p-1)/2}\sum\limits_{k=0}^{p-1}\mathbf{v}_{k}.
	\]
	Therefore the dual map with respect to the pairing $\langle-,-\rangle$ is 
	\begin{equation}\label{eq:bbF_M_2_formula}
		\boxed{\bbF_{\calW_p}(M_2)(\mathbf{\Gamma}(a,b)\otimes \mathbf{v}_{c})=\delta_{a,p-1}\delta_{b,0}\frac{(-\zeta^2)^{-(p-2)(p-1)/2}}{2}.}
	\end{equation}
	by explicit formula for the pairing in the standard basis \eqref{eq:pairing_formula}.
	


	\subsection{Kerler--Lyubashenko TQFT}\label{sec:KL_TQFT}

		
		We first recall a definition of small quantum $\mathrm{sl}_2$. Let $\mathfrak{u}_{\zeta}\mathfrak{sl}_2$ be the $\C$-algebra with generators $\{E,F^{(1)},K\}$ and relations:
		\begin{equation}
			E^p=(F^{(1)})^p=0,\quad K^p=1,\quad KEK^{-1}=\zeta^2E,\quad KF^{(1)}K^{-1}=\zeta^{-2}F^{(1)}, \quad [E,F^{(1)}]=K-K^{-1}
		\end{equation}
		This algebra admits a structure of factorizable ribbon Hopf algebra (see \cite{lyubashenko1995invariants}). Introduce
		\[
			T_c=1/p\sum\limits_{l=0}^{p-1}\zeta^{2lc}K^{l}.
		\]
		Then a basis of $\mathfrak{u}_{\zeta}\mathfrak{sl}_2$ is given by:
		\[
			\{E^{a}T_c F^{(b)}|\: a,b,c\in\{0,\dots,p-1\}\}.
		\] 
		\vspace{0.5cm}
	
	    The Kerler--Lyubashenko TQFT is defined on a cobordism category $3\Cob_*^{\Z}$ that is more suitable for the algebraic setting. Define first $3\Cob_*$. An object of this category is a non-negative integer $g$. Morphisms are equivalence classes of cobordisms between $\Sigma_g$ and $\Sigma_{g'}$. Monoidal structure is given by $(g,g')\mapsto g+g'$ on objects which corresponds to the boundary connected sum $\Sigma_{g_1+g_2}\simeq\Sigma_{g_1}\natural\Sigma_{g_2}$. For more details, and for a tangle presentation of this category, see Appendix~\ref{sec:ALG3} and \cite{beliakova2024kerler}. This category was introduced by Kerler in \cite{kerler2001towards} (denoted by $\mathcal{C}ob_0$ there). There is a natural functor 
	    \begin{equation}\label{eq:Cob_functor}
	    	3\Cob_*\to 3\Cob
	    \end{equation}
	     sending object $g$ to the standard surface $\Sigma_g$ and morphisms from $g$ to $g'$ to the corresponding cobordism in $3\Cob$. Let $3\Cob_*^{\Z}$ be the central extension of $3\Cob_*$ by $\Z$ given by the signature defects (the category $\mathcal{C}ob$ in Kerler's paper). 
	
	\begin{theorem}[\cite{kerler2002non,beliakova2024kerler}]
		If $H$ is a factorizable ribbon Hopf algebra, then there exists a braided monoidal functor
		\[
			J_H:3\Cob_*^{\Z}\to H\text{-mod}
		\]
		sending every surface $\Sigma_g$ to $\mathrm{ad}^{\otimes g}$, where $\mathrm{ad}$ is the adjoint representation of $H$. 
	\end{theorem}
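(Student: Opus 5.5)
The plan is to exploit the algebraic (tangle) presentation of the cobordism category rather than working with cobordisms directly. By the results recalled in Appendix~\ref{sec:ALG3} (Kerler, Bobtcheva--Piergallini, and the reformulation in \cite{beliakova2312algebraic,beliakova2024kerler}), $3\Cob_*$ is braided monoidally equivalent to the free braided monoidal category generated by a single object, $\Sigma_1$, carrying the structure of a braided Hopf algebra with ribbon-type data. This data consists of a two-sided integral and cointegral, a copairing/Hopf pairing, and a twist. It is subject to a finite list of relations encoding Kirby moves and handle cancellations. The extension $3\Cob_*^{\Z}$ is obtained by adding a central generator that records signature defects, which modifies the relation coming from the blow-up move by that generator. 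Consequently, constructing a braided monoidal functor $J_H$ reduces to three tasks: exhibiting an object of $H\text{-mod}$ carrying this structure, checking the relations, and assigning a value to the central generator. The universal property then produces $J_H$, with $J_H(\Sigma_g)=\mathrm{ad}^{\otimes g}$ forced by monoidality.

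The key steps, in order, are as follows. First, equip $\mathrm{ad}$, meaning $H$ with the adjoint action, with Majid's transmuted braided Hopf algebra structure. Its product is that of $H$, while its coproduct and antipode are twisted by the $R$-matrix; this makes $\mathrm{ad}$ a Hopf algebra object in the braided category $H\text{-mod}$. Second, take the Hopf algebra integral $\lambda\in H^*$ and cointegral $\Lambda\in H$. Check that both are $\mathrm{ad}$-equivariant, which uses unimodularity; factorizable Hopf algebras are unimodular. Normalize them so that $\lambda(\Lambda)=1$. Third, use the monodromy matrix $R_{21}R$ to define the Hopf pairing $\mathrm{ad}\otimes\mathrm{ad}\to\C$. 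Factorizability means exactly that the Drinfeld map is an isomorphism, which is the nondegeneracy required of this pairing. Use the ribbon element $v$ to define the twist on $\mathrm{ad}$, and check the ribbon compatibility relation between twist, pairing, and coproduct. Fourth, send the central generator to the scalar $\lambda(v)/\lambda(v^{-1})$, or to a suitable square root depending on normalization conventions. Both Gauss sums $\lambda(v^{\pm1})$ are nonzero by factorizability, and this choice accounts for the anomaly in the $\pm1$-framed unknot relation.

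The main obstacle will be the relations that involve the integrals together with the twist, namely the Kirby~I (blow-up) relation and the handle-cancellation and handle-slide relations. Only these relations genuinely use factorizability rather than formal braided Hopf algebra identities. My approach is to derive handle slides from the left and right invariance of $\lambda$ applied to the transmuted coproduct. Handle cancellation should follow from the identity $\lambda(\Lambda)=1$ combined with nondegeneracy of the pairing. The blow-up relation should come from the classical identity $(\lambda\otimes\mathrm{id})\big((v^{\pm1}\otimes1)\Delta(v^{\mp1})\big)=\lambda(v^{\pm1})\,v^{\mp1}$, with the resulting scalar absorbed by the central extension. Braidedness of $J_H$ is then automatic, since the braiding in $3\Cob_*$ is the free braiding of the generating object and is sent to the braiding of $H\text{-mod}$ on $\mathrm{ad}^{\otimes g}$.
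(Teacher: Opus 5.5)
The paper gives no proof of this statement; it quotes it from \cite{kerler2002non,beliakova2024kerler}. Your architecture is essentially the route of \cite{beliakova2024kerler}, rather than the original coend construction of \cite{kerler2002non}. That route presents $3\Cob_*$ as freely generated by a Kerler Hopf algebra \cite{beliakova2312algebraic}, sends the generator to $\mathrm{ad}$ with its transmuted structure, integral, cointegral, ribbon element and a monodromy-built (co)pairing, and lets the signature generator absorb the anomaly.

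However, the identity you single out for the blow-up step is false. With $\Delta(v)=(R_{21}R)^{-1}(v\otimes v)$ and $v$ central,
\[
(v^{\pm1}\otimes 1)\,\Delta(v^{\mp1})=(R_{21}R)^{\pm1}(1\otimes v^{\mp1}).
\]
So the $v$ on the first leg cancels, and applying $\lambda\otimes\mathrm{id}$ gives $(\lambda\otimes\mathrm{id})\bigl((R_{21}R)^{\pm1}\bigr)\,v^{\mp1}$, the Drinfeld image of $\lambda$ times $v^{\mp1}$. The Drinfeld map is injective by factorizability and sends $\epsilon$ to $1$. Hence this could equal $\lambda(v^{\pm1})v^{\mp1}$ only if $\lambda=\lambda(v^{\pm1})\epsilon$, which is impossible unless $\dim H=1$. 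The identity you want is $(\lambda\otimes\mathrm{id})\bigl((v^{\pm1}\otimes1)(R_{21}R)^{\mp1}\bigr)=\lambda(v^{\pm1})\,v^{\mp1}$. It is just the integral property $(\lambda\otimes\mathrm{id})\Delta(v^{\pm1})=\lambda(v^{\pm1})1$ for the appropriate one-sided integral. Modulo handle slides, only the isolated blow-up, i.e.\ the scalar $\lambda(v^{\pm1})$, is needed.

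More substantively, your account of where factorizability enters is off, and the relation that really needs it is missing from your list. Handle slides and the cancellation $\lambda\circ\Lambda=\mathrm{id}$ use only invariance of $\lambda$, unimodularity and the normalization $\lambda(\Lambda)=1$. They already hold in Bobtcheva--Piergallini's category of $4$-dimensional $2$-handlebodies, on which \cite{beliakova2024kerler} build a functor for \emph{every} unimodular ribbon Hopf algebra. What distinguishes $3\Cob_*$ is the blow-up together with the trading of a $1$-handle for a $0$-framed $2$-handle. The trading relation says that an integral-coloured $0$-framed meridian acts as the cointegral, i.e.\ the Drinfeld map sends $\lambda$ to $\Lambda$. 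Bijectivity of the Drinfeld map is not by itself this statement. You need the lemma that for factorizable $H$ the Drinfeld image of $\lambda$ is a nonzero two-sided integral $c\Lambda$. The integral property then gives $\lambda(v)\lambda(v^{-1})=c\,\lambda(\Lambda)$ (Drinfeld map on the appropriate leg), which is the real source of $\lambda(v^{\pm1})\neq0$.

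The functor forces $\lambda(\Lambda)=1$, from cancellation. It also forces $\lambda(v)\lambda(v^{-1})=1$, since the signature-zero link $U_{+1}\sqcup U_{-1}$ presents the identity. Starting from $\lambda(\Lambda)=1$, you reach both by rescaling $\lambda\mapsto\lambda/\mathcal{D}$ and $\Lambda\mapsto\mathcal{D}\Lambda$ with $\mathcal{D}^2=\lambda(v)\lambda(v^{-1})$. After this, $c=1$ and the trading relation holds exactly. Consequently the defect generator must go to $\lambda(v)$ or $\lambda(v^{-1})$, depending on conventions. That is a square root of $\lambda(v)/\lambda(v^{-1})$ or of its inverse, so your first option, the ratio itself, is off by a square.
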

	
	We are interested in the case $H=\mathfrak{u}_{\zeta}\mathfrak{sl}_2$. Since signature defects contribute to the Kerler--Lyubashenko TQFT only through scalar coefficients, it descends to a projective TQFT on $3\Cob_*$
	\[
		\mathbb{P}J_{\mathfrak{sl}_2}:3\Cob_*\to \mathbb{P}\mathbf{Vect}_{\C}.
	\]
	The homological TQFT $\bbF_{\calW_p}$ can be pulled back to $3\Cob_*$ via \eqref{eq:Cob_functor}. We use the same notation for this functor:
	\[
		\bbF_{\calW_p}:3\Cob_*\to \bbP\mathrm{Vect}_{\C}
	\]
	and compare it with $\mathbb{P}J_{\mathfrak{sl}_2}$ in the following theorem.


	\begin{maintheorem}\label{thm:KL_isomorphism}
			The projective TQFT $\bbF_{\calW_p}$ arising from the collection of Schr\"odinger local systems $\calW_p$ for an odd $p\geq 3$ is projectively isomorphic to the Kerler--Lyubashenko TQFT $J_{\mathfrak{sl}_2}$ associated to the small quantum $\mathfrak{sl}_2$ at the $p$-th root of unity $\zeta=e^{2\pi i/p}$.
	\end{maintheorem}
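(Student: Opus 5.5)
The plan is to compare the two functors on generators of $3\Cob_*$, using a presentation that both sides respect. Since $\bbF_{\calW_p}$ is a projective monoidal functor (Theorem~\ref{thm:main_thm} and Theorem~\ref{thm:Schroedinger_nice}), it suffices to build linear isomorphisms $\Phi_g:\bbF_{\calW_p}(\Sigma_g)\to \mathrm{ad}^{\otimes g}$. These should be compatible with the monoidal structures $\mu^{\bbF}_{\calW_p}$ and the tensor product. We then check that $\Phi$ projectively intertwines the actions of a generating set of morphisms of $3\Cob_*$.

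For generators we use Kerler's tangle presentation from Appendix~\ref{sec:ALG3} and \cite{beliakova2024kerler}. These are the braiding, the generators of the mapping class group of the genus-one piece (the two Dehn twists $S$ and $T$ acting on $\Sigma_1$), together with the index $1$ and index $2$ cobordisms $M_1:\Sigma_0\to\Sigma_1$ and $M_2:\Sigma_1\to\Sigma_0$ of Fig.~\ref{fig:relation_3}. Under $J_{\mathfrak{sl}_2}$ these are sent to the braiding, to the Lyubashenko $S,T$ on $\mathrm{ad}$, and to the unit/cointegral and integral/counit type maps $\C\to\mathrm{ad}$, $\mathrm{ad}\to\C$.

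\emph{Step 1: genus one.} The state space $\bbF_{\calW_p}(\Sigma_1)$ has basis $\mathbf{\Gamma}(a,b)\otimes\mathbf{v}_c$ with $a,b,c\in\{0,\dots,p-1\}$, so its dimension is $p^3=\dim\mathfrak{u}_\zeta\mathfrak{sl}_2$. We define $\Phi_1(\mathbf{\Gamma}(a,b)\otimes\mathbf{v}_c)$ to be a rescaled basis element $\lambda_{a,b,c}\,E^aT_{c'}F^{(b)}$, where $c'$ is an affine function of $(a,b,c)$. The shift in $c'$ should be fixed by matching weights: the action of $\sigma$ and of $[\alpha]_\Heis$ against $K$-conjugation.

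The main input here is De Renzi--Martel \cite{de2022homological}. They show that $\mathring{\bbH}(\Sigma_g;\calW_p)$, with its mapping class group action, is projectively isomorphic to Lyubashenko's representation on $\mathrm{ad}^{\otimes g}$. I would take $\Phi_g$ to be exactly their isomorphism. Its restriction to genus one then intertwines $S$ and $T$ projectively for free. Since their isomorphism is built from the same twisted-cycle bases as the monoidal structure of Fig.~\ref{fig:homology_splitting}, namely $\Phi_{g_1+g_2}=\Phi_{g_1}\otimes\Phi_{g_2}$, compatibility with $\mu^{\bbF}$ follows. Compatibility with the braiding is part of the mapping class group statement, since $d_\beta$ is a mapping class.

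\emph{Step 2: handles.} Using the explicit formulas \eqref{eq:bbF_M_1_formula} and \eqref{eq:bbF_M_2_formula}, we have
\[
\bbF_{\calW_p}(M_1)(1)\propto \mathbf{\Gamma}(p-1,0)\otimes\mathbf{v}_{-1},\qquad \bbF_{\calW_p}(M_2)\propto \delta_{a,p-1}\delta_{b,0}\ \text{(for every } c\text{)}.
\]
Transport these through $\Phi_1$. One must verify two things:
\begin{itemize}
\item $\Phi_1(\mathbf{\Gamma}(p-1,0)\otimes\mathbf{v}_{-1})$ is proportional to the two-sided cointegral $E^{p-1}T_{c_0}F^{(0)}$, i.e. a multiple of $E^{p-1}\sum_l K^l$ up to the weight shift, which is the image of $M_1$ under $J$;
\item the functional "coefficient of $E^{p-1}T_\ast F^{(0)}$ summed over $\ast$" equals, up to scalar, the (right) integral $\lambda$ of $\mathfrak{u}_\zeta\mathfrak{sl}_2$ composed with the appropriate twist, which is the image of $M_2$.
\end{itemize}
Both are one-dimensional determinations. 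The cointegral is the unique (up to scalar) invariant vector of top $E$-degree annihilated by $E$. The integral is the unique (up to scalar) functional satisfying the standard equivariance. So it suffices to check that the homological images satisfy the defining invariance properties, namely invariance under the $\mathfrak{u}_\zeta\mathfrak{sl}_2$ action transported via Martel's homological $\mathfrak{sl}_2$ action \cite{martel2022homological}. Alternatively, one can match the PBW coefficients directly.

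Since $3\Cob_*$ is generated by these morphisms together with monoidal products, and both functors are projective monoidal, projective agreement on generators gives a projective natural isomorphism.

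\emph{Main obstacle.} The hardest part is Step 2 together with bookkeeping of conventions. The basis-and-weight identification used by De Renzi--Martel must be made to send the specific cycles $\mathbf{\Gamma}(p-1,0)\otimes\mathbf{v}_{-1}$ and the dual functional to the cointegral and integral in Kerler's normalization. This must hold with the same $\Phi_1$ that intertwines $S,T$, not merely up to an independent automorphism. The first thing I would try is to identify both handle maps via invariance: show that $\bbF_{\calW_p}(M_1)(1)$ is fixed (projectively) by the mapping class group of the genus-one surface that extends over the solid handle, i.e. the stabilizer of the meridian. Kerler's cointegral is characterized up to scalar by the same invariance, and the integral side then follows by Hermitian duality (Remark~\ref{rem:Hermitian}).
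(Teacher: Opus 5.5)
Your architecture matches the paper's. You take the De Renzi--Martel isomorphism $\mathbf{\Phi}$ of Theorem~\ref{thm:MJ_theorem} as the components, get the mapping cylinders from it, and match $M_1,M_2$ with $\Lambda,\lambda$ using \eqref{eq:bbF_M_1_formula} and \eqref{eq:bbF_M_2_formula}.

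The mapping cylinders do come for free, but the reason is Schur's lemma. Both $\bbF_{\calW_p}(M_d)$ and De Renzi--Martel's $\rho_g(d)$ are $d_*$ composed with an intertwiner $W_\zeta(\Sigma)\to d^*W_\zeta(\Sigma)$, and such an intertwiner is unique up to scalar. The paper checks this explicitly on $\tau_{\alpha_i},\tau_{\beta_i},\tau_{\gamma_k}$.

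\textbf{Step~2 fails as written.} Your guessed shape $\Phi_1(\mathbf{\Gamma}(a,b)\otimes\mathbf{v}_c)\propto E^aT_{c'}F^{(b)}$ is not the De Renzi--Martel map. In genus one that map is $\mathbf{\Gamma}(a,b)\otimes\mathbf{v}_c\mapsto N_1(a,b,c)\,E^{p-1-b}T_cF^{(a)}$.
\begin{itemize}
\item Your first bullet asks you to prove something false. $E^{p-1}\sum_l K^l$ is not a cointegral, since it is not annihilated by $F$ on either side. The actual image of $\mathbf{\Gamma}(p-1,0)\otimes\mathbf{v}_{-1}$ is a multiple of $E^{p-1}T_{-1}F^{(p-1)}=E^{p-1}F^{(p-1)}T_0$, which is $\Lambda(1)$ up to scalar.
\item Likewise, $\bbF_{\calW_p}(M_2)$ is supported on $E^{p-1}T_cF^{(p-1)}$, not on $E^{p-1}T_cF^{(0)}$. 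The real content of the integral check is this: the $c$-independent functional $\delta_{a,p-1}\delta_{b,0}$ in the twisted-cycle basis becomes the $\zeta^{-2c}$-weighted functional on $E^{p-1}T_cF^{(p-1)}$, because $N_1(p-1,0,c)\propto\zeta^{2c}$. That functional is $\lambda$.
\end{itemize}
With the correct formula, both checks are the one-line computations \eqref{eq:cointegral_action}--\eqref{eq:integral_action}, which is exactly what the paper does.

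Your invariance shortcuts do not replace this computation.
\begin{itemize}
\item Under $\mathbf{\Phi}$, Martel's action is the adjoint action on $\mathrm{ad}$. So invariance only places $\bbF_{\calW_p}(M_2)$ among $\mathrm{ad}$-invariant functionals, a space of the dimension of the center, not a line. The equivariance that characterizes $\lambda$ is for the regular action, which has no homological counterpart here.
\item For the cointegral, ``central and of top $E$-degree'' does single out $\Lambda(1)$. But you need the correct $\mathbf{\Phi}$ to know that top $E$-degree means $b=0$.
\item The stabilizer of the meridian in $\mathrm{Mod}(\Sigma_1,\partial\Sigma_1)\cong B_3$ is abelian, generated by $\tau_\alpha$ and the central element $(\tau_\alpha\tau_\beta)^3$. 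Projective invariance under it therefore only says ``joint eigenvector''. Such vectors exist in every degree, since mapping classes preserve degree.
\end{itemize}

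\textbf{Your generating set is too small.} Every composite of $\natural$-products of the braiding, genus-one mapping cylinders, $M_1$ and $M_2$ acts on $H_1$ by a Lagrangian relation that splits along the permuted handle summands. By contrast, $\tau_{\gamma_k}$ mixes two handles, as do the product and coproduct of the Kerler Hopf algebra. You must include all mapping cylinders in every genus, as the paper does with Humphries generators plus $\lambda,\Lambda$. Since you already invoke De Renzi--Martel in all genera, this fix is harmless.

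\textbf{Monoidality is not automatic.} Theorem~\ref{thm:MJ_theorem} does not give $\Phi_{g_1+g_2}=\Phi_{g_1}\otimes\Phi_{g_2}$. The formula reverses the order of the handles, and $N(\mathbf{a},\mathbf{b},\mathbf{c})$ contains the basis-dependent cross factors $\zeta^{2(a_j+b_j)(a_k+b_k)}$ and $\zeta^{2(a_j+b_j)(j-1)}$. You need compatibility with $\mu^{\bbF}_{\calW_p}$ to pass from $\Lambda,\lambda$ between $\Sigma_0$ and $\Sigma_1$ to $\Id\otimes\Lambda$ and $\Id\otimes\lambda$ in higher genus. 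That compatibility requires an actual argument; it does not follow from using the same bases.
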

	
	 We first recall that a projective homological representation of mapping class groups coinciding with the projective Lyubashenko representation was constructed in \cite{de2022homological}. The projective action of $\mathrm{Mod}(\Sigma_g,\partial\Sigma_g)$ on small cycles $\bbF_{\calW_p}(\Sigma_g)$ was defined by specifying the action of generating positive Dehn twists $\{\tau_{\alpha_i},\tau_{\beta_j},\tau_{\gamma_k},\ 1\leq i,j\leq g,\ 1\leq k\leq g-1\}$ along curves in Fig.~\ref{fig:Dehn_twists} on twisted cycles as follows:
	\[
		\rho_g: \mathrm{Mod}(\Sigma_g)\to \mathrm{PGL}(\bbF_{\calW_p}(\Sigma_g)) 
	\]
	\[
		\rho_g(f)(\tilde{\sigma}\otimes v)=(\tilde f^n)_*(\tilde{\sigma})\otimes \phi(f)(v), \quad n\geq 0,
	\]
	where $\tilde f^n$ is the action on the covering space $\widetilde{\Conf}_n(\Sigma_g)$ induced by $f$, $\tilde{\sigma}$ is a cycle in $C_n(\widetilde\Conf_n(\Sigma_g))$ and $v$ is a vector in the Schr\"odinger representation. The map $\phi:W_{\zeta}(\Sigma_g)\to W_{\zeta}(\Sigma_g)$ is given by the formulas in the standard basis:
	
	\begin{equation}\label{psi_alpha}
		\phi(\tau_{\alpha_i}):\mathbf{v}_{c_1,\dots,c_g}\mapsto \sum\limits_{l=0}^{p-1}\zeta^{-2l(l-1)+4c_il}\mathbf{v}_{c_1,\dots,c_g},
	\end{equation}
	\begin{equation}\label{psi_beta}
		\phi(\tau_{\beta_i}):\mathbf{v}_{c_1,\dots,c_g}\mapsto \sum\limits_{l=0}^{p-1}\zeta^{-2l(l+1)}\mathbf{v}_{c_1,\dots,c_i +l,\dots,c_g},
	\end{equation}
	\begin{equation}\label{psi_gamma}
		\phi(\tau_{\gamma_k}):\mathbf{v}_{c_1,\dots,c_g}\mapsto \sum\limits_{l=0}^{p-1}\zeta^{-2l(l+1)-4lc_k+4lc_{k+1}}\mathbf{v}_{c_1,\dots,c_g},
	\end{equation}

	\begin{figure}[h]
		\centering
		\includegraphics[width=0.5\linewidth]{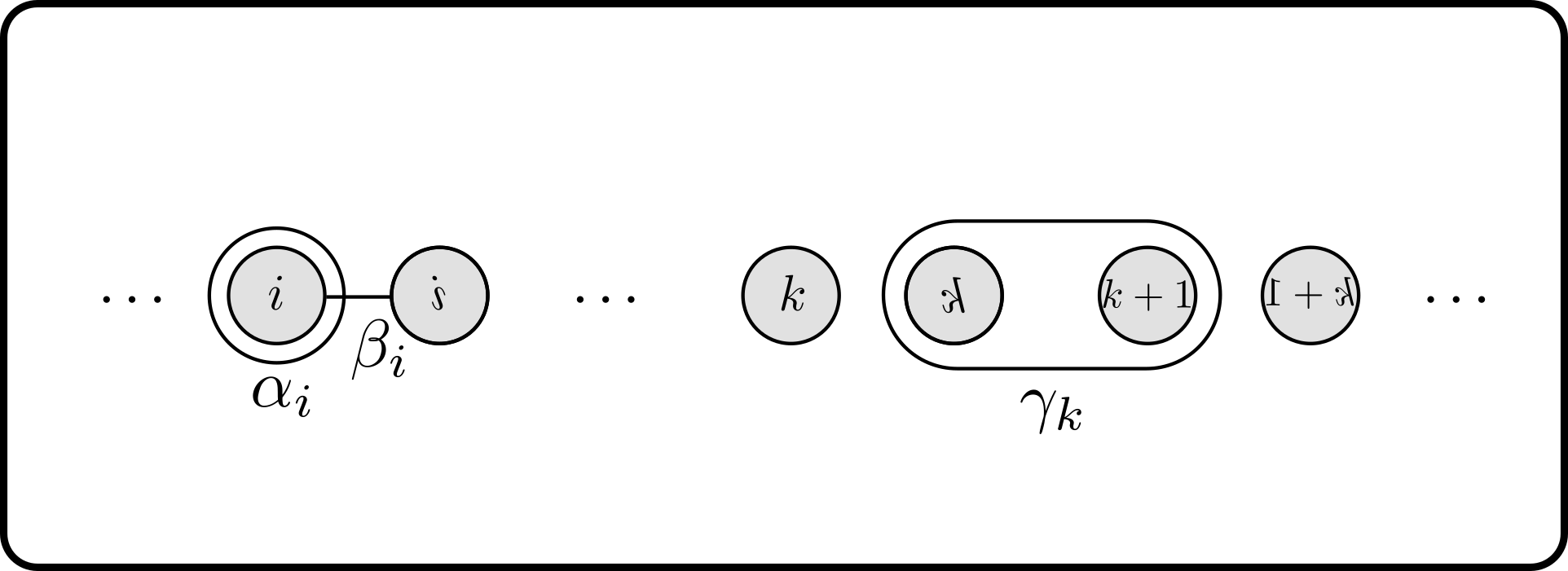}
		\caption{Generating Dehn twists.}
		\label{fig:Dehn_twists}
	\end{figure}

	\begin{theorem}[\cite{de2022homological}, Theorem 6.1]\label{thm:MJ_theorem}
		There is an isomorphism of projective $\mathrm{Mod}(\Sigma_g,\partial\Sigma_{g})$ representations between $\rho_g$ and the quantum projective representation arising from $J_{\mathfrak{sl}_2}$
		\begin{equation}\label{eq:MJ_isomorphism}
			\mathbf{\Phi}: \mathring{\bbH}(\Sigma_g;W_{\zeta}(\Sigma_g))\to \mathrm{ad}^{\otimes g},
		\end{equation}
		\[
			 \mathbf{\Gamma}(\mathbf{a},\mathbf{b})\otimes \mathbf{v}_{\mathbf{c}}\mapsto N(\mathbf{a},\mathbf{b},\mathbf{c})E^{\mathbf{p-1}-\overline{\mathbf{b}}}T_{\overline{\mathbf{c}}}F^{\overline{\mathbf{a}}},
		\]
		where $\mathrm{ad}$ is the adjoint representation of $\mathfrak{u}_{\zeta}\mathfrak{sl}_2$ and
		\[
			\mathbf{k}=(k_1,\dots,k_g),\quad \overline{\mathbf{k}}=(k_g,\dots,k_1),\quad \mathbf{p-1}-\mathbf{k}=(p-1-k_1,\dots,p-1-k_g);
		\]

		\[
			N(\mathbf{a},\mathbf{b},\mathbf{c}) := \prod_{1 \le j < k \le g} \zeta^{2(a_j+b_j)(a_k+b_k)} \prod_{j=1}^g N_j(a_j,b_j,c_j),
		\]
		\[
			N_j(a_j,b_j,c_j) := \zeta^{2(a_j+b_j)(j-1)+\frac{a_j(a_j-1)}{2}+2a_jb_j-2(b_j-1)c_j}.
		\]
	\end{theorem}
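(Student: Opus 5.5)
The statement is Theorem~6.1 of \cite{de2022homological}, quoted here without proof. My plan would be to verify directly that the explicit map $\mathbf{\Phi}$ of \eqref{eq:MJ_isomorphism} is a linear isomorphism that projectively intertwines $\rho_g$ with the Lyubashenko action on $\mathrm{ad}^{\otimes g}$.

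First I would check that $\mathbf{\Phi}$ is bijective. By Subsection~\ref{sec:small_cycles}, a basis of $\mathring{\bbH}(\Sigma_g;\calW_p(\Sigma_g))$ is given by the twisted cycles $\mathbf{\Gamma}(\mathbf{a},\mathbf{b})\otimes\mathbf{v}_{\mathbf{c}}$ with all $a_i,b_i\le p-1$ and $c_i\in\{0,\dots,p-1\}$, using the standard basis \eqref{standard_Schroedinger_basis}. There are $p^{3g}$ of these. On the other side, $\{E^{a}T_cF^{(b)}\}$ is a PBW-type basis of $\mathfrak{u}_\zeta\mathfrak{sl}_2$, so $\mathrm{ad}^{\otimes g}$ has the basis of tensor products of such elements, again $p^{3g}$ in number. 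The assignment $(\mathbf{a},\mathbf{b},\mathbf{c})\mapsto(\mathbf{p-1}-\overline{\mathbf{b}},\overline{\mathbf{c}},\overline{\mathbf{a}})$ is a bijection of index sets. Since each $N(\mathbf{a},\mathbf{b},\mathbf{c})$ is a power of $\zeta$, hence nonzero, $\mathbf{\Phi}$ is an isomorphism.

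Because $\mathrm{Mod}(\Sigma_g,\partial\Sigma_g)$ is generated by the Dehn twists $\tau_{\alpha_i},\tau_{\beta_i},\tau_{\gamma_k}$ of Fig.~\ref{fig:Dehn_twists}, it suffices to check projective intertwining on these generators. On the quantum side I would use Lyubashenko's formulas in the conventions of \cite{kerler2002non,beliakova2024kerler}:
\begin{itemize}
\item one of $\tau_{\alpha_i},\tau_{\beta_i}$ acts by multiplication by the ribbon element (or its inverse) on the $(g+1-i)$-th factor of $\mathrm{ad}^{\otimes g}$;
\item the other acts by the same operator conjugated by the Lyubashenko $S$-transformation, built from the integral and the monodromy matrix;
\item $\tau_{\gamma_k}$ acts by the ribbon element inserted through the coproduct on two adjacent factors.
\end{itemize}
On the homological side, $\rho_g(f)$ is the pushforward of the twisted cycle combined with $\phi(f)$ from \eqref{psi_alpha}--\eqref{psi_gamma}. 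To compute it, I would apply $f$ to the arcs and rewrite the image in the standard basis using the diagrammatic rules of Appendix~\ref{sec:Diag_calculus}: the braid rule, the fusion rule with quantum binomials, and handle-slide or arc-splitting rules. The cases are as follows.
\begin{itemize}
\item $\tau_{\alpha_i}$ preserves $\alpha_i$ and twists $\beta_i$ around it. Splitting the twisted arc produces a sum over redistributions of the $b_i$ points onto $\alpha_i$, with $q$-binomial coefficients. Only terms with labels $<p$ survive in small cycles.
\item $\tau_{\beta_i}$ is analogous, with the roles of $\alpha_i$ and $\beta_i$ exchanged.
\end{itemize}
Each side then becomes an explicit matrix in the bases above, and matching them after conjugation by $\mathbf{\Phi}$ reduces to identities among Gaussian sums $\sum_l\zeta^{-2l(l\pm1)+\dots}$ and $q$-binomial expansions of $\Delta(E^aT_cF^{(b)})$. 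The role of the normalization $N(\mathbf{a},\mathbf{b},\mathbf{c})$ is to absorb the $q$-powers from reordering points along arcs and the path conventions from $*_n$. Its cross term $\prod_{j<k}\zeta^{2(a_j+b_j)(a_k+b_k)}$ should exactly compensate the braiding of points from different handles.

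I expect the main obstacle to be $\tau_{\gamma_k}$. The curve $\gamma_k$ meets the arcs of two handles, so the homological computation requires sliding points between handles. The algebraic side involves the coproduct of the ribbon element acting on $\mathrm{ad}\otimes\mathrm{ad}$. Getting the scalars to agree up to one global projective factor independent of the basis vector is delicate there, because it depends on the cross term of $N$. I would first do the computation for $g=2$, fixing all sign and $\zeta$-power conventions there, and only then extend to general $g$ using locality in the other handles.
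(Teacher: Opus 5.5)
The paper gives no proof of this statement. It is quoted from De Renzi--Martel and used as a black box: it is the only bridge between $\rho_g$ and $J_{\mathfrak{sl}_2}$ in the Kerler--Lyubashenko comparison. So your outline has to stand on its own, and as written it does not.

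Your first two steps are fine. The map $(\mathbf{a},\mathbf{b},\mathbf{c})\mapsto(\mathbf{p-1}-\overline{\mathbf{b}},\overline{\mathbf{c}},\overline{\mathbf{a}})$ is a bijection of index sets, each $N(\mathbf{a},\mathbf{b},\mathbf{c})$ is a power of $\zeta$, and it suffices to check projective intertwining on the generating twists. One caveat: your count $p^{3g}$ needs $W_\zeta(\Sigma_g)$ as coefficients in degree $n=0$ too. Under the paper's literal convention $\calL_0=\underline{R}$, the degree-$0$ part is one-dimensional and the total dimension is $1+p^g(p^{2g}-1)$. Then $\mathbf{\Phi}$ would need $p^g$ degree-$0$ vectors $\mathbf{\Gamma}(\mathbf{0},\mathbf{0})\otimes\mathbf{v}_{\mathbf{c}}$ that do not exist. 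You should state that you use the De Renzi--Martel convention.

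The real problem is that the intertwining on generators \emph{is} the theorem, and you only describe it.
\begin{itemize}
\item The quantum formulas are hedged (``one of $\tau_{\alpha_i},\tau_{\beta_i}$ \dots, the other \dots'').
\item The homological action is sketched qualitatively for $\tau_{\alpha_i}$ only, and $\tau_{\beta_i}$ is declared analogous.
\item $\tau_{\gamma_k}$ is explicitly left open.
\item The compensating role of $\prod_{j<k}\zeta^{2(a_j+b_j)(a_k+b_k)}$ is asserted, not shown.
\end{itemize}
No matrix identity is verified, so you have proved only that $\mathbf{\Phi}$ is a linear isomorphism.

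To turn this into a proof you must fix conventions and compute, and even the easiest case is a real check. Take $g=1$. A central element such as $v^{\pm1}$ commutes with $K$, so it has the form $\sum_m E^m f_m(K)F^{(m)}$. Since $T_cE^m=E^mT_{c+m}$ and $f(K)T_d=f(\zeta^{-2d})T_d$, multiplication by it sends $E^{p-1-b}T_cF^{(a)}$ into the span of the $E^{p-1-b+m}T_{c+m}F^{(a+m)}$ with $m\geq 0$. Through $\mathbf{\Phi}$, it therefore sends $\mathbf{\Gamma}(a,b)\otimes\mathbf{v}_c$ into the span of the $\mathbf{\Gamma}(a+m,b-m)\otimes\mathbf{v}_{c+m}$. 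That is the shape of $\tau_\alpha$, which fixes the $\alpha$-arc and turns $\beta$ into $\alpha\beta$, moving points onto $\alpha$; it is not the shape of $\tau_\beta$.

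So, taking $\tau_\alpha\leftrightarrow v^{\pm1}$, you would have to show that the following combine into the coefficients of $v^{\pm1}$, up to one scalar independent of $(a,b,c)$:
\begin{itemize}
\item the splitting of $\tau_\alpha(\beta)$;
\item the Heisenberg factors from the braid rule, which must produce the shift $\mathbf{v}_c\mapsto\mathbf{v}_{c+m}$ because $\phi(\tau_\alpha)$ is diagonal;
\item the fusion $q$-binomials;
\item the diagonal factor $\zeta^{2c(c+1)}$ of $\phi(\tau_\alpha)$, up to a Gauss sum;
\item the ratio $N(a+m,b-m,c+m)/N(a,b,c)$.
\end{itemize}

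You need the same kind of explicit identity for $\tau_\beta$, with an explicit $S$-map built from $\lambda$ and the monodromy matrix. Above all you need it for $\tau_{\gamma_k}$. There you have not fixed the quantum formula beyond ``the ribbon element inserted through the coproduct''. The cancellation of the inter-handle braiding factors against the cross term of $N$ is exactly the nontrivial point there. These computations are the content of the cited result; without them your proposal is a plan rather than a proof.
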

	
	In order to generate the whole category $3\Cob_*$ it is enough to add two more generating cobordisms besides mapping cylinders of generating Dehn twists. Those are integral $\lambda:\mathrm{ad}\to \C$ and cointegral $\Lambda:\C\to \mathrm{ad}$ (see Appendix~\ref{sec:ALG3}).
	\begin{remark}
		The category $3\Cob_*$ admits an algebraic presentation consisting of finite number of generators and relations. In particular, $3\Cob_*$ is equivalent to the braided monoidal category freely generated by a Kerler Hopf algebra, see \cite{beliakova2312algebraic}, Section 3. In order to clarify how this presentation is related to the one we use in this paper, we provide explicit formulas for structure morphisms in the algebraic presentation in terms of generating Dehn twists and integrals/cointegrals. Here $\tau_{\alpha_i}$, $\tau_{\beta_i}$ and $\tau_{\gamma_k}$ denote the mapping cylinders corresponding to the positive Dehn twists in Fig.~\ref{fig:Dehn_twists}; $\lambda_i$ and $\Lambda_i$ denote integral and cointegral in the corresponding genus 1 subsurface. 
		\begin{equation*}
			\text{product:}\quad \mu= (\Id \otimes \lambda_2)\circ \tau_{\gamma_1}^{-1}\circ (\Id \otimes \tau_{\beta_2}^{-1}):\Sigma_2\to \Sigma_0\natural \Sigma_1=\Sigma_1,
		\end{equation*}
		\begin{equation*}
			\text{coproduct:}\quad (\Id\otimes \tau_{\alpha_2}^{-1}\circ\tau_{\beta_2}^{-1}\circ\tau_{\alpha_2})\circ \tau_{\gamma_1}\circ (\tau_{\beta_1}\circ \Lambda_1\otimes \tau_{\beta_2}\circ\tau_{\alpha}):\Sigma_1=\Sigma_0\natural\Sigma_1\to \Sigma_2,
		\end{equation*}
		\begin{equation*}
			\text{unit:}\quad \eta = \tau_{\alpha}\circ\tau_{\beta}\circ\tau_{\alpha}\circ \Lambda:\Sigma_0\to \Sigma_1,\quad\text{counit:}\quad \lambda\circ \tau_{\alpha}^{-1}\circ\tau_{\beta}^{-1}\circ \tau_{\alpha}^{-1}: \Sigma_1\to \Sigma_0,
		\end{equation*}
		\begin{equation*}
			\text{antipode:} \quad S=\tau_{\beta}\circ\tau_{\alpha}^2\circ \tau_{\beta}:\Sigma_1\to \Sigma_1,
		\end{equation*}
		\begin{equation*}
			\text{ribbon:}\quad v_+=\tau_{\alpha}\circ \eta=\tau_{\alpha}^2\circ\tau_{\beta}\circ\tau_{\alpha}\circ \Lambda:\Sigma_0\to \Sigma_1, \quad \text{inverse ribbon:}\quad v_-=\tau_{\alpha}^{-1}\circ \eta=\tau_{\beta}\circ\tau_{\alpha}\circ \Lambda:\Sigma_0\to \Sigma_1.
		\end{equation*}
	\end{remark}
	
	 With an appropriate normalization:
	\[
		\Lambda(1)=\sqrt{p}E^{p-1}F^{(p-1)}T_0,
	\]
	\[
		\lambda(E^{a}F^{(b)}T_c)=\frac{\zeta^{-2c}}{\sqrt{p}}\delta_{p-1,a}\delta_{p-1,b},
	\]
	which can be rewritten in a basis of twisted cycles via \eqref{eq:MJ_isomorphism}:
	\begin{equation}\label{eq:cointegral_action}
		\Lambda(1)=\sqrt{p}E^{p-1}T_{p-1}F^{(p-1)}=\sqrt{p}\cdot N(p-1,p-1,p-1)E^{p-1}T_{p-1}F^{(p-1)}=\zeta^{-1}\sqrt{p} \cdot\mathbf{\Gamma}(p-1,0)\otimes \mathbf{v}_{-1},
	\end{equation}
	\begin{multline}\label{eq:integral_action}
		\lambda(\mathbf{\Gamma}(a,b)\otimes \mathbf{v}_c)=\zeta^{a(a-1)/2+2ab-2(b-1)c}\lambda(E^{p-1-b}F^{(a)}T_{c-a})=\\
		=\zeta^{a(a-1)/2+2ab-2(b-1)c}\frac{\zeta^{-2c}}{\sqrt{p}}\delta_{p-1,a}\delta_{p-1,p-1-b}
		=\zeta^{1+2c}\frac{\zeta^{-2c}}{\sqrt{p}}\delta_{p-1,a}\delta_{b,0}=\frac{\zeta}{\sqrt{p}}\cdot \delta_{p-1,a}\delta_{b,0}
	\end{multline}

		\begin{proof}[Proof of Theorem~\ref{thm:KL_isomorphism}]
		
		We first check that the actions of Dehn twists $\tau_{\alpha_i},\tau_{\beta_i},\tau_{\gamma_i}$ coincide for $\bbF_{\calW_p}$ and for De Renzi--Martel action. Since the action of $\tilde{f}^n$ on twisted cycles naturally coincides with the action of $\bbF_{\calW_p}$ we only need to check it on the level of local systems.
		
		\textbf{Action of $\tau_{\alpha_i}$}. It is enough to consider genus one surface, so we omit index $i$ everywhere. The Lagrangian $L_1$ is the set $\{(0,k[\alpha])|\ k=1,\dots,p-1\}$. Since $\tau_{\alpha}$ sends $\alpha\mapsto \alpha$ and 
		$\beta\mapsto \alpha \beta$, the image of $\mathbf{1}_{L_1}$ must be the eigenvector of $(0,[\alpha])$ with eigenvalue 1, which is exactly $\mathbf{1}_{L_1}$ again (up to a scalar). We have:
		\[
			(0,[\alpha])\mapsto (0,[\alpha]),\quad(0,[\beta])\mapsto (0,[\alpha])\cdot (0,[\beta]),\quad (0,c[\beta])\mapsto (c(c+1),c[\beta])\cdot (0,c[\alpha]).
		\]
		Therefore, the image of $\mathbf{v}_c$ is:
		\[
			\phi_{\tau_{\alpha}}:(0,c[\beta])\otimes_{\tilde{L}_1}1\mapsto(c(c+1),c[\beta])\otimes_{\tilde{L}_1} 1=\mathbf{v}_c\zeta^{2c(c+1)}.
		\]
		On the other hand, the sum in \eqref{psi_alpha} can be rewritten (up to coefficient) as:
		\begin{equation}\label{sum}
			\sum\limits_{l=0}^{p-1}\zeta^{-2l(l-1)+4cl}=\sum\limits_{l=0}^{p-1}\zeta^{-2(l-c)(l-c-1)+2c(c+1)}\sim  \zeta^{2c(c+1)}.
		\end{equation}
		Here
		\[
			G=\sum\limits_{l=0}^{p-1}\zeta^{-2l(l-1)}
		\] 
		is non-zero. Indeed, since $p$ is odd by substitution $j=2l-1$
		\[
			G=e^{\pi i/p}\sum\limits_{j=0}^{p-1}e^{-\pi ij^2/p},
		\]
		which is a classical Gauss sum. One can check that $|G|^2=p$.
		
		\textbf{Action of $\tau_{\beta_i}$}. We omit index $i$ since it is enough to consider only the genus 1 case. Again, $L_1=\{(0,k[\alpha])|\ k=1,\dots,p-1\}$. The twist $\tau_{\beta}$ sends $\alpha \mapsto \beta^{-1}\alpha$ and $\beta\mapsto \beta$, therefore 
		\[
			(0,[\alpha])\mapsto (0,-[\beta])\cdot(0,[\alpha]),\quad (0,[\beta])\mapsto (0,[\beta]),\quad (0,k[\alpha])\mapsto (0,-k[\beta])\cdot (-(k-1)k,k[\alpha]).
		\]
		Then the element $\mathbf{1}_{L_1}$ is sent by $\phi_{\tau_{\beta}}$ to the only non-zero vector (up to scalar) invariant under the action of $(0,-[\beta])\cdot (0,[\alpha])$:
		\[
			\sum\limits_{k=0}^{p-1}(0,-k[\beta])\cdot(-(k-1)k,k[\alpha])\otimes_{\tilde{L}_1}1=\sum\limits_{k=0}^{p-1}\zeta^{-2k(k-1)}\mathbf{v}_{-k}
		\]
		Then $\mathbf{v}_c$ is sent to:
		\[
			\phi_{\tau_{\beta}}:\mathbf{v}_c\mapsto (0,c[\beta])\cdot \sum\limits_{k=0}^{p-1}\zeta^{-2k(k-1)}\mathbf{v}_{-k}= \sum\limits_{k=0}^{p-1}\zeta^{-2k(k-1)}\mathbf{v}_{c-k}=\sum\limits_{k'=0}^{p-1}\zeta^{-2k'(k'+1)}\mathbf{v}_{c+k'}\sim \phi(\tau_{\beta})(\mathbf{v}_c).
		\]

		\textbf{Action of $\tau_{\gamma_i}$}. It is enough to consider the genus 2 surface, so we omit $i$ in $\gamma_i$. There are only four generators $[\alpha_1]$, $[\alpha_2]$, $[\beta_1]$, $[\beta_2]$ of $H_1(\Sigma_2)$ and the Lagrangian is 
		\[
			L_2=\{(0,k_1[\alpha_1]+k_2[\alpha_2])|\ k_1,k_2=1,\dots,p-1\}.
		\] 
		Since $\tau_{\gamma}$ sends $\alpha_1\mapsto\alpha_1$, $\alpha_2\mapsto \alpha_2 \beta_1^{-1}\alpha_1^{-1}\beta_1\alpha_2\beta_1^{-1}\alpha_1\beta_1\alpha_2^{-1}$, $\beta_1\mapsto \alpha_1\beta_1\alpha_2^{-1}$ and $\beta_2\mapsto \alpha_2\beta_1^{-1}\alpha_1^{-1}\beta_1\beta_2$, we have:
		\[
			(0,[\alpha_1])\mapsto (0,[\alpha_1]), \quad (0,[\alpha_2])\mapsto (0,[\alpha_2])
		\]
		
		\[
			(0,[\beta_1])\mapsto (0,[\beta_1])\cdot (-2,[\alpha_1]-[\alpha_2]),\quad (0,[\beta_2])\mapsto (0,[\beta_2])\cdot(0,[\alpha_2]-[\alpha_1])
		\]
		and therefore 
		\[
			(0,c_1[\beta_1])\mapsto (0,c_1[\beta_1])\cdot(c_1(c_1-1)+2c_1,c_1[\alpha_1]-c_1[\alpha_2])
		\]
		\[
			(0,c_2[\beta_2])\mapsto (0,c_2[\beta_2])\cdot(c_2(c_2-1),c_2[\alpha_2]-c_2[\alpha_1])
		\]
		\[
			(0,c_1[\beta_1]+c_2[\beta_2])\mapsto (0,c_1[\beta_1]+c_2[\beta_2])\cdot(-2c_1c_2+c_1(c_1+1)+c_2(c_2-1),(c_1-c_2)[\alpha_1]+(c_2-c_1)[\alpha_2]).
		\]
		The element $\mathbf{1}_{L_2}$ is sent by $\phi_{\tau_{\gamma}}$ to the only (up to scalar) non-zero invariant vector of $[\alpha_1]$ and $[\alpha_2]$ actions, which is again $\mathbf{1}_{L_2}$.
		Then an element $\mathbf{v}_c$ is sent to:
		\begin{multline}
			\phi_{\tau_{\gamma}}:\mathbf{v}_{c_1,c_2}\mapsto(0,c_1[\beta_1]+c_2[\beta_2])\cdot(-2c_1c_2+c_1(c_1+1)+c_2(c_2-1),(c_1-c_2)[\alpha_1]+(c_2-c_1)[\alpha_2])\otimes_{\tilde{L}_2}1=\\
			=\zeta^{2(c_1-c_2)(c_1-c_2+1)}\mathbf{v}_{c_1,c_2}
		\end{multline}
		
		On the other hand, similarly to \eqref{sum}, the sum in \eqref{psi_gamma} can be rewritten as:
		\[
			\sum\limits_{l=0}^{p-1}\zeta^{-2l(l+1)-4lc_1+4lc_2}=\sum\limits_{l=0}^{p-1}\zeta^{-2(l+c_1-c_2)(l+1+c_1-c_2)+2(c_1-c_2)(c_1-c_2+1)}\sim \zeta^{2(c_1-c_2)(c_1-c_2+1)}.
		\]
		
		\textbf{Action of $\Lambda$.}
		The action of cointegral in the standard basis has in fact been already computed in \eqref{eq:bbF_M_1_formula} (compare with \eqref{eq:cointegral_action}):
		\begin{equation}\label{eq:index_1_Schroedinger}
			\bbF_{\calW_p}(\Lambda):1\mapsto \mathbf{\Gamma}(p-1,0)\otimes 2 p(-\zeta^{2})^{(p-1)(p-2)/2}\cdot\mathbf{v}_{-1}= \Lambda(1)\cdot 2\zeta(-\zeta^{2})^{(p-1)(p-2)/2}\sqrt{p}.
		\end{equation}
		Indeed $\Lambda$ is an elementary index 1 cobordism $\Sigma_0\to \Sigma_1$ with the belt sphere parallel to $\alpha$ (see Appendix~\ref{sec:ALG3} for details), therefore it is exactly $M_1$ in notation of Fig.~\ref{fig:handle_cancellation} or \ref{fig:relation_3}. 
		
		\textbf{Action of $\lambda$.}
		The integral is an elementary index 2 cobordism with attaching sphere parallel to $\beta$ (see Appendix~\ref{sec:ALG3}). It is exactly $M_2$ in notation of Fig.~\ref{fig:handle_cancellation} or \ref{fig:relation_3}. Its action has already been computed in \eqref{eq:bbF_M_2_formula} (compare with \eqref{eq:integral_action}):
		\[
			\bbF_{\calW_p}(\lambda):\mathbf{\Gamma}(a,b)\otimes \mathbf{v}_c\mapsto \delta_{a,p-1}\delta_{b,0} \zeta^{-2}=\lambda(\mathbf{\Gamma}(a,b)\otimes \mathbf{v}_c)\cdot 2\zeta^{-1}(-\zeta^2)^{(p-2)(p-1)/2}\sqrt{p}.
		\]
		It completes the proof.	
	\end{proof}
	
	\begin{remark}
		Note that $\bbF_{\calW_p}$ is \emph{Hermitian} by Remark~\ref{rem:Hermitian} with the Hermitian pairing given by the intersection pairing $\langle-,-\rangle$. On the other hand, it is projectively isomorphic to a non-semisimple TQFT. The question of existence of a Hermitian structure on a non-semisimple TQFT might be of special interest for quantum computations. For example, an algebraic construction of a Hermitian structure on a non-semisimple TQFT was recently studied in \cite{geer2022hermitian}. 
	\end{remark}
	

	\subsection{$\mathfrak{u}_{\zeta}\mathfrak{sl}_2$-module structure}\label{sec:sl_2_action}
	
	State spaces $\bbF_{\calW_p}(\Sigma)$, $\Sigma\in 3\Cob$ can be equipped with an action of $\mathfrak{u}_{\zeta}\mathfrak{sl}_2$ (see \cite{martel2022homological}). This is done as follows. Define the action of $K$ on small cycles as the graded diagonal operator:
	\begin{equation}\label{eq:K_def}
		K:\mathring{\bbH}_n(\Sigma;\calW_p(\Sigma))\to \mathring{\bbH}_n(\Sigma;\calW_p(\Sigma)),
	\end{equation}
	\[
		x\mapsto x \cdot \zeta^{-2(n+g)},
	\]
	where $g$ is the genus of $\Sigma$.
	
	In order to define the action of $F^{(1)}$ consider a strip $(I\times I, \{0\}\times I\sqcup \{1\}\times I)$. Its relative fundamental class can be inserted on $\partial_+\Sigma$ (see Fig.~\ref{fig:kappa}). Precisely speaking the action of $F^{(1)}$ is defined via the composition:
	\begin{equation}\label{eq:F_BM_def}
		\begin{tikzcd}
			H^{BM}_n(\Conf_n(\Sigma),\Conf_n^-(\Sigma); \calW_p(\Sigma))\arrow[d,"\Id\otimes 1"]\arrow[dddd,rounded corners=3pt , to path={-- ++(-8cm,0)  |-  node [left, near start] {$F^{BM}_-$} (\tikztotarget)}]\\
			H^{BM}_n(\Conf_n(\Sigma),\Conf_n^-(\Sigma); \calW_p(\Sigma))\otimes H^{BM}_1(I\times I,\{0\}\times I\sqcup \{1\}\times I;\C)\arrow[d,"\text{K\"unneth}"]\\
			H^{BM}_{n+1}(\Conf_n(\Sigma)\times I\times I,\Conf_n^-(\Sigma)\times I\times I\cup\Conf_n(\Sigma)\times(\{0\}\times I\sqcup \{1\}\times I);\calW_p(\Sigma)\boxtimes \C)\arrow[d,"\Id_*"]\\
				H^{BM}_{n+1}(\Conf_n(\Sigma)\times I\times I,\Conf_n^-(\Sigma)\times I\times I\cup\Conf_n(\Sigma)\times(\{0\}\times I\sqcup \{1\}\times I);\kappa^*\calW_p)\arrow[d,"(\kappa_-)_*"]\\
				H^{BM}_{n+1}(\Conf_{n+1}(\Sigma),\Conf_{n+1}^-(\Sigma);\calW_p(\Sigma))
		\end{tikzcd}
	\end{equation}
	where $\Id:\calW_p(\Sigma)\boxtimes \C\to \calW_p(\Sigma)$ sends $x\boxtimes 1$ to $x$ and $\kappa_-$ is defined by the strip gluing in Fig.~\ref{fig:kappa}:
	\[
		\kappa_-:\Conf_n(\Sigma)\times I\times I\to \Conf_{n+1}(\Sigma)
	\]
	\[
		\mathbf{x}\times y\mapsto \mathbf{x}\cup y.
	\]
	Define:
	\begin{equation}\label{eq:F_1_def}
		F^{(1)}:\bbH^{BM}_n(\Sigma;\calW_p(\Sigma))\to \bbH^{BM}_{n+1}(\Sigma;\calW_p(\Sigma)),
	\end{equation}
	\[
		x\mapsto F^{BM}_-(x)\cdot \zeta^{2g},
	\]
	where $g$ is the genus of $\Sigma$. This map restricts well to the small cycles since there is a composition similar to \eqref{eq:F_BM_def} with ordinary homology instead of Borel--Moore homology. This definition of the $F^{(1)}$ action coincides with \cite{martel2022homological}, Definition 2.15.

	The action of $E$ can be defined in a similar way, see the right hand side of Fig.~\ref{fig:kappa}. Let 
	\[
		F_+:H_n(\Conf_{n}(\Sigma),\Conf_n^+(\Sigma);\calW_p(\Sigma))\to H_{n+1}(\Conf_{n+1}(\Sigma),\Conf_{n+1}^+(\Sigma);\calW_p(\Sigma)) 
	\]
	 be the map given by inserting a class of $(I\times I, \{0\}\times I\sqcup \{1\}\times I)$ on $\partial_-\Sigma$ similarly to \eqref{eq:F_BM_def}. Then the action of $E$ can be defined as the dual to $F_+$ with respect to the intersection pairing:
	 \begin{equation}\label{eq:E_def}
	 	E:=F_{+}^{\vee}:\bbH^{BM}_*(\Sigma;\calW_p(\Sigma))\to \bbH^{BM}_{*+1}(\Sigma;\calW_p(\Sigma)),
	 \end{equation}
	\ie
	\[
		\langle E x,y\rangle=\langle x,F_+y\rangle, \quad \forall x, y\in \bbH^{BM}(\Sigma;\calW_p(\Sigma)).
	\]
	This map restricts well to small cycles. A simple computation shows that this definition of $E$ action coincides with \cite{martel2022homological}, Definition 2.14.
	
	\begin{lemma}[\cite{martel2022homological}, Theorem 2.20]
		Linear maps \eqref{eq:K_def}, \eqref{eq:F_1_def} and \eqref{eq:E_def} restricted to small cycles form a well defined action of $\mathfrak{u}_{\zeta}\mathfrak{sl}_2$.
	\end{lemma}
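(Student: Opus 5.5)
The plan is to reduce the statement to \cite{martel2022homological}, Theorem 2.20, by showing that the three operators \eqref{eq:K_def}, \eqref{eq:F_1_def}, \eqref{eq:E_def} agree on $\bbH^{BM}(\Sigma_g;\calW_p(\Sigma_g))$ with the operators of \cite{martel2022homological}, Definitions 2.14 and 2.15. Alongside this I would keep a direct check in the standard basis of twisted cycles, so that the relations can also be read off independently.

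\textbf{Step 1: explicit formulas.} I would compute each generator on the basis $\mathbf{\Gamma}(\mathbf{a},\mathbf{b})\otimes\mathbf{v}_{\mathbf{c}}$ of Fig.~\ref{fig:BM_arcs}.
\begin{itemize}
\item $K$ is diagonal by definition.
\item For $F^{(1)}$, the strip class inserted along $\partial_+\Sigma_g$ in Fig.~\ref{fig:kappa} is an arc relative to $\partial_-\Sigma_g$ that can be isotoped to run parallel to the union of the symplectic arcs. Using the diagrammatic calculus of Appendix~\ref{sec:Diag_calculus} (handle sliding, then the fusion rule), the inserted point becomes a sum over the arcs $\alpha_i,\beta_i$. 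Each term raises one label by $1$ and carries a quantum integer $[a_i+1]_q$ or $[b_i+1]_q$, times a monodromy factor in $\Heis_p(\Sigma_g)$ acting on $\mathbf{v}_{\mathbf{c}}$. The factor $\zeta^{2g}$ in \eqref{eq:F_1_def} normalises these contributions.
\item For $E$, I would compute $F_+$ in the dual basis $\mathbf{\Gamma}^{\vee}(\mathbf{a},\mathbf{b})\otimes\mathbf{v}^{\vee}_{\mathbf{c}}$ in the same way, and then dualise using \eqref{eq:pairing_formula}.
\end{itemize}
Comparing the resulting matrices with Martel's formulas identifies the two sets of operators, after which Theorem 2.20 applies.

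\textbf{Step 2: preservation of small cycles.} The coefficients produced in Step 1 contain $[a_i+1]_q$ or $[b_i+1]_q$. These vanish exactly when a label would reach $p$, because $q^{2p}$ acts trivially (Remark~\ref{rem:quantum_invertibility}). So $F^{(1)}$, and dually $E$, map the span of cycles with all labels $<p$ into itself. This is consistent with the factorisation through ordinary homology noted after \eqref{eq:F_1_def}.

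\textbf{Step 3: the relations.} I would split them by difficulty.
\begin{itemize}
\item $KEK^{-1}=\zeta^{2}E$ and $KF^{(1)}K^{-1}=\zeta^{-2}F^{(1)}$ follow at once from the degree shifts $\pm1$ and \eqref{eq:K_def}.
\item $K^p=1$ holds because $\zeta^{-2(n+g)p}=1$.
\item $(F^{(1)})^p=0$ and $E^p=0$: inserting $p$ strips and fusing them produces $[p]_q!$, which acts by $0$.
\item $[E,F^{(1)}]=K-K^{-1}$ needs more care. Via the monoidality $\mu^{\bbF}_{\calL}$ of Section~\ref{sec:monoidality}, I would reduce to the genus one case with the generators acting through a coproduct-type formula. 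On one handle I would then compare $EF^{(1)}$ and $F^{(1)}E$ on $\mathbf{\Gamma}(a,b)\otimes\mathbf{v}_c$: the two compositions differ only by a boundary term, where the strip and the dual strip meet near $\partial\Sigma$.
\end{itemize}

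\textbf{Main obstacle.} I expect the commutator relation to be the hardest step, specifically the sign and Heisenberg-phase bookkeeping. The extra point has to be braided past the existing configuration on the way from $\partial_+$ to the arcs, and this contributes powers of $\sigma=-q^{-2}$ and the factors $\zeta^{\pm 2g}$. I would do the genus one computation first and fix the normalisations there. If matching Martel's conventions directly turns out to be awkward, I would rely on the identification in Step 1 together with \cite{martel2022homological}, Theorem 2.20, rather than re-deriving the commutator.
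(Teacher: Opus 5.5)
Your proposal follows the paper's route. The paper gives no independent proof: it asserts that \eqref{eq:F_1_def} and \eqref{eq:E_def} coincide with Definitions~2.15 and~2.14 of \cite{martel2022homological} (the latter by a ``simple computation'') and then invokes Theorem~2.20 there, which is exactly your Step~1 reduction. Your basis-level check of the relations is extra work beyond the paper, and its genus-one reduction of $[E,F^{(1)}]=K-K^{-1}$ would first require showing that $\mu^{\bbF}_{\calL}$ intertwines the actions through a coproduct, which is not proved in the paper. So the citation remains the load-bearing step.
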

	
	\begin{figure}[h]
		\centering
		\includegraphics[width=0.8\linewidth]{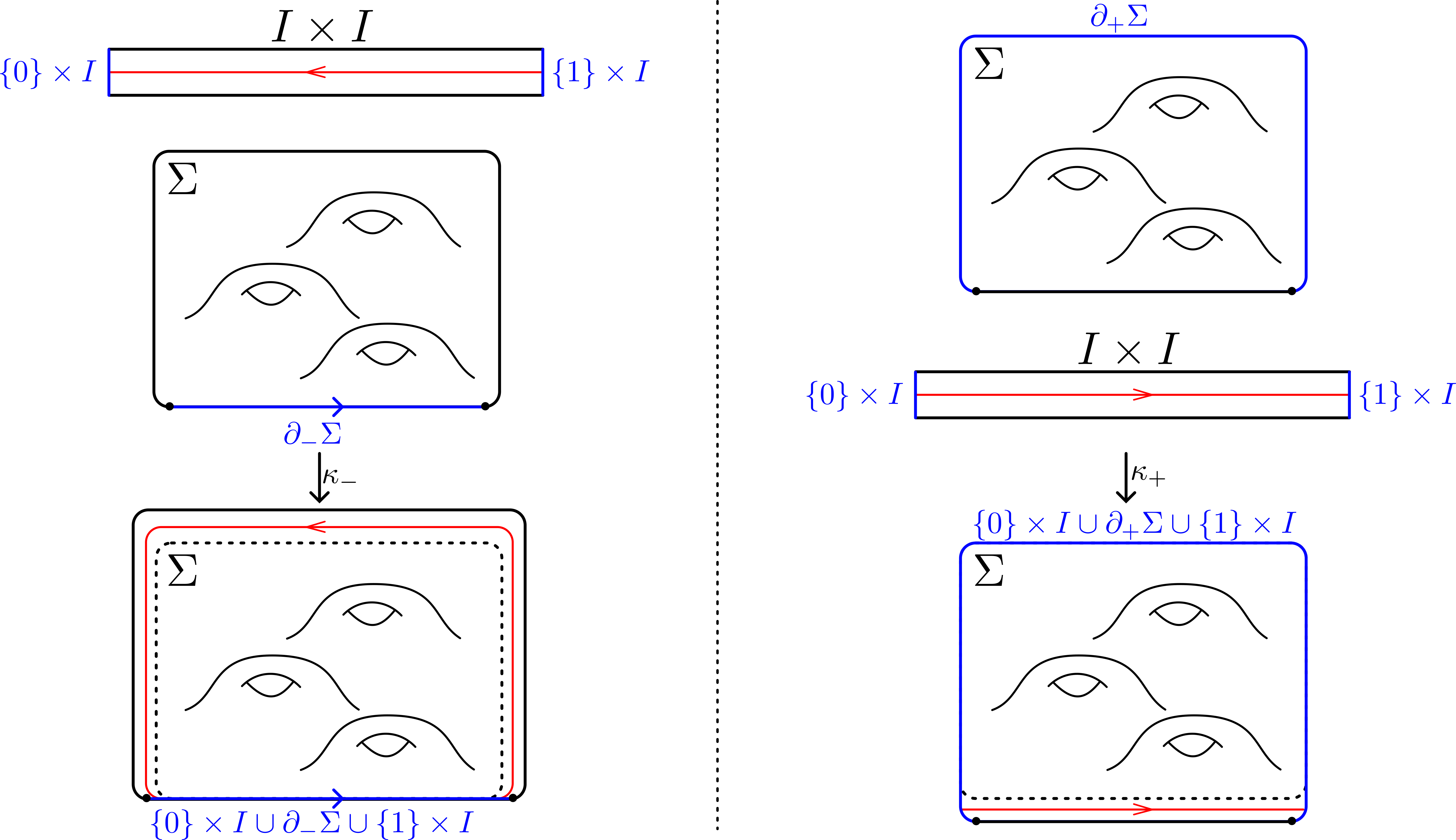}
		\caption{The strips used to define the $\mathfrak{u}_{\zeta}\mathfrak{sl}_2$-action on small cycles by inserting relative fundamental classes along $\partial_+\Sigma$ and $\partial_-\Sigma$.}
		\label{fig:kappa}
	\end{figure}
	
	The action of $\mathfrak{u}_{\zeta}\mathfrak{sl}_2$ is intertwined by the action of cobordisms.
	
	\begin{proposition}
	 	For each $M\in \mathrm{Mor}(3\Cob)$, the linear map $\bbF_{\calW_p}(M)$ is a morphism of $\mathfrak{u}_{\zeta}\mathfrak{sl}_2$-modules with respect to the module structure introduced above.
	\end{proposition}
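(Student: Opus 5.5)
By Juh\'asz's presentation (Theorem~\ref{thm:Juhasz_relations}), every morphism of $3\Cob$ is a composite of mapping cylinders and elementary index $1$ and $2$ cobordisms. A composite of module maps is a module map, so it is enough to prove the claim for these three types of generators. For each generator $M$ I will show that $\bbF_{\calW_p}(M)$ commutes with $K$, with $F^{(1)}$ and with $E$. For $E$ the key fact is that $E$ is defined as the adjoint of $F_+$ under $\langle-,-\rangle$. So commuting with $E$ follows from commuting with $F_+$ for the orientation-reversed cobordism, by the Hermitian property (Remark~\ref{rem:Hermitian}). Symmetrically, index $2$ cobordisms are defined as adjoints of index $1$ cobordisms. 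Their commutation with $F^{(1)}$ therefore reduces to the commutation of index $1$ cobordisms with $F_+$ on ordinary homology, and vice versa. The real work thus lies in mapping cylinders and index $1$ cobordisms, each tested against $K$, $F^{(1)}=F^{BM}_-\cdot\zeta^{2g}$ and $F_+$.

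\textbf{Mapping cylinders.} A mapping class $d$ preserves the configuration degree $n$ and the genus, so it commutes with $K$ by \eqref{eq:K_def}. The strip defining $F^{(1)}$ sits in a collar of $\partial_+\Sigma$, and the strip defining $F_+$ in a collar of $\partial_-\Sigma$. Both collars are fixed by $d$, so $d\circ\kappa_\pm=\kappa_\pm\circ(d\times\Id)$. The remaining point is local-system compatibility: under $\kappa_-^*$ we need $\phi_d$ on $\Conf_{n+1}$ to restrict to $\phi_d\boxtimes\Id$ on $\Conf_n\times I\times I$. Since $\phi_d$ is the unique projective isomorphism of irreducible Schr\"odinger representations, and $\kappa_-$ induces the identity on $\Heis_p$, this holds projectively. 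Then naturality of the K\"unneth map and of pushforwards gives the commutation, exactly as in the square-chasing of Theorem~\ref{thm:main_thm}. This projective scalar is a \emph{global} constant for $d$ (independent of $n$), and that is what makes commutation meaningful in $\bbP\mathrm{Vect}_\C$.

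\textbf{Index $1$ cobordisms and $K$.} Here $n\mapsto n+p-1$ and $g\mapsto g+1$. By \eqref{eq:K_def}, $K$ changes by the factor $\zeta^{-2(p-1+1)}=\zeta^{-2p}=1$, so $K$ commutes on the nose.

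\textbf{Index $1$ cobordisms and $F^{(1)}$, $F_+$.} I will use the span \eqref{eq:homological_index_1_span}. The attaching and belt tubes can be isotoped away from the boundary collars. The strip maps $\kappa_\pm$ then factor through the trajectory space $\calT_M$, compatibly with both $i_-$ and $\varphi$. By Locality (with the strip viewed as a genus $0$ piece $\natural$-summed near the boundary) they also commute with $\psi_M$. This yields $\Psi_M\circ F^{BM}_{-,\calT}=F^{BM}_{-,\Sigma_+}\circ\Psi_M$ and $(i_-)_*\circ F^{BM}_{-,\calT}=F^{BM}_{-,\Sigma_-}\circ(i_-)_*$. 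The uniqueness in Lemma~\ref{lem:i_inversion} then gives $\bbF^{BM}(M)F^{BM}_-=F^{BM}_-\bbF^{BM}(M)$.

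The normalization $\zeta^{2g}$ differs between $\Sigma_-$ and $\Sigma_+$ by $\zeta^2$. I expect this to be absorbed projectively, but it is really the main obstacle. The inserted point of $F^{(1)}$ must be commuted past the $p-1$ points on the belt sphere. Doing so may produce a factor $q^{\pm 2(p-1)}$ from the braid rule, or a factor from the relative position of the path $h$ and $\partial_+$. These factors have to cancel the $\zeta^{2}$ discrepancy exactly, uniformly in $n$. I would first check this with the explicit formula \eqref{eq:index_1_preaction} on basis twisted cycles in genus $0\to1$ (the cointegral $\Lambda$). If that check fails, the fallback is to compare with the algebraic side: the cointegral in $\mathfrak{u}_\zeta\mathfrak{sl}_2$ is ad-invariant, and the isomorphism $\mathbf{\Phi}$ of Theorem~\ref{thm:MJ_theorem} is $\mathfrak{u}_\zeta\mathfrak{sl}_2$-linear by \cite{martel2022homological}, so equivariance of $\bbF_{\calW_p}(\Lambda)$ can be read off from that of $\Lambda$. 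Combined with the Dehn twist computations in the proof of Theorem~\ref{thm:KL_isomorphism}, this would settle all generators.
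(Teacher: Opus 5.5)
Your arguments for $K$ and for mapping cylinders are fine. Your treatment of index~$1$ cobordisms (insertions defined on $\calT_M$ intertwine $i_-$ and $\varphi$, then uniqueness in Lemma~\ref{lem:i_inversion}) is the paper's argument. But you stop at the step that carries the content, and neither way out you propose works.

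\textbf{The $\zeta^{2}$ step.} A $\zeta^{2}$ discrepancy cannot be ``absorbed projectively'': rescaling $\bbF_{\calW_p}(M)$ multiplies both sides of $\bbF_{\calW_p}(M)F^{(1)}=F^{(1)}\bbF_{\calW_p}(M)$ by the same scalar.

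For $M:\Sigma_g\to\Sigma_{g+1}$ of index~$1$, the $F^{(1)}$-part of the statement is \emph{equivalent} to $\bbF_{\calW_p}(M)\circ F^{BM}_-=\zeta^{2}\,F^{BM}_-\circ\bbF_{\calW_p}(M)$. For $g\geq 1$, $F^{(1)}\neq0$ and $\bbF_{\calW_p}(M)$ is injective (cf.\ \eqref{eq:index_1_preaction}), so the exact commutation $\bbF^{BM}(M)F^{BM}_-=F^{BM}_-\bbF^{BM}(M)$ that you derive would make the proposition false.

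That derivation is also unsupported. Locality concerns splittings $M_{\Id}\natural M''$ glued along vertical boundary segments, whereas the strip of $\kappa_-$ runs along the whole of $\partial_+$. Moreover, for $\calW_p$ Locality holds only projectively (Theorem~\ref{thm:Schroedinger_nice}), so it cannot pin down a scalar.

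What must be computed is the scalar relating two morphisms of monodromy representations: (a) insert first, then apply $\psi_M$ at level $n+1$; (b) apply $\psi_M$ at level $n$, then insert. With the paper's conventions for $*_n$ and $h$, the inserted point is the $(n+1)$st point of $*_{n+p}$ in (a) and the $(n+p)$th in (b). So (a) and (b) differ by the Heisenberg class of a null-homologous loop carrying the inserted point past the $p-1$ belt points, which is a central element $\sigma^{k}$. You need $\sigma^{k}=\zeta^{2}$. Note that $\sigma^{p-1}=(-\zeta^{-2})^{p-1}=\zeta^{2}$ has exactly this size. This path bookkeeping is the missing idea; the paper's three-line proof leaves it implicit as well.

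Your fallback does not replace it. $F^{(1)}$ and $E$ vanish on $\bbF_{\calW_p}(\Sigma_0)$, so testing $\Lambda$ only recovers ad-invariance of the cointegral. Passing to $\Id_{\Sigma_g}\natural\Lambda$ would require $\mu^{\bbF}_{\calW_p}$ to intertwine a coproduct and $\mathbf{\Phi}$ to be $\mathfrak{u}_\zeta\mathfrak{sl}_2$-linear. Neither is established here; Theorem~\ref{thm:MJ_theorem} concerns only mapping class group actions.

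\textbf{The duality reduction.} Adjunction with respect to $\langle-,-\rangle$ exchanges degree-raising and degree-lowering operators. It correctly yields index~$2$ against $E$ from index~$1$ against $F_+$. But for an index~$2$ cobordism $M'$, commuting with $F^{(1)}$ is equivalent to $\bbF_{\calW_p}(\overline{M}')$ commuting with the \emph{adjoint} of $F^{(1)}$, which is a lowering operator, not $F_+$. Likewise, index~$1$ against $E$ dualizes to index~$2$ against $F_+$.

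So the cases (index~$1$, $E$) and (index~$2$, $F^{(1)}$) are never proved. Dualizing a raising operator against an index~$2$ cobordism only produces a lowering operator against an index~$1$ cobordism, and vice versa.

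The paper avoids this by running the insertion argument directly on the index~$2$ span of Section~\ref{sec:topological_model} (the ``or $\varphi$, $\overline{\varphi}$'' in its proof). Alternatively, you would need to show directly that the lowering operator $E$ commutes with the index~$1$ span.
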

	
	\begin{proof}
		This is a simple fact following from the construction of homological TQFT. For any elementary cobordism $M:\Sigma_-\to \Sigma_+$ homology class insertions $F_-^{BM}$ and $F_+$ can be defined in the same manner on the trajectory space $\calT_M$. It intertwines both $i_-$ and $i_+$ (or $\varphi$, $\overline{\varphi}$) since all of these maps send $\partial_-\calT_M$ (resp $\partial_+\calT_M$) to $\partial_-\Sigma_-$ (resp. $\partial_+\Sigma_-$) or $\partial_-\Sigma_+$ (resp. $\partial_+\Sigma_+$). 
	\end{proof}
	
	In particular, the target category $\bbP\mathrm{Vect}_{\C}$ can be improved to the projective version of the category $\mathfrak{u}_{\zeta}\mathfrak{sl}_2-\mathrm{mod}$ and $\bbF_{\calW_p}$ can be improved to a braided functor. The isomorphism between $\bbF_{\calW_p}$ and projective Kerler--Lyubashenko TQFT clearly extends to an isomorphism of braided monoidal functors.

\section{Discussion}

This paper provides an alternative interpretation of quantum invariants of $3$-manifolds. We conclude with a brief discussion of possible further developments.
\begin{enumerate}
	\item The homological construction presented here should generalize to arbitrary even values of $p$. As often happens with quantum invariants, the even case is slightly more involved. We expect that a projective homological TQFT for even $p$ can be constructed in a way very similar to the odd case. A full TQFT, however, should require additional structure on the cobordism category, for instance a spin structure.
	\item In this paper Schr\"odinger local systems produce only a projective homological TQFT. We believe, however, that by working with the category $3\Cob_*^{\Z}$ it should be possible to define a genuine action yielding a full TQFT isomorphic to the Kerler--Lyubashenko one.
	\item As one sees in Section~\ref{sec:general_construction}, working with finite Heisenberg groups and local systems with a vanishing quantum factorial is needed in order to invert one of the arrows in the diagram \eqref{eq:homological_index_1_span}. One can replace the target category of the TQFT, $\mathrm{Mod}_R$, by a category of spans in $\mathrm{Mod}_R$ (or perhaps correspondences in $\mathrm{Mod}_R$) in order to avoid this issue. This seems very natural, since such categories contain $\mathrm{Mod}_R$ as a subcategory and the duality is less restrictive, \ie $R$-modules may have infinite rank. This approach appears to be much more flexible and has at least two potential advantages: 
	\begin{itemize}
		\item it allows one to work with Verma modules, constructed homologically in \cite{martel2022homological}, which are much larger than the subspaces of small cycles;
		\item it allows one to work with all possible parameters $p$, which can probably be interpreted as a way of working with a quantum parameter $q$ that is not a root of unity.
	\end{itemize}
	\item One can expect that this construction should generalize to surfaces with an arbitrary number of labeled punctures, reproducing non-semisimple TQFTs in a more general setting.
	\item The $3$-dimensional Kerler--Lyubashenko functor is part of a $4$-dimensional functor on $2$-handlebodies. It is possible that the homological construction extends naturally to the $4$-dimensional setting.
\end{enumerate}

	\section{Appendix}\label{section:appendix}

	\subsection{Frohman--Nicas--Donaldson TQFT on $\mathbf{3}\mathbf{Cob}$}\label{sec:Donaldson}

		We use Juh\'asz's presentation to formulate the Frohman--Nicas--Donaldson TQFT.

	\noindent\textbf{Step 1. State spaces.}
	Let $\Bbbk$ be a field. Let $\Sigma\in 3\Cob$. The state space $D(\Sigma)$ is defined as the homology of the symmetric product $\Sym^n(\Sigma)$ for sufficiently large $n$:
	\[
	D(\Sigma):=\bigoplus_{k=0}^{2g}H_k(\Sym^n(\Sigma);\Bbbk)\simeq \bigoplus_{k=0}^{2g}\bigwedge^k H_1(\Sigma;\Bbbk), \quad n\geq 2g
	\]
	where $g$ is the genus of $\Sigma$.
	
	Note that there is a canonical isomorphism 
	\[
		H_1(\Sigma;\Bbbk)\xrightarrow{\simeq}H_1(\Sigma;\partial_-\Sigma;\Bbbk).
	\]
	Therefore $D(\Sigma)$ can be viewed in a slightly different way, more suitable for comparison with homological TQFTs:
	\begin{equation}\label{eq:Donaldson_decomposition}
		D(\Sigma)\simeq \bigoplus\limits_{k=0}^{2g}\bigwedge^k H_1(\Sigma;\partial_-\Sigma;\Bbbk)\simeq \bigoplus\limits_{k=0}^{2g}H_k(\Sym^n(\Sigma),\Sym^n_-(\Sigma);\Bbbk),\quad n\geq 2g,
	\end{equation}
	where $\Sym^n_-(\Sigma)$ is the subset of configurations with at least one point on $\partial_-\Sigma$. This in particular gives a way to describe a $\Bbbk$-basis of $D(\Sigma)$; see Fig.~\ref{fig:Donaldson_basis}. For this we choose a symplectic basis $\{[\alpha_i],[\beta_i]\}_{i=1}^{g}$ of $H_1(\Sigma;\Bbbk)$ with respect to the intersection pairing. We draw each $[\alpha_i]$ and $[\beta_i]$ as particular cycles relative to $\partial_-$ identifying $H_1(\Sigma;\Bbbk)\simeq H_1(\Sigma,\partial_-\Sigma;\Bbbk)$. Fix an ordering on this basis, then a basis of $H_*(\Sym^n(\Sigma),\Sym^n_-(\Sigma);\Bbbk)$ is given by classes (compare with a basis description of Heisenberg homology via symplectic arcs in Section~\ref{sec:Heisenberg_homology}):
	\[
		[\alpha_1]^{a_1}\wedge[\beta_1]^{b_1}\wedge\dots \wedge 	[\alpha_g]^{a_g}\wedge[\beta_g]^{b_g}
	\]
	via the isomorphism \eqref{eq:Donaldson_decomposition}, where $a_1,b_1,\dots,a_g,b_g\in\{0,1\}$ ($0$ means absence of the corresponding class in the product). In what follows we only write numbers $a_i,b_i$ near the corresponding classes in pictures.
	\begin{figure}[h]
		\centering
		\includegraphics[width=0.4\linewidth]{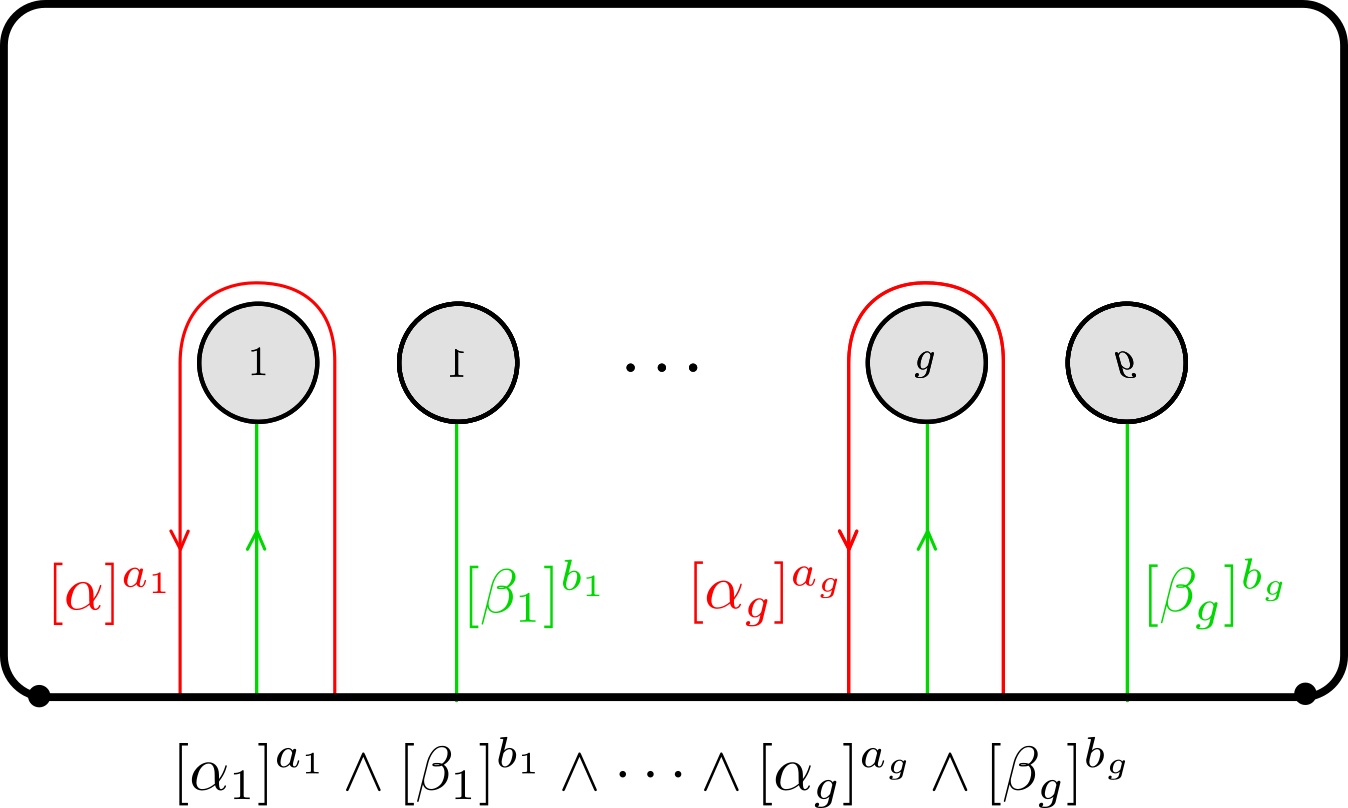}
		\caption{A picture representing a basis element in $H_*(\Sym^n(\Sigma),\Sym^n_-(\Sigma))$. Each arc corresponds to a homology class in $H_1(\Sigma,\partial_-\Sigma)$ and represents a cycle for that class.}
		\label{fig:Donaldson_basis}
	\end{figure}
	
	\noindent\textbf{Step 2. Action of elementary cobordisms.}
	
	We formulate the action of elementary cobordisms in terms of their trajectory spaces.

	\begin{enumerate}
		\item  \emph{Action of mapping cylinders}. Let $M_d:\Sigma\to \Sigma'$ be a mapping cylinder given by a diffeomorphism $d:\Sigma\to \Sigma'$. It induces a map between symmetric products $d^{\times n}:\Sym^n(\Sigma)\to \Sym^n(\Sigma')$ and their homology:
		\[
		D(M_d):=(d^{\times n})_*:H_*(\Sym^n(\Sigma),\Sym^n_-(\Sigma);\Bbbk)\to H_*(\Sym^n(\Sigma'),\Sym^n_-(\Sigma');\Bbbk). 
		\]
		\item \emph{Action of index 1 cobordisms}. Let $M:\Sigma_-\to \Sigma_+$ be an elementary cobordism of index 1 with belt sphere $\nu:S^1\to \Sigma_+$ and trajectory space $\calT_M$. The trajectory space $\calT_M$ together with the maps $i_-$ and $i_+$ induce the diagram:
		\[
		\begin{tikzcd}
			&\Sym^n(\calT_M)\times S^1\arrow[dl,"i_-^{\times n}\circ \pi"']\arrow[dr,"i_+^{\times n}\times \nu"]&\\
			\Sym^{n}(\Sigma_-)&&\Sym^{n+1}(\Sigma_+),
		\end{tikzcd}
		\]
		where $\pi:\Sym^n(\calT_M)\times S^1\to \Sym^n(\calT_M)$ is the projection. Define $\phi$ as the composition (compare with \eqref{eq:Psi_M_def}):
			\begin{equation}
			\begin{tikzcd}
				H_*(\Sym^{n}(\calT_M),\Sym^{n}_-(\calT_M);\Bbbk)\arrow[d,"{\Id\otimes [S^1]}"]\arrow[ddd,rounded corners=3pt , to path={-- ++(-4cm,0)  |-  node [left, near start] {$\phi$} (\tikztotarget)}]\\
				H_*(\Sym^n(\calT_M),\Sym^{n}_-(\calT_M);\Bbbk)\otimes H_1(S^1;\Bbbk)\arrow[d,"{\text{K\"unneth}}"]\\
				H_{*+1}(\Sym^n(\calT_M)\times S^1; \Sym^{n}_-(\calT_M)\times S^1;\Bbbk)\arrow[d,"{(i_+^{\times n}\times \nu)_*}"]\\
				H_{*+1}(\Sym^{n+1}(\Sigma_+),\Sym^{n+1}_-(\Sigma_+);\Bbbk)
			\end{tikzcd}
		\end{equation}
		We consider the following diagram:
		\begin{equation}\label{eq:Donaldson_index_1_span}
			\begin{tikzcd}
				&H_*(\Sym^n(\calT_M),\Sym^n_-(\calT_M);\Bbbk)\arrow[dl,"{(i_-^{\times n})_*}"']\arrow[dr,"\phi"]&\\
				H_*(\Sym^n(\Sigma_-),\Sym_-^n(\Sigma_-);\Bbbk)\arrow[rr,"\exists !","D(M)"',blue]&&H_{*+1}(\Sym^{n+1}(\Sigma_+),\Sym^{n+1}_-(\Sigma_+);\Bbbk).
			\end{tikzcd}
		\end{equation}
		The left arrow in \eqref{eq:Donaldson_index_1_span} is invertible in the following sense: it is surjective and $\Ker (i_-^{\times n})_*\subset \Ker \phi$ (see Fig.~\ref{fig:Donaldson_span}). Hence there exists a unique horizontal map making this diagram commutative. This defines $D(M)$ up to sign. 
		
		\begin{figure}[h]
			\centering
			\includegraphics[width=0.9\linewidth]{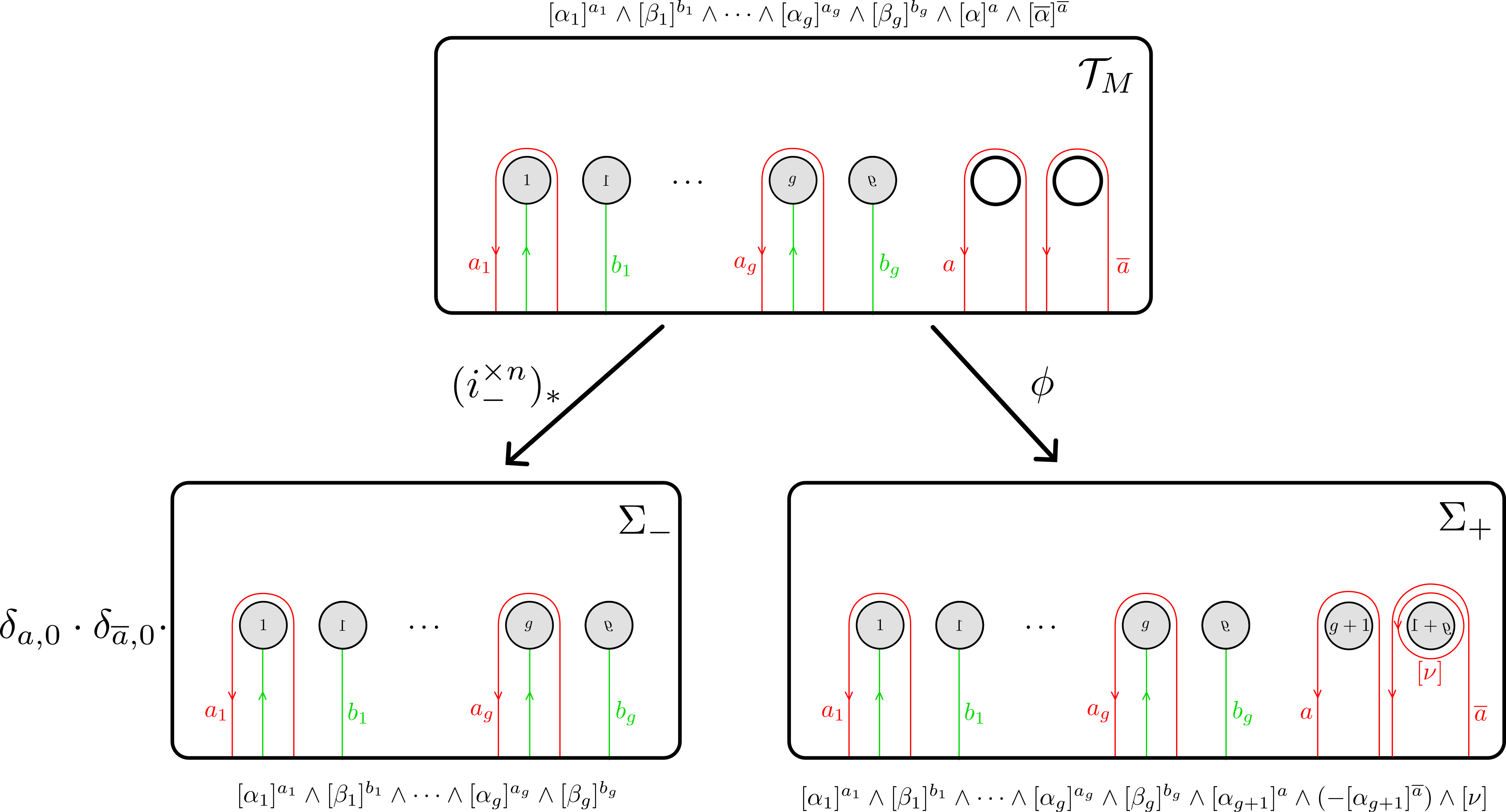}
			\caption{Action of $(i_-^{\times n})_*$ and $\phi$ on a basis element in $H_*(\Sym^n(\calT_M),\Sym^n_-(\calT_M);\Bbbk)$. This element is sent to 0 by $(i_-^{\times n})_*$ if and only if $a+\overline{a}>0$. On the other hand, the inserted homology class $[\nu]$ of the meridian (via $\phi$) contributes as multiplication $\wedge[\nu]=\wedge(-[\alpha_{g+1}])$. If $a+\overline{a}>0$ the image under $\phi$ vanishes since the images of the corresponding cycles contribute as $\wedge [\alpha_{g+1}]^{a}\wedge (-[\alpha_{g+1}]^{\overline{a}})$.}
			\label{fig:Donaldson_span}
		\end{figure}
		
		\item \emph{Action of index 2 cobordisms}. Let $M':\Sigma_-'\to \Sigma_+'$ be an index 2 cobordism. We use duality in order to define its action as follows. Switching orientation of $M'$ we obtain an index 1 cobordism $\overline{M}':-\Sigma'_+\to -\Sigma'_-$. Hence $D\left(\overline{M}'\right):D(-\Sigma_+')\to D(-\Sigma_-')$ is defined as above. If for each surface $\Sigma\in 3\Cob$ the state space $D(\Sigma)$ is equipped with a perfect bilinear pairing
		\[
			\langle-,-\rangle:D(\Sigma)\otimes D(-\Sigma)\to \Bbbk,
		\]
		then by identifying $\mathrm{Hom}(D(\Sigma),\Bbbk)$ with $D(\Sigma)$ we obtain that $D\left(\overline{M}'\right)$ induces the dual homomorphism
		\[
		D(M'):=D\left(\overline{M}'\right)^{\vee}:D(\Sigma_-')\to D(\Sigma_+').
		\]
		where the dual is taken with respect to the pairing.
		
		In particular, for each $\Sigma\in 3\Cob$ of genus $g$, there is a perfect pairing:
		\begin{equation}\label{Donaldson_pairing}
			\langle-,-\rangle:\bigwedge^{k} H_1(\Sigma;\Bbbk)\otimes	\bigwedge^{2g-k} H_1(\Sigma;\Bbbk)\xrightarrow{\wedge}\bigwedge^{2g}H_1(\Sigma;\Bbbk)\simeq \Bbbk.
		\end{equation}
			\begin{multline}\label{eq:explicit_duality}
			\left\langle[\alpha_1]^{a_1}\wedge[\beta_1]^{b_1}\wedge\dots\wedge [\alpha_g]^{a_g}\wedge [\beta_g]^{b_g},[\alpha_1]^{a_1'}\wedge[\beta_1]^{b_1'}\wedge\dots\wedge [\alpha_g]^{a_g'}\wedge [\beta_g]^{b_g'}\right\rangle=\\
			=\pm \delta_{a_1,1-a_1'}\delta_{b_1,1-b_1'}\dots\delta_{a_g,1-a_g'}\delta_{b_g,1-b_g'}.
		\end{multline}
		and by $H_1(\Sigma;\Bbbk)=H_1(-\Sigma;\Bbbk)$ we have a perfect pairing
		\[
			D(\Sigma)\otimes D(-\Sigma)\xrightarrow{\wedge}\Bbbk.
		\]
		One can view it in another way. Denote by $\Sym_+^n(\Sigma)$ the subspace of $\Sym^n(\Sigma)$ of configuration points with at least one point on $\partial_+\Sigma$. By a half twist of the boundary (see Fig.~\ref{fig:boundary_half_twist}) one can identify
		\[
			H_k(\Sym^{2g}(\Sigma),\Sym^{2g}_-(\Sigma);\Bbbk)\simeq H_k(\Sym^{2g}(\Sigma),\Sym^{2g}_+(\Sigma);\Bbbk).
		\]
		
		Then the intersection pairing:
		\[
			H_k(\Sym^{2g}(\Sigma),\Sym^{2g}_-(\Sigma);\Bbbk)\otimes H_{2g-k}(\Sym^{2g}(\Sigma),\Sym^{2g}_+(\Sigma);\Bbbk)\to \Bbbk
		\]
		results in a pairing:
		\[
		H_k(\Sym^{2g}(\Sigma),\Sym^{2g}_-(\Sigma);\Bbbk)\otimes H_{2g-k}(\Sym^{2g}(\Sigma),\Sym^{2g}_-(\Sigma);\Bbbk)\to \Bbbk,
		\]
		which coincides with $\langle-,-\rangle$.
	\end{enumerate}

	\noindent\textbf{Step 3. Checking the relations.}
	By Fig.~\ref{fig:Donaldson_span} the action of an index 1 cobordism in standard basis is given by the formula:
	\begin{equation}\label{eq:Donaldson_index_1}
		[\alpha_1]^{a_1}\wedge [\beta_1]^{b_1}\wedge\dots\wedge [\alpha_g]^{a_g}\wedge [\beta_g]^{b_g}\mapsto \pm[\alpha_1]^{a_1}\wedge [\beta_1]^{b_1}\wedge\dots\wedge [\alpha_g]^{a_g}\wedge [\beta_g]^{b_g}\wedge[\alpha_{g+1}],
	\end{equation}
	where $\pm$ ambiguity appears because flipping orientation of the meridian changes the sign of the result.

	Using duality \eqref{eq:explicit_duality} and formula \eqref{eq:Donaldson_index_1} one can derive a formula for the action of an index 2 cobordism in the standard basis:
	\begin{multline}\label{eq:Donaldson_index_2}
		[\alpha_1]^{a_1}\wedge[\beta_1]^{b_1}\wedge\dots\wedge [\alpha_{g+1}]^{a_{g+1}}\wedge [\beta_{g+1}]^{b_{g+1}}\mapsto\\
		\mapsto\pm [\alpha_1]^{a_1}\wedge[\beta_1]^{b_1}\wedge\dots\wedge [\alpha_{g}]^{a_g}\wedge [\beta_{g}]^{b_g} \cdot \delta_{a_{g+1},0}\delta_{b_{g+1},1}.
	\end{multline}
	
	It is easy to check that these formulas satisfy all Juh\'asz relations (the last is tautological since linear maps are considered up to sign). 

	\begin{theorem*}
		Modules $D(\Sigma)$ for each $\Sigma\in 3 \Cob$ and morphisms $D(M)$ for every elementary cobordism $M$ in $3\Cob$ give rise to a TQFT $D:3\Cob\to \mathrm{Vect}_{\Bbbk}^{\pm}$ called (3-dimensional) Frohman--Nicas--Donaldson TQFT.
	\end{theorem*}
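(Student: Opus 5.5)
The statement is the appendix theorem asserting that the data $D(\Sigma)$, $D(M_d)$, $D(M)$ (index 1) and $D(M')$ (index 2, defined by duality) assemble into a functor $D:3\Cob\to\mathrm{Vect}_{\Bbbk}^{\pm}$. The plan is to apply Juh\'asz's presentation, Theorem~\ref{thm:Juhasz_relations}, exactly as in the proof of Theorem~\ref{thm:main_thm}.

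The first step is to check that mapping cylinders give a monoidal functor on $\mathrm{Surf}$. This is immediate, since $D(M_d)=(d^{\times n})_*$ is functorial. The monoidal isomorphism $D(\Sigma_1)\otimes D(\Sigma_2)\to D(\Sigma_1\natural\Sigma_2)$ is the exterior product $\wedge^*H_1(\Sigma_1)\otimes\wedge^*H_1(\Sigma_2)\cong\wedge^*(H_1(\Sigma_1)\oplus H_1(\Sigma_2))$, which is natural in $d_1,d_2$ up to sign.

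The second step is to verify that the index 1 span \eqref{eq:Donaldson_index_1_span} is well defined. Here I would use the computation in Fig.~\ref{fig:Donaldson_span}: $(i_-^{\times n})_*$ is surjective, its kernel is spanned by classes involving $a+\overline a>0$, and $\phi$ kills those because $[\alpha_{g+1}]\wedge[\alpha_{g+1}]=0$. This yields the invariant description $D(M)(x)=\pm\, x\wedge[\nu]$, where $x$ is lifted to $\calT_M$ and pushed into $\Sigma_+$. In that form it is manifestly natural under diffeomorphisms of the pair $(\Sigma_-,\bbS)$, which gives relation (2) for index 1. Relation (2) for index 2 then follows by duality, because mapping classes preserve the wedge pairing \eqref{Donaldson_pairing}. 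The same invariant description gives the monoidality squares \eqref{eq:monoidality_condition} and commutation of disjoint surgeries, relation (3), up to sign: wedging with two disjoint classes commutes up to sign, and an index 2 map is contraction against a class, which commutes with wedging by a class supported in a disjoint subsurface. The mixed index 1/index 2 case needs dualizing once more. Relation (5) is tautological in $\mathrm{Vect}^{\pm}_{\Bbbk}$, since reversing $\nu$ only changes the sign of $[\nu]$.

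The main obstacle is relation (4), handle cancellation. I would reduce it via the monoidality just established to $\Sigma_-=\Sigma_+$ of genus $0$ and $f=\Id$, as in the proof of Theorem~\ref{thm:main_thm}. In the standard picture (Fig.~\ref{fig:relation_4}), $D(M_1)(1)=\pm[\alpha]$ by \eqref{eq:Donaldson_index_1}. The index 2 cobordism $M_2$ has attaching sphere meeting $b(M_1)$ once, so by \eqref{eq:Donaldson_index_2} it sends $[\alpha]$ to $\pm1$, and the composite is $\pm\Id$ as required. The care needed here is to confirm that the dual basis convention \eqref{eq:explicit_duality} matches the standard arcs for an arbitrary transverse pair. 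I would handle this by a diffeomorphism of $\Sigma$ taking any such pair to the standard one, using relation (2) already proven. With all Juh\'asz relations verified, Theorem~\ref{thm:Juhasz_relations} gives the functor.
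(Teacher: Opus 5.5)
Your proposal is correct and follows essentially the same route as the paper: define the index 2 action by duality, obtain the standard-basis formulas \eqref{eq:Donaldson_index_1} and \eqref{eq:Donaldson_index_2}, and verify the relations of Theorem~\ref{thm:Juhasz_relations}, with relation (5) tautological up to sign. The paper dismisses the relation checks as easy. You spell them out: naturality via the intrinsic description $x\mapsto\pm x\wedge[\nu]$, disjoint surgeries via commutation of wedging and contraction, and handle cancellation reduced to genus $0$ as in the proof of Theorem~\ref{thm:main_thm}.
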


	\subsection{Diagrammatic calculus for twisted cycles}\label{sec:Diag_calculus}
	 We provide here only the relations needed in our computations. For a full description of diagrammatic calculus for twisted cycles see \cite{de2022homological, martel2022homological}.
	
	\begin{itemize}
		\item \textit{Fusion rule}. (Fig.~\ref{fig:Fusion_rule}) (only for Borel-Moore homology). If there are two parallel arcs with labels $k$ and $l$ and in the same fiber of the Heisenberg cover, then they can be ``fused'' into a single arc with label $k+l$, but twisted by $\sqbinom{k+\ell}{k}_q\cdot q^{k\ell}\in R[\Heis_p(\Sigma)]$;
		\begin{figure}[h]
			\centering
			\includegraphics[width=0.5\linewidth]{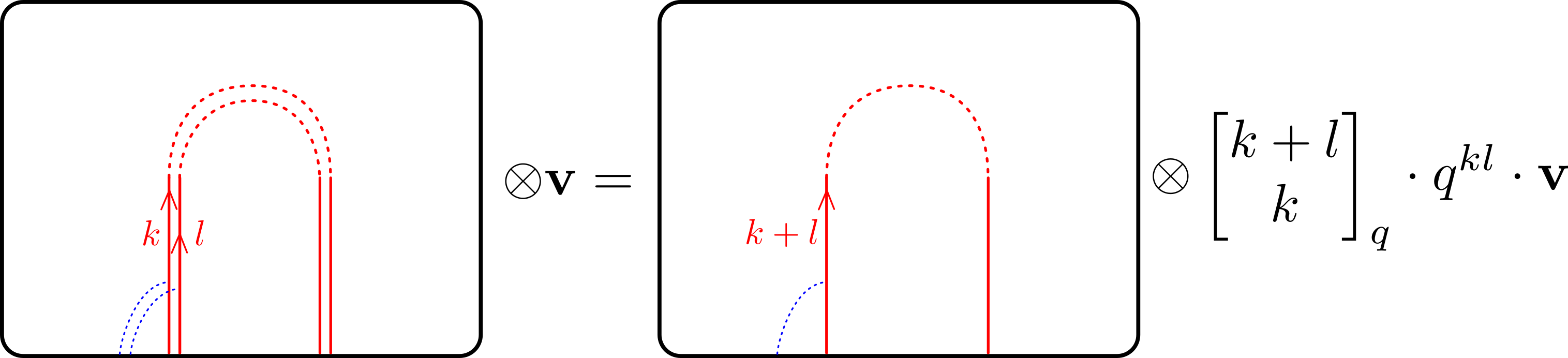}
			\caption{Fusion rule}
			\label{fig:Fusion_rule}
		\end{figure}
		\item \textit{Orientation rule}. (Fig.~\ref{fig:Orientation_rule}) The orientation of an arc with label $k$ can be switched to the opposite by multiplication by $(-1)^k$.
		\begin{figure}[h]
			\centering
			\includegraphics[width=0.5\linewidth]{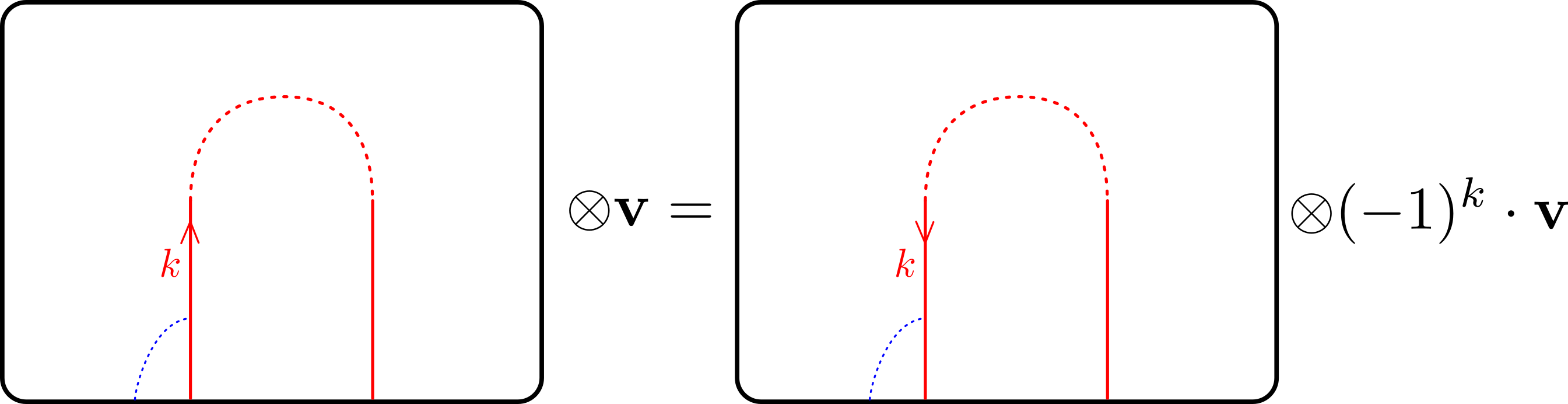}
			\caption{Orientation rule}
			\label{fig:Orientation_rule}
		\end{figure}
		\item \textit{Braid rule}. If the base point of an arc is connected to the base point $*_n$ via two paths $h$ and $h'$ then the corresponding twisted cycles are related by:
		\begin{figure}[h]
			\centering
			\includegraphics[width=0.5\linewidth]{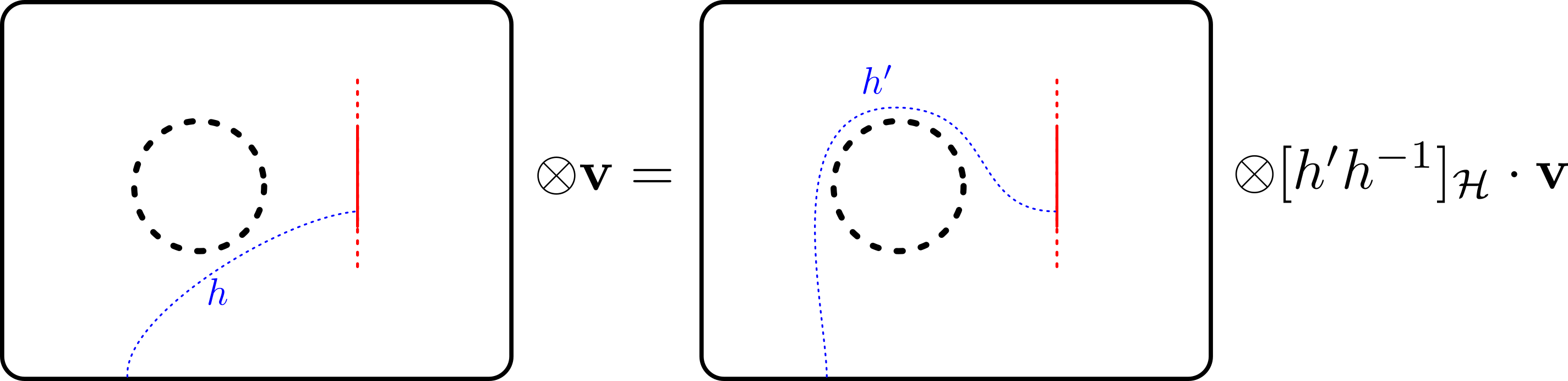}
			\caption{Braid rule}
			\label{fig:Braid_rule}
		\end{figure}
	\end{itemize}
	

	
	\subsection{Connected framed $3$-dimensional cobordisms and top tangles}
	\label{sec:ALG3}
	
	This subsection briefly recalls Kerler's category of connected framed $3$-dimensional cobordisms and its diagrammatic description in terms of top tangles in handlebodies.
	
	We use the standard skeleton of the connected cobordism category, whose objects are fixed connected oriented surfaces $\Sigma_{g}$ of genus $g$ with one boundary component, or equivalently the integers $g\in\mathbb N$. Let
	\[
	L_g\subset H_1(\Sigma_{g};\mathbb R)
	\]
	be the standard Lagrangian subspace generated by the belt circles of the $g$ one-handles.
	
	\textit{The framed cobordism category.}
	The category $3\Cob_*^{\Z}$ then has objects $g\in\mathbb N$. A morphism from $g$ to $g'$ is a pair $(M,n)$, where $M$ is the diffeomorphism class of a connected oriented cobordism from $\Sigma_{g}$ to $\Sigma_{g'}$ and $n\in\mathbb Z$ is an integer, called the \emph{signature defect}. The identity of $g$ is $(\Sigma_{g}\times I,0)$, and the composition of
	\[
	(M,n):g\to g',\qquad (M',n'):g'\to g''
	\]
	is defined by
	\[
	(M',n')\circ (M,n)
	=
	\bigl(M\cup_{\Sigma_{g'}} M',\, n+n'-\mu(M_*(L_g),L_{g'},(M')_*(L_{g''}))\bigr),
	\]
	where $\mu$ denotes the Maslov index of the indicated triple of Lagrangian subspaces. The monoidal product is given on objects by addition,
	\[
	g\otimes g'=g+g',
	\]
	and on morphisms by horizontal gluing, that is, by boundary connected sum along the distinguished boundary components. Forgetting the signature defect gives the connected cobordism category $3\Cob_*$.
	
	In this way, $3\Cob^{\Z}_*$ is a Maslov-corrected $\Z$-extension of $3\Cob_*$: the extra integer keeps track of the framing anomaly that appears when cobordisms are composed.
	
	\textit{Top tangles in handlebodies.}
	Let $H_g^*$ denote the standard $3$-dimensional handlebody of genus $g$ depicted in Fig.~\ref{fig:handlebodies}. A \emph{top $(g,g')$-tangle} is an unoriented framed tangle $T\subset H_g^*$ such that:
	\begin{itemize}
		\item the boundary of $T$ consists of $2g'$ points placed on the bottom face of $H_g^*$ in the standard order;
		\item each arc component of $T$ joins the two points in one of the prescribed pairs;
		\item closed components are allowed.
	\end{itemize}
	If $\widetilde T\subset T$ denotes the union of the arc components, the composition of a top $(g,g')$-tangle $T$ with a top $(g',g'')$-tangle $T'$ is obtained by removing a tubular neighborhood of $\widetilde T$, gluing the resulting complement to the ambient handlebody of $T'$ along the boundary pattern prescribed by $\widetilde T$, and then rescaling the result back to a standard handlebody. Horizontal juxtaposition defines the monoidal product.
	
\begin{figure}[h]
		\centering
		\includegraphics[width=0.7\linewidth]{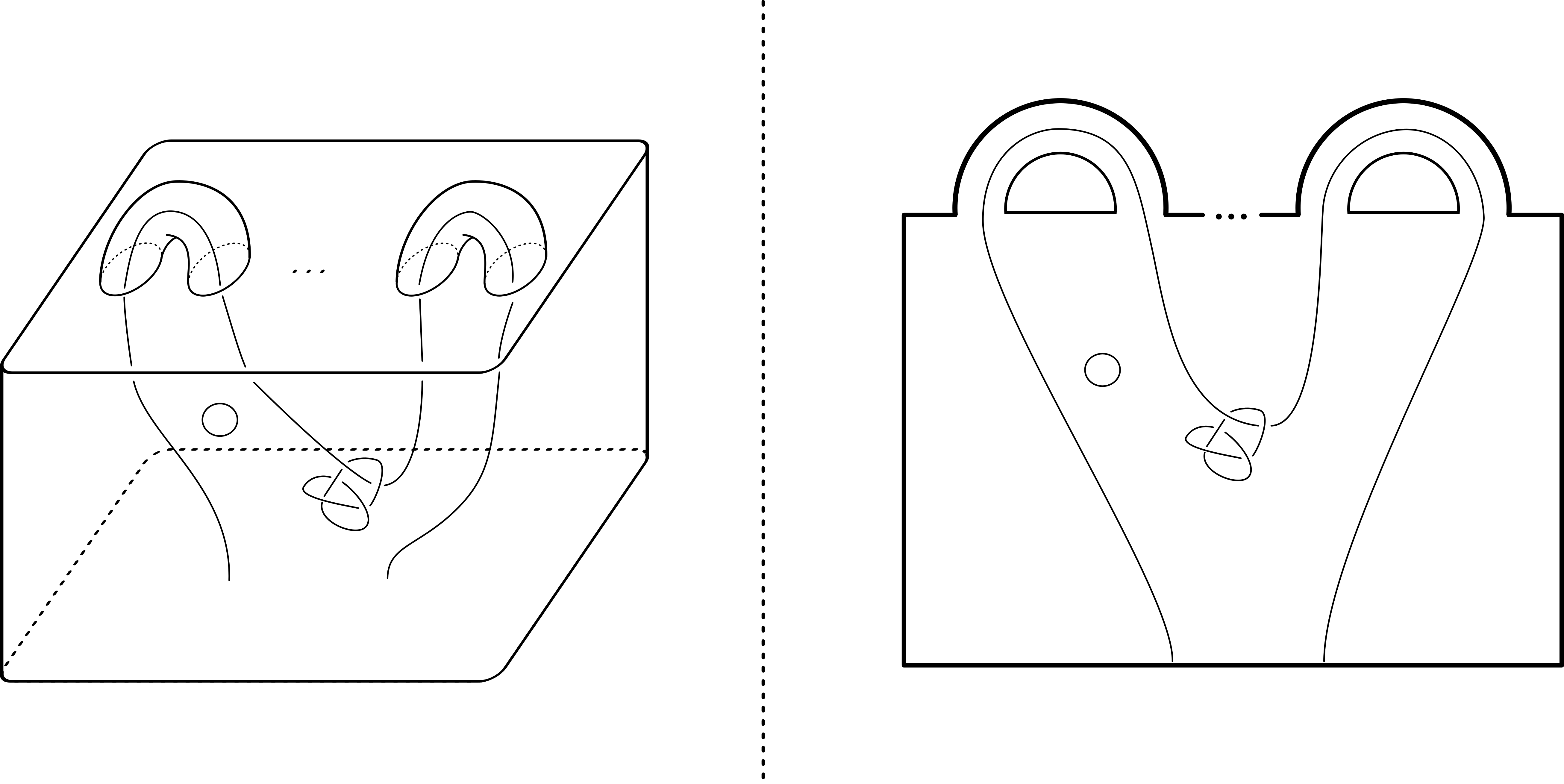}
		\caption{On the left is a tangle representing a cobordism from $g=1$ to $g'=2$. On the right its diagrammatic presentation.}
		\label{fig:handlebodies}
	\end{figure}
	
	Following the top-to-bottom convention, one obtains the category $\mathrm{TTan}^{\Z}$ of \emph{signed top tangles in handlebodies}: its objects are again natural numbers, while morphisms are pairs $(T,n)$ consisting of a top tangle and an integer defect, taken modulo isotopy and the signed Kirby moves $(K\pm 1)$, $(K2)$, and $(K3)$. Equivalently, one quotients by the usual stabilization and handle-slide moves adapted to tangles in a handlebody. Forgetting the integer defect yields the category $\mathrm{TTan}$ of top tangles in handlebodies. (With Habiro's top-to-bottom convention, these become the corresponding categories of bottom tangles.)
	
	\textit{Surgery presentation.}
	Let $T\subset H_g^*$ be a top $(g,g')$-tangle, and write $\widetilde T$ for its arc part. The associated cobordism $C(T)$ is obtained by interpreting the arc components as carving data and the circle components as surgery data: one removes a tubular neighborhood of $\widetilde T$ and performs $2$-surgery along the framed link $T\setminus \widetilde T$. This defines the \emph{signed surgery presentation functor}
	\[
	\chi^{\Z}:\mathrm{TTan}^{\Z}\longrightarrow 3\Cob^{\Z}_*,
	\qquad
	\chi^{\Z}(T,n)=\bigl(C(T),\sigma(T\setminus \widetilde T)+n\bigr),
	\]
	where $\sigma(T\setminus \widetilde T)$ is the signature of the linking matrix of the closed components of $T$. By construction, the arc components encode the boundary parametrization, while the closed components encode the $2$-handle attachments.
	
	\begin{proposition}[\cite{beliakova2024kerler}]
		\label{prop:signed_surgery_equivalence_appendix}
		The functor $\chi^{\Z}:\mathrm{TTan}^{\Z}\to 3\Cob^{\Z}_*$ is an equivalence of categories.
	\end{proposition}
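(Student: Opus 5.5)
The plan is to follow the standard strategy for surgery presentations: first check that $\chi^{\Z}$ is a functor, then show it is bijective on objects and full, and finally show it is faithful using a relative version of Kirby's theorem. Since both categories have object set $\N$ and $\chi^{\Z}$ is the identity on objects, only the statements about morphisms carry content.

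\emph{Functoriality.} I would first check that $\chi^{\Z}$ respects composition and the monoidal product.
\begin{itemize}
\item On underlying cobordisms, composing top tangles (carving out $\widetilde T$ and regluing into the handlebody of $T'$) corresponds by construction to gluing $C(T)$ and $C(T')$ along $\Sigma_{g'}$.
\item For the defect, one must show
\[
\sigma(T'\circ T)=\sigma(T)+\sigma(T')-\mu\bigl(C(T)_*(L_g),L_{g'},C(T')_*(L_{g''})\bigr),
\]
where $\sigma$ denotes the signature of the linking matrix of the closed components. I would identify each linking-matrix signature with the signature of the $4$-dimensional $2$-handlebody bounded by the surgered manifold. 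The identity then follows from Wall's non-additivity formula for signatures, since the Lagrangians involved are exactly the kernels of the maps from $H_1$ of the gluing surface into the handlebody pieces.
\item Monoidality is clear, because juxtaposition of tangles corresponds to boundary connected sum of cobordisms.
\end{itemize}

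\emph{Fullness.} Given a connected cobordism $M:\Sigma_g\to\Sigma_{g'}$, I would choose a handle decomposition of $M$ relative to $\Sigma_g\times I$ that has only $1$- and $2$-handles; connectivity and the fixed boundary parametrization allow $0$- and $3$-handles to be cancelled. The steps are:
\begin{enumerate}
\item Trade each $1$-handle for a dotted/carved arc. After sliding, arrange that the $2g'$ boundary points of the outgoing parametrization appear as endpoints of arc components in the bottom face.
\item Record the $2$-handles as a framed link in the handlebody $H_g^*$. This produces a top $(g,g')$-tangle $T$ with $C(T)\cong M$ relative to the boundary parametrization.
\item The defect is then fixed by choosing the integer $n$ so that $\sigma(T\setminus\widetilde T)+n$ equals the prescribed value.
\end{enumerate}

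\emph{Faithfulness (main obstacle).} Suppose $C(T)\cong C(T')$ relative to the boundary, with matching total defects. I need to show that $T$ and $T'$ are related by isotopy and the moves $(K\pm1)$, $(K2)$, $(K3)$, with the integer defects changing compatibly.
\begin{enumerate}
\item I would first invoke Cerf theory for relative handle decompositions, as in Roberts' relative Kirby calculus. Any two such decompositions are connected by handle slides, creation and cancellation of $1$/$2$ and $2$/$3$ pairs, and isotopies.
\item The next task is to translate these moves into tangle moves. The delicate points are:
\begin{itemize}
\item slides of $2$-handles over the arc components $\widetilde T$, which are the moves specific to tangles in handlebodies;
\item $1$/$2$ cancellations involving arcs, which become $(K3)$-type moves.
\end{itemize}
\item For the defects, the signature of the linking matrix is unchanged under handle slides and $(K3)$. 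It changes by $\pm1$ exactly under blow-ups $(K\pm1)$, and the signed moves shift $n$ by $\mp1$ to compensate. Hence the total defect is preserved, and equal total defects give equal morphisms in $\mathrm{TTan}^{\Z}$.
\end{enumerate}
I expect this last step, the relative Kirby theorem in the handlebody setting together with its precise matching to the listed moves, to be the main difficulty. I would try to obtain it by reducing to the closed-case Kirby theorem: cap off both boundary surfaces by standard handlebodies, and check that the capping respects the arc and boundary data.
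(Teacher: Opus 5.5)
The paper gives no proof of this proposition; it simply quotes \cite{beliakova2024kerler}. Your outline (check functoriality via Wall non-additivity, get fullness from a surgery presentation, get faithfulness from a relative Kirby theorem, then track the defects) is the standard argument behind that citation. Two of your steps, however, fail as stated.

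\emph{The signature identification in the functoriality step.} The $2$-handlebody that goes with a top tangle is $H_g^*\times I$ with $2$-handles attached along the closed components. Its intersection form is the linking matrix restricted to those integer combinations of closed components that are null-homologous in $H_g^*$. This agrees with the full linking matrix when every closed component is null-homologous in $H_g^*$, but not in general, and the composition law depends on exactly this difference.

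Here is a counterexample to the naive reading. Let $T$ be the trivial $0$-framed $(0,1)$-arc, and let $T'$ be one $f$-framed component running once around the handle of $H_1^*$. Then $T'\circ T$ is an $f$-framed unknot.
\begin{itemize}
\item With full linking matrices, $\sigma(T)=0$ and $\sigma(T')=\sigma(T'\circ T)=\operatorname{sign}f$. Your identity then forces the Maslov term to vanish.
\item The three Lagrangians in $H_1(\Sigma_1)$ are $C(T)_*(L_0)=\langle l\rangle$, $L_1=\langle m\rangle$ and $C(T')^*(L_0)=\langle l+fm\rangle$. Here $m$ is the meridian of the arc, i.e.\ the belt circle, and $l$ its $0$-framed longitude. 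For $f\neq0$ they are pairwise distinct, so $\mu=\pm1$.
\item With $2$-handlebody signatures the values are $(0,0,\operatorname{sign}f)$, because $H_1^*\times I\cup h_K$ is just $B^4$. These are consistent with the formula.
\end{itemize}
So before Wall's formula can give you functoriality, you must read $\sigma(T\setminus\widetilde T)$ as this restricted $4$-dimensional signature, with linking numbers in $H_g^*$ defined through its standard embedding. The plain linking-matrix signature does not work.

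\emph{The fullness and faithfulness mechanics.} A top $(g,g')$-tangle has exactly $g'$ arcs. A relative decomposition of $M$ has $k$ one-handles and $m$ two-handles, and only $k-m=g'-g$ is prescribed. Moreover, a $3$-dimensional $2$-handle attached along a curve in the outgoing surface changes that surface, whereas surgery on a closed component does not. So ``carve out the $1$-handles and record the $2$-handles as the link'' does not produce a tangle.

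The standard route is as follows:
\begin{itemize}
\item Cap the outgoing end of $M$ with the standard genus-$g'$ handlebody, so the result has the same boundary as $H_g^*$.
\item Present the capped manifold as integral surgery on a link in $H_g^*$, by the relative Lickorish--Wallace theorem.
\item Take $\widetilde T$ to be the core arcs of the cap, isotoped off the link.
\end{itemize}

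For faithfulness, capping off and then applying the closed Kirby theorem does not suffice. The moves it produces can pass through the capping handlebodies, that is, through the arcs and through the handles of $H_g^*$, and it ignores the boundary parametrization. You genuinely need a relative theorem, such as Roberts' calculus in manifolds with boundary or Kerler's or Habiro's tangle calculus.

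Finally, the tangle-specific slides go the other way from what you wrote. Arcs are carved out, so an arc may be slid over a closed component; a closed component may not be slid over an arc.
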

	
	In particular, every connected framed cobordism admits a presentation by a top tangle in a handlebody, and two such presentations determine the same morphism of $3\Cob^{\Z}_*$ if and only if they are related by isotopy and signed Kirby moves. After forgetting signature defects, one obtains an equivalence
	\[
	\chi:\mathrm{TTan}\longrightarrow 3\Cob_*.
	\]
	Thus top tangles provide a convenient diagrammatic model for connected framed $3$-dimensional cobordisms.

	\vspace{0.5cm}

	The integral and cointegral are presented in Habiro's notation in Fig.~\ref{fig:integral_cointegal}.

	\begin{figure}[H]
		\centering
		\includegraphics[width=0.5\linewidth]{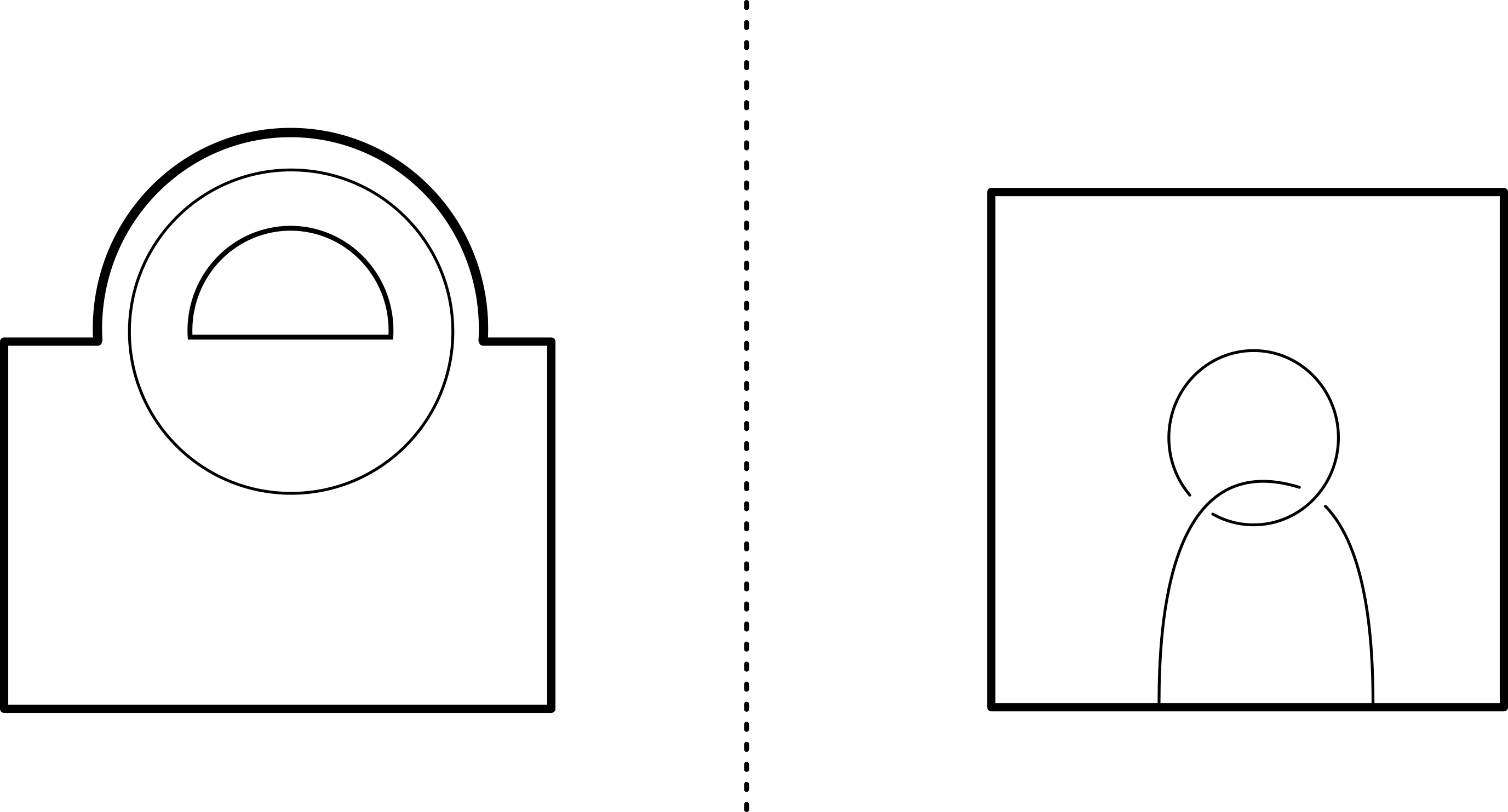}
		\caption{On the left is the tangle presentation of the integral $\lambda$. On the right is the tangle presentation of the cointegral $\Lambda$.}
		\label{fig:integral_cointegal}
	\end{figure}

	\bibliographystyle{alpha}
	\bibliography{biblio}
	
	

\end{document}